\newcommand{\eqdef}{\overset{\text{\tiny def}}{=}}
\newcommand{\eqdist}{\overset{{\text{\tiny dist}}}{=}}
\newcommand{\one}{\mathbf{1}}
\newcommand{\supp}{\text{supp}\,}
\newcommand{\lpara}{\prec}
\newcommand{\rpara}{\succ}
\newcommand{\res}{\circ}
\newcommand{\Ss}{\mathscr{S}}
\newcommand{\T}{\mathbb{T}}
\newcommand{\X}{Y} 
\newcommand{\XX}{\mathbf{X}}
\newcommand{\Y}{X}
\newcommand{\RY}{R}
\newcommand{\RX}{S}
\newcommand{\Z}{\mathbb{Z}}
\newcommand{\R}{\mathbb{R}}
\newcommand{\Nb}{\mathbb{N}}
\newcommand{\Pc}{\mathcal{P}}
\newcommand{\Pb}{\mathbb{P}}
\newcommand{\EE}{\mathbb{E}}
\newcommand{\Cc}{\mathcal{C}}
\newcommand{\Ll}{\mathscr{L}}
\newcommand{\ccdot}{\;\cdot\;}
\newcommand{\mm}{^-}
\newcommand{\spc}[1]{\,#1\,}
\newcommand{\simQ}{\approx}
\newcommand{\eqOrder}{\eqsim}
\newcommand{\Law}{\mathrm{Law}}
\newcommand{\Cov}{\Xi}
\newcommand{\tQ}{\widetilde{Q}}
\newcommand{\tW}{\widetilde{W}}
\newcommand{\tZ}{\widetilde{Z}}
\newtheorem{proposition}{Proposition}[section]
\newtheorem{lemma}[proposition]{Lemma}
\newtheorem{corollary}[proposition]{Corollary}
\newtheorem{remark}[proposition]{Remark}
\newtheorem{theorem}[proposition]{Theorem}
\newtheorem{definition}[proposition]{Definition}
\numberwithin{equation}{section}
\crefname{algorithm}{Algorithm}{Algorithms}
\crefname{assumption}{Assumption}{Assumptions}
\crefname{lemma}{Lemma}{Lemmas}
\crefname{theorem}{Theorem}{Theorems}
\crefname{remark}{Remark}{Remarks}
\crefname{corollary}{Corollary}{Corollaries}
\crefname{figure}{Fig.}{Figures}
\crefname{section}{Section}{Sections}
\crefname{proposition}{Proposition}{Propositions}
\crefname{definition}{Definition}{Definitions}
\newcommand{\death}{\raisebox{-0.17em}{\includegraphics[width=3.3mm]{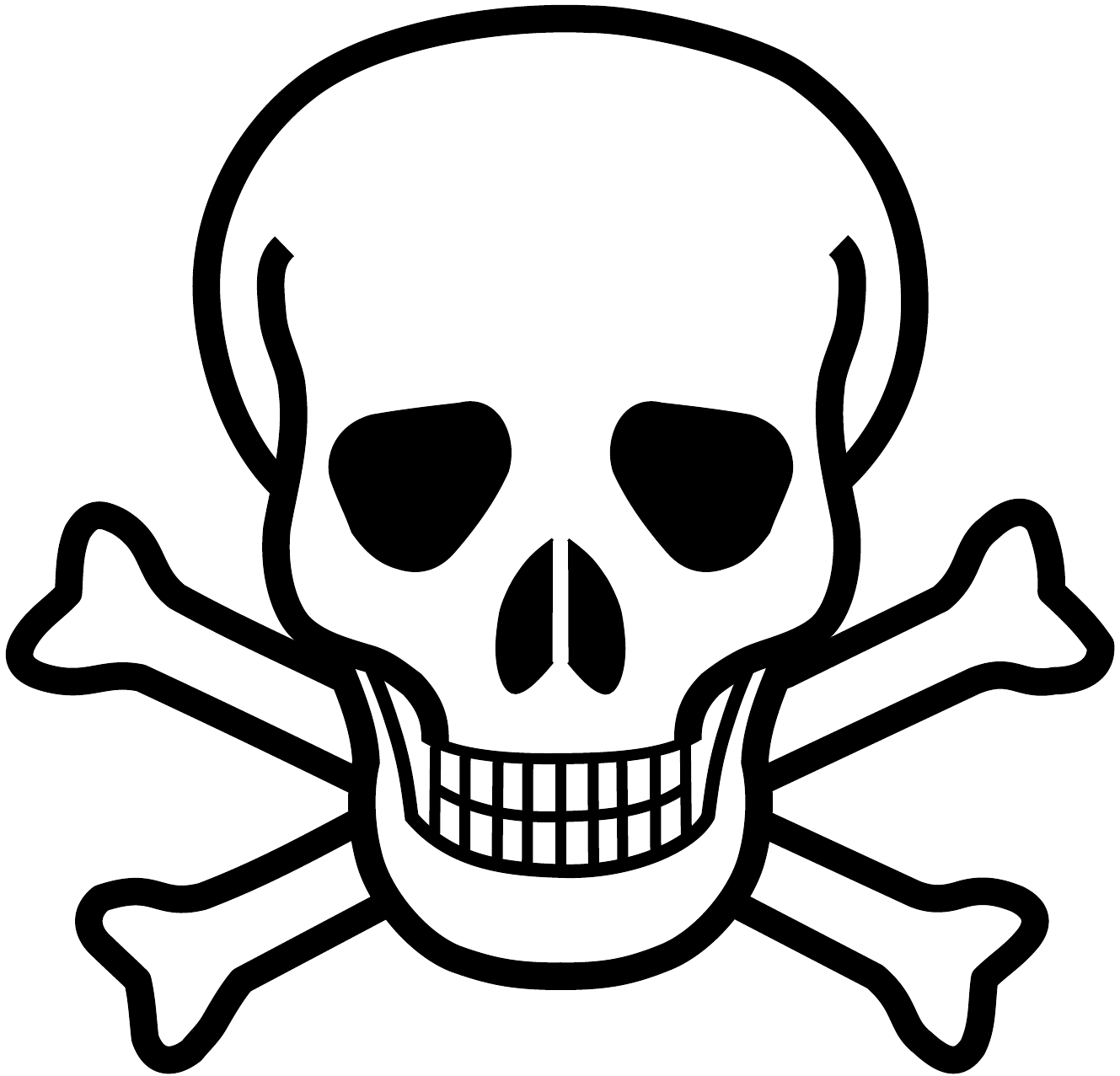}}}
\begin{document}

\title[Gaussian Structure of Singular Stochastic Burgers]{The Gaussian Structure of the Singular Stochastic Burgers Equation}
\author{Jonathan C. Mattingly}
\address{Department of Mathematics and Department of Statistical 
  Science, Duke University, Durham, NC USA}
\author[M. Romito]{Marco Romito}
  \address{Dipartimento di Matematica, Universit\`a di Pisa, Largo Bruno Pontecorvo 5, I--56127 Pisa, Italia }
 \author{Langxuan Su}
\address{Department of Mathematics, Duke University, Durham, NC USA}
\begin{abstract}
We consider the stochastically forced Burgers equation with an emphasis
on spatially rough driving noise. We show that the law of the process at a
fixed time $t$, conditioned on no explosions, is absolutely continuous
with respect to the stochastic heat equation obtained by removing the
nonlinearity from the equation. This establishes a form of ellipticity
in this infinite dimensional setting. The results follow from a
recasting of the Girsanov Theorem to handle less spatially regular
solutions while only proving absolute continuity at a fixed time and not
on path-space. The results are proven by decomposing the solution into
the sum of auxiliary processes which are then shown to be absolutely
continuous in law
to a stochastic heat equation. The number of levels in this
decomposition diverges to infinite as we move to the stochastically
forced Burgers equation associated to the KPZ equation, which we
conjecture is just beyond the validity of our results (and certainly
the current proof). The analysis provides insights into the structure
of the solution as we approach the regularity of KPZ. A number of
techniques from singular SPDEs are employed as we are beyond the
regime of classical solutions for much of the paper.
\end{abstract}
\maketitle

\begin{center}
\textit{LS dedicates this to his mother Deli Xie for everything
she has done for the family.}
\end{center}
\vspace{0.1cm}

\section{Introduction} 

The lack of one, distinguished \textit{standard} Borel topology, with its
associated Lebesgue measure, is the source of many differences between stochastic dynamics in finite and infinite
dimensions.  It is typical for the law of a stochastic ordinary
differential equation to have a transition law which is absolutely
continuous with respect to Lebesgue measure. In finite dimensions, the 
equivalence of transition densities is the norm while in infinite
dimensions it is the exception.
Of course, this fact is at the core of the difference between ordinary
and partial differential equations. In the stochastic setting, it
produced additional difficulties as many of the classical ideas such as
ellipticity, smoothing and transition densities are tied to the
existence of a Lebesgue measure.

Here, we provide an analysis showing when there is a preferred  topology
whose associated Gaussian measure plays the role of 
Lebesgue measure in infinite dimensions. We study the stochastically
forced Burger's equation in a singular regime and show
that the distribution of the dynamics at time $t$ is mutually
absolutely continuous with the  Gaussian
measure associated with linear dynamics where the nonlinear term
has been removed.

In the infinite-dimensional setting, if one only considers
finite-dimensional functionals of the solution 
(such as the evaluation in a space-time point), existence of densities
with respect to the natural reference measure -- again the Lebesgue
measure -- has a large literature, mostly related to
Malliavin calculus. Here we point out for instance to the
monograph \cite{San2005}, or the papers \cite{MatPar2006,HairerMattingly2011,gerasimovivcs2019hormander}. In particular,
the setting in \cite{MatPar2006,HairerMattingly2011,gerasimovivcs2019hormander} is orthogonal 
to ours, as the authors there consider equations driven 
by finite-dimensional Wiener processes, 
while our equation is very singular with a stochastic forcing 
that is non-degenerate in all directions. Through that lens, these
papers are dealing with the hypoelliptic setting but only answering
finite-dimensional questions about any transition densities, while this
paper considers what might be called the truly elliptic setting where
the structure of the stochastic forcing sets the  relevant topology,
and hence the reference measure, for
the full infinite-dimensional setting  (see \cite{Mattingly2002a} for
a broader, all though, dated discussion of this).

A much more substantial literature is devoted to the same problem
(in a smoother regime) at the level of path measures, thanks to the Girsanov Theorem.
We point out for instance to the monograph \cite{DapZab1992}. There is
strong evidence that that approach is not directly applicable to our setting.

The first works we are aware of that consider the problem we are
interested are \cite{DapDeb2004,Mattingly_Suidan_2005}. In \cite{DapDeb2004}  equivalence is proved
for invariant measures and, via the strong Feller property,
 the solution at fixed times. This work takes a different tact,
leveraging the time-shifted Girsanov method
contained in \cite{Mattingly_Suidan_2005,Mattingly_Suidan_2008},
and in a more consistent presentation in \cite{Watkins2010}. Those
works are the starting point for this investigation, but we will see
that significant work is required to extend to the singular setting. 

In the case of rough but sufficiently smooth forcing when all of the objects are 
classically defined, the time-shifted Girsanov method
contained in \cite{Mattingly_Suidan_2005,Mattingly_Suidan_2008,
  Watkins2010} can be applied to our setting. 
As the roughness increases, we decompose the equation into
an increasing number of levels of equations, and 
some stochastic objects in some levels require
renormalizations in the sense of
\cite{hairer2013solving,gubinelli2015paracontrolled,gubinelli2017kpz,mourrat2015construction,catellier2018paracontrolled}. The
additional levels of decomposition are driven by our need to prove
absolute continuity and not by the need for renormalizations in the
sense of singular SPDEs. The analysis further illuminates the structure
of the equations by underlining structural changes that occur as the
roughness increases.  In particular, the KPZ equation (in Burgers equation
form) presents itself as a boundary case just beyond the analysis of
this paper. There is strong evidence that this relates to a
fundamental change in the structure of the equation in the KPZ setting. 

If the KPZ is the boundary case, it is still open whether our
results extend to that case. When the forcing is precisely the spatial
derivative  of the space-time white noise, the invariant measure is
Gaussian. However, it is unclear if any Gaussian structure persists if
the structure of the noise is perturbed. Since the semigroup in that setting is known to be Strong
Feller in the KPZ case \cite{HaiMat2018} even with more general
forcing, we know that the failure of our results to generalize will
not be because of the appearance of a rough, random shift outside of
the needed Cameron-Martin space of admissible shifts as in \cite{BarGub2020}. (See
\cite{da2006introduction} and Sections~\ref{sec:cm-TSG-overview} and~\ref{sec:Full-CM}.) In \cite{BarGub2020}, the authors prove
the singularity of the $\Phi^4_3$ measure with respect to the
Gaussian free field and absolute continuity with respect to
a random shift of the
Gaussian free field.

Additionally, we believe that establishing absolute continuity
of the dynamics 
with respect to a Gaussian  reference measure will open additional
perspectives and approaches to analyzing these rough SPDEs.
Finally, we mention that in \cite{BesFer2016} the authors prove
a connection between a nonlinear problem and a linear
problem.

\vspace{1em}
\noindent \textbf{Outline of Paper: }
The paper is organized as follows. \cref{sec:main}
contains our main result, and \cref{sec:centralideas}
contains the main tools we use to prove it:
decompositions of the solution and the time-shifted
Girsanov method. In \cref{sec:Preliminary},
we give the basic definitions and estimates,
and study the regularity of the solution
and of the terms appearing in the equation.
In \cref{sec:equivalence}, we prove
our general statements on absolute
continuity and equivalence,
which are used in \cref{sec:decomposition}
and \cref{sec:Xdecomposition} to prove
absolute continuity of the decompositions.
All together, these results prove the main
theorem. %
In the Appendix, we recall some details on
Besov spaces and paraproducts (Appendix \ref{Besov}),
we define the Gaussian objects that appear in the
decompositions and prove their regularity
(Appendix \ref{sec:Gaussian}), and we give a result
of existence and uniqueness for
the needed equations (Appendix \ref{sec:ExistenceRegularity}).

\section{Main Result}\label{sec:main}
Consider the stochastic Burgers equation on $\T = [0, 2\pi]$ 
with periodic boundary condition: 
\begin{equation}\label{eq:burgers}
\Ll u_t = B(u_t)dt + Q dW_t,
\end{equation}
where $A = - \partial_{xx}$, $\mathscr{L} = \partial_t + A$, $B(u, v) = \partial_x (uv)$, 
and we write $B(u) := B(u, u)$. 
Also, $W$ is a cylindrical Brownian motion on $L^2(\T)$.
Since $A$ is a positive, symmetric operator on functions
in $L^2(\T)$ with mean zero in space, we can
define $A^\delta$ for any $\delta \in \R$ by its spectral
decomposition.
Assume that $Q \simQ A^{\alpha/2}$ for some $\alpha \in \R$, 
where we write $Q \simQ A^{\beta/2}$ for some $\beta \in \R$
when $A$ and $Q$ have a common eigenbasis and 
$A^{-\beta} QQ^{*}$ is bounded with bounded inverse. 

We denote by $e^{-t A}$ the semigroup
generated by $-A$. 
The use of the notation $\Ll u_t$ on the left-hand side of \eqref{eq:burgers} is meant
to be both compact and evocative of the fact that we will
consider the mild or integral formulation of the equation. Namely, if
$u_0$ is the initial condition, then $u$ solves
\begin{align}\label{eq:burgersMild}
  u_t - e^{-t A}u_0= \int_0^t   e^{-(t-s) A}B(u_s)ds +  \int_0^t   e^{-(t-s) A} Q dW_s \,.
\end{align}
Based on the assumption on $Q$ and the structure of the equation,
if $u_0$ has spatial mean zero,
all terms in the equation will have mean zero, which
is in consistency with the domains of $A^\delta$ and $e^{-tA}$.
We will consider the setting when $Q\simQ A^{\frac{\alpha}2}$ for
$\alpha < 1$, with particular interest in the case of $\alpha$ close
to $1$. 

We will see that when $\alpha <1$, local solutions exist in the
H\"older space $\Cc^{(\frac12-\alpha)^-}$ up to a
stopping time $\tau_\infty$ that is almost surely positive for initial
conditions in $\Cc^\gamma$ for $\gamma > -1$.\footnote{A
  formal definition of the function spaces used is given at the start
  of Section~\ref{sec:Preliminary}.}When
$\alpha < \frac12$,
 standard energy estimates guarantee the
existence of a unique global solution (that is  $\tau_\infty = \infty$
almost surely). When $\alpha \geq \tfrac12$, the solution is no longer a
function and extra care needs to be taken as it is \textit{a priori}
possible that $\tau_\infty < \infty$ with positive probability. 

Because in the settings of primary interest, global solutions are not
assured, we will extend our state space to include an isolated
``death'' state, denoted by  $\death$, and define $u_t = \death$ when $t
\geq \tau_\infty$. This also has the advantage of underscoring the
applicability of these ideas to equations with explosive
solutions. With this in mind, we will extend state space to include the death
state by defining $\bar \Cc^\beta =  \Cc^\beta \cup \{\death\}$. We
extend the dynamics by setting $u_t =\death$ for all $t>0$ if
$u_0=\death$. To state our main results we
define the Markov transition semigroup $\Pc_t$ by
\begin{align*}
  (\Pc_t \phi)(u_0) = \EE_{u_0} \phi(u_t),
\end{align*}
where $\phi\colon \bar\Cc^{(\frac12-\alpha)^-}\rightarrow \R$ is a bounded
measurable function. This extends in a natural way to a transitions measure
$\Pc_t(u_0,K)=(\Pc_t \one_K)(u_0)= \Pb_{u_0}(u_t \in K)$ for measurable subsets $K$ of
$\bar \Cc^{(\frac12-\alpha)^-}$ and to the left action of probability measures
$\mu$ on $\bar \Cc^{(\frac12-\alpha)^-}$ by
\begin{align*}
  \mu \Pc_t\phi = \int (\Pc_t \phi)(u) \mu(du).
\end{align*}

Our main result will show that, at a fixed time $t$, the law of the random
variable $u_t$ on the event $\{\tau_\infty > t\}$ is absolutely
continuous to the law of the  Ornstein-Uhlenbeck
process obtained by removing the non-linearity from
\eqref{eq:burgers}.  In other words, if we define
$\mathcal{Q}_t(z_0,K) =\Pb_{z_0}( z_t \in K)$ where
\begin{equation}\label{eq:OU}
\Ll z_t =  Q dW_t,
\end{equation}
then we have the following result which will follow from more detailed
results proved in later sections.
\begin{theorem}\label{thm:SimplyMain}
  For any $\alpha<1$, $t>0$, and any $u_0, z_0 \in\Cc^\gamma$ with $\gamma>-1$
 and zero spatial mean,
  $\Pc_t(u_0,\ccdot) \ll
  p\mathcal{Q}_t(z_0,\ccdot)+(1-p)
  \one_{\{\death\}}(\ccdot)$ \footnote{Here $\mu \sim \nu$ signifies
    that the two measure are mutually   absolutely continuous with
    respect to each other. That is to say, $\mu \ll \nu$ and $\nu \ll
    \mu$, where $\nu\ll \mu$ means $\nu$ is  absolutely continuous with respect to   $\mu$.}, where $p=\Pb_{u_0}(\tau_\infty >
  t)$.
  In other words, the law of $u_t$,
  conditioned on non-explosion by time $t$, is equivalent as a measure
  to the law of $z_t$ when $u_t$ and $z_t$ start from $u_0$ and $z_0$
  respectively. 
\end{theorem}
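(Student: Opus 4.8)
The plan is to reduce \cref{thm:SimplyMain} to a statement about the solution \emph{conditioned on non-explosion}, and then to prove absolute continuity of that conditioned law with respect to the Ornstein--Uhlenbeck law $\mathcal{Q}_t(z_0,\ccdot)$ by decomposing $u$ into a finite sum of auxiliary processes $u = \sum_{j} v^{(j)}$ (a number of levels depending on how close $\alpha$ is to $1$), as announced in the abstract and the introduction. First I would dispose of the death state: since $\Pc_t(u_0,\ccdot)$ restricted to $\{\death\}$ has mass $1-p$ and is carried by the atom $\one_{\{\death\}}$ already appearing on the right-hand side, it suffices to prove that the restriction of $\Pc_t(u_0,\ccdot)$ to $\bar\Cc^{(\frac12-\alpha)^-}\setminus\{\death\}=\Cc^{(\frac12-\alpha)^-}$, which is the law of $u_t$ on $\{\tau_\infty>t\}$ and has total mass $p$, is absolutely continuous with respect to $p\,\mathcal{Q}_t(z_0,\ccdot)$, equivalently with respect to $\mathcal{Q}_t(z_0,\ccdot)$. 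Also, because $\mathcal{Q}_t(z_0,\ccdot)$ for two different initial data $z_0$ differ only by the deterministic shift $e^{-tA}(z_0-z_0')$, which lies in the Cameron--Martin space of the Gaussian measure $\mathcal{Q}_t(0,\ccdot)$ (here one uses $\gamma>-1$ and the smoothing of $e^{-tA}$), the choice of $z_0$ is irrelevant and I may as well compare with the centered Gaussian law; similarly $e^{-tA}u_0$ is an admissible shift, so the initial condition $u_0$ can be absorbed and one reduces to comparing the law of the ``stochastic convolution plus nonlinear corrections'' with a Gaussian.

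Next, on the event $\{\tau_\infty>t\}$, I would introduce the canonical split $u = z + (u-z)$ where $z$ is the stochastic convolution solving \eqref{eq:OU} with $z_0 = u_0$, so that the remainder $R := u-z$ solves a random PDE with nonlinearity $B(u)=B(z+R)=B(z)+2B(z,R)+B(R)$, driven by no new noise. When $\alpha$ is small, $R$ is more regular than $z$ (it gains a derivative from the Duhamel integral against $e^{-(t-s)A}$ relative to the worst term $B(z)$), so $R_t$ lies in the Cameron--Martin-type space of $\mathcal{Q}_t$ and the time-shifted Girsanov method of \cite{Mattingly_Suidan_2005,Mattingly_Suidan_2008,Watkins2010} — as re-cast in \cref{sec:equivalence} for fixed-time rather than path-space absolute continuity — gives the result directly. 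When $\alpha$ is larger, $R$ itself is still too rough, so I would iterate: peel off the worst (explicitly Gaussian, possibly renormalized) part of $R$, call it $z^{(1)}$, which I show is itself absolutely continuous in law to a stochastic heat equation via the results of \cref{sec:equivalence} and Appendix \ref{sec:Gaussian}, then study $R - z^{(1)}$, and so on for finitely many levels until the final remainder lands in the admissible shift space. The bookkeeping of which terms at each level are Gaussian (and need the renormalizations of \cite{hairer2013solving,gubinelli2015paracontrolled,gubinelli2017kpz,mourrat2015construction,catellier2018paracontrolled}), which are genuine corrections, and which are admissible shifts, is exactly the content of \cref{sec:decomposition} and \cref{sec:Xdecomposition}; I would quote those decomposition lemmas and then combine them with the abstract equivalence statements of \cref{sec:equivalence} and a chaining/stability argument to conclude.

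The main obstacle, as the authors flag, is twofold. First, the classical Girsanov theorem is unavailable: the Radon--Nikodym density on path space would require the drift $B(u_s)$ to define an admissible (finite-energy, $Q^{-1}$-integrable) perturbation of the noise, which fails in the singular regime $\alpha\ge \tfrac12$; the fix is to only ask for absolute continuity of the \emph{time-$t$ marginal}, using the time-shifted (Markov-ian, forward) version of Girsanov where one exploits the smoothing of $e^{-(t-s)A}$ over the full interval $[0,t]$ to trade spatial roughness of the drift for this restricted conclusion — making the precise statement and proof of this recast Girsanov principle (\cref{sec:equivalence}) the technical heart. Second, controlling the auxiliary processes requires singular-SPDE machinery (paraproducts, Besov regularity, and stochastic estimates on Wick-renormalized products, Appendices \ref{Besov}--\ref{sec:Gaussian}) up to the random time $\tau_\infty$, and verifying that the number of decomposition levels stays finite precisely for $\alpha<1$ while diverging as $\alpha\uparrow 1$ — this counting, and showing each level's remainder gains enough regularity to feed the induction, is where the restriction $\alpha<1$ (the KPZ threshold being $\alpha=1$) enters and is the crux of why the argument stops there.
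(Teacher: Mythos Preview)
Your proposal captures the broad architecture --- reduce to non-explosion, handle initial data by a Cameron--Martin shift, and iterate a Da~Prato--Debussche-type peeling --- but it is missing the paper's central idea and, as written, breaks down for $\alpha\in[\tfrac34,1)$.

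The gap is this. In your scheme all of the noise $Q\,dW$ lives in the single process $z$, and every subsequent object $z^{(1)},z^{(2)},\dots$ you peel off from the remainder is a \emph{measurable functional of $W$} (for instance $z^{(1)}=J(z)$ is second Wiener chaos, not Gaussian). Consequently none of these pieces is ``absolutely continuous in law to a stochastic heat equation'' in any useful sense: they carry no independent noise to compare to. What you really need is to absorb the accumulated shift into the noise driving $z$. For that you have two tools. Cameron--Martin (\cref{lem:CM}) requires the shift to be \emph{independent} of the driving noise, which fails here since $J(z)$ depends on $W$. The Time-Shifted Girsanov Method (\cref{thm:tsG}) allows adapted shifts but requires the drift $F_s$ to be well-defined \emph{at each fixed time} $s$ so that $\widetilde F_s=2\,\one_{[T/2,T]}(s)\,e^{-(T-s)A}F_{2s-T}$ makes sense; when $\alpha\geq\tfrac34$ the object $B(z_s)$ exists only in its time-integrated form $J(z)_t$ (see \cref{sec:rangeOfApp} and Appendix~\ref{sec:Gaussian}), so \cref{thm:tsG} is inapplicable to the roughest part of your drift. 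Your iteration therefore stalls before reaching the full range $\alpha<1$.

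The paper's resolution, which your outline omits, is to split the \emph{noise itself} across levels: choose operators $Q_0,\dots,Q_n,\tQ_n$ with $Q_0Q_0^*+\cdots+Q_nQ_n^*+\tQ_n\tQ_n^*=QQ^*$ and independent Wiener processes $W^{(0)},\dots,W^{(n)},\tW^{(n)}$, so that the $i$-th level equation is driven by $Q_i\,dW^{(i)}$ with a drift depending only on $W^{(0)},\dots,W^{(i-1)}$. This feed-forward structure makes the drift at level $i$ \emph{conditionally deterministic} given $\mathcal F_t^{(i-1)}$, so Cameron--Martin applies (and only needs $J$, not $B$, to exist). The Time-Shifted Girsanov step is reserved for the final remainder $\RX^{(n)}$ (or $\RY^{(n)}$), whose drift is, by design, regular enough to be defined pointwise. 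The counting that forces $n\to\infty$ as $\alpha\uparrow1$ (\cref{cor:constntX}) comes from the Cameron--Martin constraint $\alpha_0+\alpha_{i-1}-\alpha_i<1$ at each level, not from a regularity gain in a single-noise peeling. Without this noise factorization, neither the independence needed for Cameron--Martin nor the chaining lemma (\cref{lem:absSum}) is available, and the argument does not close.
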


\begin{remark} The absolute continuity given in  \cref{thm:SimplyMain}
  implies that any almost sure property of the Gaussian measure
  $\mathcal{Q}_t(z_0,\ccdot)$ is shared by $\Pc_t(
  u_0,\ccdot)$. For example, the spatial H\"older regularity  or the
  Hausdorff dimension of spatial level-sets or solutions of
  \eqref{eq:burgers} are the same as those of \eqref{eq:OU}.
  See \cite{BagRom2014} for results along these lines.
\end{remark}

Unfortunately, our methods are not (yet!) powerful
enough to cover the case $\alpha=1$. We remark though
that with a little effort, and the help of \cite{HaiMat2018},
one can prove that, at least when the diffusion operator
in equation~\eqref{eq:burgers} is $Q=\partial_x \approx A^{\frac12}$,
the law of the solutions of \cref{eq:burgers}
and of \cref{eq:OU} at each fixed time
are both equivalent to the law of white noise.
Indeed, by \cite{HaiMat2018}, the transition
semigroup of \cref{eq:burgers}
is strong Feller. If one assumes that
the transition semigroup is irreducible,
then by a theorem of Khasminskii
(see for instance \cite{DapZab1996}),
transition probabilities are equivalent.
The final part of the argument is,
again by \cite{HaiMat2018},
that white noise is invariant for
the semigroup.
We do not know if equivalence holds
beyond the case $Q=\partial_x$.

\section{Central ideas in Theorem~\ref{thm:SimplyMain}}\label{sec:centralideas}

We will now give the central arc of 
three different (but related) arguments which can prove 
\cref{thm:SimplyMain}. Though there is overlap in the arguments, we 
feel each highlights a particular connection and helps to give a more 
complete picture.

\subsection{First Decomposition}
\label{sec:FirstDecompPreview}

The core idea used to prove Theorem~\ref{thm:SimplyMain} is the
decomposition of the solution of~\eqref{eq:burgers} into to the sum of
different processes of increasing regularity. In ~\eqref{eq:burgers},
the smoothness of solutions is dictated solely by the stochastic
convolution term, namely the last term on the right-hand side
of~\eqref{eq:burgersMild}.

We begin by taking the stochastic convolution as the first level in
our decomposition. This first level will be fed through the integrated
nonlinearity, namely the first term on the right-hand side
of~\eqref{eq:burgersMild}. We will then keep only the roughest
component and use it to force the next level in our
hierarchy.  At each level, we will include a stochastic forcing term
which, though smoother than the forcing at the previous level, will be
sufficiently rough to generate a stochastic convolution which is
less smooth than the forcing generated by the previous level through
the nonlinearity. Eventually, we will reach a level where the terms in
the equation can be handled by classical methods, and the expansion terminated.

More concretely, fixing number of levels $n$, $n \in \Nb$,  we begin by positing the
existence of process $\Y^{(0)}, \dots, \Y^{(n)}$ and remainder
term $\RY^{(n)}$ so that
\begin{equation}\label{eq:decomp2}
 u_t \eqdist \sum_{i=0}^n \Y^{(i)}_t + \RY^{(n)}_t,
\end{equation}
where $\eqdist$ denotes equality in law.
We define the  terms in this expansion by  
\begin{equation}
  \label{eq:system2}\left\{
  \begin{aligned}
    \Ll \Y^{(0)}_t &= Q_0 dW^{(0)}_t, \\
    \Ll\Y^{(1)}_t &= B(\Y^{(0)}_t)dt + Q_1 dW_t^{(1)}, \\
    \Ll \Y^{(2)}_t &= \big(B(\Y^{(0,1)}_t) - B(\Y^{(0,0)}_t)\big)dt + Q_2 dW_t^{(2)}, \\
    \Ll \Y^{(3)}_t &= \big(B(\Y^{(0,2)}_t) - B(\Y^{(0,1)}_t)\big)dt + Q_3 dW_t^{(3)}, \\
    \vdots\quad&\qquad \qquad\vdots\quad \qquad \qquad\vdots \qquad\qquad \qquad\vdots \\
    \Ll \Y^{(n)}_t &= \big(B(\Y^{(0,n-1)}_t) - B(\Y^{(0,n-2)}_t)\big)dt + Q_n dW_t^{(n)}, \\
    \Ll \RY^{(n)}_t &= \big(B(\Y^{(0,n)}_t+ \RY^{(n)}_t) - B(\Y^{(0,n-1)}_t)\big)dt
    + \tQ_n d\tW _t^{(n)},
  \end{aligned}\right.
\end{equation}
where the $Q_1,\dots,Q_n, \tQ_n$ are a collection of linear operators,
the $W_t^{(0)}, \dots, W_t^{(n)}, \tW _t^{(n)}$ a collection of
standard, independent, cylindrical Wiener processes, and
\begin{equation*}
  \Y^{(m,n)}_t = \sum_{i=m}^n \Y^{(i)}_t, 
\end{equation*}
with $\Y^{(0, -1)}_t = \Y^{(0, -2)}_t = 0$. If we choose
$\Y_0^{(0)}=\cdots=\Y_0^{(n)}=0$ and $\RY_0^{(n)}=u_0$
and  require
\begin{equation}
  \label{eq:Qcondition}
  Q_0 Q_0^* + Q_1 Q_1^* + \cdots + Q_n Q_n^* + \tQ_n\tQ_n^* = QQ^*,
\end{equation}
then
\begin{align}\label{eq:QWexpansion}
 Q W_t \eqdist \tQ_n \tW _t^{(n)} + \sum_{k=0}^n Q_k W_t^{(k)},
\end{align}
and, at least formally, the condition given
in~\eqref{eq:decomp2} holds. To make the argument complete, we need to
demonstrate that each of the equations in~\eqref{eq:system2} is well
defined and have at least local solutions.
The number of levels $n$ will be chosen as a function of $\alpha$.
The closer $\alpha$ is to one, the more levels are required.

Notice that because of \eqref{eq:QWexpansion}, which followed from
\eqref{eq:Qcondition}, the stochastic convolution from
\eqref{eq:burgersMild} satisfies
\begin{align}
  \label{eq:StochasticConvZs}
  z_t = e^{-t A} z_0+ \int_0^t   e^{-(t-s) A} Q dW_s \eqdist \tZ_t^{(n)} +\sum_{k=0}^n Z_t^{(k)}\,,
\end{align}
where 
\begin{align}\label{eq:Z}
   \Ll  Z_t^{(k)} = Q_{k} dW_t^{(k)}\quad\text{and}\quad  \Ll \tZ_t^{(n)} = \tQ_{n} d\tW_t^{(n)}\,
\end{align}
with initial conditions $Z_0^{(0)}= \cdots =Z_0^{(n)}=0$ and $\tZ_0^{(n)}=z_0$.
It is worth noting that $ Z_t^{(0)}=\Y_t^{(0)}$ and that all
equations, but $\RY_t^{(n)}$, are ``feed-forward'' in the sense that the forcing
drift $B(\Y^{(0,k-1)}_t) - B(\Y^{(0,k-2)}_t)$ in the $k$-th level is adapted to
the filtration $\mathcal{F}_t^{(k-1)} = \sigma( W_s^{(j)}: j
\leq k-1, s \leq t)$. In this sense, conditioned on
$\mathcal{F}_t^{(k-1)}$, $\Y^{(k)}_t$ is a forced linear equation with
both stochastic and (conditionally) deterministic forcing. This in turn
implies that  conditioned on
$\mathcal{F}_t^{(k-1)}$,  $\Y^{(k)}_t$ is a Gaussian random variable.

We will prove Theorem~\ref{thm:SimplyMain} by showing, for any
fixed $t>0$ and all $k=1,\dots,n$, \footnote{Given a random
      variable $X$ and a $\sigma$-algebra $\mathcal{F}$, we write
      $\Law(X\mid \mathcal{F})$ for the random measure defined by
      $\Law(X\mid \mathcal{F})(A) = \EE ( \,\one_A(X) \mid 
      \mathcal{F}\,)$. When $\mathcal{F}$ is not the trivial
      $\sigma$-algebra, randomness will remain, and one typically
      requires that properties of  $\Law(X\mid \mathcal{F})$ hold
      only almost surely. }
\begin{equation}
  \label{eq:YabsCont}
  \begin{aligned}
    \Law(\Y_t^{(k)}\mid  \mathcal{F}_t^{(k-1)}) &\sim
    \Law(Z_t^{(k)})\quad\text{a.s.,\qquad and}\\
    \Law(\RY^{(n)}_t  \mid  \tau_\infty>t, \mathcal{F}_t^{(n)}) &\ll
    \Law(\tZ_t^{(n)})\quad\text{a.s.}\,.
\end{aligned}
\end{equation}
We will see in  Section~\ref{sec:existenceTime} that the random existence time of $\RY^{(n)}$ is almost surely equal to
that of $u$ and hence we will use $\tau_\infty$ in both settings. 
We will show in Section~\ref{sec:absContiunityViaDecomp} how this
sequence of statements about the conditional laws combined with the
structure of \eqref{eq:system2}  will imply Theorem~\ref{thm:SimplyMain}.

\begin{remark}
  We have chosen to structure the initial conditions in
  \eqref{eq:system2} with  $\Y_0^{(0)}=\cdots=\Y_0^{(n)}=0$ and
  $\RY_0^{(n)}=u_0$. This is solely for convenience, as a number of the
  estimates for the $\Y_t^{(k)}$ and $Z_t$ are simpler to develop
  without the mild complication of initial conditions. We could have just
  as easily taken $\Y_0^{(0)}=u_0$ and the rest zero or
  $\RY_0^{(n)}=0$ and  $\Y_0^{(0)}=\cdots=\Y_0^{(n)}=\frac{1}{n}u_0$.
\end{remark}

\subsection{Second Decomposition}

 Accepting the result of \eqref{eq:YabsCont} from the previous section, 
 it might seem reasonable to
 replace the instances of $\Y_t^{(j)}$ in \eqref{eq:decomp2} with
 $Z_t^{(j)}$.  This would have a number of advantages. One is that the
 $Z_t^{(j)}$ are explicit Gaussian processes which will simplify the
rigorous definition of some of the more singular terms in the
decomposition. Additionally, it will emphasize the relationship between
the   $\Y_t^{(j)}$ construction in \eqref{eq:decomp2} and the tree
constructions developed in 
analysis of singular SPDEs (such as 
\cite{hairer2013solving,mourrat2015construction,gubinelli2017kpz}), 
which is driven
by isolating the singular objects in the solution. 

Motivated by this discussion, we now consider the expansion
  \begin{equation}\label{eq:decomp}
 u_t \eqdist \sum_{i=0}^n \X^{(i)}_t + \RX^{(n)}_t ,
\end{equation}
where
\begin{equation}\label{eq:system}
\left\{	\begin{aligned}
	\Ll \X^{(0)}_t &= Q_0 dW^{(0)}_t, \\
	\Ll \X^{(1)}_t &= B(Z^{(0)}_t)dt + Q_1 dW_t^{(1)}, \\
	\Ll \X^{(2)}_t &= (B(Z^{(0,1)}_t) - B(Z^{(0,0)}_t))dt + Q_2 dW_t^{(2)}, \\
	\Ll \X^{(3)}_t &= (B(Z^{(0,2)}_t) - B(Z^{(0,1)}_t))dt + Q_3 dW_t^{(3)}, \\
 \vdots\quad&\qquad \qquad\vdots\quad \qquad \qquad\vdots \qquad\qquad \qquad\vdots \\
	\Ll \X^{(n)}_t &= (B(Z^{(0,n-1)}_t) - B(Z^{(0,n-2)}_t))dt + Q_n dW_t^{(n)}, \\
	\Ll \RX^{(n)}_t &= (B(\X^{(0,n)}_t+ \RX^{(n)}_t) - B(Z^{(0,n-1)}_t))dt 
	 + \tQ_n d\tW _t^{(n)}, 
	\end{aligned}\right.
\end{equation}
and the $Q$'s are again as in~\eqref{eq:Qcondition} and
\begin{align*}
   Z^{(m,n)}_t = \sum_{i=m}^n Z_t^{(i)},
\end{align*}
with 
$Z_t^{(1)}, \dots, Z_t^{(n)}$ again defined by~\eqref{eq:Z}. Again we
take $\X_0^{(0)}=\cdots=\X_0^{(n)}=0$ and $\RX_0^{(n)}=u_0$ where $u_0$ was the initial
condition of \eqref{eq:burgers}.

Though in many ways we find the  $\Y$ expansion in \eqref{eq:system}
more intuitive and better motivated, we will find it easier to prove
Theorem~\ref{thm:SimplyMain} for the $\X$ expansion in
\eqref{eq:system} first. We will then use it to deduce
Theorem~\ref{thm:SimplyMain} for the $\Y$ expansion in
\eqref{eq:system2}.

More concretely, we will begin by proving that for each $k \leq n$, 
\begin{equation}\label{eq:XabsCont}
  \begin{aligned}
  \Law(\X_t^{(k)}\mid \mathcal{F}_t^{(k-1)}) &\sim \Law(Z_t^{(k)})\quad\text{a.s.,\qquad and}\\
  \Law(\RX^{(n)}_t\mid t < \tau_\infty,\mathcal{F}_t^{(n)} ) &\ll \Law(\tZ_t^{(n)}) \quad\text{a.s.}\,.
\end{aligned}
\end{equation}
Then we will deduce that for $k=1,\dots,n$,
\begin{equation}\label{eq:XYabsCont}
\begin{aligned}
  \Law(\Y_t^{(k)}\mid\mathcal{F}_t^{(k-1)} )
    &\sim \Law(\X_t^{(k)}\mid\mathcal{F}_t^{(k-1)})\quad\text{a.s.,\qquad and}\\
   \Law(\RY^{(n)}_t\mid  \tau_\infty>t, \mathcal{F}_t^{(n)}) &\ll \Law(\tZ_t^{(n)} ) \quad\text{a.s.}\,.
\end{aligned}
\end{equation}
By combining \eqref{eq:XabsCont} with \eqref{eq:XYabsCont}, we can deduce
that \eqref{eq:YabsCont} holds.

There is no fundamental obstruction to proving \eqref{eq:YabsCont} directly, as
it essentially requires the same calculation as proving
\eqref{eq:XYabsCont}. Similarly, we could have directly proven
\eqref{eq:YabsCont} before \eqref{eq:XabsCont}; however, along the way
we would have collected most of the estimates needed to prove
\eqref{eq:XabsCont}. We hope that proving all three statements,
namely \eqref{eq:YabsCont},  \eqref{eq:XYabsCont}, and
\eqref{eq:XabsCont}, will
help show the relationship between different ideas around singular SPDEs.

\begin{remark}
  The careful reader has likely noticed that the 
  absolute continuity statements in \eqref{eq:YabsCont},
  \eqref{eq:XYabsCont}, and  \eqref{eq:XabsCont} are stated only at a
  fixed time $t$ and not on the space of trajectories from $0$ to $t$. Hence one is not
  free to prove the result for the $\X$ expansion by simply replacing the $\Y$ with
  $Z$ in \eqref{eq:system2} by a change of measure on path-space to obtain \eqref{eq:system}.  
  There is strong evidence that $Z$ is not absolutely
  continuous with respect to $\Y$ on path-space, see \cref{rem:optimal}.
  Nonetheless, we will show that $\X$ and $\Y$ satisfy a
  modified version of absolute continuity on path-space which will imply \eqref{eq:XYabsCont}.
\end{remark}

\subsection{Cameron-Martin Theorem and  Time-Shifted Girsanov Method}
\label{sec:cm-TSG-overview}

The Cameron-Martin Theorem and the closely related Girsanov's Theorem
are the classical tools for proving two stochastic processes are
absolutely continuous. Both describe when a ``shift'' in the drift  can be absorbed into
a stochastic forcing term while keeping the law of the resulting random variable
or stochastic process absolutely continuous with respect to the
original law.

More concretely, to prove \eqref{eq:XabsCont}, we will absorb the drift terms on the
right-hand side of the equations for $\X_t^{(1)}, \dots, \X_t^{(n)},
\RX_t^{(n)}$ into the stochastic forcing term on the right-hand side of the
the same equation. In the case of the Cameron-Martin Theorem, this
absorption is done using the integrated, mild form of the equation, analogous
to \eqref{eq:burgersMild}.
The resulting expressions are identical in form to
the analogous $Z$ expressions implied by~\eqref{eq:Z}. The Girsanov
Theorem proceeds similarly as the Cameron-Martin Theorem, but the
drift is removed instantaneously at the level of the driving equation and not in an
integrated form as in the Cameron-Martin Theorem. We will see that
this leads to both stronger conclusions and a need for stronger
assumptions in order to apply the  Girsanov
Theorem.

In
\cite{Mattingly_Suidan_2005,Mattingly_Suidan_2008,Watkins2010}, Girsanov
Theorem was recast by shifting the infinitesimal perturbation injected
by the drift at one instance of time to a later instance. By shifting
the drift perturbation forward in time by the flow $e^{-tA}$, it is
regularized in space. This regularized, time-shifted drift can
then be compensated by the noise at the later moment of time,
thereby 
extending the applicability of Girsanov's Theorem.
This extended applicability will
be critical to our results. The price of the extended applicability is
that only a
modified form of trajectory-level absolute continuity is proven, which
nonetheless, is sufficient to deduce absolute continuity at the terminal
time of the path.  We have dubbed this approach as the
\textsc{Time-Shifted-Girsanov Method}. A full discussion with all of the
details is provided in Section~\ref{sec:TSG}.

\subsection{Gaussian Regularity}
There is a tension when applying either the Cameron-Martin Theorem or the  Time-Shifted Girsanov Method
between the roughness of the stochastic forcing,
set by   $Q\simQ A^{\frac{\alpha}2}$,  and the roughness of the drift
term on the right-hand side  of the $k$-th equations. The gap between
these regularities cannot be too big. Hence, it is critical to understand
the regularity of the solutions and the drift term involving the
nonlinearity $B$.

When $\alpha < \frac12$, we will see that the
Time-Shifted Girsanov Method can be applied directly to
\eqref{eq:burgers} to obtain the desired result following the general
outline of \cite{Mattingly_Suidan_2008, Watkins2010}.
When $\alpha \in [\frac{1}{2},1)$, the multi-level decomposition
from~\eqref{eq:system2} and~\eqref{eq:system}  will be 
required to make sure the jump in regularity between the stochastic
forcing in an equation and the drift to be removed is not too
large.

The reason for this change at $\alpha=\frac12$ is fundamental to our
discussion. When $\alpha < \frac12$, the product of all of the spatial functions
$f$ and $g$ contained in $B(f,g)$ is well defined classically as the
functions will have positive H\"older regularity. Hence, pointwise
multiplication is well defined. When  $\alpha \geq \frac12$, we must
leverage the Gaussian structure of the specific processes being multiplied and a
renormalization procedure to
make sense of the product of some of the terms in $B(f,g)$. When
$\alpha \in [\frac12,\frac34)$, through these considerations, we will always be able to give
meaning to $B(f,g)$  at each moment of time in all of the needed cases. When $\alpha \in [\frac34,1)$, at times we must consider the time integrated version of the
drift term from the mild formulation (analogous to the second term from
the right in~\eqref{eq:burgersMild}) and leverage time decorrelations
of the Gaussian process to make sense of the nonlinear term in its
integrated form $J(f,g)_t$ defined in \eqref{eq:J}.

\subsection{Relation to Trees and Chaos expansions}
We now explore briefly the relationship between tree representations
of stochastic Gaussian objects from
\cite{hairer2013solving,mourrat2015construction,gubinelli2017kpz} and this work,
in a heuristic way. 
One way to view the trees in those work is to consider the expansion one obtains by formally
substituting the integral representation of $z$ given in
\eqref{eq:Z} back into the first
integral term on the right-hand side of~\eqref{eq:burgersMild}.
Repeated applications of this
is one way to develop an expansion of the solution $u_t$ in terms of
finite trees of $z$ with a remainder.
These tree representations of stochastic objects 
are key to the analysis in those work.
We will later see the drift terms of \eqref{eq:system} can be
decomposed into some of the same trees of $z$.

We push the idea of tree expansions further by grouping the 
trees formed by $Z$ with different regularity, and
adding an extra stochastic forcing at each level.
Here, looking back at \eqref{eq:StochasticConvZs},  
as we have subdivided our noise into $n$
levels ($Z^{(1)},\dots, Z^{(n)}$) and one remainder term
($\tZ^{(n)}$), we have a tree-like expansion mixing the Gaussian
inputs of different levels. 
This work can be viewed as giving a more refined analysis of
the stochastic objects in \eqref{eq:burgers}.

We will see that our eventual assumptions
on the  $Q$'s will imply that the regularity of the $Z^{(k)}$ increases
with $k$. Hence, we can understand the expansions in~\eqref{eq:system2}
and \eqref{eq:system} as two different groupings of the a subset of the
tree objects from this expansion so that the sum of the terms in a level is sufficiently
regular to be absorbed into that level's stochastic forcing term
via the Cameron-Martin Theorem or the Time-Shifted Girsanov Method. 
Clearly this grouping is not unique, but it makes analysis based
on regularity more straightforward. 

\section{Preliminaries}\label{sec:Preliminary}
We now collect a number of estimates and observations, which will be
needed to prove the versions of \cref{thm:SimplyMain} based on the
expansion across noise levels given in \eqref{eq:system2}
and  \eqref{eq:system}. We start by setting the function
analytic setting in which we will work and recalling some basic estimates
on the operator $A$ and the semigroup it generates. We then discuss
the stochastic convolution, the regularity of
solutions~\eqref{eq:burgers} and the equations in~\eqref{eq:system2}
and~\eqref{eq:system}.

\subsection{Function spaces and basic estimates}\label{sec:functionSpaces}
We shall denote by $\Cc^\gamma$, $\gamma\in\R$,
the separable version of the Besov-H\"older space
$B^\gamma_{\infty,\infty}(\T)$ of order $\gamma$, namely the closure
of periodic smooth functions with respect to
the $B^\gamma_{\infty,\infty}$ norm.
See Appendix \ref{Besov} for some details about Besov spaces.
If $f \in \Cc^{\gamma}$, we will say that $f$ has (H\"older) regularity $\gamma$.
We will write $\Cc^{\gamma\mm}$ for the union of all of space
$\Cc^{\beta}$ with $\beta < \gamma$. \footnote{Notice that this
  definition does not coincide with the classical definition of
  H\"older spaces for integer values of the index. See \cref{rm:clasicalHolder}. }

Given a Banach space $\XX$ of functions $f(x)$ on $\T$, we will write
$C_T\XX$ for the space of time dependent functions $f(t,x)$ on $[0,T]
\times \T$ 
 such that for each $t \in [0,T]$, $f(t,\ccdot) \in \XX$ and as $s
\rightarrow t$ we have that $\|f(s,\ccdot)-f(t,\ccdot)\|_{\XX}\rightarrow 0$. We will endow this
space with the norm
\begin{align*}
  \|f\|_{C_T\XX} = \sup_{t \in [0,T]} \|f(t,\ccdot)\|_{\XX}.
\end{align*}
Typical examples we will consider are $C_T\Cc^\beta$ and $C_TL^2$.
If $f \in C_T\Cc^{\gamma}$, we will say that 
$f$ has (H\"older) regularity $\gamma$ (in space).
For convenience, we will write $C_T \Cc^{\gamma\mm}$ for the union of all of the spaces
$C_T \Cc^{\beta}$ with $\beta < \gamma$.

As we are interested in solutions which might have a finite time of
existence, we will introduce the one-point compactification of
$\Cc^{\gamma}$,  $\overline \Cc^{\gamma}=\Cc^{\gamma}\cup \{
\death\}$. $\overline \Cc^{\gamma}$ is a topological space where the open
neighborhoods of $\death$ are given by $\{ u \in \Cc^{\gamma}: \|u\|_{\Cc^\gamma} >R\}$ for
$R>0$. With a light abuse of notation, we will write $C_T \overline \Cc^{\gamma}$ to mean the space of
all continuous functions on $[0,T]$ taking values in
$\overline\Cc^{\gamma}$. We do not place a norm on $C_T \overline
\Cc^{\gamma}$ and view it only as a topological space. Observe that
if $u \in C_T \overline
\Cc^{\gamma}$ and $\tau \in (0,\infty]$ such that for $t \in [0,T]$, $u_t =
\death$ if $t \geq \tau$ and $u_t \in \Cc^{\gamma}$ if $t < \tau$,
then for all $t \in [0,T] \cap [0,\tau)$, $u \in C_t \Cc^{\gamma}$ because $u$ is
continuous in  $\overline
\Cc^{\gamma}$.  We feel this justifies the notation $ C_T \overline
\Cc^{\gamma}$ for continuous functions on $\Cc^{\gamma}$ even though the
space is not endowed with the supremum norm. Additionally, because of
our choice of open neighborhoods of $\death$, if $\tau<
\infty$ then $\|u_t\|_{\Cc^\gamma}  \rightarrow \infty$ as $t
\rightarrow \tau$. Again, we will write $C_T \Cc^{\gamma\mm}$ 
for the union of all of the spaces
$C_T \overline\Cc^{\beta}$ with $\beta < \gamma$.

We collect a few useful properties of $A$ and its semigroup in the
following proposition. Here and through out the text, we write $a
\lesssim b$ to mean there exists a positive constant $c$ so that $a
\leq c b$. When the constant $c$ depends on some parameters we will
denote them by subscripts on $\lesssim$. We will write $\gtrsim$ when the
reverse inequality holds for some constant and $\eqOrder$  when both
$\lesssim$ and $\gtrsim$ hold (for possibly different constants).
\begin{proposition}\label{prop:besicEstimates} For $\gamma\in\R$,
    $\partial_x:\Cc^\gamma\to\Cc^{\gamma-1}$ and   $A:\Cc^\gamma\to\Cc^{\gamma-2}$ 
    are  bounded linear operators. Additionally, if $\delta\in\R$ with
    $\gamma\leq\delta$ then
    \begin{align}
      \label{eq:basicASemigroupEst}
        \|e^{-t A}f\|_{\Cc^\delta}
        \lesssim
      t^{\frac12(\gamma-\delta)}\|f\|_{\Cc^\gamma}\quad\text{and}\quad
      \|A^{\frac\delta2} e^{-t A}f\|_{L^2}
        \lesssim t^{\frac12(\gamma-\delta)}\|A^{\frac\gamma2} f\|_{L^2}.
    \end{align}
    In particular, for any $\epsilon >0$ and $\delta,t >0$,
    \begin{align}
      \label{eq:L2ASemigroupEst}
         \|A^{\frac\delta2}e^{-t A}f\|_{L^2}
        \lesssim \|A^{\frac\delta2}e^{-t A}f\|_{\Cc^\epsilon}  
        \lesssim_\epsilon t^{\frac12(\gamma-\delta-\epsilon)}\|f\|_{\Cc^\gamma}\,.
    \end{align}
\end{proposition}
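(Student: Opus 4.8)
The plan is to establish \cref{prop:besicEstimates} by reducing everything to the Littlewood--Paley characterization of the Besov spaces $\Cc^\gamma = B^\gamma_{\infty,\infty}$ recalled in Appendix~\ref{Besov}. For the boundedness of $\partial_x\colon\Cc^\gamma\to\Cc^{\gamma-1}$ and $A=-\partial_{xx}\colon\Cc^\gamma\to\Cc^{\gamma-2}$, I would note that on $\T$ these are Fourier multipliers of the form $ik$ and $k^2$, so that the $j$-th Littlewood--Paley block satisfies $\Delta_j(\partial_x f) = \partial_x \Delta_j f$ and the symbol restricted to the annulus $\{|k|\eqOrder 2^j\}$ has size $\eqOrder 2^j$; a standard Bernstein-type estimate then gives $\|\Delta_j(\partial_x f)\|_{L^\infty}\lesssim 2^j\|\Delta_j f\|_{L^\infty}$, hence $2^{j(\gamma-1)}\|\Delta_j(\partial_x f)\|_{L^\infty}\lesssim 2^{j\gamma}\|\Delta_j f\|_{L^\infty}$, and taking the supremum over $j$ yields $\|\partial_x f\|_{\Cc^{\gamma-1}}\lesssim\|f\|_{\Cc^\gamma}$. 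The same argument with $2^j$ replaced by $2^{2j}$ handles $A$.

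For the first estimate in \eqref{eq:basicASemigroupEst}, the key point is the standard heat-semigroup smoothing on frequency blocks: since $e^{-tA}$ is the Fourier multiplier $e^{-tk^2}$ and $\Delta_j f$ is frequency-localized to $|k|\eqOrder 2^j$, one has $\|e^{-tA}\Delta_j f\|_{L^\infty}\lesssim e^{-c t 2^{2j}}\|\Delta_j f\|_{L^\infty}$ for some $c>0$. Therefore, for $\gamma\le\delta$,
\begin{align*}
  2^{j\delta}\|e^{-tA}\Delta_j f\|_{L^\infty}
  \lesssim 2^{j(\delta-\gamma)} e^{-ct2^{2j}}\, 2^{j\gamma}\|\Delta_j f\|_{L^\infty}
  \lesssim \sup_{x\ge0}\big(x^{(\delta-\gamma)/2} e^{-ctx}\big)\,\|f\|_{\Cc^\gamma},
\end{align*}
with $x=2^{2j}$, and since $\delta-\gamma\ge0$ the supremum is $\eqOrder t^{(\gamma-\delta)/2}$ by scaling. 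Taking the supremum over $j$ gives the claim. The $L^2$ version is the same computation with $L^\infty$-blocks replaced by $L^2$-blocks and $\|A^{\delta/2}\cdot\|_{L^2}\eqOrder(\sum_j 2^{j\delta\cdot 2}\|\Delta_j\cdot\|_{L^2}^2)^{1/2}$ (Plancherel); alternatively one works directly with the spectral decomposition of $A$ on mean-zero functions, where $A^{\delta/2}e^{-tA}$ acts on the eigenmode of eigenvalue $\lambda=k^2$ by $\lambda^{\delta/2}e^{-t\lambda}$ and $\sup_\lambda \lambda^{(\delta-\gamma)/2}e^{-t\lambda}\eqOrder t^{(\gamma-\delta)/2}$.

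Finally, \eqref{eq:L2ASemigroupEst} follows by chaining: the first inequality $\|A^{\delta/2}e^{-tA}f\|_{L^2}\lesssim\|A^{\delta/2}e^{-tA}f\|_{\Cc^\epsilon}$ is just the embedding $\Cc^\epsilon=B^\epsilon_{\infty,\infty}\hookrightarrow B^0_{\infty,\infty}\hookrightarrow L^2$ on the torus (for $\epsilon>0$, or even $\epsilon\ge0$ with the convention that $B^0_{\infty,\infty}\hookrightarrow L^2(\T)$ by finiteness of the measure and the Besov embedding into $L^\infty$-type spaces), applied to the function $A^{\delta/2}e^{-tA}f$; the second inequality is the first estimate in \eqref{eq:basicASemigroupEst} applied with $\delta$ replaced by $\delta+\epsilon$, using $A^{\delta/2}e^{-tA}=e^{-(t/2)A}\circ A^{\delta/2}e^{-(t/2)A}$ and absorbing the $A^{\delta/2}$ together with one half of the semigroup into a bounded operator $\Cc^\gamma\to\Cc^{\gamma-\delta}$ via the smoothing estimate, then applying $e^{-(t/2)A}\colon\Cc^{\gamma-\delta}\to\Cc^{\delta+\epsilon}$-type smoothing — more cleanly, one simply writes $\|A^{\delta/2}e^{-tA}f\|_{\Cc^\epsilon}\eqOrder\sup_j 2^{j(\delta+\epsilon)}\|e^{-tA}\Delta_j f\|_{L^\infty}$ and repeats the displayed computation with exponent $\delta+\epsilon$ in place of $\delta$, yielding $t^{(\gamma-\delta-\epsilon)/2}\|f\|_{\Cc^\gamma}$.

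I do not expect a genuine obstacle here: this is a collection of textbook Littlewood--Paley / heat-semigroup estimates, and the only mild care needed is (i) keeping track of the mean-zero restriction so that $A^{\delta/2}$ is well defined for all real $\delta$ (as already noted in the paper's setup), and (ii) being slightly careful with the endpoint embeddings $B^\epsilon_{\infty,\infty}(\T)\hookrightarrow L^2(\T)$, which hold precisely because $\T$ has finite measure. The step most likely to require a line of justification rather than a citation is the scaling identity $\sup_{x\ge0}x^{\sigma}e^{-ctx}\eqOrder_\sigma t^{-\sigma}$ for $\sigma\ge0$, which is elementary (the maximizer is $x=\sigma/(ct)$), so overall the proof is routine and I would present it compactly, likely deferring the Besov-space conventions to Appendix~\ref{Besov}.
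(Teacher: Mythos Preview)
Your proposal is correct and follows the standard Littlewood--Paley route. The paper itself does not give a self-contained proof of \cref{prop:besicEstimates}: it simply states the result and defers to Appendix~\ref{Besov}, where the mapping property of $\partial_x$ and $A$ is a special case of the cited multiplier result (\cref{prop:multiplier}, since $ik$ and $k^2$ are $S^1$- and $S^2$-multipliers) and the heat-semigroup smoothing is quoted as \cref{prop:heat} from \cite{catellier2018paracontrolled,gubinelli2015paracontrolled}. Your block-by-block argument with $\sup_{x\ge0}x^{(\delta-\gamma)/2}e^{-ctx}\eqOrder t^{(\gamma-\delta)/2}$ is exactly the content behind those cited propositions, and your derivation of \eqref{eq:L2ASemigroupEst} via the embedding $\Cc^\epsilon(\T)\hookrightarrow L^\infty(\T)\hookrightarrow L^2(\T)$ (valid for $\epsilon>0$ on the torus) followed by the $\Cc^\gamma\to\Cc^{\delta+\epsilon}$ smoothing is precisely what the paper intends.
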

Using these estimates, one can obtain the regularity of the
solution $z_t$ of \eqref{eq:OU}. 

\begin{remark}[Regularity of Stochastic Convolution]
  \label{rem:StochasticConvolutionReg} Using the
  results \eqref{prop:besicEstimates} and some classical embedding theorems, we
  have that if
 \begin{align}\label{eq:stochConv}
  z_t= \int_0^t e^{-(t-s) A} \Cov\,dW_s,
   \end{align}
 with  $\Cov \simQ A^{\frac{\delta}2}$ (and hence is a mild solution of
 $\Ll z_t = \Cov\,dW_t$), then
 $\|z_t\|_{\Cc^\gamma} < \infty$ uniformly on finite time intervals for
 all $\gamma <\frac12-\delta$. More compactly,
 $z \in
 C_t\Cc^{(\frac12-\delta)^-}$  for any $t>0$ almost surely.
\end{remark}

Given the structure of the nonlinearity $B$,  we are particularly
interested in the properties of the pointwise product of two
functions. We now summarize the results in the classical setting and
recall the results in the Gaussian setting.

\begin{remark}[Canonical Regularity of Gaussian Products]\label{rem:prod}
It is a classical result that if $f \in \Cc^\delta$ and $g \in
\Cc^\gamma$, then their product is well defined if $\delta+\gamma>0$
with $fg \in \Cc^{r}$ where $r=\gamma\wedge\delta \wedge (\gamma + \delta)$. This can be
summarized more completely in     the statement  that the pointwise
product $(g,f) \mapsto gf$
is a continuous bilinear operator between $\Cc^\gamma\times\Cc^\delta$
to $\Cc^{r}$ if
$\gamma+\delta>0$.

When $f \in \Cc^\delta$ and $g \in \Cc^\gamma$ with $\gamma+\delta \le 0$,
there is no canonical way to define the product. A critical
observation for this work, and most of the recent progress in singular
PDEs \cite{hairer2013solving,gubinelli2015paracontrolled,mourrat2015construction,gubinelli2017kpz,catellier2018paracontrolled}, is that  even when $\gamma+\delta \le 0$ one can often
define the product  in $B(f,g)$ via a renormalization procedure
to have the canonical regularity by leveraging the particular Gaussian
structure of $f$ and $g$. We will see that this is not possible in the
needed cases when $\gamma+\delta \le -\frac12$. However, 
we still can make sense of $B(f,g)$
convolved in time with the   heat semigroup, by leveraging the specific
structure of the time correlations of the  specific $f$ and $g$ of interest.

\end{remark}
With this fact about Gaussians in mind, we make the following
definition to simplify discussions.
\begin{definition}\label{def:canonicalProduct}
  Given  $f \in 
\Cc^\delta$ and $g \in \Cc^\gamma$, we will say that the product $fg$ has the \textsc{canonical regularity}
if it is well defined, possibly after a renormalization procedure, with  $fg \in 
\Cc^{r}$ where $r=\gamma\wedge\delta \wedge (\gamma + \delta)$.
\end{definition}

\subsection{Regularity of the Mild form of the Nonlinearity.} \label{sec:Jreg}

Looking back at \eqref{eq:burgersMild}, we see that the nonlinearity
$B$ integrated in time against the heat semigroup (namely the first
term on the right-hand side of this equation) will be a principal
object of interest. We now pause to study the main properties of this
object, while postponing some more technical considerations to the Appendix.

In the sequel, it will be notationally convenient to define the bilinear operator $J(f,g)_t$ by
\begin{align}
  \label{eq:J}
   J(f, g)_t : = \int_0^t e^{-(t-s)A}B(f_s, g_s)\,ds,
\end{align}
and $J(f)_t=J(f,f)_t$.

\begin{remark}[Canonical Regularity of  $J$]\label{rem:JClassicalProdReg}
   If $f \in C_t\Cc^\gamma$ and $g \in C_t\Cc^\delta$ with $\gamma+
   \delta>0$, then
$B(f, g) \in C_t\Cc^{r -1}$
where $r=\gamma\wedge \delta \wedge(\gamma+\delta)$, which
implies that
\begin{align*}
\| J(f,g)_t\|_{\Cc^\beta} \lesssim \int_0^t
  \|e^{-(t-s)A} B(f_s,g_s)\|_{\Cc^\beta}\, ds  \lesssim \|f g  \|_{C_t\Cc^{r}}\int_0^t
  \frac{1}{(t-s)^{\frac12(\beta-r+1)}}\, ds\,.
\end{align*}
Here we have used the estimate from~\cref{prop:besicEstimates}. Since this last integral is finite when $\frac12(\beta-r+1)<1$,
we deduce that $\beta < r+1$, implying that $J(f,g)_t \in
\Cc^{(r+1)^-}$ where  $r=\gamma\wedge \delta \wedge(\gamma+\delta)$.
\end{remark}

As mentioned in Remark~\ref{rem:prod} (and proved in Appendix \ref{sec:Gaussian}), we
can prove that product $fg$ is well defined with canonical regularity
in  the specific examples needed in this work, when $f \in C_t\Cc^\gamma$ and
$g \in C_t\Cc^\beta$ with $\gamma+ \delta >
-\frac12$. However, we will show in Appendix \ref{sec:Gaussian} that when  $\gamma+\delta >
-\frac32$, 
$J(f,g)$ is well defined with  $J(f,g) \in
C_t\Cc^{(r+1)^-}$ where $r=\gamma\wedge \delta
\wedge(\gamma+\delta)$, even though the product $fg$ might not
be well defined with its canonical regularity.

\begin{definition}\label{def:canonicalJ}
  Given  $f \in 
C_t\Cc^\delta$ and $g \in C_t\Cc^\gamma$, we will say that $J(f,g)$ has the \textsc{canonical regularity}
if $J(f,g)$ is well defined (possibly via a renormalization procedure) with   $J(f,g) \in
C_t \Cc^{(r+1)^-}$ where $r=\gamma\wedge \delta \wedge(\gamma+\delta)$.
\end{definition} 

\begin{remark}[$J$ regularizes in our setting]\label{rem:Jregs}
  Looking at  \eqref{eq:burgers}, it is relevant to understand when the
  map $f \mapsto  J(f)_t$ produces an image process which is more
  regular than the input process $f$. Assume that we are in the setting
  where $J(f)_t$ has the canonical regularity. Then $J(f)_t$ will be
  smoother if $f \in \Cc^\gamma$ with $\gamma< \gamma+1$
  if $\gamma >0$ and  $\gamma< 2\gamma+1$ if $\gamma < 0$. Thus, $J$
  is always regularizing when applied to functions of positive
  regularity and it will be regularizing in the canonical setting
  when for a distribution of negative H\"older regularity  greater than
  $-1$. We will always find ourselves in one of these two settings.

  Building from the above, it is also relevant to understand how the
  regularizing effect of $J$ interact with products.
  More specifically, later we need to compute the regularity of
  sums of terms that are 
  essentially like $B(z, z')$, $B(J(z), z)$, $B(J(z))$,
  and etc, where $z'$ is another Ornstein-Unlenbeck process with positive 
  H\"older regularity. 
  In our setting,
  we will see that a term like $B(z, z')$ is the least regular term, which dictates 
  the canonical regularity of the sum,
  while all other terms with more $J$'s involved are more regular.
\end{remark}

The regularizing nature of $J$ highlighted in \cref{rem:Jregs} is
closely related to the use of fixed point methods to prove existence and
uniqueness of local in time solutions with the needed regularity. This
is explored further in Appendix~\ref{sec:ExistenceRegularity}.

\subsection{Regularity of solutions}\label{sec:reg}

We now turn to the regularity of the Burgers equation
\eqref{eq:burgers} and those in our decompositions \eqref{eq:system2}
and \eqref{eq:system}. Most of the equations are forced linear
equations except for the remainder equations $\RY_t$ and
$\RX_t$ and the original Burgers equation \eqref{eq:burgers}. While the
 following discussion will be illuminating in these later cases, it
 is most directly applicable in the setting of forced linear equations. A
complete treatment in the nonlinear setting (namely $\RY_t$, 
$\RX_t$ and \eqref{eq:burgers}) involves a fixed point argument which we postpone to
Appendix~\ref{sec:ExistenceRegularity}. Nonetheless, the
discussion in this section will still be illuminating to these cases
while focusing  on the forced linear equation setting.

We begin by studying a more general equation which can subsume most of
the equations
in our decompositions \eqref{eq:system2}
and \eqref{eq:system}. Since all of the forcing drift terms on the
right-hand side of the equations are a finite sum of terms of the form
$B(f,g)$ for some $f$ and $g$, it is enough to consider the more
general equation 
\begin{equation}
  \Ll v_t =  B(f_t,g_t) dt + \Cov\,dW_t\,,
  \label{eq:generalReg}%
\end{equation}
for some given $f \in C_t\Cc^\gamma$ and   $g \in C_t\Cc^\delta$ and $\Cov \simQ A^{\beta/2}$ 
for some $\beta, \gamma \in \R$. All of the forced linear equations of interest are
a finite sum of equations of this form. 

The solution to \eqref{eq:generalReg} with
initial condition $v_0$ is given by
\begin{align*}
v_t= e^{-t A}v_0 + J(f,g)_t +  z_t,
\end{align*}
where now $z_t$ is the stochastic convolution solving \eqref{eq:stochConv}
and $J$ is again defined by \eqref{eq:J}. We will assume that $f$ and $g$
are such that $J(f,g)_t$ has the canonical regularity in the sense of \cref{def:canonicalJ}.

 For  any $t>0$ and any reasonable $v_0$,  $e^{-t A}v_0
 \in \Cc^b$ for all $b \in \R$. Hence, the first term will not be the
 term which fixes the regularity of the equation, and either $J$ or $z$
 will determine the maximal regularity of the system.
 
 By Remark~\ref{rem:StochasticConvolutionReg}, the stochastic convolution $z
\in C_t\Cc^{(\frac12-\beta)^-}$. By \cref{def:canonicalJ}, we have that $J(f,g) \in
C_t \Cc^{(r +1)^-}$  where $r=\gamma\wedge \delta \wedge
(\gamma+\delta)$. Hence the
regularity of the solution will be set by the stochastic convolution
$z$ to be  $C_t\Cc^{(\frac12-\beta)^-}$ if $r > \frac12 - \beta$.
We will arrange our choice of
parameters so that the equations \eqref{eq:system2}
and \eqref{eq:system} will always satisfy this condition. Hence, the
equations will always have their regularity set by the
stochastic convolution term in the equation. With this motivation, we
make the following definition.

\begin{definition}\label{def:CanonicalRegularity}We say that an equation of the general form 
  \eqref{eq:generalReg} has \textsc{canonical regularity} if $v_t$
  has same H\"older regularity as the associated stochastic convolution $z_t$
  uniformly on finite time intervals. 
\end{definition}

The above considerations are also relevant to assessing the regularity of
the remaining equations in \eqref{eq:system2}
and \eqref{eq:system} as well as the original Burgers equation.
Observe that the solution to 
\eqref{eq:burgers} can be written as 
\begin{align*}
  u_t= e^{-tA} u_0 + J(u)_t + z_t,
\end{align*}
where $z_t$ solves \eqref{eq:OU}. Since $Q \simQ A^{\alpha/2}$, we know 
from Remark~\ref{rem:StochasticConvolutionReg} that $z \in C_t\Cc^{(\frac12-\alpha)^-}$. Since we 
are interested in $\alpha < 1$, we have that $\frac12-\alpha >
-\frac12$. In light of \cref{rem:Jregs}, we see that $u_t$ has the same 
regularity in space as $z_t$ then $J(u)_t$ will be more regular in 
space (assuming we can show that $J(u)_t$ has the canonical 
regularity dictated by $u$). Hence, it is expected that in our setting 
the regularity of \eqref{eq:burgers}  will be set by the regularity of 
the stochastic convolution term so $u \in
C_t\Cc^{(\frac12-\alpha)^-}$. For more details, see the discussion in
Appendix~\ref{sec:ExistenceRegularity}.

\section{Absolute Continuity  of Measures}\label{sec:equivalence}
We now turn to the main tools used to establish the absolute
continuity statements required to prove \cref{thm:SimplyMain} as
outlined in \cref{sec:cm-TSG-overview}.

Whether at the level of the Burgers equation \eqref{eq:burgers} or
when considering one of the levels in the
expansions in \eqref{eq:decomp2} or \eqref{eq:decomp}, we are left
considering when the law of $v_t$  is equivalent with
respect to the law of $z_t$ where 
\begin{equation}
  \begin{aligned}
  \Ll v_t &= F_t \,dt + \Cov\,  dW_t\,,\\
   \Ll z_t &= \Cov\,  dW_t\,.
\end{aligned}
  \label{eq:general}%
\end{equation}
Here $\Cov \simQ A^{\beta/2}$ 
for some $\beta \in \R$, and $F_t$ is a continuous (in time)
stochastic process with the space regularity
to be specified presently. We always assume that $F_t$ is adapted to
some filtration to which $W_t$ is also adapted. In some instances, it
is possible that  $F$ is independent of $W$.

When all of the terms are well defined and $z_0=v_0$, observe that
$v_t = z_t + h_t$ where
\begin{align}
  \label{eq:h}
  h_t=\int_0^t e^{-(t-s)A}F_s \,ds.
\end{align}
\subsection{The Cameron-Martin Theorem}\label{sec:Full-CM}
The Cameron-Martin Theorem gives if and only if conditions describing
when the $\Law(z_t)$ is equivalent to $\Law(v_t)$, with $v_t=z_t+h_t$, for a fixed time
$t$ and a deterministic shift $h_t$. If $\Cov \simQ A^{\beta/2}$, then
the covariance operator of the Gaussian random variable
$z_t$ is (up to a compact operator)
$A^{\beta-1}$. Then Cameron-Martin Theorem requires that
$\|A^{\frac{1-\beta}{2}} h_t\|_{L^2} < \infty$ (see \cite[Theorem 2.8]{da2006introduction}).
If $F_t$ is random but
independent of the stochastic forcing $W_t$, then we can still apply
the  Cameron-Martin theorem by first conditioning on the trajectory
of $F$. This produces the following sufficient condition for
absolute continuity, which is a version of the classical Cameron-Martin
Theorem adapted to our setting.
\begin{theorem}[A version of Cameron-Martin]\label{lem:CM}
  In the setting of \eqref{eq:general} with $z_0=v_0$, let $\mathcal{G}_t$ be a
  filtration independent of
  the Brownian forcing $W_t$. Let $h_t$ be as  in \eqref{eq:h} and 
  adapted to  $\mathcal{G}_t$. If for some $ t>0$,
  $\|A^{\frac{1-\beta}{2}} h_t\|_{L^2} < \infty$ almost surely, then
  $\Law(z_t+h_t\mid  \mathcal{G}_t )$ is equivalent as a measure
  to  $\Law(z_t)$ almost surely. In particular, it is sufficient  that $h_t \in
  \Cc^\gamma$ almost surely for $\gamma+\beta >1$.
 \end{theorem}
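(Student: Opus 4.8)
The plan is to reduce to the classical Cameron--Martin theorem for Gaussian measures on a Hilbert space. First I would recall that $z_t$ is a centered Gaussian random variable taking values in a suitable space, and identify its covariance operator. Since $\Cov \simQ A^{\beta/2}$, we have $A^{-\beta}\Cov\Cov^*$ bounded with bounded inverse, and the covariance of $z_t = \int_0^t e^{-(t-s)A}\Cov\,dW_s$ is $\int_0^t e^{-2(t-s)A}\Cov\Cov^*\,ds$, which is comparable (up to a bounded, boundedly invertible perturbation, and using that $e^{-tA}$ commutes with everything on the common eigenbasis) to $\int_0^t e^{-2(t-s)A}A^{\beta}\,ds = \tfrac12 A^{\beta-1}(I - e^{-2tA})$. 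On the mean-zero subspace the factor $I - e^{-2tA}$ is bounded with bounded inverse on each eigenspace (eigenvalues bounded away from $0$ and from $\infty$ since $A\geq 1$ there), so the covariance of $z_t$ is $A^{\beta-1}$ up to such a perturbation. This is the content of the parenthetical remark in the statement and I would make it precise at the start.

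Next I would invoke the classical Cameron--Martin theorem (e.g.\ \cite[Theorem 2.8]{da2006introduction}): for a centered Gaussian measure with covariance operator $\mathcal{C}$, the translate by a vector $h$ is equivalent to the original measure if and only if $h \in \mathcal{C}^{1/2}(\text{Hilbert space})$, equivalently $\|\mathcal{C}^{-1/2} h\|_{L^2} < \infty$. With $\mathcal{C} \eqOrder A^{\beta-1}$ in the above sense, $\mathcal{C}^{-1/2} \eqOrder A^{(1-\beta)/2}$, so the condition becomes $\|A^{\frac{1-\beta}{2}} h_t\|_{L^2} < \infty$. Here one must be mildly careful that the bounded, boundedly invertible perturbation does not change the Cameron--Martin space: if $\mathcal{C}_1 = T\mathcal{C}_2 T^*$ with $T, T^{-1}$ bounded, then $\mathcal{C}_1^{1/2}$ and $\mathcal{C}_2^{1/2}$ have the same range (since $\overline{\operatorname{ran}}\,\mathcal{C}_1^{1/2} = \overline{\operatorname{ran}}\,\mathcal{C}_1 $ arguments plus polar decomposition give $\mathcal{C}_1^{1/2} = T\mathcal{C}_2^{1/2}U$ for a partial isometry $U$), so the finiteness of $\|\mathcal{C}^{-1/2}h\|$ is insensitive to the perturbation. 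This yields the deterministic statement.

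To handle the case where $h_t$ is random but measurable with respect to a filtration $\mathcal{G}_t$ independent of $W$, I would condition on $\mathcal{G}_t$. Since $W$ is independent of $\mathcal{G}_t$, the conditional law of $z_t$ given $\mathcal{G}_t$ is still the same centered Gaussian law $\Law(z_t)$, while $h_t$ becomes (conditionally) a deterministic vector. Applying the deterministic Cameron--Martin statement pointwise in $\omega$ on the almost-sure event $\{\|A^{\frac{1-\beta}{2}} h_t\|_{L^2} < \infty\}$ gives $\Law(z_t + h_t \mid \mathcal{G}_t) \sim \Law(z_t)$ almost surely; one should note in passing that the Radon--Nikodym density, given by the usual Cameron--Martin exponential in terms of $h_t$, is jointly measurable, so the conditional statement is well-posed. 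Finally, for the last sentence: if $h_t \in \Cc^\gamma = B^\gamma_{\infty,\infty}$ with $\gamma + \beta > 1$, then by the Besov embedding $\Cc^\gamma \hookrightarrow B^{\gamma}_{2,\infty} \hookrightarrow B^{\gamma-\epsilon}_{2,2}$ and the identification of $B^{s}_{2,2}$ with the domain of $A^{s/2}$ (on the mean-zero subspace) for $s = 1-\beta < \gamma$, we get $\|A^{\frac{1-\beta}{2}} h_t\|_{L^2} \lesssim \|h_t\|_{\Cc^\gamma} < \infty$, so the hypothesis of the theorem is met.

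The main obstacle is the first step: pinning down precisely in what sense the covariance of $z_t$ ``is $A^{\beta-1}$ up to a compact operator'' and verifying that this does not perturb the Cameron--Martin space. In particular one must check that $I - e^{-2tA}$ restricted to the mean-zero modes is bounded with bounded inverse (which uses the spectral gap of $A$ on that subspace and the fixed $t > 0$), and must be careful that $\Cov\Cov^*$ versus $A^\beta$ differ only by a bounded, boundedly invertible operator with the same eigenbasis — both facts are immediate from the hypothesis $\Cov \simQ A^{\beta/2}$, but it is worth stating cleanly. Everything after that is a routine application of the classical theorem plus a conditioning argument, and the Besov embedding giving the sufficient condition is standard (and recalled in Appendix~\ref{Besov}).
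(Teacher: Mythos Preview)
Your proposal is correct and follows essentially the same route as the paper: identify the covariance of $z_t$ as $A^{\beta-1}$ up to a bounded, boundedly invertible factor, apply the classical Cameron--Martin theorem after conditioning on $\mathcal{G}_t$ (using independence of $W$ from $\mathcal{G}_t$), and then verify the sufficient condition via a regularity estimate. The only cosmetic differences are that the paper uses the mapping property $A^{(1-\beta)/2}:\Cc^\gamma\to\Cc^{\gamma+\beta-1}$ together with $\|\cdot\|_{L^2}\lesssim\|\cdot\|_{\Cc^\epsilon}$ for the last step (rather than your Besov-embedding chain), and it conditions first on $\sigma(h_s:s\le t)$ before passing to $\mathcal{G}_t$; your treatment of the invariance of the Cameron--Martin space under bounded invertible perturbations is more explicit than the paper's.
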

\begin{remark}\label{rem:CMApplicablity}
If $F=B(f,g)$ for some $f \in
  C_t\Cc^\gamma$, and $g\in
  C_t\Cc^\delta$ such that $J(f,g)_t$ has the canonical regularity 
  then $h_t=J(f,g)_t \in \Cc^{(r+1)^-}$ where  $r=\gamma\wedge \delta
  \wedge(\gamma+\delta)$.  Hence, the condition
  in \cref{lem:CM} becomes $r+\beta=\gamma\wedge \delta
  \wedge(\gamma+\delta)+\beta>0$.
  
  Notice that this remark extends to the setting where
  \begin{align}\label{generalizedF}
    F=\sum_{i=0}^m c_i 
    B(f^{(i)},g^{(i)})
  \end{align}
  for some $ c_i \in \R$,  $f^{(i)} \in
  C_t\Cc^{\gamma_i}$ and  $g^{(i)} \in
  C_t\Cc^{\delta_i}$ where $r_i=\gamma_i\wedge \delta_i
  \wedge(\gamma_i+\delta_i)$ and the condition on the indexes becomes
  $r+\beta>0$ with $r=\min r_i$.
\end{remark}

\begin{remark}
  \cref{lem:CM} immediately extends to the setting where
  $v_t=z_t+h_t+k_t$ and both $h_t$ and $k_t$ are adapted to
  $\mathcal{G}_t$ with $h_t$ satisfying the assumptions of
  \cref{lem:CM}. Then  the $\Law(z_t+h_t+k_t \mid  \mathcal{G}_t )$
  is equivalent as a measure to the
  $\Law(z_t+k_t \mid   \mathcal{G}_t)$ almost surely.
\end{remark}

\begin{remark}
  The condition that $z_0=v_0$ is only for simplicity and not needed. 
\end{remark}

\begin{proof}[Proof of \cref{lem:CM}] Without loss of generality, we
  can take $z_0=v_0=0$ since the effect of the initial condition
  cancels out when looking at the difference between $z_t$ and $v_t$. Since $h_t$ is adapted to
  $\mathcal{G}_t$, we can apply the classical Cameron-Martin Theorem with $h_t$
  considered as deterministic by conditioning. 
  A direct application of the It\^o
  isometry to \eqref{eq:stochConv} shows that $z_t$ from
  \eqref{eq:general} has covariance operator $C_t=\int_0^t e^{s A}\Cov
 \Cov^*  e^{s A} ds$. Because $\Cov \simQ A^{\beta/2}$, we have that
 $C_t \simQ \widetilde C_t= \int_0^t e^{s A}A^\beta e^{s A} ds$. Hence,
 the classical condition from the Cameron-Martin Theorem that
 $\|C_t^{\frac12} h_t\|_{L^2} < \infty$ is equivalent to
 $\|A^{\frac{1-\beta}{2}} h_t\|_{L^2} < \infty$. Since this condition
 is assumed to hold almost surely, the classical Cameron-Martin
 Theorem implies that $\Law(z_t+h_t\mid  \sigma( h_s : s\leq t)\,) $
 is equivalent as a measure  to $\Law(z_t)$ almost surely. Since $\sigma( W_s : s \leq t)$ is
  independent of $\mathcal{G}_t$, we have that the complement 
  of $\sigma( h_s : s\leq t)$ in $\mathcal{G}_t$ is independent of
  the random measure
  $\Law(z_t+h_t\mid  \sigma( h_s : s\leq t)\,)$, which implies  $\Law(z_t+h_t\mid \mathcal{G}_t) $ is equivalent as a measure
  to the $\Law(z_t)$. To verify the last remark, observe that since $h_t \in
  \Cc^\gamma$ almost surely we have  $A^{\frac{1-\beta}{2}} h_t \in
  \Cc^{\gamma+\beta -1}$ almost surely. Now since
  $\|A^{\frac{1-\beta}{2}} h_t\|_{L^2} \lesssim \|A^{\frac{1-\beta}{2}}
  h_t\|_{\Cc^\epsilon}$, we see that if  $\beta + \gamma - 1 > \epsilon$
  for some $\epsilon>0$ the last remark holds. This is possible
  because we have assumed $\beta+\gamma>1$.
\end{proof}

\subsection{The Standard Girsanov Theorem}
\label{sec:Girsanov}

The Girsanov Theorem is essentially the specialization of the
Cameron-Martin theorem to the path-space of a stochastic differential
equation, while relaxing the assumptions to allow random shifts in the drift
which are adapted to the Brownian Motion forcing the SDE.

We again consider the setting of \eqref{eq:general}. Since we will
be discussing path-space measures, we will write $v_{[0,t]}$ and $z_{[0,t]}$ for
the random variable denoting the entire path of $v$ and $z$
respectively on the time
interval $[0,t]$.
We now give a version of the Girsanov
Theorem adapted to our setting.

\begin{theorem}[A version of Girsanov]\label{thm:basicG}
  In the
  setting of  \eqref{eq:general} with $z_0=v_0$,
  let $\mathcal{F}_t$ be a filtration to which $W$ is an adapted
  Brownian Motion, and let $\mathcal{G}_t$ be a filtration independent
  of $\mathcal{F}_t$. Let $\tau$ be a stopping time adapted to
  $\mathcal{H}_t=\sigma(\mathcal{G}_t,\mathcal{F}_t)$ with  $\Pb(\tau
  >0) > 0$ such that $v_t$ and 
  $F_t$ are stochastic processes adapted to $\mathcal{H}_{t\wedge \tau}$ so
  that  $v_t$ 
  solves \eqref{eq:general} for $t < \tau$, and 
  \begin{equation}
  \label{eq:basicGirsanov}
  \int_0^t \|A^{-\beta/2} F_s\|_{L^2}^2 \, ds  < \infty,
\end{equation}
almost surely 
for all $t < \tau$.
Then 
$\Law( v_{[0,t]} \mid  t < \tau, \mathcal{G}_t) \ll \Law( z_{[0,t]})$ almost surely.

In particular, it is sufficient
that $F \in C_t\Cc^\sigma$ for $\sigma+\beta >0$ and any $t < \tau$
for \eqref{eq:basicGirsanov} to hold almost surely.
\end{theorem}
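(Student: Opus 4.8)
The plan is to reduce this to the classical Girsanov theorem applied pathwise after conditioning on the independent $\sigma$-algebra $\mathcal{G}_t$, with a localization argument to deal with the fact that we only need absolute continuity up to (but not including) the stopping time $\tau$. First I would note that since $\Cov \simQ A^{\beta/2}$, we can write $\Cov = A^{\beta/2}\Cov_0$ where $\Cov_0$ is bounded with bounded inverse and shares the eigenbasis of $A$; the process $z_t$ from \eqref{eq:general} is then the stochastic convolution $\int_0^t e^{-(t-s)A}\Cov\,dW_s$. The candidate Cameron-Martin/drift shift is $u_s = \Cov^{-1}F_s = \Cov_0^{-1}A^{-\beta/2}F_s$, so that formally $v$ solves the same equation as $z$ but with $W_t$ replaced by $W_t + \int_0^t u_s\,ds$, and the Novikov-type quantity controlling the Girsanov density is exactly $\int_0^t \|u_s\|_{L^2}^2\,ds \eqOrder \int_0^t \|A^{-\beta/2}F_s\|_{L^2}^2\,ds$, which is the hypothesis \eqref{eq:basicGirsanov}.

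The main steps, in order, are as follows. Fix $t>0$. For $n \in \Nb$ define the stopping time $\tau_n = \inf\{s : \int_0^s \|A^{-\beta/2}F_r\|_{L^2}^2\,dr \geq n\} \wedge \tau \wedge t$, which is an $\mathcal{H}_s$-stopping time with $\tau_n \uparrow (\tau \wedge t)$ almost surely by \eqref{eq:basicGirsanov}; on $\{\tau > t\}$ we have $\tau_n \uparrow t$. For each fixed $n$, consider the localized drift $F_s^{n} = F_s\,\one_{\{s < \tau_n\}}$; because $F$ is adapted to $\mathcal{H}_{s\wedge\tau}$ and $W$ is a Brownian motion with respect to $\mathcal{H}_s$, the integrand $u_s^n = \Cov_0^{-1}A^{-\beta/2}F_s^n$ is $\mathcal{H}_s$-adapted and satisfies $\int_0^t \|u_s^n\|_{L^2}^2\,ds \le Cn$ almost surely, so the exponential martingale $\mathcal{E}_s = \exp(\int_0^s u_r^n\,dW_r - \tfrac12\int_0^s \|u_r^n\|_{L^2}^2\,dr)$ is a genuine martingale on $[0,t]$ (Novikov's condition holds trivially for a bounded integrand, or one invokes the standard criterion with the a priori bound $Cn$). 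Next I would condition on $\mathcal{G}_t$: since $\mathcal{G}_t$ is independent of $\mathcal{F}_t$ and $W$ generates a Brownian filtration inside $\mathcal{H}_t = \sigma(\mathcal{G}_t,\mathcal{F}_t)$, the classical Girsanov theorem applies to the regular conditional law given $\mathcal{G}_t$ and shows that, under the measure with density $\mathcal{E}_t$, the process $\widetilde W_s = W_s - \int_0^s u_r^n\,dr$ is a cylindrical Brownian motion; hence the law of $z_{[0,t]} + \int_0^\cdot e^{-(\cdot-s)A}\Cov u_s^n\,ds = z_{[0,t]} + \int_0^\cdot e^{-(\cdot-s)A}F_s^n\,ds$ (conditioned on $\mathcal{G}_t$) is equivalent to $\Law(z_{[0,t]})$. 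On the event $\{\tau_n = t\}$, which contains $\{\tau > t\} \cap \{n \text{ large}\}$ up to null sets, this shifted process agrees with $v_{[0,t]}$ because $F_s^n = F_s$ for $s < t$ there and $v$ solves \eqref{eq:general}. Finally I would take $n \to \infty$: the events $\{\tau_n = t\}$ increase to $\{\tau > t\}$, and on $\{\tau > t\}$ the laws $\Law(v_{[0,t]}\one_{\{\tau_n=t\}} \mid \mathcal{G}_t)$ are each absolutely continuous with respect to $\Law(z_{[0,t]})$; a standard monotone passage (the restriction of $\Law(v_{[0,t]} \mid t<\tau, \mathcal{G}_t)$ to $\{\tau_n = t\}$ is dominated, uniformly in $n$ in the sense of absolute continuity, by $\Law(z_{[0,t]})$) yields $\Law(v_{[0,t]} \mid t<\tau,\mathcal{G}_t) \ll \Law(z_{[0,t]})$ almost surely. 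For the ``in particular'' clause, if $F \in C_t\Cc^\sigma$ with $\sigma + \beta > 0$, then $A^{-\beta/2}F_s \in \Cc^{\sigma+\beta}$ with $\sup_{s\le t}\|A^{-\beta/2}F_s\|_{\Cc^{\sigma+\beta}} < \infty$, and since $\sigma+\beta>0$ the $\Cc^{\sigma+\beta}$ norm dominates the $L^2$ norm (on the mean-zero subspace), so the left side of \eqref{eq:basicGirsanov} is bounded by $t\sup_{s\le t}\|A^{-\beta/2}F_s\|_{L^2}^2 < \infty$ almost surely.

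The hard part will be handling the stopping time and the conditioning on $\mathcal{G}_t$ simultaneously, i.e. making precise that the classical Girsanov theorem may be applied to the regular conditional probability $\Pb(\,\cdot \mid \mathcal{G}_t)$ so that the resulting density is $\mathcal{G}_t$-measurably parametrized, and that the localization-and-limit argument interchanges correctly with this conditioning; the key structural inputs are the independence of $\mathcal{G}_t$ and $\mathcal{F}_t$ and the adaptedness of $\tau$ and $F$ to $\mathcal{H}_{\cdot\wedge\tau}$. A secondary technical point is verifying that the shift $\int_0^\cdot e^{-(\cdot-s)A}F_s^n\,ds$ applied to paths in the relevant state space is measurable and that $v$ genuinely coincides with the Girsanov-shifted process on $\{\tau_n = t\}$; this is a matter of uniqueness for \eqref{eq:general} on $[0,\tau_n]$ and the mild formulation \eqref{eq:h}. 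Everything else is routine once these two points are in place.
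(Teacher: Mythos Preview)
Your proposal is correct and follows essentially the same approach as the paper: localize via $\tau_n$ (the paper's $\tau_M$), apply Girsanov with a bounded integrand so the exponential martingale is honest, and then remove the localization by a monotone union argument over $\{\tau_n>t\}$. The one place the paper is simpler than your sketch is the conditioning on $\mathcal{G}_t$: rather than applying Girsanov to a regular conditional law, the paper applies Girsanov \emph{unconditionally} to obtain $\Law(v^M_{[0,t]})\sim\Law(z_{[0,t]})$ and then observes that independence of $\mathcal{G}_t$ from $W$ immediately upgrades this to conditional equivalence almost surely --- so the ``hard part'' you flag is in fact a one-line step.
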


\begin{remark}\label{rem:BasicG}
  In the setting of \cref{thm:basicG}, we assume that there exists
  stochastic processes $f_t$ and $g_t$ so that $F_t=B(f_t,g_t)$  for
  all $t < \tau$ with $f \in C_t\Cc^\gamma$,  $g
  \in C_t\Cc^\delta$ and such that $F_t=B(f_t,g_t)$ has the canonical
  regularity, namely $F \in C_t\Cc^{r-1}$ for $r=\gamma\wedge \delta
  \wedge(\gamma+\delta)$ and  all $t < \tau$. Then  \cref{thm:basicG}
  applies and the regularity assumption in
  \eqref{eq:basicGirsanov} is implied by $r-1+\beta=\gamma\wedge \delta
  \wedge(\gamma+\delta)+\beta-1>0$ or rather $r+\beta=\gamma\wedge \delta
  \wedge(\gamma+\delta)+\beta>1$ .
\end{remark}
 The condition $r+\beta=\gamma\wedge \delta
  \wedge(\gamma+\delta)+\beta>1$ from \cref{rem:BasicG}
  should be contrasted with the condition $r+\beta=\gamma\wedge \delta
  \wedge(\gamma+\delta)+\beta>0$ from
  \cref{rem:CMApplicablity}. Relative to the Cameron-Martin 
  \cref{lem:CM}, the basic Girsanov \cref{thm:basicG} does
  have the advantage that $F_t$ can be adapted to the forcing Brownian
  motion and not independent. Also, the results are not
  comparable, as \cref{thm:basicG} proves pathwise equivalence
  while \cref{lem:CM} only proves equivalence at a fixed time $t$.

  \begin{proof}[Proof of \cref{thm:basicG}]
    We begin by defining
    \begin{align*}
    \tau_M=\tau \wedge \inf\Big\{ t >0 : \int_0^t \|A^{-\beta/2} F_s\|^2_{L^2} ds > M\Big\}
    \end{align*}
    and 
  \begin{equation*}\label{eq:vHat}
  \Ll \, v_t^M = \one_{\{t < \tau_M\}}F_t \,dt+ \Cov\, dW_t.
\end{equation*}
Observe that $v_t^M$ is well defined on $[0,t]$ for any $t > 0$ due to
the stopping time and that $v_t=  v_t^M$ on the event $\{t <
\tau^M\}$. Then,
\begin{align*}
  \exp\Big( \int_0^t \|A^{-\beta/2}\one_{\{s < \tau_M\}}F_s\|^2_{L^2} ds\Big)
  < C(1+ e^M),
\end{align*}
almost surely for some fixed constant $C$. Thus, the classical
Kazamaki criterion (see for instance \cite{Krylov_2009}) ensures
that the local-martingale in the Girsanov Theorem is an integrable
martingale.  Let $v_{[0,t]}^{M}$ and $z_{[0,t]}$ be the path-valued 
random variables over the time interval $[0,t]$. We have that 
$\Law( v_{[0,t]}^{M} )$ is equivalent as a measure to
$\Law( z_{[0,t]})$.  Since $\mathcal{G}_t$ is independent of the
Brownian motion $W$, we have that the
$\Law( v_{[0,t]}^{M} \mid  \mathcal{G}_t)$  is equivalent as a
measure to the $\Law(z_{[0,t]})$
almost surely. 

We now show we can remove the truncation level $M$.  Now let $E$ be a
measurable subset of paths of length $T$ such that
$\Pb( z_{[0,T]} \in E ) =0$. To prove the absolute continuity claim,
we need to show that  $\Pb\big(v_{[0,T]}\in E,\tau>T
\mid \mathcal{G}_T\big)=0$ almost surely. If $\Pb(\tau>T
\mid \mathcal{G}_T)=0$ almost surely, we are done. Hence we proceed assuming $\Pb(\tau>T
\mid \mathcal{G}_T)>0$ almost surely.

Because, conditioned on $\mathcal{G}_T$,
the law of $ v^M_{[0,T]}$ is equivalent to the law of $z_{[0,T]}$
almost surely, we know that
$\Pb( v^M_{[0,T]}\in E \mid \mathcal{G}_T ) =0$ for all $M>0$ almost
surely. 
We also know from the construction of $ v^M$ that
$\Pb( v^M_{[0,T]}\in E, \tau_M > T \mid \mathcal{G}_T ) =\Pb(
v_{[0,T]}\in E, \tau_M > T \mid  \mathcal{G}_T )$. Now since

\begin{equation*}
  \Big\{ v_{[0,T]}\in E,\tau>T\Big\}
  \subset\bigcup_{M>0}\Big\{ v_{[0,T]}\in E,\tau_M>T\Big\},
\end{equation*}
we have that
\begin{align*}
  \Pb\big(v_{[0,T]}\in E,\tau>T \mid \mathcal{G}_T\big)
  & \leq\sup_{M>0}\Pb\big(v_{[0,T]}\in E,\tau_M > T \mid \mathcal{G}_T\big)\\
  & \leq\sup_{M>0}\Pb\big( v^M_{[0,T]} \in E,\tau_M > T
    \mid \mathcal{G}_T\big)\\
  &\leq\sup_{M>0}\Pb\big( v^M_{[0,T]}\in E \mid \mathcal{G}_T\big)
    = 0,
\end{align*}
as already noted since $\Pb\big(z_{[0,T]}\in E \big)=0$.
\end{proof}

\begin{remark}\label{rem:optimal}
  We believe that in the context of diffusions,
  namely when the $F$ in \eqref{eq:general} is
  a non-anticipative function of $v$,
  condition \eqref{eq:basicGirsanov}
  of \cref{thm:basicG} should be optimal,
  in the sense that
  equivalence holds if and only if
  \eqref{eq:basicGirsanov} holds.
  This statement is true in finite dimension,
  see \cite[Theorem 7.5]{LipShi2001}.
  Moreover, by adding a condition
  similar to \eqref{eq:basicGirsanov}
  for $z$ it should be possible to
  prove equivalence of the laws.
  The extension of these results
  in the framework of
  \cref{thm:basicG} goes beyond the
  scope of this paper and
  will be addressed elsewhere.

  Alternatively, if one has control of some moments of the solution
  sufficient to implied global existence (namely $\tau_\infty=\infty$),
  one can typically prove the equivalence between the laws in
  \cref{rem:BasicG}. For example, this can be accomplished using the
  relative entropy calculations given in Lemma~C.1 of \cite{Mattingly_Suidan_2005}.
\end{remark}

\subsection{The Time-Shifted Girsanov Method}\label{sec:TSG}

We now present the  \textsc{Time-Shifted Girsanov Method} which was
developed in \cite{Mattingly_Suidan_2005,Mattingly_Suidan_2008, Watkins2010}. It will
provide essentially the same regularity conditions in our setting as in
\cref{rem:CMApplicablity}  while allowing adapted shifts as in the
standard Girsanov Theorem. Interestingly, we will see that the
classical  Cameron-Martin Theorem still holds some advantages when
dealing with extremely rough Gaussian objects.

Considering the mild-integral
formulation of \eqref{eq:general}
\begin{equation}
  \label{eq:generalMild}
  v_t - e^{-t A} v_0 = \int_0^t  e^{-(t-s) A} F_s\, ds + \int_0^t
  e^{-(t-s) A} \Cov\, dW_s\,,
\end{equation}
we can understand the first term as the
shift of the Gaussian measure which is the second term. We will now
recast the drift term in \eqref{eq:generalMild} to extend the
applicability of the Girsanov Theorem.
 
We begin with the observation that for fixed $T>0$
\begin{equation}
  \label{eq:timeShiftDrift}
  \begin{aligned}
    \int_0^T   e^{-(T-s) A}F_sds &=  \int_0^T   e^{-\frac12(T-s) 
                                   A}e^{-\frac12(T-s) A}F_s\,ds \\
  &= \int_{\frac{T}2}^T   e^{-(T-s)  A}e^{-(T-s) A}F_{2s-T}\,ds=
    \int_{0}^T   e^{-(T-s)  A} \widetilde F_s \,ds,
  \end{aligned}
\end{equation}
where 
\begin{align}
  \label{eq:Fhat}
\widetilde F_s=2\one_{[\frac{T}2,T]}(s) e^{-(T-s) A}F_{2s-T}\,. 
\end{align}
Since 
$2s-T \leq s$ for all $s \in [\frac{T}2,T]$, $\widetilde F_s$ is adapted to the 
filtration of $\sigma$-algebras generated by the forcing Wiener 
process $W$ when $F_s$ is also adapted to the same filtration. Hence,
we can define the auxiliary It\^o stochastic differential equation 
\begin{equation}
  \label{eq:timeShifted}
  \Ll \,\widetilde v_t = \widetilde F_t\, dt + \Cov\, dW_t,
\end{equation}
which is driven by the same stochastic forcing as used to construct 
$v_t$. Choosing the initial 
data to coincide with $u_0$, the 
mild/integral formulation of this equation is 
\begin{align}
  \label{eq:timeShiftedMild}
  \widetilde v_t - e^{-t A}u_0 = \int_0^t e^{-(t-s)A}  \widetilde F_s\, ds +   \int_0^t   e^{-(t-s) A} \Cov\, dW_s\,. 
\end{align}

By comparing~\eqref{eq:generalMild}~and~\eqref{eq:timeShiftedMild}, we 
see that $\widetilde v_T=v_T$ while $\widetilde v_t$ need not equal $v_t$ for $t\neq 
T$. Hence, if we use the standard Girsanov's theorem to show that the 
law of $\widetilde v_{[0,T]}$ on path-space
is absolutely continuous with respect to the law of $z_{[0,T]}$ (the solution 
to \eqref{eq:general}), then we can conclude that the law of 
$\widetilde v_T$ (at the specific time $T$) is absolutely continuous with respect 
to the law of $z_T$ (again at the specific time $T$). Finally, since 
$v_T= \widetilde v_T$, we conclude that the law of 
$v_T$ is absolutely continuous with respect 
to the law of $z_T$, both at the specific time $T$. 

The power of this reformulation is seen when we write down the
condition needed to apply the Girsanov Theorem to remove the drift
from~\eqref{eq:timeShifted}. We now are required to have control over
moments of
\begin{align}
   \label{eq:timeShiftedGirsanov}
\int_0^T \| A^{-\beta/2}\widetilde F_s\|_{L^2}^2\, ds  
=\int_0^T \| A^{-\beta/2} e^{-(T-s)A} F_s\|_{L^2}^2\, ds  
\end{align}
to apply the standard Girsanov Theorem to transform the path-space
law of $\widetilde v_{[0,T]}$ to that of $z_{[0,T]}$. Comparing
\eqref{eq:basicGirsanov} with \eqref{eq:timeShiftedGirsanov}, we see
that the additional semigroup $e^{-(T-s)A}$ in the integrand improves
its regularity significantly.

Similarly, if we want to compare the distribution at time $T>0$ of two
equations starting from different initial conditions $v_0,z_0 \in
\Cc^b$, for $\beta \in \R$, then we can observe that
\begin{align*}
  e^{-T A} v_0 =  e^{-T A} z_0 +  e^{-T A} (v_0-z_0)  &=e^{-T A} z_0  +  \int_{\frac{T}2}^T e^{-(T-s) A}
                 \frac{2}{T}e^{-s A} (v_0 -z_0)\,ds\\
  &=  e^{-T A} z_0  +  \int_{\frac{T}2}^T e^{-(T-s) A} \widetilde F_s^{(0)}\,ds,
\end{align*}
where $\widetilde  F_s^{(0)}= \one_{[\frac{T}2,T]}(s)\frac{2}{T}e^{-s A} (v_0
-z_0)$. This is the observation at the core of the
 Bismut-Elworthy-Li formula \cite{Bismut84,ElworthyLi94}.  Observe that $\widetilde  F^{(0)}
\in C_T \Cc^b$ for any $b \in \R$, regardless of the initial conditions, so
$A^{-\frac{\beta}2} \widetilde  F^{(0)} \in C_T L^2$, and we will always
be able to use the Girsanov Theorem to remove this term.

It will be convenient to consider a slightly generalized setting where
$v_t$, $\widetilde v_t$ and $\zeta_t$ respectively solve mild forms of
the following equations,
\begin{equation}
  \label{eq:moreGen}
  \begin{aligned}
  \Ll \,v_t &= F_t\,dt +\,G_t\, dt+ \Cov\, dW_t, \\
  \Ll \,\widetilde v_t &= (\widetilde F_t+\widetilde F_t^{(0)}) \,dt + G_t\, dt + \Cov\, dW_t,\\
    \Ll \,\zeta_t &=  G_t\,dt+  \Cov\, dW_t,
\end{aligned}
\end{equation}
with initial conditions $v_0$ and $z_0$ where $\zeta_0=\widetilde v_0 = z_0$. Here $\widetilde
F_t$ is defined as in \eqref{eq:Fhat}, $\widetilde F_t^{(0)}$ as just
above, and $G_t$ and
$F_t$  are some stochastic processes.

\begin{theorem}[Time-Shifted Girsanov Method]\label{thm:tsG}
 In the setting of  \eqref{eq:moreGen},
  let $\mathcal{F}_t$ be a filtration to which $W$ is an adapted
  Brownian Motion, and let $\mathcal{G}_t$ be a filtration independent
  of $\mathcal{F}_t$. Fix initial conditions $v_0$ and $z_0$, and let $\tau$ be a stopping time adapted to
  $\mathcal{H}_t=\sigma(\mathcal{F}_t,\mathcal{G}_t)$ such that $\Pb(\tau
  >0) > 0$. Let $G_t$ be
  stochastic processes adapted to  $\mathcal{G}_t$ and defined for all
  $t \geq 0$. Let $v_t$ and $F_t$ be  stochastic process adapted to
  $\mathcal{H}_{t\wedge \tau}$, such that  $v_t$ 
  solves \eqref{eq:moreGen} for $t < \tau$ and
  \begin{equation}
  \label{eq:TSGirsanov}
  \int_0^t \|A^{-\frac{\beta}{2}}  e^{-(t-s)A} F_s\|_{L^2}^2\, ds   < \infty
\end{equation}
almost surely for all $t < \tau$, and $v_t$ and $\zeta_t$ have initial conditions
$v_0$ and $z_0$ respectively. Then $\Law( v_{t} \mid  t < \tau,  \mathcal{G}_t
  ) \ll \Law(
  \zeta_{t} \mid  \mathcal{G}_t)$ almost surely. Additionally,  there exists a solution $\widetilde
  v_t$ which  solves
  \eqref{eq:moreGen} for $t < \tau$ and with  $\Law(\widetilde
  v_{[0,t]} \mid  t < \tau,  \mathcal{G}_t ) \ll \Law(
  \zeta_{[0,t]} \mid  \mathcal{G}_t)$ almost surely.  In particular, it is sufficient  that $F \in
  C_t\Cc^\sigma$ almost surely  for $\sigma+\beta +1>0$ and for any $t < \tau$ to
  ensure \eqref{eq:TSGirsanov} holds.
 \end{theorem}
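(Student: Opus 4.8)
The plan is to run the three‑move argument outlined in \cref{sec:TSG}. First, I would rewrite the drift $F$ in its time‑shifted form $\widetilde F$ so that the auxiliary equation agrees with the original one at the terminal time; second, apply the standard Girsanov Theorem (\cref{thm:basicG}) to the auxiliary equation, whose drift is now smooth enough for the Girsanov integrability condition to hold; third, push the resulting path‑space absolute continuity down to the single time via $\widetilde v_T=v_T$. Concretely, fix the horizon $T>0$ and work on the event $\{T<\tau\}$, on which $F$ is defined on all of $[0,T]$. Let $\widetilde F$ be given by \eqref{eq:Fhat}, let $\widetilde F^{(0)}$ be as in the paragraph preceding \eqref{eq:moreGen}, and let $\widetilde v$ be the mild solution of the middle equation in \eqref{eq:moreGen}; since that equation has purely additive (feed‑forward) drift, $\widetilde v$ exists and is unique with no fixed‑point argument, once we know the drift integrates, which Step~2 checks. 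Because $2s-T\le s$ for $s\in[\tfrac T2,T]$, the process $\widetilde F_s$ is adapted to $\mathcal H_{s\wedge\tau}$ whenever $F_s$ is, and $\widetilde F^{(0)}$ is a deterministic function of $v_0,z_0$. Comparing the mild forms of the first and the middle equations in \eqref{eq:moreGen}, and using the time‑shift identity \eqref{eq:timeShiftDrift} for the $F$‑drift together with the one‑line identity $e^{-TA}v_0=e^{-TA}z_0+\int_0^T e^{-(T-s)A}\widetilde F^{(0)}_s\,ds$ for the initial data, the two solutions coincide at time $T$, i.e.\ $\widetilde v_T=v_T$ on $\{T<\tau\}$.

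For the Girsanov step I would condition on $\mathcal G_T$, so that $G$ becomes a fixed path and $W$ is still a Brownian motion. The proof of \cref{thm:basicG} then applies essentially verbatim with the driftless reference process $z$ replaced by $\zeta$, because the Girsanov change‑of‑measure density only sees the drift \emph{difference} between $\widetilde v$ and $\zeta$, namely $\widetilde F+\widetilde F^{(0)}$, not the common drift $G$. The integrability requirement \eqref{eq:basicGirsanov} for this difference reads
\begin{equation*}
  \int_0^T \bigl\|A^{-\beta/2}\bigl(\widetilde F_s+\widetilde F^{(0)}_s\bigr)\bigr\|_{L^2}^2\,ds<\infty .
\end{equation*}
For the $\widetilde F$ part, the substitution $u=2s-T$ turns this into $2\int_0^T\|A^{-\beta/2}e^{-\frac12(T-u)A}F_u\|_{L^2}^2\,du$, which is precisely the hypothesis \eqref{eq:TSGirsanov} up to the constant factor and the harmless replacement of $e^{-(T-u)A}$ by $e^{-\frac12(T-u)A}$ (compare \eqref{eq:timeShiftedGirsanov}). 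For the $\widetilde F^{(0)}$ part, $\widetilde F^{(0)}_s=\one_{[\frac T2,T]}(s)\tfrac2T e^{-sA}(v_0-z_0)$ is supported away from $s=0$, so $A^{-\beta/2}\widetilde F^{(0)}_s$ is bounded in $L^2$ uniformly in $s$ and its integral is finite for any $v_0,z_0$. The removal of the truncation level is identical to the corresponding part of the proof of \cref{thm:basicG}. This yields $\Law(\widetilde v_{[0,T]}\mid T<\tau,\mathcal G_T)\ll\Law(\zeta_{[0,T]}\mid\mathcal G_T)$ almost surely (with $\widetilde v$ the mild solution from Step~1), which is the path‑space half of the theorem; projecting onto the time‑$T$ coordinate and using $\widetilde v_T=v_T$ on $\{T<\tau\}$ gives $\Law(v_T\mid T<\tau,\mathcal G_T)\ll\Law(\zeta_T\mid\mathcal G_T)$ almost surely, the remaining half.

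For the sufficient condition, if $F\in C_t\Cc^\sigma$ almost surely with $\sigma+\beta+1>0$ then \eqref{eq:L2ASemigroupEst} (applied with $\delta=-\beta$, $\gamma=\sigma$, and $\epsilon\in(0,\sigma+\beta+1)$; when $\sigma+\beta\ge0$ one instead uses the contractivity of $e^{-tA}$ on $L^2$ together with $\Cc^{\sigma+\beta}\hookrightarrow L^2$) bounds $\|A^{-\beta/2}e^{-(T-s)A}F_s\|_{L^2}$ by $(T-s)$ raised to a power strictly greater than $-\tfrac12$ times $\|F\|_{C_T\Cc^\sigma}$, so its square is integrable over $[0,T]$; this is \eqref{eq:TSGirsanov}, and a fortiori the half‑semigroup variant used above.

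The only genuinely delicate point is the measure‑theoretic bookkeeping in the Girsanov step: one must verify that conditioning on $\mathcal G_T$ really does reduce the situation to a standard Girsanov change of measure for a Brownian motion carrying an $\mathcal H_{\cdot\wedge\tau}$‑adapted shift (in particular that $W$ remains a Brownian motion and that $\tau$ remains a stopping time after conditioning), and that all the conditional laws involved are jointly measurable in the $\mathcal G_T$‑variable so that the almost‑sure statements make sense. Everything else — the algebra of the time‑shift, the change of variables in the integrability bound, and the regularity estimate in the last step — is routine given \cref{prop:besicEstimates} and the already‑established \cref{thm:basicG}.
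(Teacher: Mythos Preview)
Your proposal is correct and follows essentially the same route as the paper: define the time-shifted drift $\widetilde F$ and the auxiliary process $\widetilde v$, localize via a stopping time and apply the standard Girsanov argument (conditioning on $\mathcal G_T$ so that $G$ becomes a fixed external input and $\zeta$ plays the role of the reference process), remove the truncation as in \cref{thm:basicG}, and then pass from path-space absolute continuity to the single time $T$ via $\widetilde v_T=v_T$. Your treatment is in fact slightly more careful than the paper's in two places: you do the change of variables explicitly and note that one lands on $e^{-\frac12(T-u)A}$ rather than $e^{-(T-u)A}$ (the paper writes an equality that silently conflates the two), and you separate the case $\sigma+\beta\geq0$ when verifying the sufficient condition.
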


  \begin{remark}\label{rem:TSG}
  In the setting of \cref{thm:tsG}, we assume that there exists
  stochastic processes $f_t$ and $g_t$ so that $F_t=B(f_t,g_t)$ for
  all $t < \tau$ with $f \in C_t\Cc^\gamma$,  $g
  \in C_t\Cc^\delta$ and such that $F_t=B(f_t,g_t)$ has the canonical
  regularity, namely $F \in C_t \Cc^{r-1}$  for $r=\gamma\wedge
  \delta \wedge(\gamma+\delta)$ and all $t < \tau$. Then the regularity assumption of 
  \eqref{eq:basicGirsanov} is satisfied provided that
  $r+\beta=\gamma\wedge
  \delta \wedge(\gamma+\delta)+\beta>0$.

  As in \cref{rem:CMApplicablity}, this remark extends to the setting where
  \begin{align*}
    F=\sum_{i=0}^m c_i 
    B(f^{(i)},g^{(i)}),
  \end{align*}
  for some $ c_i \in \R$,  $f^{(i)} \in
  C_t\Cc^{\gamma_i}$ and  $g^{(i)} \in
  C_t\Cc^{\delta_i}$ where $r_i=\gamma_i\wedge \delta_i
  \wedge(\gamma_i+\delta_i)$ and the condition on the indexes becomes
  $r+\beta>0$ with $r=\min r_i$.
\end{remark}

\begin{remark}[Comparing Theorems~\ref{lem:CM}, \ref{thm:basicG}, and \ref{thm:tsG}]\label{rem:compare}
  Comparing the Cameron-Martin Theorem, the Standard Girsanov Theorem,
and the Time-Shifted Girsanov Method in the setting of $F_t=B(f_t,g_t)$, we see that the  Cameron-Martin
Theorem and the Time-Shifted Girsanov Method impose identical
regularity conditions on $f_t$ and $g_t$. The Time-Shifted Girsanov
Method has the added advantage of allowing one to consider $f$ and $g$
which are only adapted to the Brownian Motion $W$ and not independent as
the Cameron-Martin theorem requires. This added flexibility will be
critical to proving the needed absolute continuity for the remainder
variables $\RY$ and $\RX$.

Both only prove equivalence at a
fixed time which is an advantage as we only need this for our applications. 
However, we will see that the requirement that $B(f_t,g_t)$
has the canonical regularity of the Time-Shifted Girsanov
Method will be more restrictive than the requirement that $J(f,g)_t$
has the canonical regularity of the Cameron-Martin Theorem.
\end{remark}

\begin{proof}[Proof of \cref{thm:tsG}] Fixing a time $T$, we define
  \begin{align*}
   \int_0^T \|A^{-\frac{\beta}{2}} 
    \widetilde F_s\|^2_{L^2}\, ds=  \int_0^T \|A^{-\frac{\beta}{2}} 
    e^{-(T-s)A} F_s\|^2_{L^2}\, ds <\infty,
  \end{align*}
  almost surely on the event $\{T <
    \tau\}$. Fix a positive  integer $M$. The following stopping time 
  \begin{align*}
    \tau_M=\tau \wedge \inf\Big\{ t >0 : \int_0^t \|A^{-\frac{\beta}{2}} 
   \widetilde F_s\|^2_{L^2}\, ds > M\Big\}
  \end{align*}
is well defined  and that $\tau_M \rightarrow \tau$
monotonically as $M
\rightarrow \infty$. Let $\widetilde v_t$ be the solution to
\eqref{eq:timeShifted} and observe that it is well defined on $[0,T]$
on the event $\{T < \tau\}$ with $v_T=\widetilde v_T$ on the same event.
Now consider
\begin{equation*}\label{eq:vHat+ic}
  \Ll \,\widetilde v_t^M = \one_{\{t < \tau_M\}}\widetilde F_t\, dt + \widetilde F_t^{(0)} dt +G_t\,dt+ \Cov\, dW_t, 
\end{equation*}
with $\widetilde v_0=z_0$.
Clearly, $\widetilde v_t^M= \widetilde v_t$ for $t < \tau_M$. Furthermore,
because of the definition of $\tau_M$ and the fact that $\widetilde
F^{(0)} \in C_T\Cc^b$ for any $b \in \R$, the classical Girsanov Theorem
implies that the law of the trajectories of $\widetilde v^M$ on $[0,T]$,
conditioned on $\mathcal{G}_T$,
are equivalent (i.e. mutually absolutely continuous) to the law of
$\zeta$, conditioned on $\mathcal{G}_T$, 
on $[0,T]$. In the sequel, we will write $\zeta_{[0,T]}$ for the
random variable on paths of lengths $T$ induced by the law of $\zeta$.

By the same argument as in Theorem~\ref{thm:basicG}, we remove the
localization by $\tau^M$ to obtain $  \Law(\widetilde v_{[0,T]}\mid 
\tau>T) \ll \Law(\zeta_{[0,T]} \mid \mathcal{G}_T\big)$ almost surely.

    To conclude the proof, we let $D$ be any measurable subset such
    that $\Pb( \zeta_T \in D\mid \mathcal{G}_T)=0$ where $z_T$ is the distribution of $z$ at the
    fixed time $T$. Let $D_{[0,T]}$ be the subset of path-space of
    trajectories which are in $D$ at time $T$. Then
    \begin{equation}
      \label{eq:pathABStofixedTime}
      \begin{multlined}[.9\linewidth]
      0=\Pb\big( \zeta_T \in D  \mid \mathcal{G}_T)\\
       = \Pb( \zeta_{[0,T]} \in D_{[0,T]} \mid \mathcal{G}_T\big)
       =  \Pb\big(\widetilde v_{[0,T]} \in D_{[0,T]} , \tau> T \mid \mathcal{G}_T\big) \\
       = \Pb\big( \widetilde v_T\in D,\tau> T \mid \mathcal{G}_T\big)
       =\Pb\big( v_T\in D  , \tau> T \mid \mathcal{G}_T\big),
       \end{multlined}
    \end{equation}
where the last equality follows from the fact that $ \widetilde
v_T=v_T$ on the event $\{\tau> T\}$. 
    The chain of implications in \eqref{eq:pathABStofixedTime}
    shows that the law of $v_T$ restricted to the event
    $\{\tau> T\}$ is absolutely continuous with respect to the
    law of $\zeta_T$ with both conditioned on $\mathcal{G}_T$. 
\end{proof}

\subsection{Range of Applicability of Methods}
\label{sec:rangeOfApp}

We now consider for what regimes the Cameron-Martin Theorem and the Time-Shifted Girsanov Method
  can be applied directly to~\eqref{eq:burgers} to
  prove~\cref{thm:SimplyMain}. We will proceed formally with the
  understanding that some neglected  factors will lead to additional
  complications which will require more nuanced arguments.

  For the moment, we assume that \eqref{eq:burgers} has the
  canonical regularity, namely the regularity dictated by the stochastic convolution
  term. Thus, $u \in C_t\Cc^{(\frac12-\alpha)^-}$ where recall that
  $\alpha$ is the exponent which sets the spatial regularity of the
  forcing.

  When $\alpha<\frac12$, the
  solution $u_t$ to~\eqref{eq:burgers} has positive H\"older regularity
  with $u_t \in \Cc^{(\frac12-\alpha)\mm}$. This implies that $B(u_t)$
  has the canonical regularity with $B(u_t)
  \in \Cc^{(-\frac12-\alpha)\mm}$. Thus, the regularity condition to
  apply the  Cameron-Martin Theorem or the Time-Shifted Girsanov
  Method to \eqref{eq:burgersMild} becomes $\frac12-\alpha + \alpha =\frac12 >0$, which is always
  true. See \cref{rem:CMApplicablity}, \cref{rem:TSG}, and
  \cref{rem:compare}. 

  When $\alpha\geq \frac12$, the
  solution $u_t$ to~\eqref{eq:burgers} has negative H\"older regularity
  since $u_t \in \Cc^{(\frac12-\alpha)\mm}$ still. If we proceed as if
  the relevant terms have their canonical regularity ($B(u_t)$ in
  the case of the  Time-Shifted Girsanov Method and $J(u)_t$ in the
  case of the Cameron-Martin Theorem), then the regularity condition
  becomes $2(\frac12-\alpha)+\alpha=1-\alpha>0$, which restricts us to
  the setting of $\alpha<1$. One cannot directly apply neither the
  Time-Shifted Girsanov Method nor the Cameron-Martin Theorem directly
  to \eqref{eq:burgers}  when $\alpha\geq \frac12$. We will see that
  we need the  multi-level decomposition to incrementally improve the
  regularity of the solution to the point where we can apply the
  Time-Shifted Girsanov Method to the last level, namely $\RY^{(n)}$
  or $\RX^{(n)}$ depending on the decomposition. Along the way, we
  typically use the Cameron-Martin Theorem to prove
  equivalence of the levels in the decomposition to the appropriate
  Gaussian processes. This is possible since the fed-forward structure
  of the decomposition means that each level is conditionally
  independent from the previous.  When $\alpha \in [\frac34,1)$, there
  is an added complication that the terms to be removed by the change
  of measure only can be defined when integrated against the
  heat semigroup. This necessitates the use of the  Cameron-Martin
  Theorem  rather than the Time-Shifted Girsanov Method.
 
\subsection{Interpreting the Time-Shifted Girsanov Method}
It is tempting to dismiss the manipulations
in~\eqref{eq:timeShiftDrift} as a trick of algebraic manipulation. We
encourage you not to do so.

The standard Girsanov Theorem
compares the two equations of the form in~\eqref{eq:general} and asks when we can
shift the noise realization to another ``allowed'' noise realization
to absorb any differences in the drift terms, namely the $F_t$ in our
setting. Here ``allowed'' means in a way that across all realizations, 
the resulting noise term's distribution stays equivalent  to the
original noise term's distribution. To keep the path measures
equivalent on $[0,T]$, one needs to do this instantaneously  at every
moment of time.

Since the equation for $z_t$ in~\eqref{eq:general} is a
forced linear equation, the linear superposition
principle (a.k.a Duhamel's principle or the variation of constants
formula) applies. It states that we move an impulse injected into the
system at time $s$ to another time $t$ by mapping it under
the linear flow from the tangent space at time $s$ to the tangent space  at time
$t$.
Through this lens, we can interoperate~\eqref{eq:timeShiftDrift} as
a reordering of the impulses injected into the system by $F_s$ over
the interval $s \in [0,T]$. The
impulse injected at time $s$ is moved to time $t=\frac12(T+s)$ via
$e^{-(t-s)A}=e^{-\frac12(T-s)A}$. $\widetilde F_t$ is the resulting
effective impulse at the time $t=\frac12(T+s)$. The  Time-Shifted
Girsanov Method compensates for the time shifted impulse $\widetilde
F_t$ using the forcing via a change of measure. The resulting
$\widetilde F_t$ is more regular than $F_t$ for $t<T$. The
regularizing effect of the semigroup $e^{tA}$ vanishes as we approach
$T$. The requirement $\beta+\gamma +1 >0$ ensures that the
singularity at $t=T$  is sufficiently 
integrable to apply the classical Girsanov Theorem to the resulting
process with its  forcing impulses rearranged, which was denoted by
$\widetilde v$ in the Time-Shifted Girsanov discussion in Section~\ref{sec:TSG}.

\section{A Decomposition of Noise and Smoothness}\label{sec:decomposition}

The idea of decomposing the solution into the sum of terms of 
different regularity is a staple of SPDE analysis dating back at least 
to the pioneering work of Da Prato, Zabczyk, Flandoli,
Debussche, etc. See for instance \cite{DapZab1996,FlaGat1995,DapDeb2002}.
The decomposition 
of the solution $u_t$ into $z_t+v_t$ where $z_t$
solves~\eqref{eq:OU} is the starting point of many arguments. The 
advantage of this decomposition is that $z$, the rougher one of 
the two equations, is very explicit and has all of the direct 
stochastic forcing. In contrast,  typically the equation for $v$ is 
not a stochastic equation (as it contains no It\^o 
integrals). Rather, it is a random equation, and $z$ is contained in some 
of its terms. This leads to $v$ typically being more regular than $z$. 

We will build on these ideas with some important distinctions. The 
most basic will be that because we intend to use the 
Cameron-Martin Theorem/Time-Shifted 
Girsanov Method on each level, we will leave noise in each 
equation. Moreover, we  see that our explorations expose 
additional structures in the equation. In particular, to reach to 
$\alpha$ arbitrarily close to one, we will be required to divide our 
solution into an ever increasing number of pieces as we approach one. 

As mentioned in the introduction, 
there are three key ingredients in our result. The first is that 
products of Gaussian objects can be defined via renormalizations with 
their canonical regularity. 
The second is the Cameron-Martin Theorem/Time-Shifted Girsanov Method. 
These two elements were discussed in the proceeding two 
sections. We now introduce the third component, a noise decomposition 
across scales. With all three central ideas on the table, we can 
sketch the main proofs of this note.

\subsection{Regularity and Existence Times of Solutions}\label{sec:existenceTime}
We begin with a simple lemma which relates the maximal time of
existence of $u_t$ with those of 
 $\Y^{(0)}, \Y^{(1)}, \ldots$, $\Y^{(n)}$, $\RY^{(n)}$ satisfying \eqref{eq:decomp2} and \eqref{eq:system2}, and $\X^{(0)}, \X^{(1)}, \ldots$, $\X^{(n)}$, $\RX^{(n)}$
satisfying \eqref{eq:decomp} and \eqref{eq:system}.
\begin{lemma}\label{lem:localExt} Let $\tau_\infty$ be the maximal existence time
  of $u_t$.
  \begin{enumerate}
 \item If $(\Y^{(0)}, \Y^{(1)}, \ldots, \Y^{(n)}, \RY^{(n)})$ solves
      \eqref{eq:system2}, then $\Y^{(0)}_t, \Y^{(1)}_t, \ldots,
      \Y^{(n)}_t$ exist for all time $t$. Additionally, if
      \eqref{eq:decomp2} holds (or equivalently
      \eqref{eq:Qcondition} holds), then the  maximal time of existence for $\RY^{(n)}$ is
	the same as $\tau_\infty$ almost surely.
    \item If $(\X^{(0)}, \X^{(1)}, \ldots, \X^{(n)}, \RX^{(n)})$ solves
      \eqref{eq:system}, then $\X^{(0)}_t, \X^{(1)}_t, \ldots,
      \X^{(n)}_t$ exist for all time $t$. Additionally, if
      \eqref{eq:decomp} holds (or equivalently
      \eqref{eq:Qcondition} holds), then the  maximal time of existence for $\RX^{(n)}$ is
	the same as $\tau_\infty$ almost surely.
  \end{enumerate}
\end{lemma}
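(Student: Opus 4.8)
The plan is to prove both parts simultaneously, since the only difference between \eqref{eq:system2} and \eqref{eq:system} is whether the drift feeding the $k$-th level is built from the auxiliary processes $\Y^{(0,j)}$ or from the explicit Gaussians $Z^{(0,j)}$. In each decomposition there are two claims: that the non-remainder levels are global, and that the partial sum of all levels together with the remainder again solves \eqref{eq:burgers}, driven by a noise with the law of $QW$, so that the remainder inherits the explosion time $\tau_\infty$.

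\emph{Globality of the auxiliary levels.} For \eqref{eq:system2} one argues by induction on $k$. The base case $\Y^{(0)}=Z^{(0)}$ is the stochastic convolution of \eqref{eq:Z}, which is defined for all $t$ by \cref{rem:StochasticConvolutionReg}. Assuming $\Y^{(0)},\dots,\Y^{(k-1)}$ are global, the drift $B(\Y^{(0,k-1)}_t)-B(\Y^{(0,k-2)}_t)$ is a process adapted to $\mathcal F^{(k-1)}_t$ which on every finite interval has canonical regularity in the sense of \cref{def:canonicalJ} — this is precisely what the estimates of \cref{sec:reg} and the Appendix provide (and for $\alpha<\tfrac12$ is classical) — so
\[
  \Y^{(k)}_t=\int_0^t e^{-(t-s)A}\big(B(\Y^{(0,k-1)}_s)-B(\Y^{(0,k-2)}_s)\big)\,ds+Z^{(k)}_t
\]
has both summands finite for every $t$, whence $\Y^{(k)}$ is global, closing the induction. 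For \eqref{eq:system} no induction is needed: the drift of $\X^{(k)}$ involves only the explicit Gaussian processes $Z^{(0)},\dots,Z^{(k-1)}$, each defined for all $t$, so the analogous mild representation shows directly that every $\X^{(k)}$ is global.

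\emph{The remainder equation.} Set $u:=\Y^{(0,n)}+\RY^{(n)}$ (resp.\ $u:=\X^{(0,n)}+\RX^{(n)}$). Summing the first $n+1$ equations of \eqref{eq:system2}, the drift telescopes — using the conventions $\Y^{(0,-1)}=\Y^{(0,-2)}=0$ — to $B(\Y^{(0,n-1)}_t)$, and the forcings add to $\sum_{k=0}^n Q_k\,dW^{(k)}_t$; adding the equation for $\RY^{(n)}$, the two copies of $B(\Y^{(0,n-1)}_t)$ cancel, $B(\Y^{(0,n)}_t+\RY^{(n)}_t)=B(u_t)$, and by \eqref{eq:Qcondition} the total forcing has covariance $QQ^*$, so by \eqref{eq:QWexpansion} there is a cylindrical Wiener process $\bar W$ with $Q\,d\bar W_t=\sum_{k=0}^n Q_k\,dW^{(k)}_t+\tQ_n\,d\tW^{(n)}_t$ and
\[
  \Ll u_t = B(u_t)\,dt + Q\,d\bar W_t .
\]
Thus $u$ solves \eqref{eq:burgers}; since $\Y^{(0,n)}$ is finite at every time, $u$ and $\RY^{(n)}=u-\Y^{(0,n)}$ explode at exactly the same instant, and by uniqueness for \eqref{eq:burgers} that instant is $\tau_\infty$. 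Conversely, given $u$ with driving noise $QW$, one may split $QW$ according to \eqref{eq:Qcondition}, build the $\Y^{(k)}$ from the independent pieces $W^{(k)}$, and set $\RY^{(n)}:=u-\Y^{(0,n)}$; the same telescoping computation run backwards shows this tuple solves \eqref{eq:system2}, so the identification of existence times is pathwise under this coupling, not merely in law. The $\X$ decomposition is treated verbatim, with $Z^{(0,j)}$ replacing $\Y^{(0,j)}$ in the telescoped drift.

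\emph{Main obstacle.} The substantive point is the bookkeeping of this last paragraph: the telescoping of the drifts and, above all, recombining the independent Itô forcings into a single cylindrical Wiener process with the law of $QW$ via \eqref{eq:Qcondition}, while taking care that the equality in law in \eqref{eq:decomp2}/\eqref{eq:decomp} is realized by an honest pathwise coupling, so that a statement about the maximal existence times themselves — not merely their laws — is justified. By contrast, the globality of the auxiliary levels is routine once the canonical regularity of the feeding drifts, established elsewhere in the paper, is taken as given.
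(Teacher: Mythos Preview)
Your argument is correct and follows the paper's own approach: the $\Y^{(k)}$ (resp.\ $\X^{(k)}$) are global because they are linear equations with drifts that are well defined for all time, and the explosion time of the remainder coincides with that of $u$ because their difference is the globally defined partial sum $\Y^{(0,n)}$. The paper's proof is terser and phrases the second step via the distributional identity $u_t-\sum_i \Y^{(i)}_t\eqdist \RY^{(n)}_t$ rather than through the explicit telescoping and noise recombination you write out; the law-versus-pathwise issue you flag is handled in the remark immediately following the lemma, where $u$ is simply \emph{defined} by the decomposition.
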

\begin{proof} The argument is the same in both cases. We detail the
  first case.
	$\Y^{(0)}_t$, $\Y^{(1)}_t$, $\ldots$, $\Y^{(n)}_t$ exist for all time $t$, 
	because they are linear equations, and the drifts are well-defined for all time.
        If  \eqref{eq:decomp}  holds, we have
        \begin{equation*}
           u_t - \sum_{i=0}^n \Y^{(i)}_t  \eqdist \RY^{(n)}_t,
        \end{equation*}
	so the maximal time of existence for $\RY^{(n)}$ is the same as 
	that of $u$ almost surely. If  \eqref{eq:Qcondition} holds, 
  then \eqref{eq:system2} combined with \eqref{eq:Qcondition}
  implies \eqref{eq:decomp2}.
\end{proof}

\begin{remark}
  Moving forward we will take $u_t$ to be constructed by the
  decomposition in either \eqref{eq:decomp2} or
  \eqref{eq:decomp}. Hence, in light of Lemma~\ref{lem:localExt} , the
  existence time $\tau_\infty$ will be almost surely that of
  $\RY^{(n)}$ and $\RX^{(n)}$. Recalling the definition of $C_T\overline
  \Cc^\delta$ from  Section~\ref{sec:functionSpaces}, we will see in
  \cref{prop:regR} and  \cref{prop:regRX} that $\RY^{(n)}, \RX^{(n)} \in
  C_T\Cc^{\delta\mm}$ for some $\delta >0$, 
  by setting $\RY^{(n)}_t = \death$ for $t \ge \tau_\infty$ and the same for 
  $\RX^{(n)}$.
   Both of these results
  follow from the rather classical existence and uniqueness theory in
  Appendix~\ref{sec:ExistenceRegularity}, once all
  the more singular terms have been properly renormalized to give them meaning. 

\end{remark}

\subsection{Absolute Continuity via Decomposition}
\label{sec:absContiunityViaDecomp}

As already indicated, we will prove  \cref{thm:SimplyMain} using the
decomposition in either \eqref{eq:decomp2}  or \eqref{eq:decomp}. In
the first case, we will prove \eqref{eq:YabsCont} and in the second
case  \eqref{eq:XabsCont}. In both cases, \cref{thm:SimplyMain} will follow from inductively applying the
following lemma.
\begin{lemma}\label{lem:absSum} Let $U$, $U'$, $Z$, and $Z'$ be random
  variables and $\mathcal{G}$ be a $\sigma$-algebra such that $U$ and
  $Z$ are $\mathcal{G}$-measurable. If   $\Law(U) \ll \Law(Z)$ and
  $\Law(U' \mid  \mathcal{G}) \ll \Law(Z' \mid  \mathcal{G})$ almost surely, then
  $\Law(U' + U ) \ll  \Law(Z' + Z )$.
\end{lemma}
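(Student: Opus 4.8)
The plan is to disintegrate everything with respect to $\mathcal{G}$ and then reassemble. First I would fix a measurable set $A$ with $\Law(Z'+Z)(A)=0$ and aim to show $\Law(U'+U)(A)=0$. Since $Z$ is $\mathcal{G}$-measurable, write, using the tower property,
\begin{align*}
  0=\Pb(Z'+Z\in A)=\EE\big[\,\Pb(Z'+Z\in A\mid\mathcal{G})\,\big]
  =\EE\big[\,\Law(Z'\mid\mathcal{G})(A-Z)\,\big],
\end{align*}
where $A-z=\{x:x+z\in A\}$ and the inner expression makes sense because $z\mapsto A-z$ is jointly measurable, so $\omega\mapsto\Law(Z'\mid\mathcal{G})(\omega)(A-Z(\omega))$ is $\mathcal{G}$-measurable. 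Nonnegativity of the integrand forces
\begin{align*}
  \Law(Z'\mid\mathcal{G})(A-Z)=0\qquad\text{almost surely.}
\end{align*}

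Next I would invoke the hypothesis $\Law(U'\mid\mathcal{G})\ll\Law(Z'\mid\mathcal{G})$ a.s. For a.e.\ fixed $\omega$, the set $A-Z(\omega)$ is $\Law(Z'\mid\mathcal{G})(\omega)$-null, hence $\Law(U'\mid\mathcal{G})(\omega)$-null; that is, $\Law(U'\mid\mathcal{G})(A-Z)=0$ almost surely. Taking expectations and running the tower property in reverse (again using that $U$ is $\mathcal{G}$-measurable),
\begin{align*}
  \Pb(U'+U\in A)=\EE\big[\,\Law(U'\mid\mathcal{G})(A-U)\,\big].
\end{align*}
This is not yet zero: I have killed $A-Z$, not $A-U$, under $\Law(U'\mid\mathcal{G})$. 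Here is where the remaining hypothesis $\Law(U)\ll\Law(Z)$ enters, but it must be routed through $\mathcal{G}$.

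The cleanest route is to avoid needing $A-U$ directly: introduce the $\mathcal{G}$-measurable sets $N_Z=\{\omega:\Law(Z'\mid\mathcal{G})(\omega)(A-Z(\omega))=0\}$ and $N_U=\{\omega:\Law(U'\mid\mathcal{G})(\omega)(A-U(\omega))=0\}$, and note that the a.s.\ absolute continuity gives $\{\omega:\Law(U'\mid\mathcal{G})(\omega)\ll\Law(Z'\mid\mathcal{G})(\omega)\}\subseteq$ (a full set on which) $N_Z$-membership of $\omega$ implies $\Law(U'\mid\mathcal{G})(\omega)(A-Z(\omega))=0$. So I actually want the set where $A-Z(\omega)$ is $\Law(Z'\mid\mathcal{G})(\omega)$-null \emph{as a function of the pair} $(Z(\omega),\omega)$, and I need $U(\omega)$ to land in the same null region. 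Concretely: define the measurable function $g(\omega)=\Law(Z'\mid\mathcal{G})(\omega)\big(A-Z(\omega)\big)$; I showed $g=0$ a.s., i.e.\ $\Pb(g\neq0)=0$, i.e.\ (since $g$ depends on $\omega$ only through $\mathcal{G}$ and through $Z(\omega)$, with $Z$ $\mathcal{G}$-measurable) the $\mathcal{G}$-measurable event $\{g\neq0\}$ is $\Pb$-null. Now consider $\tilde g(\omega)=\Law(Z'\mid\mathcal{G})(\omega)\big(A-U(\omega)\big)$. I claim $\{\tilde g\neq 0\}$ is also $\Pb$-null: this is exactly the statement $\Law(U)\ll\Law(Z)$ transported along the $\mathcal{G}$-measurable kernel $(\omega,x)\mapsto\Law(Z'\mid\mathcal{G})(\omega)(A-x)$ — one checks $\EE[\one\{\tilde g\neq0\}]=\int \Pb(\Law(Z'\mid\mathcal{G})(\cdot)(A-x)\neq0\mid\text{kernel})\,\Law(U)(dx)$ and uses that the corresponding quantity with $\Law(Z)(dx)$ vanishes, which is the content of $g=0$ a.s. Then $\Law(U'\mid\mathcal{G})(A-U)=0$ a.s.\ by the a.s.\ absolute continuity of the conditional laws, and integrating gives $\Pb(U'+U\in A)=0$.

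The main obstacle is precisely this last transport step: the cheap ``null under $Z$ $\Rightarrow$ null under $U$'' move is a statement about the \emph{unconditional} laws of $U$ and $Z$, whereas what I need to be null is an event built from the \emph{conditional} law $\Law(U'\mid\mathcal{G})$ evaluated at a set depending on $U$. Because $U$ and $Z$ are both $\mathcal{G}$-measurable and the conditional kernel $\Law(Z'\mid\mathcal{G})$ is a fixed $\mathcal{G}$-measurable object, everything can be written as $H(\omega, W(\omega))$ with $W\in\{U,Z\}$ and $H$ a measurable function that is $\mathcal{G}$-measurable in its first slot; the needed implication is then genuinely just $\Law(U)\ll\Law(Z)$ applied slotwise, but making this rigorous requires a careful measurable-selection/Fubini argument for regular conditional probabilities (which exist here since the spaces are Polish). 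I would isolate that as a short sub-lemma: \emph{if $K(\omega,\cdot)$ is a $\mathcal{G}$-measurable kernel, $\phi$ a nonnegative measurable function with $\phi(\omega)=0$ a.s., $\phi(\omega)=\int k(\omega,x)\,\delta_{Z(\omega)}(dx)$ form, and $\Law(U)\ll\Law(Z)$ with $U,Z$ $\mathcal{G}$-measurable, then the analogous quantity with $Z$ replaced by $U$ also vanishes a.s.} Granting that sub-lemma, the rest is the routine tower-property bookkeeping sketched above.
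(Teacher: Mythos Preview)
Your instinct that the ``transport step'' is the crux is exactly right, but the sub-lemma you propose to handle it is false, and in fact the lemma \emph{as stated} is false. Take $\Omega=\{H,T\}$ with uniform probability and $\mathcal{G}=2^\Omega$. Set $Z(H)=Z'(H)=U'(H)=1$, $Z(T)=Z'(T)=U'(T)=0$, and $U=1-Z$. Then $U,Z$ are $\mathcal{G}$-measurable with $\Law(U)=\Law(Z)$, and $\Law(U'\mid\mathcal{G})=\Law(Z'\mid\mathcal{G})=\delta_{Z'}$, so both hypotheses hold. But $Z'+Z\in\{0,2\}$ while $U'+U\equiv 1$, so $\Law(U'+U)\perp\Law(Z'+Z)$. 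The failure is precisely that the $\mathcal{G}$-measurable kernel $\omega\mapsto\Law(Z'\mid\mathcal{G})(\omega)$ moves \emph{with} $\omega$, so knowing it annihilates $A-Z(\omega)$ says nothing about $A-U(\omega)$ at the same $\omega$; the marginal relation $\Law(U)\ll\Law(Z)$ cannot decouple these.

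Your attempted disintegration ``$\EE[\one\{\tilde g\neq0\}]=\int \Pb(\ldots)\,\Law(U)(dx)$'' is where the error enters: since $U$ is $\mathcal{G}$-measurable, conditioning on $U=x$ does not freeze the kernel $\Law(Z'\mid\mathcal{G})$, and the two $\omega$-dependences cannot be separated. The paper's own proof glosses over the same point when it asserts that $x\mapsto\EE\big(\Pb(Z'\in D-Z\mid\mathcal{G})\mid Z=x\big)$ and $x\mapsto\EE\big(\Pb(Z'\in D-U\mid\mathcal{G})\mid U=x\big)$ coincide; in the counterexample they do not.

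The statement becomes true (and both arguments become correct) under the extra hypothesis that $Z'$ is independent of $\mathcal{G}$, which is exactly what holds in every application in the paper: there $Z'=Z_t^{(k)}$ or $\tZ_t^{(n)}$ is driven by a Wiener process independent of $\mathcal{F}_t^{(k-1)}$. With that hypothesis $\Law(Z'\mid\mathcal{G})=\mu$ is a fixed measure, $\kappa_{Z'}(\omega,A-Z(\omega))=\mu(A-Z(\omega))$ is a Borel function of $Z(\omega)$ alone, and the transport via $\Law(U)\ll\Law(Z)$ is the one-line step you wanted. So: add that hypothesis, drop the sub-lemma, and your outline becomes a clean proof.
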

\begin{proof}[Proof of \cref{lem:absSum}]
  Let $D$ be any measurable set with $\Pb(Z'+Z \in D)=0$.
  Then since
  \[
    0=\Pb(Z'+Z \in D) = \EE(\Pb(Z' \in D - Z \mid  Z ))
    = \EE(\EE ( \Pb(Z' \in D - Z \mid  \mathcal{G} )\mid  Z)),
  \]
  we conclude that there exists a set $E$ with $\Pb(Z \in E)=1$ such that
  \[
    \EE(\Pb(Z' \in D - Z \mid  \mathcal{G} )\mid  Z = x)
    = \EE(\Pb(Z' \in D - U \mid  \mathcal{G} )\mid  U = x)
    = 0,
  \]
  for all $x \in E$. Since $\Law(U) \ll \Law(Z)$ and $\Law(U' \mid 
  \mathcal{G} ) \ll \Law(Z' \mid  \mathcal{G})$ almost surely,
  we have, respectively, 
  that $\Pb(U \in E)=1$ and  
  \[
    \EE ( \Pb(U' \in D - U \mid  \mathcal{G} )\mid  U = x)
    = 0,
  \]
  for all $x \in E.$ Combining these facts, we see that
  \begin{align*}
    \Pb(U'+U \in D) = \EE(\EE (\Pb(U' \in D - U \mid  \mathcal{G} )\mid  U),
    U \in E) = 0,
  \end{align*}
  which completes the proof.
\end{proof}

\begin{corollary}\label{cor:absXY}
  Assume that, for some $n$, the system of equations~\eqref{eq:system2}
  (respectively \eqref{eq:system}) is well defined and satisfies the
  absolute continuity conditions given in \eqref{eq:YabsCont}
  (respectively  \eqref{eq:XabsCont}). Then in the first case
  \begin{align*}
    \Law\left(\RY_t^{(n)} +\sum_{k=0}^n \Y_t^{(k)} \spc{\Big|} \tau_\infty >t \right)
      \ll   \Law\left(\tZ_t^{(n)} +\sum_{k=0}^n Z_t^{(k)}\right)
  \end{align*}
 holds, and in the second
   \begin{align*}
    \Law\left(\RX_t^{(n)} +\sum_{k=0}^n \X_t^{(k)}\spc{\Big|} \tau_\infty >t \right)
      \ll   \Law\left(\tZ_t^{(n)} +\sum_{k=0}^n Z_t^{(k)}\right)
   \end{align*}
   holds.
 \end{corollary}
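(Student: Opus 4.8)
The plan is to derive \cref{cor:absXY} from \cref{lem:absSum} by an induction over the levels of the decomposition, followed by one further application that folds in the remainder term together with the survival event $\{\tau_\infty>t\}$. We describe the argument for the $\Y$-expansion governed by \eqref{eq:system2} and satisfying \eqref{eq:YabsCont}; the $\X$-expansion governed by \eqref{eq:system} and satisfying \eqref{eq:XabsCont} is handled word for word after replacing $\Y^{(i)}$ by $\X^{(i)}$ and $\RY^{(n)}$ by $\RX^{(n)}$, since the two systems share the same feed-forward structure.

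\textbf{Partial sums over the first $n$ levels.} First I would prove by induction on $k=0,1,\dots,n$ that $\Law\big(\sum_{i=0}^{k}\Y^{(i)}_t\big)\ll\Law\big(\sum_{i=0}^{k}Z^{(i)}_t\big)$. The base case $k=0$ is the identity $\Y^{(0)}_t=Z^{(0)}_t$. For the inductive step, fix $1\le k\le n$ and apply \cref{lem:absSum} with $\mathcal{G}=\mathcal{F}_t^{(k-1)}$, with $U=\sum_{i=0}^{k-1}\Y^{(i)}_t$ and $Z=\sum_{i=0}^{k-1}Z^{(i)}_t$, and with $U'=\Y^{(k)}_t$, $Z'=Z^{(k)}_t$. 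Here $U$ and $Z$ are $\mathcal{G}$-measurable because the feed-forward structure of \eqref{eq:system2} makes $\Y^{(i)}_t$ and $Z^{(i)}_t$ measurable with respect to $\mathcal{F}_t^{(i)}$ for each $i$. The first hypothesis of \cref{lem:absSum}, namely $\Law(U)\ll\Law(Z)$, is the inductive hypothesis. For the second hypothesis, the first line of \eqref{eq:YabsCont} gives $\Law(\Y^{(k)}_t\mid\mathcal{F}_t^{(k-1)})\sim\Law(Z^{(k)}_t)$ a.s., while $Z^{(k)}_t$ is a functional of $W^{(k)}$ and therefore independent of $\mathcal{F}_t^{(k-1)}$, so $\Law(Z^{(k)}_t\mid\mathcal{F}_t^{(k-1)})=\Law(Z^{(k)}_t)$ a.s.; combining, $\Law(U'\mid\mathcal{G})\ll\Law(Z'\mid\mathcal{G})$ a.s. \cref{lem:absSum} then gives $\Law\big(\sum_{i=0}^{k}\Y^{(i)}_t\big)\ll\Law\big(\sum_{i=0}^{k}Z^{(i)}_t\big)$, completing the induction; taking $k=n$ produces $\Law\big(\sum_{i=0}^{n}\Y^{(i)}_t\big)\ll\Law\big(\sum_{i=0}^{n}Z^{(i)}_t\big)$.

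\textbf{The remainder term.} Let $D$ be measurable with $\Pb\big(\tZ^{(n)}_t+\sum_{i=0}^n Z^{(i)}_t\in D\big)=0$; it is enough to show $\Pb\big(\RY^{(n)}_t+\sum_{i=0}^n\Y^{(i)}_t\in D,\ \tau_\infty>t\big)=0$, which is equivalent to the asserted $\Law(\,\cdots\mid\tau_\infty>t)\ll\Law(\cdots)$. Since $\tZ^{(n)}_t$ is a functional of $\tW^{(n)}$ it is independent of $\mathcal{F}_t^{(n)}$, and $\sum_{i=0}^n Z^{(i)}_t$ is $\mathcal{F}_t^{(n)}$-measurable; conditioning on $\mathcal{F}_t^{(n)}$ and using that the integrand is non-negative shows that the set $E=\{x:\Pb(\tZ^{(n)}_t\in D-x)=0\}$ has full measure under $\Law\big(\sum_{i=0}^n Z^{(i)}_t\big)$, hence, by the previous step, full measure under $\Law\big(\sum_{i=0}^n\Y^{(i)}_t\big)$; equivalently $\Pb\big(\tZ^{(n)}_t\in D-\sum_{i=0}^n\Y^{(i)}_t\mid\mathcal{F}_t^{(n)}\big)=0$ a.s. Now I would retrace the proof of \cref{lem:absSum} with $\mathcal{G}$ replaced by $\sigma\big(\mathcal{F}_t^{(n)},\one_{\{\tau_\infty>t\}}\big)$ and the indicator $\one_{\{\tau_\infty>t\}}$ carried through: the second line of \eqref{eq:YabsCont} says that on $\{\tau_\infty>t\}$ the conditional law of $\RY^{(n)}_t$ given this $\sigma$-algebra is absolutely continuous with respect to $\Law(\tZ^{(n)}_t)$ a.s., and since $D-\sum_{i=0}^n\Y^{(i)}_t$ is a $\Law(\tZ^{(n)}_t)$-null set almost surely, we obtain $\one_{\{\tau_\infty>t\}}\Pb\big(\RY^{(n)}_t\in D-\sum_{i=0}^n\Y^{(i)}_t\mid\mathcal{F}_t^{(n)},\one_{\{\tau_\infty>t\}}\big)=0$ a.s.; taking expectations yields $\Pb\big(\RY^{(n)}_t+\sum_{i=0}^n\Y^{(i)}_t\in D,\ \tau_\infty>t\big)=0$, which is the claim.

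\textbf{Anticipated difficulty.} The two inductive passes are essentially bookkeeping. The point requiring care is lining up the conditioning: \cref{lem:absSum} wants $\Law(U'\mid\mathcal{G})\ll\Law(Z'\mid\mathcal{G})$, whereas \eqref{eq:YabsCont} and \eqref{eq:XabsCont} only provide absolute continuity of the relevant conditional law against the \emph{unconditional} Gaussian reference law; these coincide exactly because the driving noise of the reference process ($W^{(k)}$, respectively $\tW^{(n)}$) is independent of the conditioning filtration. In the remainder step one must in addition thread the event $\{\tau_\infty>t\}$ through the argument of \cref{lem:absSum}, which is harmless only because that event constrains the $\RY^{(n)}$ (respectively $\RX^{(n)}$) factor and not its Gaussian counterpart $\tZ^{(n)}$ — keeping this alignment correct, rather than any analytic estimate, is the one subtle part.
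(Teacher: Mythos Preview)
Your proposal is correct and follows essentially the same approach as the paper: both arguments proceed by induction on the levels, invoking \cref{lem:absSum} at each step with $\mathcal{G}=\mathcal{F}_t^{(k-1)}$ and using the independence of $Z^{(k)}_t$ from $\mathcal{F}_t^{(k-1)}$ to match the conditional and unconditional reference laws, then handle the remainder by applying \cref{lem:absSum} once more with the event $\{\tau_\infty>t\}$ folded in. Your treatment of the remainder step is in fact more explicit than the paper's, which simply notes that \cref{lem:absSum} is applied ``to the measures conditioned on $\{\tau_\infty>t\}$'' without spelling out the null-set argument you give.
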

 \begin{proof}[Proof of~\cref{cor:absXY}]
   The proof in the two cases is the same. We give the first.
   Since $\Law(\Y_t^{(0)})=\Law(Z_t^{(0)})$ and
   $\Law(\Y_t^{(1)}\mid \mathcal{F}_t^{(0)}) \sim
   \Law(Z_t^{(1)})=\Law(Z_t^{(1)}\mid \mathcal{F}_t^{(0)} )$ almost surely
   where
   $Y_t^{(0)}$ and $Z_t^{(0)}$ are adapted to $\mathcal{F}_t^{(0)}$,
   \cref{lem:absSum} implies that $\Law(\Y_t^{(0)}+\Y_t^{(1)}) \sim
   \Law(Z_t^{(0)}+Z_t^{(1)})$. We proceed inductively. If we have shown
   that $\Law(\sum_{k=0}^m \Y_t^{(k)}) \sim \Law(\sum_{k=0}^m
   Z_t^{(k)})$, then because $\sum_{k=0}^m \Y_t^{(k)}$ and
   $\sum_{k=0}^m Z_t^{(k)}$ are adapted to $\mathcal{F}_t^{(m)}$, the
   fact that  
   $$\Law(\Y_t^{(m)}\mid \mathcal{F}_t^{(m-1)}) \sim
   \Law(Z_t^{(m)})=\Law(Z_t^{(m)}\mid \mathcal{F}_t^{(m-1)} )$$ 
   almost surely implies the
   next step in the induction again using  \cref{lem:absSum} . The
   final step in the proof, namely the step from $ \Law(\sum_{k=0}^n \Y_t^{(k)}) \sim
   \Law(\sum_{k=0}^n Z_t^{(k)})$ 
      uses the condition that either
   to the result quoted in the
   corollary, proceeds analogously to the previous induction steps,
   except that one 
   $\Law(\RY_t^{(n)}\mid  \tau_\infty>t,\mathcal{F}_t^{(n-1)})$ or
   $\Law(\RX_t^{(n)}\mid \tau_\infty>t,\mathcal{F}_t^{(n-1)})$ is absolutely continuous
   with respect to $\Law(\tZ_t^{(m)})$. Here we have applied
   \cref{lem:absSum} to the measures conditioned on $\{\tau_\infty>t\}$.
 \end{proof}

 The proof of the following corollary is completely analogous to that
 of \cref{cor:absXY}.
 \begin{corollary}
   Assume that, for some $n$, the system of equations~\eqref{eq:system2}
and \eqref{eq:system} is well defined with \eqref{eq:XYabsCont}
holding. Then
\begin{align*}
  \Law\left(\sum_{k=0}^n \Y_t^{(k)}\right)
    \sim \Law\left(\sum_{k=0}^n \X_t^{(k)}\right).
\end{align*}
 \end{corollary}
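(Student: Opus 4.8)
The plan is to reprise, essentially verbatim, the inductive argument from the proof of \cref{cor:absXY}, but with the Gaussian comparison processes $Z_t^{(k)}$ replaced throughout by the processes $\X_t^{(k)}$, and with the level-by-level equivalences \eqref{eq:XYabsCont} playing the role that \eqref{eq:YabsCont} played there. Since the first line of \eqref{eq:XYabsCont} is a two-sided statement ($\sim$, not merely $\ll$), the conclusion will likewise be a two-sided equivalence; this is obtained simply by applying \cref{lem:absSum} in each of the two directions at every step.

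First I would record the base case. The top levels of \eqref{eq:system2} and \eqref{eq:system} are both of the form $\Ll\,\cdot = Q_0\,dW^{(0)}_t$ driven by the same Wiener process, so $\Y_t^{(0)}=\X_t^{(0)}$ and in particular $\Law(\Y_t^{(0)})=\Law(\X_t^{(0)})$; alternatively one invokes the $k=0$ instance of \eqref{eq:XYabsCont} with $\mathcal{F}_t^{(-1)}$ read as the trivial $\sigma$-algebra. Then I would run the induction: assuming $\Law\big(\sum_{k=0}^{m-1}\Y_t^{(k)}\big)\sim\Law\big(\sum_{k=0}^{m-1}\X_t^{(k)}\big)$, I note that the feed-forward structure of both systems (the drift in the $m$-th line of \eqref{eq:system2} and of \eqref{eq:system} is adapted to $\mathcal{F}_t^{(m-1)}$) makes both partial sums $\sum_{k=0}^{m-1}\Y_t^{(k)}$ and $\sum_{k=0}^{m-1}\X_t^{(k)}$ measurable with respect to $\mathcal{F}_t^{(m-1)}$. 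Applying \cref{lem:absSum} with $\mathcal{G}=\mathcal{F}_t^{(m-1)}$, $U=\sum_{k=0}^{m-1}\Y_t^{(k)}$, $Z=\sum_{k=0}^{m-1}\X_t^{(k)}$, $U'=\Y_t^{(m)}$, $Z'=\X_t^{(m)}$, and using the first line of \eqref{eq:XYabsCont} in both directions, yields $\Law\big(\sum_{k=0}^{m}\Y_t^{(k)}\big)\sim\Law\big(\sum_{k=0}^{m}\X_t^{(k)}\big)$. Iterating from $m=1$ up to $m=n$ gives the stated result.

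I do not expect a genuine obstacle here. Unlike in \cref{cor:absXY}, no remainder term $\RY^{(n)}$ or $\RX^{(n)}$ and no conditioning on $\{\tau_\infty>t\}$ enters the finite-sum statement, so the argument is a clean $n$-step induction citing \cref{lem:absSum}. The only point that deserves a word of care is the $\mathcal{F}_t^{(m-1)}$-measurability of the partial sums, which is exactly the ``fed-forward'' observation recorded just after \eqref{eq:Z}; granting that, each inductive step reduces to a direct application of \cref{lem:absSum}.
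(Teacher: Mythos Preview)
Your proposal is correct and matches the paper's approach exactly: the paper simply states that the proof is ``completely analogous to that of \cref{cor:absXY}'', and your write-up spells out precisely that analogy, including the observation that the two-sided $\sim$ in \eqref{eq:XYabsCont} yields two-sided equivalence by applying \cref{lem:absSum} in each direction at every step.
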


\subsection{Some informal computations}\label{sec:motive}
With tools above, we can informally describe how to 
choose the number of levels and 
the $Q_i$'s in the decomposition \eqref{eq:system2} and \eqref{eq:system}.
We focus on \eqref{eq:system2} because the equation structure 
of $\Y^{(i)}$ and the remainder $\RY^{(n)}$ is aligned, which
makes discussions more intuitive.
But the intuition is the same for \eqref{eq:system2}, and later 
we will see that the result for \eqref{eq:system2} is actually
more straightforward to prove rigorously. 
For simplicity, we do not distinguish between the Cameron-Martin Theorem
or the Time-Shifted Girsanov Method, since they require
the same condition on the canonical regularity, as discussed in \cref{rem:compare}.

Building on the preliminary discussion in \cref{sec:Preliminary},
the first idea is that we assume every Gaussian term, 
i.e. each of the $X^{(0,i)}$, $B(X^{(0,i)})$
or $J(X^{(0,i)})$, in \eqref{eq:system2} can be well-defined with
its canonical regularity. 
As a reminder, it means that $\Y^{(i)}$
has the same H\"older regularity as $Z^{(i)}$, and the $B$ terms
(and $J$ terms) have the canonical regularity following \cref{rem:prod} and
\cref{rem:JClassicalProdReg}. The second idea is that we want $X^{(i)}$ to
become smoother as $i$ increases, so we can take $Q_i \simQ A^{\alpha_i}$
so that $Z^{(i)} \in C_T \Cc^{(\frac{1}{2} - \alpha_i)\mm} $, 
where $\alpha_i$ decreases as $i$ increases. 
It is straightforward to show (later) that we can choose
$Q_i$, $\tQ_n$ satisfying \eqref{eq:Qcondition} with $\alpha_0 = \alpha$.

When $\alpha < \frac{1}{2}$, as discussed in \cref{sec:rangeOfApp}, 
we can directly apply the Time-Shifted Girsanov Method
to \eqref{eq:burgersMild} and obtain $\Law(u_t) \ll \Law(z_t)$. In fact,
since the solutions can be seen to be almost surely global with finite
control of some moments of the norm, one can show that $\Law(u_t) \sim \Law(z_t)$.

When $\alpha \ge \frac{1}{2}$, 
then $u_t \in \Cc^{(\frac{1}{2} - \alpha)\mm}$ is a distribution 
with its canonical regularity, so
we cannot make sense of $u_t^2$ classically. 
We first consider $u_t = \Y^{(0)}_t + \Y^{(1)}_t + \RY^{(1)}_t$ 
in \eqref{eq:system2} for $t < \tau_\infty$. Clearly, $\Y^{(0)} = Z^{(0)}$.
To apply the Cameron-Martin Theorem or the Time-Shifted Girsanov Method on
$\Y^{(1)}$ to show \eqref{eq:YabsCont}, 
the regularity condition is $2\alpha_0 - \alpha_1 < 1.$
On the other hand, note that the remainder $\RY^{(1)}$ is not a Gaussian object.
We want $\RY^{(1)}_t$ to have positive regularity so that $B(\RY^{(1)}_t)$
is well-defined, so we can take $\tQ_1 \simQ A^{\beta_1}$
for some $\beta_1 < \frac{1}{2}.$ 
For convenience of computing regularity, 
we additionally impose $\alpha_1 < \frac{1}{2}$ so that $\tZ^{(1)}$
has positive regularity.
Assume $\RY^{(1)}$ also has its canonical regularity as $\tZ^{(1)}$.
Then $B(\Y^{(0,1)}_t, \RY^{(1)}_t)$ is well-defined classically
if $1 - \beta_1 - \alpha_0 > 0$, and the roughest term in the drift is 
\[
  B(\Y^{(0,1)}) - B(\Y^{(0)}) = 2B(\Y^{(0)}, \Y^{(1)}) 
  \in C_T \Cc^{(-\frac12 - \alpha_0)\mm},
\]
so the Time-Shifted Girsanov condition for $\RY^{(1)}$
is $\frac{1}{2} - \alpha_0 + \beta_1 > 0.$ 
By collecting the above constraints on $\alpha_0$, $\alpha_1$, and $\beta_1$,
we see that as long as $\alpha = \alpha_0 < \frac{3}{4}$,
we can find $\alpha_1$ and $\beta_1$ such that all constraints are satisfied
and 
\begin{align*}
   \Law(u_t\mid  t < \tau_\infty ) &= \Law(\Y^{(0)}_t + \Y^{(1)}_t +
 \RY^{(1)}_t\mid  t < \tau_\infty)\\
 &
 \ll \Law(Z^{(0)}_t + Z^{(1)}_t + \tZ^{(1)}_t) = \Law(z_t) 
\end{align*}
follows from \cref{cor:absXY}.

\begin{remark}
  The careful reader may notice that for $\Y^{(1)}$ and $\RY^{(1)}$ 
  to have their canonical regularity, additional constraints on
  $\alpha_1$ and $\beta_1$ are needed to ensure that the stochastic
  forcing is rougher than the drifts, 
  but one can check and
  will see later that those constraints are implied by the 
  constraints for the Cameron-Martin Theorem/Time-Shifted Girsanov Method.
\end{remark}

The previous informal computation is based on 
the decomposition \eqref{eq:system2} when $n = 1$.
Next, we consider the case $n = 2$, i.e., 
$u_t = \Y^{(0)}_t + \Y^{(1)}_t + \Y^{(2)}_t + \RY^{(2)}_t$.
Again, based on the same reasoning, we take $Q_i \simQ A^{\alpha_i}$, 
$i = 0, 1$ and $\tQ_2 \simQ A^{\beta_2}$, and we assume that $\RY^{(2)}$
has its canonical regularity as $\tZ^{(2)}$ 
and $\alpha_2 < \frac{1}{2} \le \alpha_1 < \alpha_0$ for convenience.
For the same reason as in the case $n = 1$ above, we need $\beta_2 < \frac{1}{2}$,
$2\alpha_0 - \alpha_1 < 1$, $1 - \beta_2 - \alpha_0 > 0$, 
and $\frac{1}{2} - \alpha_0 + \beta_2 > 0.$
Similarly, to show \eqref{eq:YabsCont} on $\Y^{(2)}$, we need
additionally $\alpha_0 + \alpha_1 - \alpha_2 < 1$. 
However, one can check that the above constraints on $\alpha_i, \beta_2$
give no solutions if $\alpha_0 \ge \frac{3}{4}$, and the bottleneck is the constraint
$1 - \beta_2 - \alpha_0 > 0$ for $B(\Y^{(0,2)}_t, \RY^{(2)}_t)$
to be classically well-defined. To resolve this issue, we
employ the Da Prato-Debussche trick of interpreting 
$\RY^{(2)} = \eta^{(2)} + \rho^{(2)}$, where $\eta^{(2)}$ and $\rho^{(2)}$
satisfy
\[
  \Ll \eta^{(2)}_t = \tQ_n\,dW^{(n)}_t,
\]
\begin{multline*}
  \Ll \rho^{(2)}_t =  B(\Y^{(0,2)}_t) - B(\Y^{(0,1)}_t) 
   + 2 B(\Y^{(0,2)}_t, \eta^{(2)}_t) \\+ 2B(\X^{(0,2)}_t, \rho^{(2)}_t)
     + B(\rho^{(2)}_t) + 2B(\rho^{(2)}_t, \eta^{(2)}_t) + B(\eta^{(2)}_t). 
\end{multline*}
with $\eta^{(2)}_0 = 0$ and $\rho^{(2)}_0 = u_0$.
Then we can interpret
\[
  B(\Y^{(0,2)}_t, \RY^{(2)}_t) = 
  B(\Y^{(0,2)}_t, \eta^{(2)}_t) + B(\Y^{(0,2)}_t, \rho^{(2)}_t),
\]
where $B(\Y^{(0,2)}, \eta^{(2)})$ is a well-defined Gaussian object,
and one can check and will see as long as $\alpha_0 < 1$,
$B(\Y^{(0,2)}, \rho^{(2)})$ is classically well-defined.
Now we do not need the constraint $1 - \beta_2 - \alpha_0 > 0$, and
the remaining constraints can be satisfied 
as long as $\alpha = \alpha_0 < \frac{5}{6}.$

Following the heuristics above, we can increase the number $n$ of
levels of the decomposition \eqref{eq:system2} to
obtain the main result up to $\alpha < 1$. The informal computations
above will be justified in a clean and rigorous way in the next
section.

\subsection{Basic assumptions on factorization of noise into levels}

We now fix additional structure in the  $\Y$ and $\X$ systems
(equations \eqref{eq:system2} and \eqref{eq:system} respectively) to allow us to
better characterize the  regularity of the different levels. We assume
that 	there exists a sequence of real numbers 
\begin{equation}
  \label{eq:alpha}
  \beta_n < \alpha_n < \alpha_{n-1} < \ldots < \alpha_0 = \alpha,
\end{equation}
with
\begin{equation}
  \label{eq:condExtra}
  \alpha_n < \frac{1}{2} \le \alpha_{n-1}
      \end{equation}
such that
\begin{equation}
  \label{eq:Qi}
  Q_i \simQ A^{\alpha_i/2}\quad\text{and}\quad \tQ_n  \simQ A^{\beta_n/2}\,.
\end{equation}
We will see that the effect of the assumption in \eqref{eq:alpha} is
to make the levels in \eqref{eq:system2} and \eqref{eq:system} have
increasing spatial regularity as $k$ increases. Conditions
\eqref{eq:alpha}-\eqref{eq:Qi} will be our standing structural
assumption on the noise. 

\begin{remark}[Importance of Condition \eqref{eq:condExtra}]
At first sight, Condition \eqref{eq:condExtra} may seem unnecessary for our main result.
However, it is critical mainly for two reasons:
\begin{enumerate}
	\item In later arguments, Condition \eqref{eq:condExtra} gives a clean break 
	between terms that are functions (positive H\"older regularity) 
	and those that are distributions (non-positive H\"older regularity). 
	In particular, it makes computations for regularity straightforward. 

	\item It makes sure that the drift in the remainder equation,
      $\RY$ or $\RX$, can be defined without being convolved with the heat
    kernel. This in turn allows us to apply the Time-Shifted Girsanov
    Method. This is critical, as the reminder equations have drift
    terms which depend on the solution of the equation. As such,
    we cannot apply the Cameron-Martin Theorem, and tools based
    on the Girsanov Theorem seem the only option. 
\end{enumerate}
\end{remark}

\begin{remark}[First Note on $\alpha < 1$]
	Throughout this note, we implicitly assume $\alpha_0 = \alpha < 1$,
	because in Appendix \ref{sec:Gaussian}, we only construct
	relevant Gaussian objects up to the case of $\alpha < 1$. 
	However, if the condition $\alpha_0 < 1$ is explicitly stated
	in the assumptions of later results, it highlights another 
	non-trivial dependence on this condition.
\end{remark}

The following lemma shows that one can choose
the $\{\alpha_i : i = 0,\dots,n\}$ and $\beta_n$ so that in addition to
\eqref{eq:alpha}-\eqref{eq:Qi} the condition in 
\eqref{eq:Qcondition} holds, which implies that the sum of the stochastic
forcing in \eqref{eq:system2} or \eqref{eq:system} has the same
distribution as that of the Burgers equation in \eqref{eq:burgers}.

\begin{lemma}\label{lem:param}
  For any sequence of real numbers as in \eqref{eq:alpha} and
  any choice of operator $Q$ from \eqref{eq:burgers}
  with  $Q\simQ A^{-\beta/2}$, there exist operators
  $Q_0, Q_1, \ldots, Q_n, \tQ_n$ satisfying \eqref{eq:Qi} and \eqref{eq:Qcondition}.
\end{lemma}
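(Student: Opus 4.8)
The plan is to construct the operators $Q_0,\dots,Q_n,\tQ_n$ directly on the common eigenbasis, exploiting the fact that "$\simQ$" is a spectral condition. Let $\{e_k\}_{k\in\Z\setminus\{0\}}$ be the common eigenbasis of $A$ and $Q$, with $Ae_k=\lambda_k e_k$ where $\lambda_k\eqOrder k^2$, and write $QQ^*e_k=q_k e_k$. The assumption $Q\simQ A^{\alpha/2}$ means $q_k\eqOrder \lambda_k^{\alpha}$, i.e. $q_k=c_k\lambda_k^{\alpha}$ with $c_k$ bounded above and below by positive constants. Since all operators are to be diagonal in this basis, condition \eqref{eq:Qcondition} reduces to the scalar identity, for each $k$,
\begin{equation*}
  q^{(0)}_k+q^{(1)}_k+\cdots+q^{(n)}_k+\tilde q_k = q_k,
\end{equation*}
where $Q_iQ_i^*e_k=q^{(i)}_k e_k$ and $\tQ_n\tQ_n^*e_k=\tilde q_k e_k$, while \eqref{eq:Qi} requires $q^{(i)}_k\eqOrder\lambda_k^{\alpha_i}$ and $\tilde q_k\eqOrder\lambda_k^{\beta_n}$.

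First I would handle the high-frequency regime. Because $\beta_n<\alpha_n<\cdots<\alpha_0=\alpha$, for $\lambda_k$ large the putative "budget" terms $\lambda_k^{\alpha_i}$ and $\lambda_k^{\beta_n}$ satisfy $\lambda_k^{\alpha_n}+\cdots+\lambda_k^{\alpha_0}+\lambda_k^{\beta_n}\eqOrder\lambda_k^{\alpha}\eqOrder q_k$, with the $i=0$ term dominating. So I would set, for instance, $q^{(i)}_k=\tfrac12\lambda_k^{\alpha_i}$ for $i=1,\dots,n$ and $\tilde q_k=\tfrac12\lambda_k^{\beta_n}$ (any fixed small positive multiples work), and then define
\begin{equation*}
  q^{(0)}_k := q_k - \sum_{i=1}^n q^{(i)}_k - \tilde q_k
             = c_k\lambda_k^{\alpha_0} - \tfrac12\sum_{i=1}^n \lambda_k^{\alpha_i} - \tfrac12\lambda_k^{\beta_n}.
\end{equation*}
For $\lambda_k\ge\Lambda_0$ with $\Lambda_0$ chosen large enough (using $\alpha_1<\alpha_0$ and $\inf_k c_k>0$), this $q^{(0)}_k$ is positive and $q^{(0)}_k\eqOrder\lambda_k^{\alpha_0}$, so all the $\simQ$ requirements hold on high frequencies. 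The point is that the $\alpha_0$-term has strictly larger exponent than everything being subtracted, so it survives with the right order.

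Next I would patch the finitely many low frequencies $\lambda_k<\Lambda_0$. Here there is no asymptotics to exploit, but there are only finitely many modes and each $q_k>0$, so I can simply split $q_k$ into $n+2$ positive pieces in any fixed bounded-ratio way, e.g. $q^{(i)}_k=\tilde q_k=\tfrac{1}{n+2}q_k$; since the index set is finite, "bounded above and below" is automatic, and the $\eqOrder\lambda_k^{\alpha_i}$ statements are vacuous constraints up to constants on a finite set. Combining the two regimes gives diagonal, bounded, boundedly-invertible-after-scaling sequences $(q^{(i)}_k)_k$ and $(\tilde q_k)_k$; defining $Q_i$, $\tQ_n$ as the diagonal operators with these entries (taking positive square roots, say, so the operators are self-adjoint) yields \eqref{eq:Qi} and, by construction, the exact identity \eqref{eq:Qcondition}.

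I do not expect a genuine obstacle here; this is essentially a bookkeeping argument. The only mild subtlety — and the one step I would write out carefully — is the choice of the threshold $\Lambda_0$ and the verification that $q^{(0)}_k$ stays bounded below by a positive multiple of $\lambda_k^{\alpha_0}$ uniformly in $k\ge$ that threshold, which is where the strict inequality $\alpha_1<\alpha_0$ in \eqref{eq:alpha} is used (it guarantees $\sum_{i\ge1}\lambda_k^{\alpha_i}+\lambda_k^{\beta_n}=o(\lambda_k^{\alpha_0})$). One should also note that $Q$ preserving the mean-zero subspace is inherited by all the pieces since they are diagonal in the same basis, so the construction is consistent with the functional-analytic setting of \eqref{eq:burgers}.
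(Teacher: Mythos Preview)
Your proposal is correct and takes essentially the same approach as the paper: fix $Q_1,\dots,Q_n,\tQ_n$ as explicit small multiples of powers of $A$ and define $Q_0$ via the square root of the remainder $QQ^*-\sum_{i\ge 1}Q_iQ_i^*-\tQ_n\tQ_n^*$, using that $\alpha_0$ is the strictly largest exponent so this remainder is $\eqOrder A^{\alpha_0}$. Your high/low-frequency split is a slightly more explicit way of securing positivity of the remainder than the paper's bare choice $Q_i=\frac{1}{\sqrt{n+2}}A^{\alpha_i/2}$, but the argument is otherwise the same.
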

\begin{proof}[Proof of \cref{lem:param}]
  We can take $Q_i = \frac{1}{\sqrt{n+2}}A^{\alpha_i/2}$ for $1 \le i \le n$ 
  and $\tQ_n = \frac{1}{\sqrt{n+2}}A^{\beta_n/2}$. Note that the operator 
  \begin{equation*}
    QQ^* - \tQ_n\tQ_n^* - Q_nQ^*_n - \cdots - Q_2 Q_2^* -  Q_1 Q_1^*
  \end{equation*}
  is symmetric and positive definite, so it is equal to $Q_0 Q_0^*$
  for some operator $Q_0$. Since
  $Q \simQ A^{\alpha/2}$ and $\alpha = \alpha_0$ is the largest
  among $\alpha_i$'s and $\beta_n$, we have $Q_0 \simQ A^{\alpha_0/2}$.
\end{proof}

\begin{remark}[Sums of $Z^{(i)}$]\label{rmk:OU}
	With the proofs of Lemma \ref{lem:param}, 
	we note that for $0 \le i \le n$, 
	\[
		Z^{(0, i)}_t \eqdist \int_0^t e^{-(t - s)A}Q^{(0,i)}dW_s 
	\]
	for some operator $Q^{(0,i)} \simQ A^{\alpha_0/2}$. 
\end{remark}

\subsection{The  \texorpdfstring{$\X^{(i)}$}{X(i)} Equations} We will begin by establishing
the needed structural and desired absolute continuity results for
$\X^{(i)}$. They will be leveraged to prove the results about the $\Y$ system.
\begin{proposition}[Canonical regularity of drifts]\label{lem:regX}
 Under the standing noise factorization assumptions
 \eqref{eq:alpha}-\eqref{eq:Qi}, one has with probability  one
  \begin{equation}\label{eq:regYdrift}
    \quad J(Z^{(0,i-1)}) - J(Z^{(0,i-2)}) \in C_T\Cc^{(2 - \alpha_0 - \alpha_{i-1})\mm}
  \end{equation}
for $0 \le i \le n$.
  In particular, $J(Z^{(0,i-1)}) - J(Z^{(0,i-2)})$
  has the canonical regularity. In addition, the equations for
  $\{\X^{(i)}: i=0,\dots,n\}$ are well posed with global solutions. 
\end{proposition}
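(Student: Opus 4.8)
The plan is to turn \eqref{eq:regYdrift} into bookkeeping of H\"older exponents sitting on top of the Gaussian constructions of \cref{sec:Jreg} and Appendix~\ref{sec:Gaussian}, and to observe that the well-posedness claim for the $\X^{(i)}$ equations then comes essentially for free. The case $i=0$ is vacuous, since $Z^{(0,-1)}=Z^{(0,-2)}=0$ makes the difference equal to $0$. Fix $1\le i\le n$; since $Z^{(0,i-1)}=Z^{(0,i-2)}+Z^{(i-1)}$ and $B$, hence $J$, is bilinear and symmetric ($B(f,g)=\partial_x(fg)=B(g,f)$),
\[
  J(Z^{(0,i-1)})_t-J(Z^{(0,i-2)})_t \;=\; 2\,J\big(Z^{(0,i-2)},Z^{(i-1)}\big)_t + J\big(Z^{(i-1)}\big)_t .
\]
By \cref{rem:StochasticConvolutionReg}, $Z^{(i-1)}\in C_T\Cc^{(\frac12-\alpha_{i-1})\mm}$, and by \cref{rmk:OU} together with the same remark, $Z^{(0,i-2)}\in C_T\Cc^{(\frac12-\alpha_0)\mm}$. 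Conditions \eqref{eq:alpha}--\eqref{eq:condExtra} give $\tfrac12\le\alpha_{i-1}\le\alpha_0<1$, so both fields have nonpositive H\"older regularity, and the two sums of regularities that occur, $1-\alpha_0-\alpha_{i-1}$ and $1-2\alpha_{i-1}$, both lie in $(-1,0]$ and in particular exceed $-\tfrac32$.

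The substance is then imported from Appendix~\ref{sec:Gaussian} (announced in \cref{rem:prod} and \cref{sec:Jreg}): there the Gaussian objects $J(Z^{(0,i-2)},Z^{(i-1)})$ and $J(Z^{(i-1)})$ are constructed --- by a Wick-type renormalization in the range where the pointwise-in-time product is unavailable, i.e.\ where the sum of regularities is $\le-\tfrac12$ --- and are shown to have their canonical regularity in the sense of \cref{def:canonicalJ}. Taking $\gamma=\tfrac12-\alpha_0$, $\delta=\tfrac12-\alpha_{i-1}$ and using $\alpha_{i-1}\ge\tfrac12$, one has $r:=\gamma\wedge\delta\wedge(\gamma+\delta)=\gamma+\delta=1-\alpha_0-\alpha_{i-1}$, hence $J(Z^{(0,i-2)},Z^{(i-1)})\in C_T\Cc^{(2-\alpha_0-\alpha_{i-1})\mm}$; similarly $J(Z^{(i-1)})\in C_T\Cc^{(2-2\alpha_{i-1})\mm}$, and since $\alpha_{i-1}\le\alpha_0$ we get $2-2\alpha_{i-1}\ge 2-\alpha_0-\alpha_{i-1}$, so this second summand is no rougher than the first. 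Summing, the difference lies in $C_T\Cc^{(2-\alpha_0-\alpha_{i-1})\mm}$, which is \eqref{eq:regYdrift}; for $i=1$ the first summand vanishes and only $J(Z^{(0)})\in C_T\Cc^{(2-2\alpha_0)\mm}$ survives, again matching \eqref{eq:regYdrift} since then $\alpha_{i-1}=\alpha_0$. The ``in particular'' assertion is the same computation read off differently: the roughest term of the drift, $B(Z^{(0,i-2)},Z^{(i-1)})$, has canonical regularity $(-\alpha_0-\alpha_{i-1})\mm$, the remaining term $B(Z^{(i-1)})$ is strictly smoother, and $J$ raises regularity by one, so $2-\alpha_0-\alpha_{i-1}$ is precisely the canonical exponent of the full drift.

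The final assertion needs no fixed-point argument. Each $\X^{(i)}$ equation in \eqref{eq:system} is linear, and its drift $B(Z^{(0,i-1)})-B(Z^{(0,i-2)})$ depends only on the lower-level Gaussian fields --- it is adapted to $\mathcal{F}_t^{(i-1)}$ --- so the mild solution with $\X^{(i)}_0=0$ is simply
\[
  \X^{(i)}_t \;=\; \big(J(Z^{(0,i-1)})_t-J(Z^{(0,i-2)})_t\big) + Z^{(i)}_t ,
\]
with the difference finite for all $t$ by the first part and $Z^{(i)}$, a stochastic convolution of a linear equation, defined for all time; uniqueness in the natural class is immediate, so these solutions are global. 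The only genuinely hard input is the one deferred to Appendix~\ref{sec:Gaussian}: defining $J(Z^{(0,i-2)},Z^{(i-1)})$ and $J(Z^{(i-1)})$ by renormalization when $\alpha_0+\alpha_{i-1}\ge\tfrac32$, respectively $\alpha_{i-1}\ge\tfrac34$, where the product is not a random distribution at a fixed time and one must exploit the time decorrelation of the Ornstein--Uhlenbeck processes, and then running the hypercontractivity plus Besov-flavoured Kolmogorov estimates that yield the stated canonical regularity. Everything in the proposition beyond those inputs is the exponent arithmetic above.
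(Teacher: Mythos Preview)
Your proof is correct and follows essentially the same approach as the paper. The only cosmetic difference is that the paper expands the cross term fully as $\sum_{j=0}^{i-2} J(Z^{(j)},Z^{(i-1)})$ and then identifies $J(Z^{(0)},Z^{(i-1)})$ as the roughest summand, whereas you bundle the sum into $J(Z^{(0,i-2)},Z^{(i-1)})$ via \cref{rmk:OU} and apply Appendix~\ref{sec:Gaussian} directly to that single pair of independent OU processes; both routes invoke the same Gaussian constructions and yield the same exponent arithmetic.
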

\begin{proof}[Proof of \cref{lem:regX}]
  By assumptions \eqref{eq:alpha}-\eqref{eq:Qi}
  and Proposition \ref{prop:regz}, 
  $Z^{(i)} \in C_T\Cc^{(\frac{1}{2} - \alpha_i)\mm}$. 
  The expression 
  \begin{equation}\label{eq:diffGauss}
    J(Z^{(0,i-1)}) - J(Z^{(0,i-2)}) = 
    J(Z^{(i-1)}) + 2\sum_{j=0}^{i-2} J(Z^{(j)}, Z^{(i-1)}) 
  \end{equation}
  is well defined and 
  belongs to $C_T\Cc^{(2 - \alpha_0 - \alpha_{m-1})\mm}$ almost surely, 
  by Appendix $\ref{sec:Gaussian}$, 
  because $J(Z^{(0)}, Z^{(i-1)}) \in C_T\Cc^{(2 - \alpha_0 - \alpha_{m-1})\mm}$ 
  a.s. is the least regular term.  
\end{proof}

\begin{proposition}[Constraints from canonical regularity of $\X^{(i)}$]\label{prop:regCon}
Under the standing noise factorization assumptions
 \eqref{eq:alpha}-\eqref{eq:Qi},
	if in addition $\alpha_0 + \alpha_{i - 1} - \alpha_i < \frac{3}{2}$ 
	for all $1 \le i \le n$, then all of the
        $\X^{(i)}$ equations have the canonical regularity, 
        $$\X^{(i)} \in C_T\Cc^{(\frac{1}{2} - \alpha_i)\mm},$$
        namely
        that of the stochastic convolution in each equation. 
\end{proposition}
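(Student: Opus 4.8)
The plan is to read the spatial regularity of $\X^{(i)}$ directly off the mild formulation of the $i$-th equation in \eqref{eq:system}. The case $i=0$ is trivial, since $\X^{(0)}=Z^{(0)}$. For $1\le i\le n$, I would use that the $i$-th equation in \eqref{eq:system}, in mild form and with the zero initial condition, reads
\begin{align*}
  \X^{(i)}_t = \big(J(Z^{(0,i-1)})_t - J(Z^{(0,i-2)})_t\big) + Z^{(i)}_t,
\end{align*}
where the bracketed term is the drift $B(Z^{(0,i-1)})-B(Z^{(0,i-2)})$ convolved in time with the heat semigroup, in the notation of \eqref{eq:J} (with the conventions $Z^{(0,-1)}=Z^{(0,-2)}=0$). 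Note that only the $J$-version of the drift enters, so even in the range where $B(Z^{(0,i-1)})-B(Z^{(0,i-2)})$ need not be definable as a distribution at a fixed time, the right-hand side still makes sense. By \cref{lem:regX} the $\X^{(i)}$ equations are already well posed with global solutions under the standing assumptions \eqref{eq:alpha}--\eqref{eq:Qi}, and the bracketed term lies in $C_T\Cc^{(2-\alpha_0-\alpha_{i-1})\mm}$ almost surely with its canonical regularity (the Appendix~\ref{sec:Gaussian} constructions provide the renormalizations that make this precise; the standing assumptions together with $\alpha_j<1$ keep the sums of the relevant Gaussian H\"older exponents above the threshold below which $J$ fails to be definable). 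By \cref{rem:StochasticConvolutionReg} (and its Gaussian refinement in Appendix~\ref{sec:Gaussian}), the stochastic convolution obeys $Z^{(i)}\in C_T\Cc^{(\frac12-\alpha_i)\mm}$ almost surely.

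The remaining step is pure exponent bookkeeping, of the kind discussed just before \cref{def:CanonicalRegularity}. The extra hypothesis $\alpha_0+\alpha_{i-1}-\alpha_i<\tfrac32$ is exactly the inequality $2-\alpha_0-\alpha_{i-1}>\tfrac12-\alpha_i$, i.e.\ the drift-convolution term is strictly more regular in space than $Z^{(i)}$. Consequently it also belongs to $C_T\Cc^{(\frac12-\alpha_i)\mm}$, so the sum $\X^{(i)}$ belongs to $C_T\Cc^{(\frac12-\alpha_i)\mm}$ as well, and since $\X^{(i)}-Z^{(i)}$ is strictly smoother than $Z^{(i)}$, the solution $\X^{(i)}$ has the same H\"older regularity as the stochastic convolution appearing in its equation. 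By \cref{def:CanonicalRegularity} this is precisely the assertion that the $\X^{(i)}$ equation has the canonical regularity, $\X^{(i)}\in C_T\Cc^{(\frac12-\alpha_i)\mm}$.

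I do not expect a genuine obstacle here: the substantive work is upstream, in \cref{lem:regX} and the Appendix constructions that give meaning, via renormalization, to the singular Gaussian objects $B(Z^{(j)},Z^{(k)})$ and their time-convolutions $J$. Given those inputs, the proof is just the observation of which of the two summands $J(Z^{(0,i-1)})_t-J(Z^{(0,i-2)})_t$ and $Z^{(i)}_t$ is rougher, together with the remark that $\alpha_0+\alpha_{i-1}-\alpha_i<\tfrac32$ is the sharp threshold for the stochastic convolution to dominate (any weaker hypothesis would let the drift term set a worse regularity). The points that deserve a line of care are the handling of the $\mm$-spaces -- one should invoke ``strictly more regular'' rather than merely ``at least as regular'' -- and the degenerate case $i=1$, where $Z^{(0,-1)}=0$ and the drift is just $B(Z^{(0)})$; the formula $(2-\alpha_0-\alpha_{i-1})\mm$ already covers this with $\alpha_{i-1}=\alpha_0$, so no separate treatment is needed.
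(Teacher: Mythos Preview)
Your proposal is correct and follows essentially the same approach as the paper: write $\X^{(i)}_t = \big(J(Z^{(0,i-1)})_t - J(Z^{(0,i-2)})_t\big) + Z^{(i)}_t$, invoke \cref{lem:regX} for the regularity of the drift convolution and \cref{rem:StochasticConvolutionReg} for $Z^{(i)}$, and observe that the hypothesis $\alpha_0+\alpha_{i-1}-\alpha_i<\tfrac32$ is precisely the inequality $2-\alpha_0-\alpha_{i-1}>\tfrac12-\alpha_i$ needed for the stochastic convolution to be the rougher term. The paper's own proof is the same argument, stated more tersely.
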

\begin{proof}[Proof of \cref{prop:regCon}]
	For any $1 \le i \le n$ and $t > 0$, we only need to make sure that the drift 
	\[
		J(Z^{(0,i-1)}) - J(Z^{(0,i-2)}) 
		\in C_T \Cc^{(2-\alpha_0 - \alpha_{i-1})\mm}\quad \text{a.s.} 
	\] 
	is smoother than the stochastic convolution 
	\[
		Z^{(i)} \in C_T\Cc^{(\frac{1}{2} - \alpha_i)\mm} \quad \text{a.s.}
	\]
	so that $\X^{(i)}$ has the same regularity as the stochastic convolution. 
	This holds if $2 -\alpha_0 - \alpha_{i-1} >  \frac{1}{2} - \alpha_i.$ 
\end{proof}

We will apply the Cameron-Martin \cref{lem:CM} in our setting, 
to each level by conditioning on previous levels. 
As mentioned in \cref{rem:compare} and \cref{sec:rangeOfApp},
the reason why we cannot apply Time-Shifted Girsanov Method 
to some of the levels is that some of the terms involving $Z^{(i)}$ 
can only be defined when convolving with the heat kernel. 
For example, we cannot define $B(Z^{(0)}_t)$ but can only define $J(Z^{(0)})_t$ 
when $\alpha_0 \ge \frac{3}{4}$ (see Appendix \ref{sec:Gaussian} for more details). 
In this case, the Time-Shifted Girsanov Method, \cref{thm:tsG}, cannot be applied.

\begin{proposition}[Constraints from Cameron-Martin for $\X^{(i)}$]
\label{prop:GirCon}
Under the standing noise factorization assumptions
 \eqref{eq:alpha}-\eqref{eq:Qi}, if 
	$\alpha_0 + \alpha_{i-1} - \alpha_i < 1$ for all $1 \le i \le n$, 
	then the regularity conditions, given in \cref{rem:CMApplicablity}, needed to apply the Cameron-Martin Theorem~\ref{lem:CM} hold. 
	More concretely, it implies that for $1 \le i \le n$, for any $t > 0$,
	it holds almost surely that
	$$ \Law(\X_t^{(i)}\mid \mathcal{F}_t^{(i-1)}) \sim \Law(Z_t^{(i)}), $$
	where we recall 
	$\mathcal{F}_t^{(i-1)} = \sigma( W_s^{(j)}: j \leq i-1, s \leq t)$.
\end{proposition}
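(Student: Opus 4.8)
The plan is to apply the Cameron-Martin Theorem, \cref{lem:CM}, to each equation in the $\X$ system separately, conditioning on the $\sigma$-algebra generated by the lower levels. The key structural observation -- already emphasized in \cref{sec:FirstDecompPreview} -- is that the $k$-th equation in \eqref{eq:system} is \emph{feed-forward}: its drift $B(Z^{(0,i-1)}) - B(Z^{(0,i-2)})$ depends only on $Z^{(0)},\dots,Z^{(i-1)}$, hence is adapted to $\mathcal{F}_t^{(i-1)}$, while the stochastic forcing $Q_i\,dW^{(i)}_t$ is driven by an independent Wiener process $W^{(i)}$. Thus, conditionally on $\mathcal{F}_t^{(i-1)}$, $\X^{(i)}_t$ is exactly of the form $z_t + h_t$ with $z_t = Z^{(i)}_t$ the stochastic convolution of $Q_i\,dW^{(i)}$ and $h_t = J(Z^{(0,i-1)})_t - J(Z^{(0,i-2)})_t$ a shift that is $\mathcal{G}_t := \mathcal{F}_t^{(i-1)}$-measurable and independent of the Brownian forcing $W^{(i)}$. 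This is precisely the setting of \cref{lem:CM}, with $\beta = \alpha_i$ (since $Q_i \simQ A^{\alpha_i/2}$).

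The second step is to verify the regularity hypothesis of \cref{lem:CM}, i.e. that $h_t \in \Cc^\gamma$ almost surely for some $\gamma$ with $\gamma + \alpha_i > 1$. By \cref{lem:regX}, under the standing assumptions \eqref{eq:alpha}-\eqref{eq:Qi} we have
\begin{equation*}
  h_t = J(Z^{(0,i-1)})_t - J(Z^{(0,i-2)})_t \in \Cc^{(2 - \alpha_0 - \alpha_{i-1})\mm}\quad\text{a.s.},
\end{equation*}
so $h_t \in \Cc^\gamma$ for every $\gamma < 2 - \alpha_0 - \alpha_{i-1}$. The Cameron-Martin condition $\gamma + \alpha_i > 1$ can then be met for a suitable such $\gamma$ precisely when $2 - \alpha_0 - \alpha_{i-1} + \alpha_i > 1$, that is, when $\alpha_0 + \alpha_{i-1} - \alpha_i < 1$ -- which is exactly the hypothesis of the proposition. (This matches the discussion in \cref{rem:CMApplicablity}: with $r = 2 - \alpha_0 - \alpha_{i-1} - 1 = 1 - \alpha_0 - \alpha_{i-1}$ playing the role of the exponent minus one, the condition $r + \beta > 0$ reads $1 - \alpha_0 - \alpha_{i-1} + \alpha_i > 0$.) We should also note that \cref{prop:regCon} -- whose hypothesis $\alpha_0 + \alpha_{i-1} - \alpha_i < \tfrac32$ is implied by the stronger assumption here -- guarantees that each $\X^{(i)}$ is well-defined with its canonical regularity, so the objects in question genuinely make sense.

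Applying \cref{lem:CM} with $\mathcal{G}_t = \mathcal{F}_t^{(i-1)}$ then yields $\Law(\X^{(i)}_t \mid \mathcal{F}_t^{(i-1)}) \sim \Law(Z^{(i)}_t)$ almost surely, which is the claim, once we observe that $Z^{(i)}_t$ is independent of $\mathcal{F}_t^{(i-1)}$ so that $\Law(Z^{(i)}_t) = \Law(Z^{(i)}_t \mid \mathcal{F}_t^{(i-1)})$. The argument is then uniform in $i = 1,\dots,n$. I do not expect a serious obstacle here: the independence-and-feed-forward structure makes the conditioning clean, and the only real content is the arithmetic matching the regularity of the drift from \cref{lem:regX} against the Cameron-Martin threshold. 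The one point requiring mild care is checking that, conditionally on $\mathcal{F}_t^{(i-1)}$, the shift $h_t$ really is of the integrated form \eqref{eq:h} relative to the forcing $Q_i\,dW^{(i)}$ -- i.e. that the decomposition $\X^{(i)}_t = Z^{(i)}_t + h_t$ holds with $Z^{(i)}$ as in \eqref{eq:Z} -- which is immediate from linearity of \eqref{eq:system} and the choice of vanishing initial data $\X^{(i)}_0 = Z^{(i)}_0 = 0$.
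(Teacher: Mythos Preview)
Your proposal is correct and follows essentially the same approach as the paper's own proof: decompose $\X^{(i)}_t = Z^{(i)}_t + h_t$ with $h_t = J(Z^{(0,i-1)})_t - J(Z^{(0,i-2)})_t$, invoke the regularity from \cref{lem:regX}, and check that the Cameron-Martin condition $\gamma + \alpha_i > 1$ reduces to $\alpha_0 + \alpha_{i-1} - \alpha_i < 1$. The paper's proof is simply a terser version of what you wrote.
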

\begin{proof}[Proof of \cref{prop:GirCon}]
	For each $1 \le i \le n$, we have 
	\[
		\X^{(i)}_t = J(Z^{(0,i-1)})_t - J(Z^{(0,i-2)})_t + Z^{(i)}_t. 
	\]	
	Since $Q^{(i)} \simQ A^{\alpha_i / 2}$, 
	by \eqref{eq:regYdrift}, 
	the condition from Theorem~\ref{lem:CM} 
	and Remark \ref{rem:CMApplicablity}
 	for the equation of $\X^{(i)}$ is exactly 
	$\alpha_0 + \alpha_{i-1} - \alpha_i < 1$. 
\end{proof}

\begin{remark}[Redundant constraints]\label{rem:redun}
	It is clear that the parameter constraints in Proposition \ref{prop:GirCon} 
	imply those in Proposition \ref{prop:regCon}. 
\end{remark}

With all the constraints so far, we establish the relation between
the range of $\alpha$ and the corresponding number $n$ of levels 
needed in the decomposition (except for the remainder).

\begin{proposition}[Choosing the number of levels $n$ in $\{\X^{(i)}\}$]\label{cor:constntX}
	Fix an $n$  and an $\alpha$ so that $\frac{1}{2} \le \alpha < \frac{2n+1}{2n+2}$. Then there exists 
	a sequence real numbers $\alpha_n < \ldots < \alpha_0 = \alpha$ 
	such that the standing noise factorization assumptions on the
        $\{\alpha_j: j=0,\dots,n\}$ in
 \eqref{eq:alpha}-\eqref{eq:Qi} hold as well as the hypothesis of
 \cref{lem:regX}, \cref{prop:regCon}, and \cref{prop:GirCon}.
\end{proposition}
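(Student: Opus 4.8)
The plan is to produce the exponents as an arithmetic progression, which collapses all of the constraints into a single inequality on the common gap. First I would note that \cref{lem:regX} imposes no hypothesis beyond the standing assumptions \eqref{eq:alpha}--\eqref{eq:Qi}, and that by \cref{rem:redun} the hypothesis of \cref{prop:regCon} is implied by that of \cref{prop:GirCon}. So it is enough to find reals $\alpha_0=\alpha>\alpha_1>\cdots>\alpha_n$ satisfying the break condition \eqref{eq:condExtra}, namely $\alpha_n<\tfrac12\le\alpha_{n-1}$, together with the Cameron-Martin constraints $\alpha_0+\alpha_{i-1}-\alpha_i<1$ for $1\le i\le n$; one then picks any $\beta_n<\alpha_n$ to complete \eqref{eq:alpha} and invokes \cref{lem:param} to obtain operators $Q_0,\dots,Q_n,\tQ_n$ realizing \eqref{eq:Qi} (and \eqref{eq:Qcondition}).

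Next I would set $\alpha_i=\alpha-i\delta$ for a gap $\delta>0$ to be chosen. Then $\alpha_0+\alpha_{i-1}-\alpha_i=\alpha+\delta$ for every $i$, so the entire family of Cameron-Martin constraints of \cref{prop:GirCon} becomes the single requirement $\delta<1-\alpha$, while the break condition \eqref{eq:condExtra} reads $\alpha-n\delta<\tfrac12\le\alpha-(n-1)\delta$, that is,
\[
  \frac{\alpha-\tfrac12}{n}<\delta\le\frac{\alpha-\tfrac12}{n-1}
\]
(the right-hand bound being absent when $n=1$, since it then merely asserts $\tfrac12\le\alpha$). It remains to check that these windows overlap.

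Finally I would verify feasibility by the elementary computation that is the whole point of the statement. Assume $\alpha>\tfrac12$ (the case $\alpha=\tfrac12$ forces $n=1$ by \eqref{eq:condExtra}, and then any $\alpha_1\in(0,\tfrac12)$ works). Since $\tfrac{1}{n-1}(\alpha-\tfrac12)>\tfrac{1}{n}(\alpha-\tfrac12)>0$, the only possible clash is between the Girsanov cap $\delta<1-\alpha$ and the lower bound $\delta>\tfrac1n(\alpha-\tfrac12)$; these are compatible precisely when $\tfrac1n(\alpha-\tfrac12)<1-\alpha$, which rearranges to $(n+1)\alpha<n+\tfrac12$, i.e.\ $\alpha<\frac{2n+1}{2n+2}$ --- exactly the hypothesis. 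Hence any
\[
  \delta\in\Bigl(\tfrac1n(\alpha-\tfrac12),\ \min\bigl\{1-\alpha,\ \tfrac{1}{n-1}(\alpha-\tfrac12)\bigr\}\Bigr]
\]
is admissible, and the resulting strictly decreasing sequence $\alpha_i=\alpha-i\delta$ satisfies everything required. There is no genuine obstacle here: the content is simply that an arithmetic progression makes all the Cameron-Martin constraints identical, so the construction succeeds exactly when the per-step cap $1-\alpha$ exceeds the minimal gap $\tfrac1n(\alpha-\tfrac12)$ needed to fall below $\tfrac12$ within $n$ steps.
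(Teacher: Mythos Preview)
Your proof is correct and follows essentially the same approach as the paper: both reduce to the Cameron--Martin constraints $\alpha_0+\alpha_{i-1}-\alpha_i<1$ together with \eqref{eq:condExtra}, and both arrive at the threshold $\alpha<\tfrac{2n+1}{2n+2}$ via the same arithmetic. Your argument is in fact more explicit than the paper's, which derives the necessary bound and then simply asserts existence, whereas you exhibit the sequence directly as an arithmetic progression; one small cosmetic point is that the right endpoint of your interval for $\delta$ should be open when the minimum equals $1-\alpha$, but this does not affect nonemptiness.
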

\begin{proof}[Proof of \cref{cor:constntX}]
	First, we make sure the assumption in \eqref{eq:condExtra} is satisfied. 
	Based on Proposition \ref{prop:regCon}, \ref{prop:GirCon} 
	and Remark \ref{rem:redun}, we only need to make sure 
	$\alpha_0 + \alpha_{i-1} - \alpha_i < 1$ for any $1 \le i \le n$. 
	In particular, we have $\alpha_0 + \alpha_{i-1} - 1 < \alpha_i$, 
	which implies 
	\begin{align*}
		\alpha_1 &> 2\alpha_0 - 1 \\
		\implies \alpha_2 &> \alpha_1 + \alpha_0 - 1 > 3\alpha_0 - 2 \\
		\implies \alpha_3 &> \alpha_2 + \alpha_0 - 1 > 4\alpha_0 - 3 \\
		\vdots \\ 
		\implies \alpha_n &> \alpha_{n-1} + \alpha_0 - 1 > (n+1)\alpha_0 - n.
	\end{align*} 
	Since $\alpha_n < \frac{1}{2}$, 
	we deduce $\frac{1}{2} \le \alpha_0$ 
	and $(n+1)\alpha_0 - n < \frac{1}{2}$, 
	so $\frac{1}{2} \le \alpha = \alpha_0 < \frac{2n+1}{2n+2}$, 
	and starting from this constraint, 
	we may find the possible values of $\alpha_1, \ldots, \alpha_n$. 
\end{proof}

\begin{remark}[Second Note on $\alpha < 1$]
  From \cref{cor:constntX}, we see that when $\alpha < 1$, 
  each level of the decomposition ``gains'' regularity of the gap $1 - \alpha$,
  and this gap is crucial for our method to work. 
\end{remark}

\subsection{Analysis of Remainder \texorpdfstring{$\RX^{(n)}$}{S(n)} and Associated Constraints}
Recall the remainder equation from \eqref{eq:system}: 
\begin{multline}\label{eq:R}
	\Ll \RX^{(n)}_t = \big(B(\X^{(0,n)}_t) - B(Z^{(0,n-1)}_t) \\+ 2 B(\X^{(0,n)}_t, \RX^{(n)}_t) 
		+ B(\RX^{(n)}_t)\big)dt + \tQ_nd\tW ^{(n)}_t. 
\end{multline}
Note that in this equation, the drift depends on the solution $\RX^{(n)}$
itself, so we cannot apply the Cameron-Martin \cref{lem:CM} by conditioning on previous levels. 
However, unlike the previous level, the drift is regular enough so that 
it can be defined without convolving with the heat kernel. 
This is reflected by the fact that $\alpha + \alpha_n < \frac{3}{2}$
and $\alpha + \beta_n < \frac{3}{2}.$
We are in good position to use the Time-Shifted Girsanov Method, \cref{thm:tsG}. 

We first study the well-posedness of $\RX^{(n)}$ and the canonical regularity of the terms.
We start with the term $B(\X^{(0,n)}_t) - B(Z^{(0,n-1)}_t)$.
\begin{proposition}\label{lem:regR}
Under the standing noise factorization assumptions
 \eqref{eq:alpha}-\eqref{eq:Qi}, if the $\X^{(i)}$ equations are well
 posed with all of their terms possessing the canonical regularity (as
 guaranteed for example by \cref{lem:regX} and \cref{prop:regCon}), then
 if additionally $\alpha_0 < 1$, with probability one for any $t > 0$, we have
	\begin{equation}\label{eq:Rdrift}
          B(\X^{(0,n)}) - B(Z^{(0,n-1)}) \in C_T\Cc^{(-\frac{1}{2}-\alpha_0)\mm}. 
      \end{equation}
      That is to say these terms are well defined with their canonical
      regularity.
\end{proposition}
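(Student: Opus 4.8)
The plan is to expand the difference $B(\X^{(0,n)}) - B(Z^{(0,n-1)})$ into a finite sum of terms of the form $B(a,b)$, classify each summand by the regularity of its arguments, and check that in each case the product $ab$ can be given its canonical regularity (classically when both arguments have summed regularity above $0$, via renormalization in the Gaussian cases handled in Appendix~\ref{sec:Gaussian}), with the roughest summand landing exactly in $C_T\Cc^{(-\frac12-\alpha_0)\mm}$.

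First I would write $\X^{(0,n)} = Z^{(0,n-1)} + \X^{(n)} + \sum_{i=1}^{n-1}(\X^{(i)}-Z^{(i)})$, or more simply split $\X^{(0,n)}_t = Z^{(0,n-1)}_t + D_t$ where $D_t = \X^{(0,n)}_t - Z^{(0,n-1)}_t$. By bilinearity,
\begin{equation*}
  B(\X^{(0,n)}) - B(Z^{(0,n-1)}) = 2B(Z^{(0,n-1)}, D) + B(D).
\end{equation*}
Here $D_t = \X^{(n)}_t + \sum_{i=1}^{n-1}\big(\X^{(i)}_t - Z^{(i)}_t\big) = \X^{(n)}_t + \sum_{i=1}^{n-1}\big(J(Z^{(0,i-1)})_t - J(Z^{(0,i-2)})_t\big)$, using the mild form of the $\X^{(i)}$ equations. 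By \cref{lem:regX}, each term $J(Z^{(0,i-1)}) - J(Z^{(0,i-2)}) \in C_T\Cc^{(2-\alpha_0-\alpha_{i-1})\mm}$; since the $\alpha_j$ are decreasing and $\alpha_0 < 1$, the worst of these is at index $i=1$ (in fact one should track that $2 - \alpha_0 - \alpha_{i-1} > 1$ for all relevant $i$ given $\alpha_0 + \alpha_{i-1} - \alpha_i < 1$ and $\alpha_i < \tfrac12$), and by the canonical regularity of $\X^{(n)}$ (\cref{prop:regCon}), $\X^{(n)} \in C_T\Cc^{(\frac12 - \alpha_n)\mm}$ which has positive regularity since $\alpha_n < \tfrac12$. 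Hence $D \in C_T\Cc^{\kappa\mm}$ for some $\kappa > 0$ — i.e., $D$ is a function, not a distribution.

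Next I would check the two product types. For $B(D) = \partial_x(D\cdot D)$: since $D$ has positive Hölder regularity $\kappa$, the pointwise product $D^2$ is classical with $D^2 \in C_T\Cc^{\kappa\mm}$, so $B(D) \in C_T\Cc^{(\kappa-1)\mm}$, which is more regular than $-\tfrac12 - \alpha_0$ because $\kappa - 1 > -1 > -\tfrac12 - \alpha_0$ (using $\alpha_0 < 1$... one should verify $\kappa - 1 > -\tfrac12 - \alpha_0$, which holds as $\kappa > 0$ and $\alpha_0 > -\tfrac12$). For $B(Z^{(0,n-1)}, D) = \partial_x(Z^{(0,n-1)} D)$: here $Z^{(0,n-1)} \in C_T\Cc^{(\frac12-\alpha_0)\mm}$ (by \cref{rmk:OU}, the sum has the regularity of its roughest piece $Z^{(0)}$) and $D \in C_T\Cc^{\kappa\mm}$ with $\kappa > 0$. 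If $(\tfrac12 - \alpha_0) + \kappa > 0$ the product is classical with regularity $(\tfrac12-\alpha_0)\wedge\kappa\wedge(\tfrac12-\alpha_0+\kappa) = (\tfrac12-\alpha_0)\mm$ (as $\alpha_0 \geq \tfrac12$ makes $\tfrac12-\alpha_0$ the minimum), so $B(Z^{(0,n-1)}, D) \in C_T\Cc^{(-\frac12-\alpha_0)\mm}$; if instead $(\tfrac12-\alpha_0)+\kappa \leq 0$ one invokes the Gaussian-product renormalization of Appendix~\ref{sec:Gaussian} (this is a Gaussian object, being a product of a Gaussian $Z^{(0,n-1)}$ with $D$, which is built from Gaussians $Z^{(i)}$ and $J$ of Gaussians — one needs $(\tfrac12-\alpha_0) + \kappa > -\tfrac12$, which should follow from $\alpha_0 < 1$ and the gain $\kappa \gtrsim 1 - \alpha_0$ noted in the Second Note on $\alpha < 1$) to obtain the same canonical regularity $(-\tfrac12-\alpha_0)\mm$. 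Combining, the roughest term is $B(Z^{(0,n-1)}, D)$ at regularity $(-\tfrac12-\alpha_0)\mm$, giving \eqref{eq:Rdrift}.

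The main obstacle I anticipate is the bookkeeping on $\kappa$, the regularity of $D$: one must confirm that every piece of $D$ — both $\X^{(n)}$ and each telescoped $J$-difference — lands strictly above $0$, and in fact above $\alpha_0 - \tfrac12$ so that the product $Z^{(0,n-1)}\cdot D$ stays in the renormalizable range $(\tfrac12-\alpha_0) + \kappa > -\tfrac12$. This requires carefully chaining the standing constraints $\alpha_0 + \alpha_{i-1} - \alpha_i < 1$, $\alpha_n < \tfrac12 \leq \alpha_{n-1}$, and $\alpha_0 < 1$, together with the precise regularities supplied by \cref{lem:regX}, \cref{prop:regCon}, and Appendix~\ref{sec:Gaussian}; the hypothesis that the $\X^{(i)}$ equations already have canonical regularity is exactly what makes this chaining go through. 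The identification of $B(Z^{(0,n-1)}, D)$ as the single roughest summand — rather than something worse hiding in $B(D)$ — is the conceptual point, and it rests on $D$ being a genuine function while $Z^{(0,n-1)}$ carries all the irregularity.
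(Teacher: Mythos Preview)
Your approach is essentially the paper's: expand $B(\X^{(0,n)}) - B(Z^{(0,n-1)})$ into $B(a,b)$ summands and verify each has canonical regularity, classically or via Appendix~\ref{sec:Gaussian}. The only real difference is the choice of split. The paper uses the telescoped identity $\X^{(0,n)} = J(Z^{(0,n-1)}) + Z^{(0,n)}$ directly, which is exactly your $D$ after you collapse the sum (since $\sum_{i=1}^{n-1}\big(J(Z^{(0,i-1)}) - J(Z^{(0,i-2)})\big) = J(Z^{(0,n-2)})$ and $\X^{(n)} = J(Z^{(0,n-1)}) - J(Z^{(0,n-2)}) + Z^{(n)}$, giving $D = J(Z^{(0,n-1)}) + Z^{(n)}$). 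That split immediately isolates the specific pairs Appendix~\ref{sec:Gaussian} treats --- $B(J(z),z)$, $B(J(z),z')$, $B(z,z')$ for independent OU processes --- so the renormalization step is a direct citation. Your treatment of $B(Z^{(0,n-1)},D)$ in the non-classical case stays at the level of ``invoke Appendix~\ref{sec:Gaussian}''; to actually carry this out you must perform exactly this telescoping and split $D$ into $Z^{(n)}$ and $J(Z^{(0,n-1)})$, at which point the argument becomes identical to the paper's. Your organization does buy one small simplification: since $D$ has positive regularity, $B(D)$ is handled in one classical stroke, whereas the paper splits it into three pieces.

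Two minor corrections. The parenthetical ``$2-\alpha_0-\alpha_{i-1} > 1$'' is false at $i=1$ (it would force $\alpha_0 < \tfrac12$) and is not needed; positivity of $2-\alpha_0-\alpha_{i-1}$ is all you use. And the check $\kappa - 1 > -\tfrac12 - \alpha_0$ should invoke $\alpha_0 \geq \tfrac12$ from \eqref{eq:condExtra}, not $\alpha_0 > -\tfrac12$; with the weaker inequality the implication fails.
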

\begin{proof}[Proof of \cref{lem:regR}]
	Note that $$\X^{(0,n)} = J(Z^{(0, n-1)}) + Z^{(0,n)},$$ which yields 
	\[
		\begin{split}
			 B(\X^{(0,n)}) - B(Z^{(0,n-1)}) & = B\big(J(Z^{(0, n-1)}) + Z^{(0,n)}\big) - B(Z^{(0,n-1)}) \\ 
			& = B\big(J(Z^{(0, n-1)})\big) + 2 B\big(J(Z^{(0, n-1)}), Z^{(0,n)}\big)\\ 
				&\qquad+ B( Z^{(0,n)}) - B(Z^{(0,n-1)}), 
		\end{split}
	\]
	and to leverage independence among the $Z^{(i)}$, we note that 
	\[
		B(J(Z^{(0, n-1)}), Z^{(0,n)}) = 
			B(J(Z^{(0, n-1)}), Z^{(0,n-1)}) + B(J(Z^{(0, n-1)}), Z^{(n)}), 
	\]
	\[
		B(Z^{(0,n)}) - B(Z^{(0,n-1)}) = 
    	B(Z^{(n)}) + 2 B(Z^{(0,n-1)}, Z^{(n)}). 
	\]
	By $\alpha_0 < 1, \alpha_n < \frac{1}{2}$, Remark \ref{rmk:OU}, 
	and Appendix \ref{sec:Gaussian}, 
	\[
		\begin{split}
			B(J(Z^{(0, n-1)})) & \in C_T\Cc^{(1 - 2\alpha_0)\mm}, \\  
			B(J(Z^{(0, n-1)}), Z^{(0,n)}) & \in C_T\Cc^{(-\frac{1}{2} - \alpha_0)\mm}, \\
			B(Z^{(0,n)}) - B(Z^{(0,n-1)}) & \in C_T\Cc^{(-\frac{1}{2} - \alpha_0)\mm},
		\end{split}
	\]
	almost surely. 
	Therefore, since $\alpha_0 < 1$, 
	\[
		B(\X^{(0,n)}) - B(Z^{(0,n-1)}) \in C_T\Cc^{(-\frac{1}{2} - \alpha_0)\mm},
	\]
	almost surely. 
\end{proof}

Unlike the system \eqref{eq:system} of $\X^{(i)}$, 
where all terms are Gaussian objects, 
some product terms in \eqref{eq:R} may not be a priori well-defined. 
Assume for the moment that $\RX^{(n)}$ is well-defined 
with its canonical regularity.
From the assumptions of Lemma \ref{lem:regR}, 
we have $\beta_n < \alpha_n < \frac{1}{2}$ and 
so $\RX^{(n)}_t$ is function-valued almost surely, 
which makes $B(\RX^{(n)}_t)$ well-defined and 
belong to $\Cc^{(-\frac{1}{2} - \beta_n)\mm}$ almost surely. 
The only term we need to define appropriately is $B(\X^{(0,n)}_t, \RX^{(n)}_t)$. 

As motivated in Section \ref{sec:motive}, 
we interpret the term $B(\X^{(0,n)}_t, \RX^{(n)}_t)$ as 
\[
	B(\X^{(0,n)}_t, \eta^{(n)}_t) + B(\X^{(0,n)}_t, \rho^{(n)}_t), 
\]
where $\eta^{(n)}$ and $\rho^{(n)}$ 
solve 
\begin{equation*}
	\Ll \eta^{(n)}_t = \tQ_n d\tW ^{(n)}_t, 
\end{equation*}
\begin{multline}\label{eq:rho}
		\Ll \rho^{(n)}_t =  B(\X^{(0,n)}_t) - B(Z^{(0,n-1)}_t) 
	 + 2 B(\X^{(0,n)}_t, \eta^{(n)}_t) \\+ 2B(\X^{(0,n)}_t, \rho^{(n)}_t)
	   + B(\rho^{(n)}_t) + 2B(\rho^{(n)}_t, \eta^{(n)}_t) + B(\eta^{(n)}_t), 
\end{multline}
with $\eta^{(n)}_0 = 0$ and $\rho^{(n)}_0 = u_0$. 
Note that the stochastic forcing term $\eta^{(n)}$ is
 just $\tZ^{(n)}$ but with zero initial condition.
In this case, $B(\X^{(0,n)}, \eta^{(n)})$ can be defined 
with Appendix \ref{sec:Gaussian}, since $\eta^{(n)}$ is an 
Ornstein-Uhlenbeck process with zero initial condition. 
On the other hand, as we remove the stochastic forcing term, 
$\rho^{(n)}$ has better regularity so that 
$B(\X^{(0,n)}, \rho^{(n)})$ can be classically defined
with appropriate choice of parameters.

\begin{lemma}\label{lem:rhoeta}
  Assume the standing noise factorization assumptions
 \eqref{eq:alpha}-\eqref{eq:Qi} hold. If the $\X^{(i)}$ equations are well
 posed with all of their terms possessing the canonical regularity, then with probability one 
	\begin{equation} \label{eq:regeta}
		\eta^{(n)} \in C_T\Cc^{(\frac{1}{2}-\beta_n)\mm}, 
		\quad B(\X^{(0,n)}, \eta^{(n)}) \in C_T\Cc^{(-\frac{1}{2} - \alpha_0)\mm}. 
	\end{equation}
	If $\alpha_0 < 1$, then $\rho^{(n)}$ and 
	$B(\X^{(0,n)}, \rho^{(n)})$ are well-defined locally in time, 
	and for any $T < \tau_\infty$, 
	the maximal existence time, 
	\begin{equation} \label{eq:regrho}
		\rho^{(n)} \in C_T \Cc^{(\frac{3}{2}-\alpha_0)\mm}, 
		\quad B(\X^{(0,n)}, \rho^{(n)}) \in C_T\Cc^{(-\frac{1}{2} - \alpha_0)\mm}. 
	\end{equation}
	In particular, 
	$ \RX^{(n)} = \eta^{(n)} + \rho^{(n)} $ is well-defined locally in time
	 and for $T < \tau_\infty$, 
	\[
		B(\X^{(0,n)}, \RX^{(n)}) =  B(\X^{(0,n)}, \eta^{(n)}) 
		+ B(\X^{(0,n)}, \rho^{(n)}) \in C_T\Cc^{(-\frac{1}{2} - \alpha_0)\mm}. 
	\]
\end{lemma}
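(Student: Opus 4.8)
The plan is to prove the three assertions in order: the purely Gaussian bound \eqref{eq:regeta}, the local-in-time construction of $\rho^{(n)}$ giving \eqref{eq:regrho}, and finally the identification $\RX^{(n)}=\eta^{(n)}+\rho^{(n)}$ together with the resulting bound on $B(\X^{(0,n)},\RX^{(n)})$. The regularity of $\eta^{(n)}$ in \eqref{eq:regeta} is immediate: it solves $\Ll\eta^{(n)}_t=\tQ_n\,d\tW^{(n)}_t$ with zero initial datum and $\tQ_n\simQ A^{\beta_n/2}$, so \cref{rem:StochasticConvolutionReg} gives $\eta^{(n)}\in C_T\Cc^{(\frac12-\beta_n)\mm}$, which is positive since $\beta_n<\frac12$. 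For the product term I would first use $\X^{(0,n)}=J(Z^{(0,n-1)})+Z^{(0,n)}$ (as in the proof of \cref{lem:regR}) together with the telescoping $J(Z^{(0,n-1)})=\sum_{i=1}^n\big(J(Z^{(0,i-1)})-J(Z^{(0,i-2)})\big)$; invoking \cref{rmk:OU} and the canonical regularity of the $\X^{(i)}$ drifts, this presents $\X^{(0,n)}$ as a finite sum of Gaussian trees in the $Z^{(j)}$, the roughest being $Z^{(0)}\in C_T\Cc^{(\frac12-\alpha_0)\mm}$ and the rest strictly more regular. Expanding $B(\X^{(0,n)},\eta^{(n)})$ bilinearly, every resulting term pairs such a Gaussian tree with the independent Gaussian field $\eta^{(n)}$, and is well defined with its canonical regularity by the constructions of Appendix~\ref{sec:Gaussian}; since $\alpha_0>\beta_n$ and $1-\alpha_0-\beta_n>\frac12-\alpha_0$, the canonical regularity of the roughest product $Z^{(0)}\eta^{(n)}$ is $(\frac12-\alpha_0)\mm$, and $\partial_x$ costs one derivative, giving $B(\X^{(0,n)},\eta^{(n)})\in C_T\Cc^{(-\frac12-\alpha_0)\mm}$.

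Assuming now $\alpha_0<1$, I would analyse \eqref{eq:rho}. The part of its drift not involving $\rho^{(n)}$, namely $B(\X^{(0,n)})-B(Z^{(0,n-1)})+2B(\X^{(0,n)},\eta^{(n)})+B(\eta^{(n)})$, lies in $C_T\Cc^{(-\frac12-\alpha_0)\mm}$ almost surely: the first difference by \cref{lem:regR}, the cross term by \eqref{eq:regeta}, and $B(\eta^{(n)})=\partial_x((\eta^{(n)})^2)\in C_T\Cc^{(-\frac12-\beta_n)\mm}\subset C_T\Cc^{(-\frac12-\alpha_0)\mm}$ since $\beta_n<\alpha_0$. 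I would then solve \eqref{eq:rho} in mild/integral form, $\rho^{(n)}_t=e^{-tA}u_0+\int_0^t e^{-(t-s)A}(\,\textrm{drift}\,)_s\,ds$, by a Picard iteration in the space of processes belonging, for each $t>0$, to $\Cc^{(\frac32-\alpha_0)\mm}$, with a $t^{\kappa}$-weighted norm near $t=0$ to accommodate the sub-critical datum $u_0\in\Cc^\gamma$, $\gamma>-1$. The scheme closes because, when $\rho$ has the positive regularity $(\frac32-\alpha_0)\mm$, each of the remaining drift terms $B(\X^{(0,n)},\rho)$, $B(\rho)$, $B(\rho,\eta^{(n)})$ is a classically defined product — the relevant sums of regularities being $2-2\alpha_0>0$, $3-2\alpha_0>0$, $(\frac32-\alpha_0)+(\frac12-\beta_n)>0$ — with canonical regularity at least $(-\frac12-\alpha_0)\mm$, in particular $B(\X^{(0,n)},\rho)\in C_T\Cc^{(-\frac12-\alpha_0)\mm}$; the heat semigroup then regularizes the full $\Cc^{(-\frac12-\alpha_0)\mm}$ drift by two degrees (\cref{prop:besicEstimates}), so the map returns $\rho$ to $C_T\Cc^{(\frac32-\alpha_0)\mm}$ and is contracting on a short interval by the $t^{\kappa}$-gain. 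This is precisely the classical local existence–uniqueness argument carried out in Appendix~\ref{sec:ExistenceRegularity}, which yields a solution up to a maximal time. I expect this fixed point — controlling the quadratic term $B(\rho^{(n)})$ simultaneously with the rough initial condition in the weighted norm, and showing that the solution persists exactly as long as the Gaussian data do — to be the main technical obstacle.

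Finally, substituting $B(\X^{(0,n)},\RX^{(n)})=B(\X^{(0,n)},\eta^{(n)})+B(\X^{(0,n)},\rho^{(n)})$ into \eqref{eq:R} turns that equation into the superposition of $\Ll\eta^{(n)}_t=\tQ_n\,d\tW^{(n)}_t$ and \eqref{eq:rho}, so $\eta^{(n)}+\rho^{(n)}$ solves \eqref{eq:R}; by the uniqueness underlying \cref{lem:localExt} it coincides with $\RX^{(n)}$ and their maximal existence times agree, and since $\eta^{(n)}$ is global this time is $\tau_\infty$. Hence for every $T<\tau_\infty$, combining \eqref{eq:regeta} with the bound on $B(\X^{(0,n)},\rho^{(n)})$ noted above gives
\[
  B(\X^{(0,n)},\RX^{(n)})=B(\X^{(0,n)},\eta^{(n)})+B(\X^{(0,n)},\rho^{(n)})\in C_T\Cc^{(-\frac12-\alpha_0)\mm},
\]
which is the final assertion.
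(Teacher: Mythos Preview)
Your proposal is correct and follows essentially the same approach as the paper: establish \eqref{eq:regeta} via \cref{rmk:OU} and Appendix~\ref{sec:Gaussian}, identify the $\rho^{(n)}$-independent forcing $F^{(n)}$ as lying in $C_T\Cc^{(-\frac12-\alpha_0)\mm}$ using \cref{lem:regR}, and then invoke the fixed-point machinery of Appendix~\ref{sec:ExistenceRegularity} to produce $\rho^{(n)}\in C_T\Cc^{(\frac32-\alpha_0)\mm}$. Your treatment is in fact slightly more careful than the paper's in that you explicitly retain the term $2B(\X^{(0,n)},\rho^{(n)})$ in the fixed-point scheme (the paper's displayed rewriting of \eqref{eq:rho} omits it, though the appendix it cites handles a general $J(g,v)$ term), and you spell out the weighted-norm argument for the rough initial datum as well as the uniqueness step identifying $\eta^{(n)}+\rho^{(n)}$ with $\RX^{(n)}$.
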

\begin{proof}[Proof of \cref{lem:rhoeta}]
	The first statement is proved as before 
	with Remark \ref{rmk:OU} and Appendix \ref{sec:Gaussian}. 
	
	We turn to the second statement. 
	Note that we can rewrite \eqref{eq:rho} as
	\[
		\Ll \rho^{(n)}_t = B(\rho^{(n)}_t) + 2B(\rho^{(n)}_t, \eta_t^{(n)}) + F^{(n)}_t,
	\]
	where $F^{(n)}$ is given by
	\[
		F^{(n)}_t := B(\X^{(0,n)}_t) - B(Z^{(0,n-1)}_t)
			+ 2 B(Y^{(0,n)}_t, \eta^{(n)}_t) + B(\eta^{(n)}_t).
	\]
	By \eqref{eq:regeta}, we know $\eta^{(n)} \in C_T \Cc^{(\frac{1}{2} - \beta_n)\mm}$.
	By Appendix \ref{sec:Gaussian}, 
	since $\alpha_0 + \beta_n < \frac{3}{2}$,
	it is clear that 
	\[
		B(Y^{(0,n)}, \eta^{(n)}) \in C_T \Cc^{(-\frac{1}{2} - \alpha_0)\mm}.
              \]
	Hence, by \eqref{eq:Rdrift}, we have   
	$F^{(n)} \in C_T \Cc^{(-\frac{1}{2} - \alpha_0)\mm}.$
	By a standard fixed point argument in Appendix \ref{sec:ExistenceRegularity},
	we obtain that equation  \eqref{eq:rho} is well posed with
        local in time solutions such that $\rho^{(n)} \in C_T \Cc^{(\frac{3}{2}-\alpha_0)\mm}.$
	In particular, since $\alpha_0 < 1$,  
	$B(\X^{(0,n)}, \rho^{(n)}) \in C_T\Cc^{(-\frac{1}{2} - \alpha_0)\mm} $
	is well-defined classically.
\end{proof}

Now we are in a good place to figure out the needed constraints for
$\RY^{(n)}$ to have its canonical regularity and for 
the application of the Time-Shifted Girsanov Method.

\begin{proposition}[Constraint from the canonical regularity for $\RX^{(n)}$]
	\label{prop:regR}
	Under the standing noise factorization assumptions
 	\eqref{eq:alpha}-\eqref{eq:Qi},
 	if the $\X^{(i)}$ equations are well
 	posed with all of their terms possessing the canonical regularity,
	then if in addition $\alpha_0 < 1$ and $\alpha_0 - \beta_n < 1$, 
	then $\RX^{(n)}$ has the canonical regularity, 
	\[	
		\RX^{(n)} \in C_T\Cc^{(\frac{1}{2}-\beta_n)\mm}
	\]
	for any $T < \tau_\infty$,
	namely 
    that of the stochastic convolution in the equation.  Setting $\RX_t^{(n)}=\death$
  for $t \geq \tau_\infty$, we have that $ \RX^{(n)} \in C_T\overline
  \Cc^{(\frac{1}{2}-\beta_n)\mm}$ for any $T>0$ (see
  Section~\ref{sec:functionSpaces} for the definition of $C_T\overline
  \Cc^\delta$).
\end{proposition}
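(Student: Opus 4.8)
The plan is to use the Da Prato--Debussche splitting $\RX^{(n)} = \eta^{(n)} + \rho^{(n)}$ already introduced for \cref{lem:rhoeta}, and to observe that, among the two summands, the rougher one is exactly the stochastic convolution $\eta^{(n)}$ of equation \eqref{eq:R}; the sum can then be no rougher than $\eta^{(n)}$ and, conversely, inherits its regularity.

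First I would recall from \cref{lem:rhoeta} the two inputs that are actually needed. Since $\eta^{(n)}$ solves $\Ll\eta^{(n)}_t = \tQ_n\,d\tW^{(n)}_t$ with zero initial datum and $\tQ_n\simQ A^{\beta_n/2}$, \cref{rem:StochasticConvolutionReg} gives $\eta^{(n)}\in C_T\Cc^{(\frac12-\beta_n)\mm}$ for every $T>0$ almost surely. And, because $\alpha_0<1$, the equation \eqref{eq:rho} for $\rho^{(n)}$ is locally well posed with $\rho^{(n)}\in C_T\Cc^{(\frac32-\alpha_0)\mm}$ for every $T$ strictly below the maximal existence time.

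Second I would compare the two indices. The extra hypothesis $\alpha_0-\beta_n<1$ is precisely the statement $\frac32-\alpha_0>\frac12-\beta_n$, so every H\"older space in which $\eta^{(n)}$ lives also contains $\rho^{(n)}$; hence $\RX^{(n)}=\eta^{(n)}+\rho^{(n)}\in C_T\Cc^{(\frac12-\beta_n)\mm}$ for all $T<\tau_\infty$. As $(\frac12-\beta_n)\mm$ is exactly the regularity of the stochastic convolution $\tZ^{(n)}$ attached to \eqref{eq:R}, this is the canonical regularity in the sense of \cref{def:CanonicalRegularity}. For the death-state extension I would invoke \cref{lem:localExt}: once $u$ is built from the decomposition \eqref{eq:decomp}, the maximal existence time of $\RX^{(n)}$ is a.s.\ $\tau_\infty$, and since $\eta^{(n)}$ is globally defined this is also the maximal existence time of $\rho^{(n)}$. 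Declaring $\RX^{(n)}_t=\death$ for $t\ge\tau_\infty$ and recalling the neighbourhood basis of $\death$ from \cref{sec:functionSpaces}, continuity of $\RX^{(n)}$ as a path in $\overline\Cc^{(\frac12-\beta_n)\mm}$ reduces to the blow-up alternative for \eqref{eq:rho} supplied by the fixed-point construction of Appendix~\ref{sec:ExistenceRegularity}, namely that on $\{\tau_\infty<\infty\}$ the relevant $\Cc^\beta$-norm of $\rho^{(n)}_t$, hence of $\RX^{(n)}_t$, diverges as $t\uparrow\tau_\infty$; combined with continuity of $t\mapsto\RX^{(n)}_t$ in $\Cc^{(\frac12-\beta_n)\mm}$ on $[0,\tau_\infty)$ this gives $\RX^{(n)}\in C_T\overline\Cc^{(\frac12-\beta_n)\mm}$ for every $T>0$.

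The main obstacle is not in the regularity bookkeeping, which is immediate once \cref{lem:rhoeta} is in hand, but in the blow-up alternative and the identification of the maximal existence time: one must confirm that the local solution of \eqref{eq:rho} can only fail to extend through an explosion of its $\Cc^\beta$-norm, and that this explosion occurs simultaneously with that of $u$, so that the compactified path is genuinely continuous at $\tau_\infty$. This is exactly where the fixed-point theory of Appendix~\ref{sec:ExistenceRegularity} together with \cref{lem:localExt} do the real work; everything else is a direct combination of \cref{rem:StochasticConvolutionReg} with the bound on $\rho^{(n)}$ from \cref{lem:rhoeta}.
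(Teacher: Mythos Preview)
Your proposal is correct and follows essentially the same approach as the paper: both invoke the splitting $\RX^{(n)}=\eta^{(n)}+\rho^{(n)}$ from \cref{lem:rhoeta}, read off the regularities $(\frac12-\beta_n)^-$ and $(\frac32-\alpha_0)^-$ of the two pieces, and observe that the hypothesis $\alpha_0-\beta_n<1$ is exactly the inequality $\frac32-\alpha_0>\frac12-\beta_n$ needed so that the stochastic convolution $\eta^{(n)}$ is the rougher summand. Your treatment of the $\death$-state extension via \cref{lem:localExt} and the blow-up alternative of Appendix~\ref{sec:ExistenceRegularity} is in fact more explicit than the paper's own proof, which records only the comparison of indices.
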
 
\begin{proof}
	By Lemma \ref{lem:rhoeta}, we have $\RX^{(n)} = \eta^{(n)} + \rho^{(n)}$,
	where $\rho^{(n)} \in C_T \Cc^{(\frac{3}{2}-\alpha_0)\mm}$
	and $\eta^{(n)} \in C_T\Cc^{(\frac{1}{2}-\beta_n)\mm}$
	for $T < \tau_\infty$, the maximal existence time.
	Since $\eta^{(n)}$ is exactly the stochastic convolution in the equation,
	it suffices to have $\eta^{(n)}$ less regular than $\rho^{(n)}$,
	which is guaranteed by the condition 
	$\frac{3}{2} - \alpha_0 > \frac{1}{2} - \beta_n$.
\end{proof} 

\begin{proposition}[Constraint from Time-Shifted Girsanov for $\RX^{(n)}$]
	\label{prop:GirR}
	Under the standing noise factorization assumptions
 	\eqref{eq:alpha}-\eqref{eq:Qi},
 	if the $\X^{(i)}$ equations are well
 	posed with all of their terms possessing the canonical regularity,
	and if in addition $\alpha_0 < 1$ and $\alpha_0 - \beta_n < \frac{1}{2}$, 
	then the regularity conditions needed to apply Time-Shifted
	Girsanov Method to $\RX^{(n)}$ holds. 
	More concretely, it implies that for any $t > 0$,
	it holds almost surely that 
	$$ \Law(\RX_t^{(n)}\mid t < \tau_\infty,\mathcal{F}_t^{(n)}) 
	\ll \Law(\widetilde{Z}_t^{(n)}), $$
	where we recall 
	$\mathcal{F}_t^{(n)} = \sigma( W_s^{(j)}: j \leq n, s \leq t)$.

	In particular, as long as $\alpha_0 < 1$, 
	 $\beta_n$ (and $\alpha_n$) can be taken close enough to $\frac{1}{2}$ 
	 to satisfy the condition $\alpha_0 - \beta_n < \frac{1}{2}$.
\end{proposition}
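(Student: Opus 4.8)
The plan is to apply the Time-Shifted Girsanov Method, \cref{thm:tsG}, to the remainder equation \eqref{eq:R}. We place ourselves in the setting of \eqref{eq:moreGen} with $\Cov=\tQ_n\simQ A^{\beta_n/2}$ (so the exponent called $\beta$ in \cref{thm:tsG} is here $\beta_n$), with $\mathcal{G}_t=\mathcal{F}_t^{(n)}$, with $G_t\equiv 0$, and with $\tau=\tau_\infty$. We take $v_t=\RX^{(n)}_t$ driven by $W=\tW^{(n)}$, so that the drift to be absorbed is the entire right-hand side of \eqref{eq:R},
\[
  F_t = B(\X^{(0,n)}_t) - B(Z^{(0,n-1)}_t) + 2 B(\X^{(0,n)}_t, \RX^{(n)}_t) + B(\RX^{(n)}_t),
\]
and $\zeta_t=\tZ_t^{(n)}$, with initial conditions $v_0=u_0$ and $z_0$, the mismatch being harmlessly absorbed by the term $\widetilde F^{(0)}$ built into \cref{thm:tsG}. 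Because $\tW^{(n)}$ is independent of $\mathcal{F}_t^{(n)}$, the process $\tZ^{(n)}$ is independent of $\mathcal{G}_t$, whence $\Law(\zeta_t\mid\mathcal{G}_t)=\Law(\tZ_t^{(n)})$, and the conclusion of \cref{thm:tsG} is precisely the asserted $\Law(\RX^{(n)}_t\mid t<\tau_\infty,\mathcal{F}_t^{(n)})\ll\Law(\tZ_t^{(n)})$. By \cref{lem:localExt} the existence time of $\RX^{(n)}$ is $\tau_\infty$, positive almost surely for the initial data considered, so $\tau=\tau_\infty$ is an admissible stopping time; the adaptedness of $F_t$ to $\mathcal{H}_{t\wedge\tau}$ and the very definition of the solution-dependent term $B(\X^{(0,n)}_t,\RX^{(n)}_t)$ are supplied by the Da Prato--Debussche splitting $\RX^{(n)}=\eta^{(n)}+\rho^{(n)}$ of \cref{lem:rhoeta}, which also gives that $\RX^{(n)}$ is well defined up to $\tau_\infty$.

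Next I would check the regularity hypothesis \eqref{eq:TSGirsanov} of \cref{thm:tsG} through its sufficient form, recorded there and in \cref{rem:TSG}: it is enough that $F\in C_T\Cc^\sigma$ almost surely with $\sigma+\beta_n+1>0$, for every $T<\tau_\infty$. The three pieces of $F$ were estimated in the preceding results: $B(\X^{(0,n)})-B(Z^{(0,n-1)})\in C_T\Cc^{(-\frac12-\alpha_0)\mm}$ by \cref{lem:regR} (using $\alpha_0<1$); $B(\X^{(0,n)},\RX^{(n)})=B(\X^{(0,n)},\eta^{(n)})+B(\X^{(0,n)},\rho^{(n)})\in C_T\Cc^{(-\frac12-\alpha_0)\mm}$ by \cref{lem:rhoeta} (again using $\alpha_0<1$); and, since $\beta_n<\frac12$, \cref{prop:regR} gives $\RX^{(n)}\in C_T\Cc^{(\frac12-\beta_n)\mm}$, a function of positive regularity, so that $B(\RX^{(n)})=\partial_x(\RX^{(n)})^2\in C_T\Cc^{(-\frac12-\beta_n)\mm}$. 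As $\beta_n<\alpha_0$ by \eqref{eq:alpha}, the least regular of the three are the first two, hence $F\in C_T\Cc^\sigma$ for every $\sigma<-\frac12-\alpha_0$. Such a $\sigma$ can be chosen with $\sigma+\beta_n+1>0$ exactly when $-\frac12-\alpha_0+\beta_n+1>0$, i.e. when $\alpha_0-\beta_n<\frac12$, which is the standing hypothesis; so \eqref{eq:TSGirsanov} holds almost surely for all $t<\tau_\infty$ and \cref{thm:tsG} delivers the absolute continuity.

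For the closing remark: the constraints on $\beta_n$ are \eqref{eq:alpha}--\eqref{eq:condExtra}, giving $\beta_n<\alpha_n<\frac12$, together with $\alpha_0-\beta_n<\frac12$, i.e. $\beta_n>\alpha_0-\frac12$. Since $\alpha_0<1$ the interval $(\alpha_0-\frac12,\frac12)$ is nonempty, so one may fix $\beta_n$ in it --- in particular as close to $\frac12$ as desired --- and then take $\alpha_n\in(\beta_n,\frac12)$, compatibly with the construction of the $\alpha_i$ in \cref{cor:constntX}.

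The step I expect to carry the real content is not in this proposition but in the lemmas it invokes: making sense of the solution-dependent product $B(\X^{(0,n)},\RX^{(n)})$ via the Da Prato--Debussche decomposition and, above all, verifying that the full drift $F_t$ is adapted to the Brownian filtration --- which is what keeps the time-shifted drift $\widetilde F$ of \eqref{eq:Fhat} adapted and is the reason one must use the Time-Shifted Girsanov Method rather than the Cameron--Martin Theorem here. Granting \cref{lem:regR}, \cref{lem:rhoeta}, and \cref{prop:regR}, all that remains is the regularity accounting above against the threshold $\sigma+\beta_n+1>0$.
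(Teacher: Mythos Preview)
Your proposal is correct and follows the same approach as the paper: identify the drift $F$ in \eqref{eq:R}, show via \cref{lem:regR} and \cref{lem:rhoeta} (together with the positive regularity of $\RX^{(n)}$) that its roughest contribution lies in $C_T\Cc^{(-\frac12-\alpha_0)\mm}$, and then read off the Time-Shifted Girsanov condition $\frac12-\alpha_0+\beta_n>0$ from \cref{rem:TSG}. The paper's proof is simply a one-line version of what you wrote out in full.
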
 
\begin{proof}
	In the setting of \cref{lem:regR} and \cref{lem:rhoeta}, 
	we see that the roughest drift term in \eqref{eq:R} is 
	in $C_T\Cc^{(-\frac{1}{2}-\alpha_0)\mm}$,
	so the Time-Shifted Girsanov condition from \cref{rem:TSG} and
        the associate \cref{thm:tsG} 
	is satisfied if $\frac{1}{2} - \alpha_0 + \beta_n > 0.$  
\end{proof}

\begin{remark}[Redundant constraint]
	As in Remark \ref{rem:redun}, 
	the constraint for the Time-Shifted Girsanov Method 
	$\alpha_0 - \beta_n < \frac{1}{2}$ in \cref{prop:GirR}
	also implies that $\alpha_0 - \beta_n < 1$ in \cref{prop:regR},
	i.e., $\RX^{(n)}$ has the canonical regularity. 
\end{remark}

\begin{remark}[Third Note on $\alpha < 1$]
  We reiterate where $\alpha = \alpha_0 < 1$ is needed for the analysis of 
  the remainder $S^{(n)}$:
  \begin{enumerate}
    \item Together with \eqref{eq:alpha}-\eqref{eq:condExtra}, i.e.,
    $\alpha + \alpha_n < \frac{3}{2}$ and $\alpha + \beta_n < \frac{3}{2}$,
    it makes sure that various $B(f, g)$ terms, 
    such as \eqref{eq:Rdrift} and \eqref{eq:regeta},
    are well-defined with their 
    canonical regularity, where $f$ and $g$ are Gaussian objects.

    \item It makes sure that $B(\X^{(0,n)}) - B(Z^{(0,n-1)})$
    has the same regularity as $B(Z^{(0,n)}) - B(Z^{(0,n-1)})$
    so that \eqref{eq:Rdrift} holds. This corresponds to 
    the regularization effect of J as discussed in \cref{rem:Jregs}.

    \item It makes sure that the equation \eqref{eq:rho} is
    well posed with \eqref{eq:regrho} holds.
  \end{enumerate}
\end{remark}

Finally, by collecting all results above, we prove our main absolute
continuity of the law of $u_t$ with respect to the law of $z_t$ defined in \eqref{eq:OU},
for $\alpha < 1$.

\begin{corollary}[The overall result on the $\X$ system]
	Fix an $n$ and an $\alpha$ so that $\frac{1}{2} \le \alpha < \frac{2n+1}{2n+2}$. Then there exists 
	a sequence real numbers $\beta_n < \alpha_n < \ldots < \alpha_0 = \alpha$ 
	such that the standing noise factorization assumptions on the
        $\{\alpha_j: j=0,\dots,n\}$ in
 \eqref{eq:alpha}-\eqref{eq:Qi} hold as well as the hypothesis of
 \cref{lem:regX}, \cref{prop:regCon}, \cref{prop:GirCon},
 \cref{lem:regR}, \cref{lem:rhoeta}, \cref{prop:regR}, and \cref{prop:GirR}.
 More concretely, it implies that for any $t > 0$, 
 it holds almost surely that
 \begin{align*}
 	\Law(u_t \mid  \tau_\infty>t) &= \Law\left(\RX_t^{(n)}
                                          +\sum_{k=0}^n
                                          \X_t^{(k)}\spc{\Big|}
                                          \tau_\infty>t\right) \\ &\ll 
 	\Law\left(\tZ_t^{(n)} +\sum_{k=0}^n Z_t^{(k)}\right) = \Law(z_t),
 \end{align*}
 where, as a reminder, $z_t$ is the linear part of $u_t$
 with initial condition $z_0$ as defined in \eqref{eq:OU}.
 \end{corollary}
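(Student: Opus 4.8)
The plan is to prove the corollary in two stages: first choose the parameters $\beta_n < \alpha_n < \cdots < \alpha_0 = \alpha$, and then chain together the level-by-level absolute continuity statements already established for the $\X$ decomposition. For the first stage I would invoke \cref{cor:constntX}, which, under $\tfrac12 \le \alpha < \tfrac{2n+1}{2n+2}$, already produces a decreasing sequence $\alpha_n < \cdots < \alpha_0 = \alpha$ satisfying the standing noise assumptions \eqref{eq:alpha}--\eqref{eq:Qi} (in particular $\alpha_n < \tfrac12 \le \alpha_{n-1}$ from \eqref{eq:condExtra}) together with the hypotheses of \cref{lem:regX}, \cref{prop:regCon}, and \cref{prop:GirCon}. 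It then remains only to fit in $\beta_n$. Reading the remaining results, every additional hypothesis --- those of \cref{lem:regR}, \cref{lem:rhoeta}, \cref{prop:regR}, and \cref{prop:GirR} --- reduces to $\alpha_0 < 1$ (true by assumption), the inequalities $\alpha_0 + \alpha_n < \tfrac32$ and $\alpha_0 + \beta_n < \tfrac32$ (automatic from $\alpha_0 < 1$ and $\beta_n < \alpha_n < \tfrac12$), and the single binding constraint $\alpha_0 - \beta_n < \tfrac12$ of \cref{prop:GirR}, which subsumes $\alpha_0 - \beta_n < 1$ of \cref{prop:regR}. Since $\alpha_0 < 1$ forces $\alpha_0 - \tfrac12 < \tfrac12$, and since the construction in \cref{cor:constntX} leaves enough slack to take $\alpha_n$ as close to $\tfrac12$ as we like (in particular $\alpha_n > \alpha_0 - \tfrac12$), the interval $(\alpha_0 - \tfrac12,\, \alpha_n)$ is nonempty, and I would take $\beta_n$ there. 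Finally \cref{lem:param} supplies operators $Q_0, \dots, Q_n, \tQ_n$ with the orders \eqref{eq:Qi} and satisfying \eqref{eq:Qcondition}, so the decomposition \eqref{eq:system} --- hence \eqref{eq:decomp} --- is legitimate.

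Then I would assemble the conclusion by reading off \eqref{eq:XabsCont} from the propositions and applying \cref{cor:absXY}. In detail: \cref{lem:regX} and \cref{prop:regCon} give that the equations for $\X^{(0)}, \dots, \X^{(n)}$ are globally well posed with canonical regularity, and \cref{prop:GirCon} then yields $\Law(\X_t^{(i)} \mid \mathcal{F}_t^{(i-1)}) \sim \Law(Z_t^{(i)})$ almost surely for $1 \le i \le n$, while $\X_t^{(0)} = Z_t^{(0)}$ by construction. For the remainder, \cref{lem:regR} and \cref{lem:rhoeta} give that $\RX^{(n)} = \eta^{(n)} + \rho^{(n)}$ is well defined up to a time which, by \cref{lem:localExt}, coincides almost surely with $\tau_\infty$, and that every term of \eqref{eq:R} has its canonical regularity; \cref{prop:regR} upgrades this to $\RX^{(n)}$ itself having canonical regularity; and \cref{prop:GirR} gives $\Law(\RX_t^{(n)} \mid t < \tau_\infty, \mathcal{F}_t^{(n)}) \ll \Law(\tZ_t^{(n)})$ almost surely. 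These are exactly the statements \eqref{eq:XabsCont}, so \cref{cor:absXY} applies and yields $\Law\big(\RX_t^{(n)} + \sum_{k=0}^n \X_t^{(k)} \mid \tau_\infty > t\big) \ll \Law\big(\tZ_t^{(n)} + \sum_{k=0}^n Z_t^{(k)}\big)$. Identifying the left-hand law with $\Law(u_t \mid \tau_\infty > t)$ via \eqref{eq:decomp} and the right-hand law with $\Law(z_t)$ via \eqref{eq:StochasticConvZs} would then finish the proof.

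The substantive work is not in this corollary but in the results it quotes --- the Gaussian regularity bounds of the Appendix and the fixed-point construction of $\rho^{(n)}$ --- so the main obstacle here is organizational: one must check that the finite system of linear inequalities extracted from all seven cited results is simultaneously solvable exactly on the range $\tfrac12 \le \alpha < \tfrac{2n+1}{2n+2}$, and that the feed-forward tower $\mathcal{F}_t^{(0)} \subset \cdots \subset \mathcal{F}_t^{(n)}$ provides precisely the conditioning the inductive \cref{lem:absSum} (hence \cref{cor:absXY}) requires. A secondary point worth handling carefully is the uniform treatment of the death state and the conditioning on $\{\tau_\infty > t\}$ along the whole chain, which is why \cref{cor:absXY} is already phrased with that conditioning built in.
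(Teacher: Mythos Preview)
Your proposal is correct and follows essentially the same approach as the paper: invoke \cref{cor:constntX} for the $\alpha_i$, note that the remaining constraint $\alpha_0-\beta_n<\tfrac12$ on $\beta_n$ is satisfiable whenever $\alpha_0<1$, and then conclude via \cref{cor:absXY}. The paper's proof is a two-line version of exactly this; your write-up simply unpacks the parameter verification and the role of each cited result in more detail than the paper does.
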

\begin{proof}
	The constraint from $\RX^{(n)}$ can be satisfied 
	as long as $\alpha = \alpha_0 < 1$. 
	The result follows from \cref{cor:constntX} and \cref{cor:absXY}. 
\end{proof}

\section{The \texorpdfstring{$\Y$}{X} decomposition of Noise and Smoothness}\label{sec:Xdecomposition}
Now we consider the $\Y$ system \eqref{eq:system2}.
We note that the maximal existence time $\tau_\infty$ of solutions of \eqref{eq:system2}
is the same as $u$, as in Lemma \ref{lem:localExt}.
We want to show that the same noise factorization assumptions 
\eqref{eq:alpha}-\eqref{eq:Qi} on the system \eqref{eq:system2}
also give the desired absolute continuity result \eqref{eq:XYabsCont}.
Since all computations are based on the canonical regularity,
which is dictated by the same stochastic forcing terms,
we may follow the same arguments of the previous section with minimal modifications.
The main change is the need to make sense of products of 
more complicated Gaussian objects $\Y^{(0,i)}$. 
The idea is that when $\alpha = \alpha_0 < 1$, 
the singular terms have positive regularity after convolving
with the heat kernel once or twice. 
Hence, most products can be classically defined,
and the remaining ones are exactly those appeared before.   

\begin{lemma}[Canonical regularity of $\Y^{(i)}$ and drifts]
\label{lem:regY}
	Under the standing noise factorization assumptions
 	\eqref{eq:alpha}-\eqref{eq:Qi},
	if in addition $\alpha_0 < 1$ and 
	$\alpha_0 + \alpha_{i - 1} - \alpha_i < \frac{3}{2}$,
	for $1 \le i \le n$, then it holds a.s. that for $0 \le i \le n$,
	\begin{equation}\label{eq:regY}
		\Y^{(i)} \in C_T \Cc^{(\frac{1}{2} - \alpha_i)\mm}, 
	\quad J(\Y^{(0,i-1)}) - J(\Y^{(0,i-2)}) \in C_T\Cc^{(2-\alpha_0 - \alpha_{i-1})\mm}.
	\end{equation}
	In particular, the terms are well defined with their canonical
	regularity.
\end{lemma}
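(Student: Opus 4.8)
The plan is to prove \eqref{eq:regY} by induction on $i$, following closely the proofs of \cref{lem:regX} and \cref{prop:regCon} for the $\X$ system; the one new feature is that the arguments of $B$ in \eqref{eq:system2} are the non-Gaussian processes $\Y^{(0,j)}$ rather than the explicit Gaussian fields $Z^{(0,j)}$, so each $\Y^{(j)}$ must first be unfolded into a finite sum of iterated-$J$ trees built from $Z^{(0)},\dots,Z^{(j)}$. Alongside \eqref{eq:regY} I would carry the auxiliary hypothesis that the correction $\Y^{(j)}-Z^{(j)}$ lies in $C_T\Cc^{(2-\alpha_0-\alpha_{j-1})\mm}$, which for $\alpha_0,\alpha_{j-1}<1$ has strictly positive exponent. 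The base case $i=0$ is immediate: $\Y^{(0)}=Z^{(0)}\in C_T\Cc^{(\frac12-\alpha_0)\mm}$ by \cref{prop:regz}, and the drift vanishes by the convention $\Y^{(0,-1)}=\Y^{(0,-2)}=0$.

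For the inductive step, fix $i\ge1$ and assume the claims for all smaller indices. From the mild form of \eqref{eq:system2} and the bilinearity of $J$,
\[
  \Y^{(i)}_t = \big(J(\Y^{(i-1)}) + 2J(\Y^{(0,i-2)},\Y^{(i-1)})\big)_t + Z^{(i)}_t .
\]
Substituting $\Y^{(j)} = Z^{(j)} + (\Y^{(j)}-Z^{(j)})$ for $j\le i-1$ expands the drift into a finite sum of terms $J(a,b)$ with $a,b$ among the $Z^{(k)}$ ($k\le i-1$) and the iterated-$J$ trees appearing in the corrections. The purely Gaussian pieces $J(Z^{(i-1)})$ and $J(Z^{(j)},Z^{(i-1)})$, $0\le j\le i-2$, are covered by Appendix~\ref{sec:Gaussian}: they are defined with their canonical regularity (after renormalization where the underlying pointwise product does not exist), since the regularity indices sum to $1-\alpha_j-\alpha_{i-1}>-\tfrac32$ when $\alpha_0<1$; the least regular of them is $J(Z^{(0)},Z^{(i-1)})\in C_T\Cc^{(2-\alpha_0-\alpha_{i-1})\mm}$. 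Every remaining term contains at least one proper $J$-tree factor, which by the auxiliary hypothesis is function-valued; such a term is either defined classically (\cref{rem:prod}, \cref{rem:JClassicalProdReg}) or is one of the Gaussian-chaos objects constructed in Appendix~\ref{sec:Gaussian}, and in either case lands in $C_T\Cc^{\sigma\mm}$ with $\sigma>2-\alpha_0-\alpha_{i-1}$ — each extra $J$ buying regularity $1-\alpha_0>0$. Hence $J(\Y^{(0,i-1)})-J(\Y^{(0,i-2)})\in C_T\Cc^{(2-\alpha_0-\alpha_{i-1})\mm}$, the second assertion of \eqref{eq:regY}.

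It remains to see that $\Y^{(i)}$ inherits the regularity of its stochastic convolution $Z^{(i)}\in C_T\Cc^{(\frac12-\alpha_i)\mm}$ (\cref{prop:regz}). Since $\tfrac12-\alpha_i<2-\alpha_0-\alpha_{i-1}$ precisely under the hypothesis $\alpha_0+\alpha_{i-1}-\alpha_i<\tfrac32$, the convolution is the rougher summand, so $\Y^{(i)}\in C_T\Cc^{(\frac12-\alpha_i)\mm}$ and the correction $\Y^{(i)}-Z^{(i)}$ is exactly the drift just estimated, of positive-exponent regularity $(2-\alpha_0-\alpha_{i-1})\mm$; this closes the induction. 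Well-posedness and global existence of the forced linear equation for $\Y^{(i)}$ follow once its drift has been given meaning, as in \cref{lem:localExt}.

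The hard part will be the bookkeeping behind ``the remaining terms are of the type constructed in Appendix~\ref{sec:Gaussian}'': one has to verify that expanding $B(\Y^{(0,i-1)})-B(\Y^{(0,i-2)})$ produces, apart from classically defined products, exactly the Gaussian objects catalogued there, and that in every non-classical case the two factors are regular enough for the canonical-regularity statement — which is where the standing assumption $\alpha_0<1$ (equivalently, a uniform gain of $1-\alpha_0$ from each $J$) is spent. This is the $\Y$-side analogue of the work already carried out for the $\X$ system in \cref{lem:regX}, \cref{lem:regR} and \cref{lem:rhoeta}, and I would organize it so as to reuse those estimates wherever the trees coincide, as anticipated in \cref{sec:motive}.
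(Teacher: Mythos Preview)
Your inductive strategy is correct and closely parallels the paper's proof, but the paper organizes the induction differently in a way that makes the ``hard part'' you flag disappear. Instead of substituting at each level via $\Y^{(j)}=Z^{(j)}+(\Y^{(j)}-Z^{(j)})$, the paper works with partial sums and the identity $\Y^{(0,i)}=J(\Y^{(0,i-1)})+Z^{(0,i)}$, which directly yields the decomposition \eqref{eq:driftDecomp}. Crucially, the paper \emph{strengthens} the induction by also carrying the auxiliary hypothesis that $B(J(\Y^{(0,j)}),Z^{(0,j+1)})\in C_T\Cc^{(-\frac12-\alpha_0)\mm}$; this is the only non-classical product that arises in \eqref{eq:driftDecomp}, and one more application of the partial-sum identity reduces it to the specific objects $B(J(Z^{(0,i)}),Z^{(0,i+1)})$ covered in Appendix~\ref{sec:Gaussian} plus classically defined terms. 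Your stated auxiliary hypothesis---the regularity of $\Y^{(j)}-Z^{(j)}$---is just a restatement of the second half of \eqref{eq:regY} and does not by itself control the cross products $Z^{(0,i-2)}\cdot D^{(i-1)}$ once $\alpha_0>5/6$ (the sum of regularities $\frac52-2\alpha_0-\alpha_{i-2}$ can be negative), so you would still need to unfold $D^{(i-1)}$ recursively and match each resulting tree against Appendix~\ref{sec:Gaussian}. The paper's partial-sum organization pins down exactly which Gaussian objects are needed at every step and avoids the open-ended tree-by-tree bookkeeping you anticipate.
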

\begin{proof}
	With the same argument in \cref{prop:regCon}, 
	as long as each term in the equation for $\Y^{(i)}$ is well-defined,
	it holds a.s. that 
	$\Y^{(i)} \in C_T\Cc^{(\frac{1}{2} - \alpha_i)\mm}.$ 
	Hence, we can focus on the drifts in \eqref{eq:system2}. 
	We proceed by (finite) induction. 

	We start with base cases.
	For $i = 0$, clearly $\Y^{(0, 0)} = \Y^{(0)} = Z^{(0)}$, 
	and it holds a.s. that
	$\Y^{(0)} \in C_T \Cc^{(\frac{1}{2} - \alpha_0)\mm}$. 
	For $i = 1$, $J(\Y^{(0)}) = J(Z^{(0)})$ is well-defined by Appendix \ref{sec:Gaussian},
	and it holds a.s. that 
	\[
		J(\Y^{(0)}) \in C_T\Cc^{(2-2\alpha_0)\mm}.
	\]
	For the purpose of induction, we also note that
	\[
		B(J(\Y^{(0)}), Z^{(0,1)}) = B(J(Z^{(0)}), Z^{(0,1)}) 
		= B(J(Z^{(0)}), Z^{(0)}) +  B(J(Z^{(0)}), Z^{(1)})
	\]
	is well-defined by Remark \ref{rmk:OU} and Appendix \ref{sec:Gaussian},
	and it holds a.s. that 
	\[
		B(J(\Y^{(0,0)}), Z^{(0,1)}) \in C_T\Cc^{(-\frac{1}{2} - \alpha_0)\mm}.
	\]

	Next, we show our induction step. 
	Assume for $0 \le j \le i < n$, each term in the equation for $\Y^{(j)}$
	is well-defined, and it holds a.s. that 
	\[
		J(\Y^{(0,j-1)}) - J(\Y^{(0,j-2)}) \in C_T\Cc^{(2-\alpha_0 - \alpha_{j-1})\mm},
		\quad B(J(\Y^{(0,j)}), Z^{(0,j+1)}) \in C_T\Cc^{(-\frac{1}{2} - \alpha_0)\mm}.
	\]
	We want to show  
	\begin{equation} \label{eq:jxjx}
    \begin{split}
		J(\Y^{(0,i)}) - J(\Y^{(0,i-1)}) & \in C_T\Cc^{(2-\alpha_0 - \alpha_{i})\mm}, \\
		 B(J(\Y^{(0,i)}), Z^{(0,i+1)}) & \in C_T\Cc^{(-\frac{1}{2} - \alpha_0)\mm}.
    \end{split}
	\end{equation}
	
  We start with proving the first part of \eqref{eq:jxjx}. Note that
	\[
		\Y^{(0,i)} = J(\Y^{(0,i-1)}) + Z^{(0,i)}.
	\]
	We can rewrite
	\begin{align*}
		J(\Y^{(0,i)}) & = J(J(\Y^{(0,i-1)}) + Z^{(0,i)}) \\
		& = J(J(\Y^{(0,i-1)})) + 2J(J(\Y^{(0,i-1)}), Z^{(0,i)}) + J(Z^{(0,i)}),
	\end{align*}
	which gives
	\begin{equation}
	\begin{aligned}\label{eq:driftDecomp}
		 \lefteqn{J(\Y^{(0,i)}) - J(\Y^{(0,i-1)})}\hspace{3em}&\\
       &= J(J(\Y^{(0,i-1)}) + Z^{(0,i)}) - J(J(\Y^{(0,i-2)}) + Z^{(0,i-1)}) \\
		   & = J(J(\Y^{(0,i-1)})) - J(J(\Y^{(0,i-2)})) + J(Z^{(0,i)}) - J(Z^{(0,i-1)})\\
		   & \quad	\, + 2(J(J(\Y^{(0,i-1)}), Z^{(0,i)}) 
			- J(J(\Y^{(0,i-2)}), Z^{(0,i-1)})),
	\end{aligned}
	\end{equation}
  so it suffices to show the existence and regularity of each term in \eqref{eq:driftDecomp}.
	For $j < i$, using the induction hypothesis,
	we can define $J(\Y^{(0,j)})$ by the telescoping sum
	\[
		J(\Y^{(0,j)}) = \sum_{\ell = 0}^{j} J(\Y^{(0,\ell)}) - J(\Y^{(0,\ell - 1)}).
	\]
	Since $J(\Y^{(0,j)})$ has the regularity of $J(\Y^{(0)})$, 
	the roughest term in the sum, and $\alpha < 1$,
	$B(J(\Y^{(0,j)}))$ is well-defined classically and 
	\begin{equation} \label{eq:bjx}
		B(J(\Y^{(0,j)})) \in C_T\Cc^{(1-2\alpha_0)\mm}.
	\end{equation}
	Also, by the induction hypothesis, we know 
	\begin{equation} \label{eq:bjxz}
		B(J(\Y^{(0,i-1)}), Z^{(0,i)}) 
		- B(J(\Y^{(0,i-2)}), Z^{(0,i-1)}) 
		\in C_T\Cc^{(-\frac{1}{2} - \alpha_0)\mm}.
	\end{equation}
	By \cref{lem:regX}, we have
	\[
		J(Z^{(0,i)}) - J(Z^{(0,i-1)}) \in C_T\Cc^{(2-\alpha_0 - \alpha_i)\mm}.
	\]
	Since $\alpha_{i} > \frac{1}{2}$,
  based on \eqref{eq:bjx} and \eqref{eq:bjxz},
	$J(\Y^{(0,i)}) - J(\Y^{(0,i-1)})$ is well-defined with the same regularity
	of $J(Z^{(0,i)}) - J(Z^{(0,i-1)})$, which is the roughest term in \eqref{eq:driftDecomp}.

	To finish the induction step, we show the second part of \eqref{eq:jxjx}. 
  As before, we only need to work with each term in the following expansion
	\begin{align*}
		B(J(\Y^{(0,i)}), Z^{(0,i+1)})  
    = \, & B(J(J(\Y^{(0,i-1)})+ Z^{(0,i)} ), Z^{(0,i+1)}) \\ 
		= \, & B(J^2(\Y^{(0,i-1)})) + B(J(Z^{(0,i)}), Z^{(0,i+1)}) \\
		  & \quad \, + 2B(J(J(\Y^{(0,i-1)}), Z^{(0,i)}), Z^{(0,i+1)}).
	\end{align*}
	Similarly, since $\alpha_0 < 1$, 
	$B(J^2(\Y^{(0,i-1)}))$ is well-defined classically and 
	\[
		B(J^2(\Y^{(0,i-1)})) \in C_T\Cc^{(2-2\alpha_0)\mm}.
	\]
	Again by \eqref{eq:alpha}-\eqref{eq:Qi}, \eqref{eq:diffGauss}, 
  and Appendix \ref{sec:Gaussian},
	\[
		B(J(Z^{(0,i)}), Z^{(0,i+1)}) \in C_T\Cc^{(-\frac{1}{2} - \alpha_0)\mm}.
	\]
	By the induction hypothesis, we know
	\[
		J(J(\Y^{(0,i-1)}), Z^{(0,i)}) \in C_T\Cc^{(\frac{3}{2} - \alpha_0)\mm}.
	\]
	Again, since $\alpha_0 < 1$ and 
	$Z^{(0,i+1)} \in C_T \Cc^{(\frac{1}{2}-\alpha_0)\mm}$,
	the following term is classically well-defined: 
	\[
		B(J(J(\Y^{(0,i-1)}), Z^{(0,i)}), Z^{(0,i+1)}) 
		\in C_T\Cc^{(-\frac{1}{2}-\alpha_0)\mm},
	\]
	because the sum of the regularity of the two terms in the product is positive.
	Therefore, $B(J(\Y^{(0,i)}), Z^{(0,i+1)})$ is well-defined with the
	desired regularity.
\end{proof}

Consider proving \eqref{eq:XYabsCont} for each level $X^{(i)}$ of \eqref{eq:system2} 
before the remainder $R^{(n)}$, where
\[
	X^{(i)}_t = J(X^{(0,i-1)})_t - J(X^{(0,i-2)})_t + Z^{(i)}_t.
\]
In the decomposition \eqref{eq:driftDecomp}, the term
$J(Z^{(0,i-1)}) - J(Z^{(0,i-2)})$ is exactly the drift
in the $Y^{(i)}$ equation \eqref{eq:system}, 
which gives the same 
constraint, as in the assumption of \cref{prop:GirCon}, 
for the Cameron-Martin theorem 
in \cref{lem:CM} and \cref{rem:CMApplicablity}.
Since the remaining terms in \eqref{eq:driftDecomp} at time $t > 0$
are smoother than the term
$J(Z^{(0,i-1)})_t - J(Z^{(0,i-2)})_t$ and 
 adapted to $\mathcal{F}^{(i-1)}_t$, 
they also satisfy
the condition for
the Cameron-Martin \cref{lem:CM}. 
Thus, we arrive at the same constraint for parameters
as in the assumption of \cref{prop:GirCon}.

\begin{proposition}
\label{prop:absContY}
Under the standing noise factorization assumptions
 \eqref{eq:alpha}-\eqref{eq:Qi}, if 
  $\alpha_0 + \alpha_{i-1} - \alpha_i < 1$ for all $1 \le i \le n$, 
  then the regularity conditions needed to apply \cref{lem:CM}, the Cameron-Martin Theorem, hold for $\Y^{(i)}$. 
  More concretely, it implies that for $1 \le i \le n$, for any $t > 0$,
  it holds almost surely that
  $$ \Law(\Y_t^{(i)}\mid \mathcal{F}_t^{(i-1)}) \sim
  \Law(\X_t^{(i)}\mid \mathcal{F}_t^{(i-1)}) \sim \Law(Z_t^{(i)}), $$
  where we recall 
  $\mathcal{F}_t^{(i-1)} = \sigma( W_s^{(j)}: j \leq i-1, s \leq t)$.
\end{proposition}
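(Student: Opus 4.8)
The plan is to reproduce, for the $\Y^{(i)}$ system \eqref{eq:system2}, the Cameron--Martin argument already carried out for the $\X^{(i)}$ system in \cref{prop:GirCon}, and then to chain the resulting conditional equivalence with the one provided by \cref{prop:GirCon}. The reason this works is that $\Y^{(i)}$ and $\X^{(i)}$ are forced linear equations driven by the \emph{same} stochastic forcing $Q_i\,dW^{(i)}$: conditionally on $\mathcal{F}_t^{(i-1)}$ each is the Gaussian random variable $Z^{(i)}_t$ plus a conditionally deterministic shift, so all that is needed is that both shifts lie in the relevant Cameron--Martin space.

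First I would record the mild form of the $\Y^{(i)}$ equation from \eqref{eq:system2}: since $\Y^{(i)}_0=0$,
\[
  \Y^{(i)}_t \;=\; h_t + Z^{(i)}_t,\qquad h_t:=J(\Y^{(0,i-1)})_t-J(\Y^{(0,i-2)})_t .
\]
By \cref{lem:regY} --- whose hypotheses $\alpha_0<1$ and $\alpha_0+\alpha_{i-1}-\alpha_i<3/2$ follow from the standing assumptions together with the hypothesis $\alpha_0+\alpha_{i-1}-\alpha_i<1$ --- the process $h$ is well defined, after the renormalizations of Appendix~\ref{sec:Gaussian} where necessary, with its canonical regularity, so $h\in C_T\Cc^{(2-\alpha_0-\alpha_{i-1})\mm}$ almost surely. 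This is the content of the decomposition \eqref{eq:driftDecomp}, whose roughest summand is $J(Z^{(0,i-1)})_t-J(Z^{(0,i-2)})_t$, i.e.\ the integrated drift appearing in the $\X^{(i)}$ equation; and $h$ is adapted to $\mathcal{F}_t^{(i-1)}=\sigma(W_s^{(j)}:j\le i-1,\ s\le t)$ by the feed-forward structure of \eqref{eq:system2}. (For $i=1$ everything is trivial: $h_t=J(Z^{(0)})_t$ and in fact $\Y^{(1)}_t=\X^{(1)}_t$.)

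Next I would apply \cref{lem:CM} with $\mathcal{G}_t=\mathcal{F}_t^{(i-1)}$ --- independent of the Wiener process $W^{(i)}$ driving $Z^{(i)}$ --- with noise operator $Q_i\simQ A^{\alpha_i/2}$ (so $\beta=\alpha_i$ in the notation of that lemma) and shift $h_t$. Its sufficient condition, $h_t\in\Cc^\gamma$ almost surely for some $\gamma$ with $\gamma+\alpha_i>1$, is met because one may take $\gamma$ arbitrarily close to $2-\alpha_0-\alpha_{i-1}$ and $(2-\alpha_0-\alpha_{i-1})+\alpha_i>1$ is precisely the hypothesis $\alpha_0+\alpha_{i-1}-\alpha_i<1$; this is the sense in which the regularity conditions needed to apply \cref{lem:CM} hold for $\Y^{(i)}$. \cref{lem:CM} then gives $\Law(\Y^{(i)}_t\mid\mathcal{F}_t^{(i-1)})=\Law(Z^{(i)}_t+h_t\mid\mathcal{F}_t^{(i-1)})\sim\Law(Z^{(i)}_t)$ almost surely; since \cref{prop:GirCon} provides $\Law(\X^{(i)}_t\mid\mathcal{F}_t^{(i-1)})\sim\Law(Z^{(i)}_t)$ under the same hypothesis, transitivity of $\sim$ yields the full chain of equivalences in the statement.

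I do not expect an essential difficulty: all of the analytic content lives in \cref{lem:regY} (existence and canonical regularity of the renormalized drift $J(\Y^{(0,i-1)})-J(\Y^{(0,i-2)})$, i.e.\ of the terms in \eqref{eq:driftDecomp}) and in the Cameron--Martin template of \cref{prop:GirCon}. The points needing care are bookkeeping: that the possibly renormalized shift $h_t$ is genuinely $\mathcal{F}_t^{(i-1)}$-measurable, that the fixed-time evaluation preserves the almost-sure H\"older bound inherited from $C_T\Cc^{(2-\alpha_0-\alpha_{i-1})\mm}$, and --- the real content behind the chained equivalence --- that the stochastic convolution in $\Y^{(i)}$ is literally the same process $Z^{(i)}$, built from the same $W^{(i)}$, as the one appearing in $\X^{(i)}$.
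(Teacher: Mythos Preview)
Your proposal is correct and follows essentially the same approach as the paper: write $\Y^{(i)}_t=Z^{(i)}_t+h_t$ with $h_t=J(\Y^{(0,i-1)})_t-J(\Y^{(0,i-2)})_t$, invoke \cref{lem:regY} and the decomposition \eqref{eq:driftDecomp} to obtain $h\in C_T\Cc^{(2-\alpha_0-\alpha_{i-1})\mm}$ adapted to $\mathcal{F}_t^{(i-1)}$, apply \cref{lem:CM} exactly as in \cref{prop:GirCon}, and then chain with \cref{prop:GirCon} via transitivity. The paper's argument is the paragraph immediately preceding the proposition, which makes the same points (roughest term in \eqref{eq:driftDecomp} is $J(Z^{(0,i-1)})-J(Z^{(0,i-2)})$, remaining terms are smoother and $\mathcal{F}_t^{(i-1)}$-adapted, so the same Cameron--Martin constraint applies).
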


For the remainder, recall from (\ref{eq:system2}) that
\begin{multline}\label{eq:R2}
	\Ll \RY^{(n)}_t = \big(B(\Y^{(0,n)}_t) - B(\Y^{(0,n-1)}_t) 
	\\	+ 2 B(\Y^{(0,n)}_t, \RY^{(n)}_t)
		+ B(\RY^{(n)}_t)\big)dt + \tQ_nd\tW ^{(n)}_t
\end{multline}
with initial condition $u_0$. It remains to make sense of the term 
$B(\Y^{(0,n)}) - B(\Y^{(0,n-1)})$
with its canonical regularity.
From the proof of Lemma \ref{eq:regY}, in particular
the decomposition \eqref{eq:driftDecomp} with $J$ replaced by $B$ and 
$i$ replace by $n$,
we can show the same regularity for each term in the decomposition,
except for the term $B(Z^{(0,i)}) - B(Z^{(0,i-1)})$.
Under Condition \eqref{eq:condExtra}, instead we have
\[
	B(Z^{(0,i)}) - B(Z^{(0,i-1)}) \in C_T\Cc^{(-\frac{1}{2}-\alpha_0)\mm}.
\]
by Appendix \ref{sec:Gaussian} and the similar argument in Lemma \ref{lem:regR}.
In this case, we have 
\begin{equation} \label{eq:regRYdrift}
	B(\Y^{(0,n)}) - B(\Y^{(0,n-1)}) \in C_T \Cc^{(-\frac{1}{2}-\alpha_0)\mm}.
\end{equation}
Now we use basically the same argument in \cref{lem:rhoeta} and
\cref{lem:regR} to obtain the same constraint for canonical regularity.

\begin{proposition}[Constraint from the canonical regularity for $\RY^{(n)}$]
  \label{prop:regRX}
  Under the standing noise factorization assumptions
  \eqref{eq:alpha}-\eqref{eq:Qi},
  if the $\Y^{(i)}$ equations are well
  posed with all of their terms possessing the canonical regularity,
  and if in addition $\alpha_0 < 1$ and $\alpha_0 - \beta_n < 1$, 
  then $\RX^{(n)}$ has the canonical regularity, 
  \[  
    \RY^{(n)} \in C_T\Cc^{(\frac{1}{2}-\beta_n)\mm},
  \]
  namely that of the stochastic convolution in the equation,
  and
  \[
    B(\Y^{(0, n)}, \RY^{(n)}) \in C_T\Cc^{(\frac{1}{2}-\alpha_0)\mm}
  \]
  for any $T < \tau_\infty$ almost surely. Setting $\RY_t^{(n)}=\death$
  for $t \geq \tau_\infty$, we have that $ \RY^{(n)} \in C_T\overline
  \Cc^{(\frac{1}{2}-\beta_n)\mm}$ for any $T>0$.
\end{proposition}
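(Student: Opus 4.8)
The plan is to mirror the argument used for $\RX^{(n)}$ in \cref{prop:regR}, transplanting it to the $\Y$ setting via the $\eta$-$\rho$ (Da Prato--Debussche) splitting. First I would introduce the decomposition $\RY^{(n)} = \eta^{(n)} + \rho^{(n)}$, where $\eta^{(n)}$ solves $\Ll \eta^{(n)}_t = \tQ_n d\tW^{(n)}_t$ with $\eta^{(n)}_0 = 0$ and $\rho^{(n)}$ solves the equation obtained by substituting this splitting into \eqref{eq:R2}, with $\rho^{(n)}_0 = u_0$. Exactly as in \cref{lem:rhoeta}, the stochastic convolution $\eta^{(n)}$ is Gaussian and, since $\tQ_n \simQ A^{\beta_n/2}$, \cref{rem:StochasticConvolutionReg} gives $\eta^{(n)} \in C_T\Cc^{(\frac12-\beta_n)\mm}$ almost surely. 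The only product terms that are not classically defined are $B(\Y^{(0,n)}, \eta^{(n)})$ and the drift $B(\Y^{(0,n)}) - B(\Y^{(0,n-1)})$; both are handled by Appendix \ref{sec:Gaussian} together with \eqref{eq:regRYdrift}, which was already established above using the telescoping decomposition \eqref{eq:driftDecomp} (with $B$ in place of $J$). Since $\alpha_0 + \beta_n < \frac32$ by \eqref{eq:alpha}--\eqref{eq:condExtra}, one gets $B(\Y^{(0,n)}, \eta^{(n)}) \in C_T\Cc^{(-\frac12-\alpha_0)\mm}$.

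Next I would run the fixed-point argument for $\rho^{(n)}$. Rewriting the $\rho^{(n)}$ equation in the form $\Ll \rho^{(n)}_t = B(\rho^{(n)}_t) + 2B(\rho^{(n)}_t, \eta^{(n)}_t) + F^{(n)}_t$ with $F^{(n)}_t := B(\Y^{(0,n)}_t) - B(\Y^{(0,n-1)}_t) + 2B(\Y^{(0,n)}_t, \eta^{(n)}_t) + B(\eta^{(n)}_t)$, the terms constituting $F^{(n)}$ all lie in $C_T\Cc^{(-\frac12-\alpha_0)\mm}$ by \eqref{eq:regRYdrift}, the bound on $B(\Y^{(0,n)}, \eta^{(n)})$ just obtained, and $B(\eta^{(n)}) \in C_T\Cc^{(-\frac12-\beta_n)\mm}$ (which is at least as regular since $\beta_n < \alpha_0$). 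Then the standard fixed-point theory in Appendix~\ref{sec:ExistenceRegularity}, applied exactly as in the proof of \cref{lem:rhoeta} (the only difference being that $\Y^{(0,n)}$ replaces $\X^{(0,n)}$, but both have the same canonical regularity $(\frac12-\alpha_0)\mm$ by \cref{lem:regY}), yields local-in-time well-posedness with $\rho^{(n)} \in C_T\Cc^{(\frac32-\alpha_0)\mm}$ for $T < \tau_\infty$. Since $\alpha_0 < 1$, this makes $\frac32 - \alpha_0 > \frac12$, so $\rho^{(n)}$ is function-valued and $B(\Y^{(0,n)}, \rho^{(n)}) \in C_T\Cc^{(-\frac12-\alpha_0)\mm}$ is classically defined, as is $B(\rho^{(n)}) \in C_T\Cc^{(\frac32-\alpha_0)\mm}$.

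With both pieces in hand, I would conclude the canonical-regularity claim: $\RY^{(n)} = \eta^{(n)} + \rho^{(n)}$ has the regularity of the rougher summand, which under the hypothesis $\frac32 - \alpha_0 > \frac12 - \beta_n$ (i.e. $\alpha_0 - \beta_n < 1$) is $\eta^{(n)}$, so $\RY^{(n)} \in C_T\Cc^{(\frac12-\beta_n)\mm}$, matching the stochastic convolution $\tZ^{(n)}$ as asserted. For the product bound, $B(\Y^{(0,n)}, \RY^{(n)}) = B(\Y^{(0,n)}, \eta^{(n)}) + B(\Y^{(0,n)}, \rho^{(n)})$, and both terms were just shown to lie in $C_T\Cc^{(-\frac12-\alpha_0)\mm}$; noting that the statement writes $(\frac12-\alpha_0)\mm$ here, one should check this is consistent with what \eqref{eq:system2} needs downstream (the more regular of the two formulations suffices, and in any case the Time-Shifted Girsanov bound only uses the $(-\frac12-\alpha_0)\mm$ drift regularity). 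Finally, the death-state extension: by \cref{lem:localExt} the maximal existence time of $\RY^{(n)}$ coincides with $\tau_\infty$, and setting $\RY^{(n)}_t = \death$ for $t \geq \tau_\infty$ places it in $C_T\overline{\Cc}^{(\frac12-\beta_n)\mm}$ by the definition of that space in Section~\ref{sec:functionSpaces}, since the norm blows up as $t \to \tau_\infty$ exactly when the solution ceases to exist.

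The main obstacle I anticipate is verifying that the induction-based regularity identity \eqref{eq:regRYdrift} for $B(\Y^{(0,n)}) - B(\Y^{(0,n-1)})$ is genuinely available at this point in the argument with all the requisite Gaussian products in Appendix~\ref{sec:Gaussian} constructed — the $\Y^{(0,n)}$ objects are more complicated trees than the plain Ornstein--Uhlenbeck $Z^{(0,n)}$ of the $\RX^{(n)}$ analysis, so one must be careful that each term appearing after expanding $\Y^{(0,n)} = J(\Y^{(0,n-1)}) + Z^{(0,n)}$ and telescoping is either classically well-defined (using $\alpha_0 < 1$ so that the $J$-regularized terms have positive regularity) or reduces to one of the finitely many renormalized Gaussian objects already handled. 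Everything else is a routine transcription of \cref{lem:rhoeta}, \cref{lem:regR}, and \cref{prop:regR} with $\X$ replaced by $\Y$.
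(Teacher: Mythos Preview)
Your proposal is correct and follows the same route as the paper, which simply refers back to the arguments of \cref{lem:rhoeta} and \cref{prop:regR} with $\X$ replaced by $\Y$ and \eqref{eq:Rdrift} replaced by the already-established \eqref{eq:regRYdrift}. You have also correctly flagged that the displayed exponent $(\frac12-\alpha_0)\mm$ for $B(\Y^{(0,n)},\RY^{(n)})$ in the statement is a typo for $(-\frac12-\alpha_0)\mm$, in line with \cref{lem:rhoeta}.
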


\begin{remark}[Fourth Note on $\alpha < 1$]
  We note again the dependencies on $\alpha < 1$:
  \begin{enumerate}
    \item Under this condition, the $J(Z)$ terms have positive
    H\"older regularity, which greatly reduces the complexity
    of making sense of the $\Y$ equations in \cref{lem:regY}.

    \item It makes sure that $B(\Y^{(0,k)}) - B(\Y^{(0,k-1)})$
    has the same regularity as $B(Z^{(0,k)}) - B(Z^{(0,k-1)})$ 
    for $1 \le k \le n$
    so that \eqref{eq:regY} and \eqref{eq:regRYdrift} holds. 
    This also corresponds to 
    the regularization effect of J as discussed in \cref{rem:Jregs}.

    \item It makes sure that the $\RY^{(n)}$ equation is well posed, 
    and the Time-Shifted Girsanov Method applies 
    in exactly the same way like the $\RX^{(n)}$ equation.
  \end{enumerate}
\end{remark}

We observe that the terms in $\RY^{(n)}$ have the same regularity
as the corresponding terms in $\RX^{(n)}$, so we obtain 
the same result of \cref{prop:GirR} for $\RY^{(n)}$.

\begin{proposition}[Constraint from Time-Shifted Girsanov for $\RY^{(n)}$]
  \label{prop:absContRX}
  Under the standing noise factorization assumptions
  \eqref{eq:alpha}-\eqref{eq:Qi},
  if the $\Y^{(i)}$ equations are well
  posed with all of their terms possessing the canonical regularity,
  and if in addition $\alpha_0 < 1$ and $\alpha_0 - \beta_n < \frac{1}{2}$, 
  then the regularity condition needed to apply \cref{thm:tsG}, the Time-Shifted
  Girsanov Method, to $\RY^{(n)}$ holds. 
  More concretely, it implies that for any $t > 0$,
  it holds almost surely that
  $$ \Law(\RY_t^{(n)}\mid t< \tau_\infty,\mathcal{F}_t^{(n)}) 
  \ll \Law(\tZ_t^{(n)}), $$
  where we recall 
  $\mathcal{F}_t^{(n)} = \sigma( W_s^{(j)}: j \leq n, s \leq t)$.

  In particular, as long as $\alpha_0 < 1$, 
   $\beta_n$ (and $\alpha_n$) can be taken close enough to $\frac{1}{2}$ 
   to satisfy the condition $\alpha_0 - \beta_n < \frac{1}{2}$.
\end{proposition}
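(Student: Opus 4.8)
The plan is to apply the Time-Shifted Girsanov Method, \cref{thm:tsG}, directly to the remainder equation \eqref{eq:R2}, in exact parallel with the proof of \cref{prop:GirR} for $\RX^{(n)}$. Concretely, I would instantiate \cref{thm:tsG} with $\Cov=\tQ_n\simQ A^{\beta_n/2}$ (so that $\beta=\beta_n$), with $\mathcal{G}_t=\mathcal{F}_t^{(n)}=\sigma(W_s^{(j)}\colon j\le n,\ s\le t)$, which is independent of the cylindrical Brownian motion $\tW^{(n)}$ driving \eqref{eq:R2}, with stopping time $\tau=\tau_\infty$, with $G_t\equiv 0$, and with $F_t$ the full drift $B(\Y^{(0,n)}_t)-B(\Y^{(0,n-1)}_t)+2B(\Y^{(0,n)}_t,\RY^{(n)}_t)+B(\RY^{(n)}_t)$ of \eqref{eq:R2}. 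The mismatch between the initial condition $u_0$ of $\RY^{(n)}$ and the initial condition $z_0$ of $\tZ^{(n)}$ gets absorbed, as in \cref{sec:TSG}, into the smooth correction $\widetilde F^{(0)}_s=\one_{[\frac T2,T]}(s)\tfrac{2}{T} e^{-sA}(u_0-z_0)$, which lies in $C_T\Cc^b$ for every $b$ and so is harmless. Since $G_t\equiv 0$, the reference process $\zeta_t$ of \cref{thm:tsG} is exactly $\tZ^{(n)}_t$, and as $\tZ^{(n)}$ is independent of $\mathcal{G}_t$ we have $\Law(\zeta_t\mid\mathcal{G}_t)=\Law(\tZ^{(n)}_t)$.

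Before \cref{thm:tsG} can be invoked, one needs $F_t$ to be an $\mathcal{H}_{t\wedge\tau_\infty}$-adapted process of the required regularity; this is precisely what \cref{prop:regRX} supplies, once $B(\Y^{(0,n)},\RY^{(n)})$ has been given meaning through the Da Prato--Debussche splitting $\RY^{(n)}=\eta^{(n)}+\rho^{(n)}$ (the $\Y$-system version of the construction in \cref{lem:rhoeta}). Granting this, \eqref{eq:R2} is well posed on $[0,\tau_\infty)$ with $\RY^{(n)}\in C_T\Cc^{(\frac12-\beta_n)\mm}$, all drift terms carry their canonical regularity, and by \eqref{eq:regRYdrift} the roughest of them is $B(\Y^{(0,n)})-B(\Y^{(0,n-1)})\in C_T\Cc^{(-\frac12-\alpha_0)\mm}$; the term $B(\Y^{(0,n)},\RY^{(n)})$ is no rougher by \cref{prop:regRX}, and $B(\RY^{(n)})\in C_T\Cc^{(-\frac12-\beta_n)\mm}$ is strictly more regular since $\beta_n<\alpha_0$. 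It is Condition \eqref{eq:condExtra} that keeps $\RY^{(n)}$ function-valued and the remainder drift definable without a heat-kernel convolution, which is exactly why \cref{thm:tsG} — rather than the Cameron--Martin \cref{lem:CM} — is the usable tool here, as explained in the remark on the importance of Condition \eqref{eq:condExtra}.

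It then remains to verify the integrability hypothesis \eqref{eq:TSGirsanov}, and here I would use the sufficient condition stated at the end of \cref{thm:tsG} (equivalently \cref{rem:TSG}): it is enough that $F\in C_t\Cc^\sigma$ almost surely with $\sigma+\beta_n+1>0$, and taking $\sigma$ any exponent just below $-\frac12-\alpha_0$ this becomes $\frac12-\alpha_0+\beta_n>0$, i.e.\ $\alpha_0-\beta_n<\frac12$, which is the standing hypothesis. \cref{thm:tsG} then gives, for every $t>0$, $\Law(\RY^{(n)}_t\mid t<\tau_\infty,\mathcal{F}_t^{(n)})\ll\Law(\tZ^{(n)}_t)$ almost surely. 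The last assertion is immediate: the only constraints \eqref{eq:alpha}--\eqref{eq:condExtra} impose on $\beta_n$ are $\beta_n<\alpha_n<\frac12$, so $\beta_n$ (and $\alpha_n$) may be taken arbitrarily close to $\frac12$ from below, and since $\alpha_0-\frac12<\frac12$ the inequality $\alpha_0-\beta_n<\frac12$ is then met. The hard part is not this proposition at all — it is the upstream construction of $\RY^{(n)}$ and the verification, in \cref{lem:regY}, \cref{prop:regRX}, and the $\Y$-analogue of \cref{lem:rhoeta}, that the more intricate Gaussian trees built from $\Y^{(0,i)}=J(\Y^{(0,i-1)})+Z^{(0,i)}$ retain their canonical regularity; once that is in hand, the present step is just the matching of the roughest drift exponent against the regularity of the stochastic convolution $\tZ^{(n)}$.
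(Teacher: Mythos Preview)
Your proposal is correct and follows essentially the same approach as the paper: the paper simply observes that the drift terms in \eqref{eq:R2} have the same canonical regularity as those in \eqref{eq:R}, so the proof of \cref{prop:GirR} carries over verbatim, with the roughest drift term in $C_T\Cc^{(-\frac12-\alpha_0)\mm}$ yielding the Time-Shifted Girsanov condition $\frac12-\alpha_0+\beta_n>0$ via \cref{rem:TSG} and \cref{thm:tsG}. You spell out more of the mechanics (the identification of $\Cov$, $\mathcal{G}_t$, $\tau$, $F_t$, the initial-condition correction $\widetilde F^{(0)}$, and the comparison of the individual drift regularities), but the substance is identical.
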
 

Again, since the main argument in the previous section follows from
computations of the same regularity, 
we can obtain the same overall result as the previous section,
by \cref{cor:absXY}.

\begin{corollary}[The overall result on the $\Y$ system]
  Fix an $n$ and an $\alpha$ so that $\frac{1}{2} \le \alpha < \frac{2n+1}{2n+2}$. Then there exists 
  a sequence real numbers $\beta_n < \alpha_n < \ldots < \alpha_0 = \alpha$ 
  such that the standing noise factorization assumptions on the
        $\{\alpha_j: j=0,\dots,n\}$ in
 \eqref{eq:alpha}-\eqref{eq:Qi} hold as well as the hypothesis of
 \cref{lem:regY}, \cref{prop:absContY}, \cref{prop:regRX} and \cref{prop:absContRX} .
 More concretely, it implies that for any $t > 0$, 
 it holds almost surely that 
 \begin{align*}
  \Law(u_t \mid  \tau_\infty>t) &= \Law\left(\RY_t^{(n)} +\sum_{k=0}^n \Y_t^{(k)}\spc{\Big|} \tau_\infty>t\right) \\&\ll 
  \Law\left(\tZ_t^{(n)} +\sum_{k=0}^n Z_t^{(k)}\right) = \Law(z_t), 
 \end{align*}
 where, as a reminder, $z_t$ is the linear part of $u_t$ 
 with initial condition $z_0$ as defined in \eqref{eq:OU}.
\end{corollary}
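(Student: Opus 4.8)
The plan is to mirror the proof of the overall result for the $\X$ system, exploiting that, by \cref{lem:regY}--\cref{prop:absContRX}, the $\Y$ system is governed by exactly the same parameter inequalities as the $\X$ system. First I would fix the parameters. The computation in the proof of \cref{cor:constntX} shows that the hypothesis $\frac12 \le \alpha < \frac{2n+1}{2n+2}$ guarantees the existence of reals $\alpha_n < \cdots < \alpha_0 = \alpha$ with $\alpha_n < \frac12 \le \alpha_{n-1}$ and $\alpha_0 + \alpha_{i-1} - \alpha_i < 1$ for every $1 \le i \le n$; since moreover $\alpha_0 = \alpha < 1$, one can then choose $\beta_n < \alpha_n$ close enough to $\frac12$ that $\alpha_0 - \beta_n < \frac12$. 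With this tuple $(\alpha_0,\dots,\alpha_n,\beta_n)$ in hand, \cref{lem:param} produces operators $Q_0,\dots,Q_n,\tQ_n$ realizing \eqref{eq:Qi} together with \eqref{eq:Qcondition}; hence the noise identity \eqref{eq:QWexpansion} and the decomposition \eqref{eq:decomp2} hold.

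Next I would check that this single choice meets all the invoked hypotheses. The inequality $\alpha_0 + \alpha_{i-1} - \alpha_i < 1$ is stronger than the $\tfrac32$-version required by \cref{lem:regY}, so each $\Y^{(i)}$ equation is well posed with its canonical regularity; it is also precisely the hypothesis of \cref{prop:absContY}, which therefore gives $\Law(\Y_t^{(i)} \mid \mathcal{F}_t^{(i-1)}) \sim \Law(Z_t^{(i)})$ almost surely for $1 \le i \le n$. Similarly $\alpha_0 - \beta_n < \frac12$ implies $\alpha_0 - \beta_n < 1$, so \cref{prop:regRX} gives that $\RY^{(n)}$ has its canonical regularity (with the death-state convention on $\{t \ge \tau_\infty\}$), while \cref{prop:absContRX} yields $\Law(\RY^{(n)}_t \mid t < \tau_\infty, \mathcal{F}_t^{(n)}) \ll \Law(\tZ_t^{(n)})$ almost surely. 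Taken together, these are exactly the two assertions that make up \eqref{eq:YabsCont}.

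I would then feed \eqref{eq:YabsCont} into the first case of \cref{cor:absXY}, obtaining
\begin{align*}
  \Law\left(\RY_t^{(n)} + \sum_{k=0}^n \Y_t^{(k)} \spc{\Big|} \tau_\infty > t\right)
  \ll \Law\left(\tZ_t^{(n)} + \sum_{k=0}^n Z_t^{(k)}\right).
\end{align*}
To close the argument, identify the two sides: by \eqref{eq:decomp2} and \cref{lem:localExt} (the maximal existence time of $\RY^{(n)}$ agrees almost surely with $\tau_\infty$ for $u$), the left-hand measure is $\Law(u_t \mid \tau_\infty > t)$, and by the stochastic-convolution decomposition \eqref{eq:StochasticConvZs} the right-hand measure is $\Law(z_t)$, where $z$ solves \eqref{eq:OU} from $z_0$.

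Since every regularity estimate has already been established in the preceding propositions, no genuine analytic difficulty remains; the one point that deserves care is verifying that a \emph{single} tuple $(\alpha_1,\dots,\alpha_n,\beta_n)$ can satisfy all of the constraints simultaneously. The telescoping-inequality argument of \cref{cor:constntX} shows this is possible exactly when $\alpha < \tfrac{2n+1}{2n+2}$, and the remaining freedom needed to place $\beta_n$ comes for free from $\alpha < 1$.
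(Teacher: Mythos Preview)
Your proposal is correct and follows the same approach as the paper, which in fact does not spell out a separate proof for this corollary: the argument is identical to the $\X$-system corollary, invoking \cref{cor:constntX} for the parameter choice and \cref{cor:absXY} for the absolute-continuity conclusion. Your write-up simply makes those two citations explicit and checks that the single tuple $(\alpha_0,\dots,\alpha_n,\beta_n)$ satisfies every hypothesis simultaneously, which is exactly what is needed.
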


\section{Discussion}

The Time-Shifted Girsanov Method, described in \cref{sec:TSG}, was used
in \cite{Mattingly_Suidan_2005} to show that the hyper-viscous,
two-dimensional Navier-Stokes equation satisfied the translation of
\cref{thm:SimplyMain} to that setting when the forcing is smooth
enough to have classical solutions but not so smooth that is
infinitely differentiable in space. This is completely analogous to the
Theorem proven here when $\alpha<\frac12$. We conjecture that a
version of \cref{thm:SimplyMain}  does not hold for the Gaussian
measure generated by the  Ornstein-Unlenbeck process obtained by
removing the nonlinearity from \eqref{eq:burgers}. It would be interesting to compare and
contrast that setting to the current one when $\alpha=1$. In both
settings, it would be interesting to  understand the structure of the
transition measure when $Q \approx e^{-A}$ where we expect the system
to have more in common with a finite dimensional hypoelliptic system.

\vspace{1em}
\noindent \textbf{Acknowledgments: } MR and JCM thank MSRI for its
hospitality during the 2015 program ``New Challenges in PDE:
Deterministic Dynamics and Randomness in High and Infinite Dimensional
Systems'' where they began working on the  multilevel decomposition to
prove equivalence. This work builds on an unpublished manuscript of
JCM and Andrea Watkins Hairston which looks at the case analogous to
$\alpha < \frac12$ in related PDEs using the Time-Shifted Girsanov
Method directly on the main equation without the levels of
decomposition needed for the singular case. JCM and MR also thank a
grant from the \emph{Visiting professors} programme of {G.\,N.\,A.\,M.\,P.\,A.}
which allowed JCM to visit Pisa during the summer
of 2016, where this work evolved closer to its current direction.
JCM and LS thanks the
National Science foundation for its partial support through the
grant NSF-DMS-1613337. LS also thanks SAMSI for its partial support,
through the grant  NSF-DMS-163852,
during the 2020-2021 academic year when all of the pieces finally came
together in the singular setting and this note was written.

\appendix
\section{Besov spaces and Paraproducts}
\label{Besov}
Results of this section can be found at 
\cite{bahouri2011fourier,gubinelli2015paracontrolled,catellier2018paracontrolled}.
We recall the definition of Littlewood-Paley blocks.
Let $\chi, \varphi$ be smooth radial functions $\R \to \R$ such that
\begin{itemize}
	\item $0 \le \chi, \varphi \le 1$, 
		$\chi(\xi) + \sum_{j \ge 0} \varphi(2^{-j}\xi) = 1$ for any $\xi \in \R$,

	\item $\supp \chi \subseteq B(0, R)$, 
		$\supp \varphi \subseteq B(0, 2R) \setminus B(0, R)$,

	\item $\supp \varphi(2^{-j} \cdot) \cap \supp \varphi(2^{-i} \cdot) = \emptyset$
		if $|i - j| > 1$.
\end{itemize}
The pair $(\chi, \varphi)$ is called a dyadic partition of unity.
We use the notations
\begin{equation}\label{eq:LPpartition}
\varphi_{-1} = \chi, \quad \varphi_j = \varphi(2^{-j} \cdot),
\end{equation}
for $j \ge 0$. 
Then the family of Fourier multipliers $(\Delta_j)_{j \ge -1}$ 
denotes the associated Littlewood-Paley blocks, i.e., 
$$\Delta_{-1} = \chi(D), \quad \Delta_j = \varphi(2^{-j} D)$$ for $j \ge 0$.

\begin{definition}
	For $s \in \R, p, q \in [1, \infty]$, the Besov space $B^s_{p,q}$ is
	defined as 
	\[
		B^s_{p,q} = \left\{u \in \Ss': \|u\|_{B^s_{p,q}} := 
		\left\| \left(2^{js} \|\Delta_i u \|_{L^p}\right)_{j \ge -1} 
		\right\|_{\ell^q} < \infty \right\}.
	\] 
	As a convention, we denote by $\Cc^{s}$ the separable version
	of the Besov-H\"older space $B^s_{\infty, \infty}$ , i.\,e., 
	$\Cc^{s}$ is the closure of $C^\infty(\T)$ 
	with respect to $\|\cdot\|_{B^s_{\infty, \infty}}.$
	We also write $\|\cdot\|_{\Cc^s}$ to mean $\|\cdot\|_{B^s_{\infty, \infty}}$.
\end{definition}
\begin{remark}
	To show that $u \in \Ss'$ is in $\Cc^s$, 
	it suffices to show $\|u\|_{B^{s'}_{\infty, \infty}} < \infty$
	for some $s' > s$.
\end{remark}
\begin{remark}\label{rm:clasicalHolder}
  For $0 < s < 1$, $f$ is in the classical space of 
  $s$-H\"older continuous functions if
  and only if $f \in L^\infty$ and $\|f\|_{\Cc^s} < \infty$.
\end{remark}

\begin{proposition}[Besov embedding]\label{prop:emb}
	Let $1 \le p_1 \le p_2 \le \infty$ and $1 \le q_1 \le q_2 \le \infty$.
	For $s \in \R$, the space $B^s_{p_1, q_1}$ is continuously
	embedded in $B^{s - (\frac{1}{p_1} - \frac{1}{p_2})}_{p_2, q_2}$.
\end{proposition}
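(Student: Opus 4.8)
The plan is to reduce the claim to two standard ingredients: \emph{Bernstein's inequality} for band-limited functions and the elementary inclusion $\ell^{q_1}\hookrightarrow\ell^{q_2}$ when $q_1\le q_2$. Throughout, one should keep in mind that we work on the one-dimensional torus $\T$, so the spatial dimension is $d=1$; this is why the loss of regularity is exactly $\frac1{p_1}-\frac1{p_2}$ and not $d\bigl(\frac1{p_1}-\frac1{p_2}\bigr)$.

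First I would invoke Bernstein's inequality in the following form (see \cite{bahouri2011fourier}): there is a constant $C$ so that for every $\lambda>0$, every $v\in\Ss'(\T)$ whose Fourier transform is supported in the ball $B(0,\lambda)$, and all $1\le p_1\le p_2\le\infty$,
\[
  \|v\|_{L^{p_2}(\T)}\;\le\;C\,\lambda^{\frac1{p_1}-\frac1{p_2}}\,\|v\|_{L^{p_1}(\T)}.
\]
Although I would simply cite this, the one-line justification is that $v=\Theta_\lambda*v$, where $\Theta_\lambda$ is the (periodized) kernel obtained by rescaling a fixed Schwartz function $\Theta$ with $\widehat\Theta\equiv1$ on $B(0,1)$; Young's convolution inequality with $1+\frac1{p_2}=\frac1{p_1}+\frac1r$ gives $\|v\|_{L^{p_2}}\le\|\Theta_\lambda\|_{L^r}\|v\|_{L^{p_1}}$, and a change of variables yields $\|\Theta_\lambda\|_{L^r}\simeq\lambda^{1-1/r}=\lambda^{\frac1{p_1}-\frac1{p_2}}$, uniformly in $\lambda$.

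Next I apply this with $v=\Delta_j u$ and $\lambda\simeq 2^{j}$, which is legitimate for every $j\ge-1$: by the choice of the dyadic partition of unity in \eqref{eq:LPpartition}, $\Delta_j u$ has Fourier support in a ball of radius $\lesssim 2^{j}$ (for $j=-1$ this is the fixed ball $B(0,R)$, and the discrepancy between $R$ and $2^{-1}$ is absorbed into the constant). This gives, for all $j\ge-1$,
\[
  \|\Delta_j u\|_{L^{p_2}}\;\lesssim\;2^{j(\frac1{p_1}-\frac1{p_2})}\,\|\Delta_j u\|_{L^{p_1}}.
\]
Setting $s'=s-\bigl(\frac1{p_1}-\frac1{p_2}\bigr)$ and multiplying by $2^{js'}$, this reads $2^{js'}\|\Delta_j u\|_{L^{p_2}}\lesssim 2^{js}\|\Delta_j u\|_{L^{p_1}}$ for each $j$. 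Taking the $\ell^{q_2}$-norm over $j\ge-1$ on the left-hand side and then using $\|(a_j)_j\|_{\ell^{q_2}}\le\|(a_j)_j\|_{\ell^{q_1}}$ (valid since $q_1\le q_2$) on the right-hand side yields
\[
  \|u\|_{B^{s'}_{p_2,q_2}}\;\lesssim\;\|u\|_{B^{s}_{p_1,q_1}},
\]
which is exactly the asserted continuous embedding.

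The only genuine content is Bernstein's inequality; everything else is bookkeeping with the definition of $\|\cdot\|_{B^s_{p,q}}$ and the monotonicity of the $\ell^q$-norms in $q$. The one point worth a word of care is that the estimate must hold uniformly in the block index $j$, including the low-frequency block $j=-1$; this is automatic since $\Delta_{-1}$ is a fixed Fourier multiplier supported in a fixed ball. A secondary and ultimately harmless subtlety is that on $\T$ one uses the periodized convolution kernel in the proof of Bernstein's inequality, but periodization leaves the $L^r$-scaling used above unchanged.
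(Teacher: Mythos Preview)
Your proof is correct and is the standard argument via Bernstein's inequality and the monotonicity $\ell^{q_1}\hookrightarrow\ell^{q_2}$. The paper itself does not prove this proposition: it is stated in Appendix~\ref{Besov} as a quoted result, with the opening line of that appendix referring the reader to \cite{bahouri2011fourier,gubinelli2015paracontrolled,catellier2018paracontrolled} for proofs. What you have written is essentially the proof one finds in those references (in particular \cite{bahouri2011fourier}), specialized to $d=1$.
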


\begin{definition}
	A smooth function $\eta: \R \to \R$ is said to be an $S^m$-multiplier
	if for every multi-index $\alpha$,
	\[
		\left|\frac{\partial^\alpha \eta}{\partial x^\alpha}(\xi)\right| 
			\lesssim_\alpha (1 + |\xi|)^{m - |\alpha|}, \quad \forall \xi \in \R.
	\]
\end{definition}

\begin{proposition}\label{prop:multiplier}
	Let $m \in \R$ and $\eta$ be a $S^m$-multiplier. Then,
	for all $s \in \R$ and $1 \le p, q \le \infty$,
	the operator $\eta(D)$ is continuous from $B^s_{p,q}$ to $B^{s-m}_{p,q}$.
\end{proposition}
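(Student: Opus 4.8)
The plan is to reduce the statement to a single uniform frequency-localized bound and then sum over dyadic scales. Since $\eta(D)$ is a Fourier multiplier it commutes with every Littlewood--Paley block, so $\Delta_j\eta(D)u=\eta(D)\Delta_j u$. The block $\Delta_j u$ has spectrum in the annulus $\{2^{j-1}R\le|\xi|\le 2^{j+1}R\}$ for $j\ge0$ (and in a ball of radius $2R$ for $j=-1$), so I would fix a smooth compactly supported $\tilde\varphi$ on a slightly larger annulus with $\tilde\varphi\equiv1$ on $\supp\varphi$, together with a similar ball cutoff $\tilde\chi$, and write $\eta(D)\Delta_j u=\eta(D)\,\tilde\varphi(2^{-j}D)\Delta_j u$ for $j\ge0$ and $\eta(D)\Delta_{-1}u=\eta(D)\,\tilde\chi(D)\Delta_{-1}u$.

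The core estimate to establish is
\[
  \|\eta(D)\,\tilde\varphi(2^{-j}D)f\|_{L^p}\lesssim 2^{jm}\|f\|_{L^p},
\]
with a constant independent of $j\ge0$ and $p\in[1,\infty]$. I would prove this by scaling: on $\R$ the operator $\eta(D)\tilde\varphi(2^{-j}D)$ is convolution with a kernel which, after the substitution $\xi=2^j\zeta$, equals $2^j$ times the oscillatory integral $\int e^{i2^jx\zeta}\,\eta(2^j\zeta)\,\tilde\varphi(\zeta)\,d\zeta$ over the fixed annulus $\supp\tilde\varphi$; on that annulus the $S^m$ bounds, through the chain rule, give $|\partial^\alpha_\zeta[\eta(2^j\zeta)\tilde\varphi(\zeta)]|\lesssim 2^{jm}$ for every multi-index $\alpha$, so $N$ integrations by parts in $\zeta$ yield the pointwise kernel bound $\lesssim 2^{j(m+1)}(1+2^j|x|)^{-N}$, whence the kernel has $L^1$ norm $\lesssim 2^{jm}$ and Young's inequality gives the claim. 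On $\T$ the relevant kernel is the periodization of this one, which by Poisson summation has the same $L^1$ bound; the case $j=-1$ is easier still, since $\eta\tilde\chi$ is a fixed compactly supported smooth function.

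Combining the two steps, $\|\Delta_j\eta(D)u\|_{L^p}=\|\eta(D)\tilde\varphi(2^{-j}D)\Delta_ju\|_{L^p}\lesssim 2^{jm}\|\Delta_ju\|_{L^p}$ with a constant independent of $j$ and $p$, and therefore
\[
  \|\eta(D)u\|_{B^{s-m}_{p,q}}=\Big\|\big(2^{j(s-m)}\|\Delta_j\eta(D)u\|_{L^p}\big)_{j\ge-1}\Big\|_{\ell^q}\lesssim\Big\|\big(2^{js}\|\Delta_ju\|_{L^p}\big)_{j\ge-1}\Big\|_{\ell^q}=\|u\|_{B^s_{p,q}},
\]
which, with the obvious linearity of $\eta(D)$, is the asserted continuity. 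I expect the main obstacle to be the uniform-in-$j$ kernel estimate of the second paragraph: one must track carefully that, after rescaling, the $S^m$ bounds produce exactly the gain $2^{jm}$ together with enough decay in $2^j|x|$ for the kernel's $L^1$ norm to be bounded by $2^{jm}$, and (on the torus) that periodizing does not spoil this. The remaining steps are routine bookkeeping with the dyadic partition of unity and hold uniformly for all $q\in[1,\infty]$.
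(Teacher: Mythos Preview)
Your argument is correct and is essentially the standard proof (rescale to a fixed annulus, use the $S^m$ bounds to control the kernel in $L^1$ uniformly in $j$, then apply Young and sum the Littlewood--Paley pieces). The paper does not give its own proof of this proposition: it is stated in the appendix as a background fact with a reference to \cite{bahouri2011fourier,gubinelli2015paracontrolled,catellier2018paracontrolled}, and your write-up matches the argument one finds there (e.g.\ Bahouri--Chemin--Danchin, Lemma~2.2).
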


The following estimate can be found at 
\cite[Lemma 2.5]{catellier2018paracontrolled} and 
\cite[Lemma A.7]{gubinelli2015paracontrolled}.
\begin{proposition}\label{prop:heat}
	Let $A$ be the negative Laplacian, and $\gamma, \delta \in \R$ 
	with $\gamma \le \delta$. Then 
	$$\|e^{-tA} u\|_{\Cc^\delta} \lesssim 
		t^{\frac{1}{2}(\gamma - \delta)}\|u\|_{\Cc^{\gamma}}.$$
	for all $u \in \Cc^\gamma$.
\end{proposition}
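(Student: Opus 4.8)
The plan is to run the standard Littlewood--Paley argument, reducing the claim to a frequency-localized estimate. By definition of the $B^\delta_{\infty,\infty}$ norm one has $\|e^{-tA}u\|_{\Cc^\delta}=\sup_{j\ge-1}2^{j\delta}\|\Delta_j e^{-tA}u\|_{L^\infty}$, so it suffices to bound $2^{j\delta}\|\Delta_j e^{-tA}u\|_{L^\infty}$ uniformly in $j\ge-1$ and $t>0$. First I would treat the high-frequency blocks $j\ge0$. There $\Delta_j e^{-tA}$ is the Fourier multiplier with symbol $m_{j,t}(\xi)=\varphi(2^{-j}\xi)e^{-t|\xi|^2}$, which is supported where $|\xi|\eqsim 2^j$; writing $\widetilde\Delta_j=\Delta_{j-1}+\Delta_j+\Delta_{j+1}$, which acts as the identity on that Fourier support (by the third property of the dyadic partition of unity), I would factor $\Delta_j e^{-tA}u=m_{j,t}(D)\widetilde\Delta_j u$.

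The technical crux is the uniform kernel bound $\|\mathcal F^{-1}m_{j,t}\|_{L^1}\lesssim e^{-ct2^{2j}}$ for a fixed $c>0$ independent of $j$ and $t$. This is the classical heat-smoothing estimate for functions whose Fourier transform lives in a dyadic annulus: after rescaling out the factor $2^j$ and integrating by parts one obtains rapid decay of the kernel in $2^j|x|$ together with the gain $e^{-t|\xi|^2}\le e^{-ct2^{2j}}$ valid on $\{|\xi|\eqsim 2^j\}$ (see e.g. \cite{bahouri2011fourier}, and in exactly the form needed \cite[Lemma 2.5]{catellier2018paracontrolled} and \cite[Lemma A.7]{gubinelli2015paracontrolled}). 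Young's inequality then gives $\|\Delta_j e^{-tA}u\|_{L^\infty}\lesssim e^{-ct2^{2j}}\|\widetilde\Delta_j u\|_{L^\infty}\lesssim e^{-ct2^{2j}}2^{-j\gamma}\|u\|_{\Cc^\gamma}$, so that
\[
  2^{j\delta}\|\Delta_j e^{-tA}u\|_{L^\infty}\lesssim 2^{j(\delta-\gamma)}e^{-ct2^{2j}}\|u\|_{\Cc^\gamma}.
\]
Because $\delta-\gamma\ge0$, writing $y=t2^{2j}$ and using the elementary bound $\sup_{y\ge0}y^{(\delta-\gamma)/2}e^{-cy}<\infty$ turns the right-hand side into $t^{-(\delta-\gamma)/2}\|u\|_{\Cc^\gamma}=t^{\frac12(\gamma-\delta)}\|u\|_{\Cc^\gamma}$, which is the desired bound for every $j\ge0$.

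It then remains to treat the block $j=-1$, whose symbol $\chi(\xi)e^{-t|\xi|^2}$ is bounded by $1$ and compactly supported; hence $\|\Delta_{-1}e^{-tA}u\|_{L^\infty}\lesssim\|\Delta_{-1}u\|_{L^\infty}\lesssim\|u\|_{\Cc^\gamma}$. This already suffices whenever $t$ is confined to a bounded interval, since there $t^{\frac12(\gamma-\delta)}\gtrsim1$ (recall $\gamma-\delta\le0$); for arbitrarily large $t$ one instead uses the spectral gap of $A$ on mean-zero functions to replace the bound by $e^{-ct}\|u\|_{\Cc^\gamma}$, which dominates $t^{\frac12(\gamma-\delta)}\|u\|_{\Cc^\gamma}$. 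Taking the supremum over $j\ge-1$ of the bounds just obtained yields the claim. I expect the only point requiring any genuine care to be making the uniform-in-$t$ annulus kernel estimate precise — which is why the paper simply cites it; everything else is routine Littlewood--Paley bookkeeping.
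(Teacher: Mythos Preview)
Your proposal is correct and is precisely the standard Littlewood--Paley argument; the paper itself does not give a proof but simply cites \cite[Lemma 2.5]{catellier2018paracontrolled} and \cite[Lemma A.7]{gubinelli2015paracontrolled}, whose proofs are exactly the argument you sketched. Your treatment of the low-frequency block $j=-1$ (splitting into bounded $t$ versus invoking the spectral gap on mean-zero functions for large $t$) is a bit more careful than strictly necessary for the paper's purposes, where $t$ always ranges over a bounded interval and the functions are mean-zero anyway, but it does no harm.
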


For $u \in \Cc^{\gamma}$ and $v \in \Cc^{\delta}$, 
we can formally decompose the product $uv$ as
\begin{equation} \label{eq:prod}
	uv = u \lpara v + u \circ v + u \rpara v,
\end{equation}
where
\[
	u \lpara v = v \rpara u := \sum_{i< j - 1} \Delta_i u \Delta_j v,
	\quad u \circ v = \sum_{|i - j| < 1} \Delta_i u \Delta_j v.
\]

\begin{proposition}[Bony estimates]\label{prop:para}
	Let $\gamma, \delta \in \R$. Then we have the estimates
	\begin{itemize}
		\item $\|u \lpara v\|_{\Cc^\delta} 
			\lesssim \|u\|_{L^\infty} \|v\|_{\Cc^\delta}$
			for $u \in L^\infty$ and $v \in \Cc^\delta$.

		\item $\|u \lpara v\|_{\Cc^{\gamma + \delta}} 
			\lesssim \|u\|_{\Cc^{\gamma}} \|v\|_{\Cc^\delta}$
			for $\gamma < 0$, $u \in \Cc^\gamma$ and $v \in \Cc^\delta$.

		\item $\|u \res v\|_{\Cc^{\gamma + \delta}} 
			\lesssim \|u\|_{\Cc^{\gamma}} \|v\|_{\Cc^\delta}$
			for $\gamma + \delta > 0$, $u \in \Cc^\gamma$ and $v \in \Cc^\delta$.
	\end{itemize}
\end{proposition}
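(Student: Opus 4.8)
The plan is to run the standard Littlewood--Paley bookkeeping, the engine of which is the spectral (Fourier-support) localization of the summands defining $u\lpara v$ and $u\res v$; once those two localization facts are recorded, each of the three estimates reduces to a one-line summation of a geometric series, with the sign condition ($\gamma<0$, resp.\ $\gamma+\delta>0$) doing exactly the work of making the relevant series converge onto its extreme term. First I would record the two structural facts, both immediate from the support conditions on $(\chi,\varphi)$. Writing $S_m u=\sum_{i\le m}\Delta_i u$, so that by the gap $i<j-1$ in \eqref{eq:prod} one has $u\lpara v=\sum_j S_{j-2}u\,\Delta_j v$, the summand $S_{j-2}u\,\Delta_j v$ has Fourier support in an annulus $\{c\,2^{j}\le|\xi|\le C\,2^{j}\}$ for fixed $0<c<C$; hence there is a fixed $N_0$ (depending only on $R$) with $\Delta_k(S_{j-2}u\,\Delta_j v)=0$ whenever $|k-j|>N_0$, so
\[
  \Delta_k(u\lpara v)=\sum_{|j-k|\le N_0}\Delta_k\!\big(S_{j-2}u\,\Delta_j v\big).
\]
Similarly, writing $u\res v=\sum_j \Delta_j u\,\widetilde\Delta_j v$ with $\widetilde\Delta_j=\Delta_{j-1}+\Delta_j+\Delta_{j+1}$, each summand has Fourier support in a \emph{ball} of radius $\sim 2^{j}$, so $\Delta_k(\Delta_j u\,\widetilde\Delta_j v)=0$ unless $j\ge k-N_0$, giving
\[
  \Delta_k(u\res v)=\sum_{j\ge k-N_0}\Delta_k\!\big(\Delta_j u\,\widetilde\Delta_j v\big).
\]
Throughout I use that $\Delta_k$ and the partial sums $S_m$ are bounded on $L^\infty$ uniformly in $k,m$, being convolutions with $L^1$-normalised kernels.

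For the two paraproduct estimates I would start from $\|\Delta_k(u\lpara v)\|_{L^\infty}\lesssim\sum_{|j-k|\le N_0}\|S_{j-2}u\|_{L^\infty}\|\Delta_j v\|_{L^\infty}$. In the first case $\|S_{j-2}u\|_{L^\infty}\lesssim\|u\|_{L^\infty}$; multiplying by $2^{k\delta}\eqsim 2^{j\delta}$ (valid since $|j-k|\le N_0$) and taking $\sup_k$ gives $\|u\lpara v\|_{\Cc^\delta}\lesssim\|u\|_{L^\infty}\|v\|_{\Cc^\delta}$. In the second case, since $\gamma<0$ the geometric sum $\|S_{j-2}u\|_{L^\infty}\le\sum_{i\le j-2}\|\Delta_i u\|_{L^\infty}\lesssim\sum_{i\le j-2}2^{-i\gamma}\|u\|_{\Cc^\gamma}\lesssim 2^{-j\gamma}\|u\|_{\Cc^\gamma}$ is dominated by its top term, whence
\[
  2^{k(\gamma+\delta)}\|\Delta_k(u\lpara v)\|_{L^\infty}
  \lesssim\|u\|_{\Cc^\gamma}\sum_{|j-k|\le N_0}2^{(k-j)(\gamma+\delta)}\,2^{j\delta}\|\Delta_j v\|_{L^\infty}
  \lesssim\|u\|_{\Cc^\gamma}\|v\|_{\Cc^\delta},
\]
and $\sup_k$ finishes it.

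For the resonance estimate I would bound $\|\Delta_k(u\res v)\|_{L^\infty}\lesssim\sum_{j\ge k-N_0}\|\Delta_j u\|_{L^\infty}\|\widetilde\Delta_j v\|_{L^\infty}\lesssim\|u\|_{\Cc^\gamma}\|v\|_{\Cc^\delta}\sum_{j\ge k-N_0}2^{-j(\gamma+\delta)}$; here $\gamma+\delta>0$ makes this a convergent geometric series dominated by its first term $\lesssim 2^{-k(\gamma+\delta)}$, so $2^{k(\gamma+\delta)}\|\Delta_k(u\res v)\|_{L^\infty}\lesssim\|u\|_{\Cc^\gamma}\|v\|_{\Cc^\delta}$ and $\sup_k$ gives the claim. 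The only genuine subtlety — the ``hard part'', such as it is — is pinning down the two Fourier-support localization statements, and in particular that the paraproduct summand lives in an \emph{annulus} (not just a ball), with a fixed $N_0$ extracted from the support hypotheses on $\chi$ and $\varphi$; everything else is routine. I would also note that this argument in fact controls $\|u\lpara v\|_{B^{s'}_{\infty,\infty}}$ and $\|u\res v\|_{B^{s'}_{\infty,\infty}}$ for $s'$ slightly above the stated indices, which by the remark after the definition of $\Cc^s$ suffices to place the products in the separable spaces $\Cc^{\delta}$ and $\Cc^{\gamma+\delta}$.
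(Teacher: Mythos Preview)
The paper does not prove this proposition; it is stated in Appendix~\ref{Besov} as a standard fact with a citation to \cite{bahouri2011fourier,gubinelli2015paracontrolled,catellier2018paracontrolled}. Your argument is exactly the standard Littlewood--Paley proof one finds in those references, and it is correct: the spectral localization (annulus for the paraproduct summand, ball for the resonant summand) is the only nontrivial input, and the three estimates then follow from the geometric-series computations you give.

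One small correction: your final sentence overstates things. The argument does \emph{not} control the norms at indices strictly above $\delta$ (resp.\ $\gamma+\delta$); the bounds $2^{j\delta}\|\Delta_j v\|_{L^\infty}\le\|v\|_{\Cc^\delta}$ are already sharp, and the sign conditions $\gamma<0$, $\gamma+\delta>0$ buy convergence of the series, not extra regularity. This does not matter here, however, since the paper explicitly adopts the convention that $\|\cdot\|_{\Cc^s}$ denotes the $B^s_{\infty,\infty}$ norm, so the proposition is purely a norm estimate and membership in the separable subspace is not at issue.
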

\begin{remark} \label{rmk:prod}
Note that the paraproduct $\lpara$ is always 
a well-defined continuous bilinear operator. 
The product 
$\Cc^\gamma \times \Cc^\delta 
	\to \Cc^{\gamma \wedge \delta \wedge (\gamma + \delta)}, 
	(u, v) \mapsto uv$ 
is a well-defined, continuous bilinear map provided $\gamma + \delta > 0$.
In this case, we say the product is classically well-defined.
On the other hand, if we can directly show the existence of
the resonant product $u \circ v$ of two terms $u$ and $v$, 
then the product $uv$ will be well-defined.
In this case, we usually have $u \circ v \in \Cc^{\gamma + \delta}$
given that $u \in \Cc^\gamma$ and $v \in \Cc^\delta$,
so we still obtain $uv \in \Cc^{\gamma \wedge \delta \wedge (\gamma + \delta)}$
without the condition $\gamma + \delta > 0$.
\end{remark}

\section{Construction of finite Gaussian chaos objects}\label{sec:Gaussian}
In this section, we construct various singular processes  
with their canonical regularity that are necessary for our analysis above. 
We refer to 
\cite{hairer2013solving,gubinelli2017kpz,mourrat2015construction,
catellier2018paracontrolled,gubinelli2015paracontrolled}
for relevant technical details.
In particular, \cite{gubinelli2017kpz} provides constructions of 
many of these Gaussian objects that correspond to the harder case 
of $\alpha = 1$ in our setting. 
We mainly apply the unified argument in \cite{mourrat2015construction} 
and point out the minimal modifications for our setting.
More specially, we only need to change relevant
Fourier multipliers of the nonlinearity and the driving noise 
and compute estimates with the same procedure.

We first set up some notations:
\begin{itemize}
	\item We write $\Z_0 = \Z \setminus \{0\}.$ 
	\item For $x \in \Ss'(\T)$, $k \in \Z$, 
		$\widehat{x}(k)$ denotes the $k$-th Fourier mode of $x$. 
	\item For $k \in \Z$, let $e_k$ be the $k$-th Fourier basis function 
		given by complex exponentials.
	\item For a process $x$, we use the notation $x_{s, t} := x_t - x_s$.
	\item We write $\eqOrder$ to mean both $\lesssim$ and $\gtrsim$.
	\item We write $k \sim k'$ if $k \in \supp\,\varphi_i$, $k' \in \supp\,\varphi_j$
		and $|i - j| \le 1$, and by abuse of this notations,
		we write $k \sim 2^j$ if $k \in \supp\,\varphi_j$,
    where $\varphi_j$ is defined in \eqref{eq:LPpartition}.
	\item Let $\psi$ be a smooth radial function with compact support
	and $\psi(0) = 1$. 
	We regularize a process $x$ by setting
	$$x^\epsilon_t = \sum_{k \in \Z} \psi(\epsilon k)\hat{x}_t(k) e_k,$$
	and for convenience, we also write
	$$ x^\epsilon_t = \sum_{|k| \lesssim \epsilon^{-1}} \hat{x}_t(k) e_k.$$
\end{itemize}

Fix $\gamma, \delta \in (0, 1)$.
Let $(z^{(\gamma)}_t\,:\, t \in [0, T])$ 
denote the Ornstein-Uhlenbeck process defined by
\[
	z^{(\gamma)}_t := \int_0^t e^{-(t-s)A}Q^{(\gamma)}dW_s,
\] 
for some operator $Q^{(\gamma)} \simQ A^{\gamma/2}$,
and $W$ is a cylindrical Brownian motion. 
We define $z^{(\delta)}$ in the same way, 
but we use different symbols to note that
$z^{(\gamma)}$ and $z^{(\delta)}$ are driven by different, independent 
cylindrical Brownian motions so that they 
are independent.
It is more convenient to write $z^{(\gamma)}$
and $z^{(\delta)}$ in Fourier space:
we have a family of independent, standard
complex-valued Brownian motions $(W(k)\,:\, k \in \Z)$ 
with the real-valued constraint $\overline{W(k)} = W(-k)$
such that for $k \in \Z_0$,
\[
	{\widehat{z^{(\gamma)}_t}}(k) = \int_0^t e^{-|k|^2(t-s)} q_k dW_s(k)
\]
where $q_k$ is the eigenvalue of $Q$ corresponding to $e_k$
and $|q_k| \eqOrder |k|^{\gamma}$. In particular, $q_0 = 0$,
so $\widehat{z^{(\gamma)}_t}(0) = 0$, which means
$z^{(\gamma)}$ has mean zero in space.

\begin{remark}
  Since we work with processes like $z^{(\gamma)}$ that have mean zero in space,
  we ignore the $0$-th Fourier mode by default, for example, 
  in various summations involving Fourier modes. 
\end{remark}

\subsection{Preliminary results}
We will use Proposition 3.6, Lemma 4.1 and Lemma 4.2 from
\cite{mourrat2015construction}.
\begin{proposition}[\cite{mourrat2015construction}] \label{prop:regW}
	Let $x: [0, T] \to \Ss'(\T)$ be a stochastic process 
	in some finite Wiener chaos such that
	\[
		k + k' \neq 0 \quad \implies \quad 
		\EE[\widehat{x_s}(k) \, \widehat{x_t}(k')] = 0.
	\]
	If for some $t \in [0, T]$, $\EE[|\widehat{x_t}(0)|^2] \lesssim 1$ and for all $k \in \Z_0$,
	\begin{equation} \label{eq:fest}
		\EE[|\widehat{x_t}(k)|^2] \lesssim \frac{1}{|k|^{1 + 2\kappa}},
	\end{equation}
	then for every $\beta < \kappa$, $p \ge 2$, we have 
	\[
		\EE[\|x_t\|_{\Cc^\beta}^p] < \infty.
	\]
	If, in addition to \eqref{eq:fest}, there exists $h \in (0, 1)$
	such that $\EE[|\widehat{x_{s,t}}(0)|^2] 
    \lesssim {|t - s|^h}$ and
	\begin{equation} \label{eq:fest2}
		\EE[|\widehat{x_{s,t}}(k)|^2] 
		\lesssim \frac{|t - s|^h}{|k|^{1+2\kappa-2h}},
	\end{equation}
	uniformly in $0 < |t - s| < 1$ and $k \in \Z_0$, then 
	$\tau \in C_T \Cc^{\beta}$, and
	\[
		\sup_{0< |t - s| < 1}\frac{\EE\|x_{s,t}\|_{\Cc^\beta}^p}{
		|t - s|^\frac{h p}{2}} < \infty.
	\]
\end{proposition}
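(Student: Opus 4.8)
The plan is a Gaussian Kolmogorov-type argument: reduce the $\Cc^\beta$-regularity of $x_t$ to second-moment bounds on its Littlewood--Paley blocks via hypercontractivity, and then sum a geometric series. Two ingredients are needed: (i) the equivalence of $L^p(\Omega)$-moments on a fixed Wiener chaos (Nelson's hypercontractivity estimate), so that if $F$ lies in the sum of the first $m$ Wiener chaoses then $\EE|F|^p\lesssim_{p,m}(\EE|F|^2)^{p/2}$, with a constant uniform in everything but $p$ and $m$; and (ii) the Besov embedding $B^{\beta'}_{p,p}\hookrightarrow\Cc^{\beta}$ for $\beta\le\beta'-\tfrac1p$ (\cref{prop:emb}). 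Fix $\beta<\kappa$, set $\beta'=\tfrac12(\beta+\kappa)\in(\beta,\kappa)$, and take $p\ge2$ large enough that $\beta\le\beta'-\tfrac1p$; by \cref{prop:emb} it then suffices to bound $\EE\|x_t\|_{B^{\beta'}_{p,p}}^p=\sum_{j\ge-1}2^{j\beta' p}\int_{\T}\EE|\Delta_j x_t(y)|^p\,dy$, and the conclusion for every $p\ge2$ then follows by monotonicity of $L^p(\Omega)$-norms.

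First I would treat a single block. Since $\Delta_j x_t(y)=\sum_k\varphi_j(k)\widehat{x_t}(k)e_k(y)$ is a real random variable in the same finite chaos as $x$, hypercontractivity gives $\EE|\Delta_j x_t(y)|^p\lesssim(\EE|\Delta_j x_t(y)|^2)^{p/2}$. Squaring and taking expectations, the hypothesis that $\EE[\widehat{x_t}(k)\widehat{x_t}(k')]=0$ unless $k+k'=0$ kills every cross term, so (using that $\varphi_j$ is even) $\EE|\Delta_j x_t(y)|^2=\sum_k|\varphi_j(k)|^2\,\EE|\widehat{x_t}(k)|^2$, which is moreover independent of $y$. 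For $j\ge0$ the multiplier $\varphi_j$ is supported on $\lesssim 2^{j}$ frequencies with $|k|\eqOrder 2^{j}$, so \eqref{eq:fest} gives $\EE|\Delta_j x_t(y)|^2\lesssim 2^{j}\cdot 2^{-j(1+2\kappa)}=2^{-2j\kappa}$, while for $j=-1$ only finitely many low modes enter and the bound $\EE|\widehat{x_t}(0)|^2\lesssim1$ yields an $O(1)$ estimate. Summing, $\EE\|x_t\|_{B^{\beta'}_{p,p}}^p\lesssim\sum_{j\ge-1}2^{j(\beta'-\kappa)p}<\infty$ since $\beta'<\kappa$, which proves the first assertion.

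For the time increment I would run the identical computation on $x_{s,t}=x_t-x_s$, which still lies in the fixed chaos and still has diagonal covariances, so $\EE|\Delta_j x_{s,t}(y)|^2=\sum_k|\varphi_j(k)|^2\EE|\widehat{x_{s,t}}(k)|^2$. Now there are two a priori bounds: \eqref{eq:fest2} gives $\lesssim |t-s|^h\,2^{j}\,2^{-j(1+2\kappa-2h)}=|t-s|^h\,2^{-2j(\kappa-h)}$, and \eqref{eq:fest} applied at the two endpoints (triangle inequality in $L^2(\Omega)$) gives $\lesssim 2^{-2j\kappa}$; together $\EE|\Delta_j x_{s,t}(y)|^2\lesssim 2^{-2j\kappa}\min\!\big(1,\,|t-s|^h 2^{2jh}\big)$. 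I would then split the frequency sum $\sum_{j\ge-1}2^{j\beta' p}(\EE|\Delta_j x_{s,t}(y)|^2)^{p/2}$ at the scale $2^{j_0}\eqOrder|t-s|^{-1/2}$, using the increment bound for $j\le j_0$ and the uniform bound for $j>j_0$; each range is a geometric series dominated by its value at $j_0$, and balancing the two produces the factor $|t-s|^{hp/2}$ (this bookkeeping is carried out in \cite[Prop.~3.6]{mourrat2015construction}). By \cref{prop:emb} this gives $\EE\|x_{s,t}\|_{\Cc^\beta}^p\lesssim|t-s|^{hp/2}$ uniformly in $0<|t-s|<1$, and Kolmogorov's continuity criterion (valid once $p>2/h$) then upgrades this to $x\in C_T\Cc^\beta$.

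The fixed-time estimate is routine once hypercontractivity and the Besov machinery of Appendix~\ref{Besov} are in hand; the step I expect to require the most care is the time-increment bound, where the sharp H\"older exponent $h/2$ emerges only from the dyadic interpolation of the two Fourier-side estimates and would be lost by a naive termwise summation. The remaining points are bookkeeping: verifying that each $\Delta_j$ preserves real-valuedness and chaos order, and that the hypercontractivity constants are uniform in $j$ and $y$.
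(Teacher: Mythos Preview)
The paper does not give its own proof of this proposition; it is quoted from \cite{mourrat2015construction} (Proposition~3.6 there) and used as a black box. The paper does, however, prove the closely related \cref{prop:bdd} by exactly the route you outline: hypercontractivity on each Littlewood--Paley block, Besov embedding $B^{\beta'}_{p,p}\hookrightarrow\Cc^\beta$, and Kolmogorov/Garsia--Rodemich--Rumsey. So your approach is the standard one and matches what the paper does for the companion statement.

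One point deserves care. Your claim that splitting at $2^{j_0}\eqOrder|t-s|^{-1/2}$ ``produces the factor $|t-s|^{hp/2}$'' is not quite right when $\beta'\in(\kappa-h,\kappa)$: in that range both geometric series are dominated by their $j_0$-term, and balancing gives $|t-s|^{p(\kappa-\beta')/2}$ rather than $|t-s|^{hp/2}$. This still yields a positive H\"older exponent in time, so $x\in C_T\Cc^\beta$ follows via Kolmogorov and the qualitative conclusion is unaffected; it is only the sharp rate $hp/2$ in the displayed supremum that genuinely requires $\beta<\kappa-h$. This is consistent with how the paper handles the issue: in the proof of \cref{prop:bdd} the hypothesis is stated ``for $h\ge0$ small enough'', and one simply chooses $h<\kappa-\beta'$ after fixing $\beta$, so no dyadic interpolation is needed at all. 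In the applications of \cref{prop:regW} later in Appendix~\ref{sec:Gaussian} the estimate \eqref{eq:fest2} is in fact verified for all small $h>0$, so the same device is available there.
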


\begin{remark}
  Later when we apply \cref{prop:regW}, 
  checking the condition on the $0$-th Fourier mode is straightforward, 
  so we will omit details of that part.
\end{remark}

\begin{lemma}[\cite{mourrat2015construction}] \label{lem:fsum1}
	Let $a, b \in \R$ satisfy
	$a + b > 1$ and $a, b < 1.$
	We have uniformly for all $k \in \Z_0$,
	\[
		\sum_{\substack{k_1, k_2 \in \Z_0 \\ k_1 + k_2 = k}} 
		\frac{1}{|k_1|^a} \frac{1}{|k_2|^b}
		\lesssim \frac{1}{|k|^{a + b - 1}}.
	\]
\end{lemma}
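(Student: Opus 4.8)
The plan is to estimate the convolution sum by splitting it according to the relative sizes of $|k_1|$, $|k_2|$ and $|k|$, using the constraint $k_1+k_2=k$. First I would record that the hypotheses $a,b<1$ and $a+b>1$ in fact force $a,b\in(0,1)$: indeed $a>1-b>0$ and $b>1-a>0$. This positivity is what lets the truncated sums below be compared to the corresponding integrals.

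Next I would decompose the sum over $\{k_1,k_2\in\Z_0 : k_1+k_2=k\}$ into three (possibly overlapping) regions: (i) $|k_1|\le|k|/2$; (ii) $|k_2|\le|k|/2$; (iii) $|k_1|>|k|/2$ and $|k_2|>|k|/2$. Every admissible pair lies in at least one region. In region (i), $k_2=k-k_1$ gives $|k_2|\ge|k|-|k_1|\ge|k|/2$, so $|k_2|^{-b}\lesssim|k|^{-b}$, and since $a\in(0,1)$ one has $\sum_{1\le|k_1|\le|k|/2}|k_1|^{-a}\lesssim|k|^{1-a}$; the product is $\lesssim|k|^{1-a-b}$. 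Region (ii) is the mirror image, exchanging the roles of $(k_1,a)$ and $(k_2,b)$. In region (iii) one checks $|k_1|\eqsim|k_2|$ (each is at most $|k|$ plus the other, and $|k|<2$ times either one, so $\tfrac13|k_1|<|k_2|<3|k_1|$), hence the summand is $\lesssim|k_1|^{-(a+b)}$; summing over $|k_1|>|k|/2$ and using $a+b>1$ gives $\sum_{|k_1|>|k|/2}|k_1|^{-(a+b)}\lesssim|k|^{1-a-b}$. Adding the three contributions yields the claimed bound $\lesssim|k|^{-(a+b-1)}$, with a constant depending only on $a,b$, hence uniform in $k$.

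The only inputs are the elementary comparisons $\sum_{1\le n\le N}n^{-a}\lesssim N^{1-a}$ for $a\in(0,1)$ and $\sum_{n>N}n^{-s}\lesssim N^{1-s}$ for $s>1$, which follow from comparison with integrals and transfer to sums over $\Z_0$ up to a factor of two. There is no serious obstacle; the one place to be slightly careful is the verification that $|k_1|\eqsim|k_2|$ throughout region (iii), together with a sanity check of the boundary case $|k|=1$, where regions (i) and (ii) are empty and the whole sum is handled by the region-(iii) estimate (the single term with $|k_1|=1$ being bounded directly).
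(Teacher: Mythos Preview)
Your proof is correct. The paper does not give its own proof of this lemma but simply cites \cite{mourrat2015construction}; your three-region decomposition (small $|k_1|$, small $|k_2|$, both large) is the standard argument for this discrete convolution estimate and is essentially what one finds in that reference.
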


\begin{lemma}[\cite{mourrat2015construction}] \label{lem:fsum2}
	Let $a, b \in \R$ satisfy
	$a + b > 1.$
	We have uniformly for all $k \in \Z_0$,
	\[
		\sum_{\substack{k_1 + k_2 = k, \\ k_1 \sim k_2}} 
		\frac{1}{|k_1|^a} \frac{1}{|k_2|^b}
		\lesssim \frac{1}{|k|^{a + b - 1}}.
	\]
\end{lemma}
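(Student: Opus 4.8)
The plan is to organize the sum by the common dyadic scale of the two frequencies. The resonance condition $k_1 \sim k_2$ means exactly that there is a single index $j \ge -1$ with $|k_1| \eqOrder |k_2| \eqOrder 2^j$ (with the fixed proportionality constants coming from the partition of unity $\varphi_j$), so
\[
  \sum_{\substack{k_1 + k_2 = k \\ k_1 \sim k_2}} \frac{1}{|k_1|^a}\,\frac{1}{|k_2|^b}
  \;\lesssim\; \sum_{j \ge -1}\;\sum_{\substack{k_1 + k_2 = k \\ |k_1| \eqOrder |k_2| \eqOrder 2^j}} \frac{1}{|k_1|^a}\,\frac{1}{|k_2|^b}.
\]
First I would observe that the inner sum is empty unless $2^j \gtrsim |k|$, since $k = k_1 + k_2$ forces $|k| \lesssim |k_1| + |k_2| \lesssim 2^j$ on the support of the summand.

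For a fixed admissible scale $j$ (that is, $2^j \gtrsim |k|$) there are $\lesssim 2^j$ integers $k_1$ with $|k_1| \eqOrder 2^j$, and $k_2 = k - k_1$ is then determined; moreover $|k_1|^{-a}\,|k_2|^{-b} \eqOrder 2^{-j(a+b)}$ on the support of the summand. Hence the inner sum is $\lesssim 2^j\cdot 2^{-j(a+b)} = 2^{j(1-a-b)}$. Because $a+b>1$, the exponent $1-a-b$ is strictly negative, so summing the resulting geometric series over $\{\,j : 2^j \gtrsim |k|\,\}$ yields
\[
  \sum_{\substack{j \ge -1 \\ 2^j \gtrsim |k|}} 2^{j(1-a-b)} \;\lesssim\; |k|^{1-a-b} \;=\; \frac{1}{|k|^{a+b-1}},
\]
which is the desired bound; all constants depend only on $a$, $b$, and the fixed dyadic partition, so the estimate is uniform in $k \in \Z_0$.

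I do not expect a real obstacle here: the only points requiring care are the bookkeeping at the lowest admissible scale $2^j \eqOrder |k|$ (and, for the finitely many small values of $|k|$, a direct check that both sides are $\eqOrder 1$) and keeping track of the proportionality constants hidden in $k_1 \sim k_2$. It is worth noting that, in contrast with \cref{lem:fsum1}, the hypotheses $a<1$ and $b<1$ are not needed: the resonance constraint prevents either frequency from being much smaller than the other, so no contribution from $|k_i|\eqOrder 1$ has to be controlled, and only $a+b>1$ is used — precisely to make the sum over scales converge.
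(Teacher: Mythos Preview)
Your argument is correct: the dyadic organisation by the common scale $2^j$ of $k_1$ and $k_2$, the observation that only scales $2^j\gtrsim|k|$ contribute, the counting $\lesssim 2^j$ terms of size $\eqOrder 2^{-j(a+b)}$ at each scale, and the geometric summation using $a+b>1$ together give the claimed bound. Your closing remark about why the hypotheses $a<1$, $b<1$ of \cref{lem:fsum1} are not needed here is also on point.

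Note that the paper does not actually give its own proof of this lemma: it is quoted verbatim from \cite{mourrat2015construction} (as indicated in the lemma heading), so there is nothing to compare against in the present text. Your proof is precisely the standard dyadic argument one would expect to find in that reference.
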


The following proposition provides a way to 
bound $p$-th moments of H\"older norms of
a process via estimates of its
Littlewood-Paley blocks.
\begin{proposition} \label{prop:bdd}
	Let $x : [0, T] \to \Ss'(\T)$ be a stochastic process in
	some finite Wiener chaos such that for $h \ge 0$
	small enough
	\[
		\EE\left[ \left|\Delta_j x_{s,t}(x) \right|^2 \right]
		\le C |t - s|^h 2^{-j(2\kappa - 2h)}.
	\]
	Then for any $\beta < \kappa$ and $p > 1$,
	\[
		\EE\left[ \left\|x\right\|^p_{C_T \Cc^\beta} \right] 
		\lesssim C^{p/2}.
	\]
\end{proposition}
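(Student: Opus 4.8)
The plan is to run a Kolmogorov-type continuity estimate at the level of Littlewood--Paley blocks and then transfer the resulting regularity to a H\"older space by a Besov embedding. First I would reduce to the case of large $p$: since the probability space has total mass one, $\|\,\cdot\,\|_{L^{p_0}(\Omega)}\le\|\,\cdot\,\|_{L^{p}(\Omega)}$ whenever $p_0\le p$, so it suffices to establish the bound for $p>3/(\kappa-\beta)$. Having fixed such a $p$, I would choose $h>0$ small with $\tfrac2p<h<\kappa-\beta-\tfrac1p$ (possible by the choice of $p$), so that $hp/2>1$, and an auxiliary smoothness exponent $s'$ with $\beta+\tfrac1p<s'<\kappa-h$, and then apply the hypothesis with this value of $h$.

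The key step is to upgrade the given $L^2$ bound on each block to a bound on the full $B^{s'}_{p,p}$ norm of the time increments. For fixed $s,t,j$ the random function $\Delta_j x_{s,t}$ lies in a finite Wiener chaos whose order is uniform in $j,s,t$, so Gaussian hypercontractivity (equivalence of $L^p$ and $L^2$ norms on a fixed chaos) gives, uniformly in the spatial point $\xi\in\T$, the bound $\EE[|(\Delta_j x_{s,t})(\xi)|^{p}]\lesssim_{p}(\EE[|(\Delta_j x_{s,t})(\xi)|^{2}])^{p/2}\lesssim C^{p/2}|t-s|^{hp/2}2^{-jp(\kappa-h)}$. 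Integrating over the torus and summing the dyadic series, which converges because $s'<\kappa-h$, yields
\[
  \EE\big[\|x_{s,t}\|_{B^{s'}_{p,p}}^{p}\big]
  =\sum_{j\ge-1}2^{js'p}\,\EE\big[\|\Delta_j x_{s,t}\|_{L^{p}}^{p}\big]
  \lesssim C^{p/2}\,|t-s|^{hp/2}.
\]
From here I would invoke the Kolmogorov continuity theorem in the separable Banach space $B^{s'}_{p,p}(\T)$: since $hp/2>1$, it produces a modification of $x$ with paths in $C_TB^{s'}_{p,p}$ whose time-H\"older seminorm has $p$-th moment $\lesssim C^{p/2}$, and hence (using that $x_0=0$ for all the processes to which this proposition is applied, so the hypothesis at $s=0$ bounds $\|x_t\|_{B^{s'}_{p,p}}$ directly) $\EE[\sup_{t\le T}\|x_t\|_{B^{s'}_{p,p}}^{p}]\lesssim C^{p/2}$. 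Finally, \cref{prop:emb} with $(p_1,q_1)=(p,p)$, $(p_2,q_2)=(\infty,\infty)$ gives the continuous embeddings $B^{s'}_{p,p}(\T)\hookrightarrow\Cc^{s'-1/p}(\T)\hookrightarrow\Cc^{\beta}(\T)$, the last one because $s'-\tfrac1p>\beta$; thus the modification lies in $C_T\Cc^{\beta}$ and $\EE[\|x\|_{C_T\Cc^{\beta}}^{p}]\lesssim\EE[\sup_{t\le T}\|x_t\|_{B^{s'}_{p,p}}^{p}]\lesssim C^{p/2}$. The general case $p>1$ then follows from the reduction in the first step.

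I expect the argument to be essentially routine; the points that need genuine care are (i) the appeal to Gaussian hypercontractivity, which is precisely why the hypothesis insists on membership in a \emph{finite} Wiener chaos with order uniform in $j,s,t$, and (ii) the bookkeeping to choose $p$ large, $h$ small, and $s'$ in between so that $hp/2>1$, $\beta+\tfrac1p<s'<\kappa-h$, and the summability of the dyadic series all hold at once — which is possible exactly because $\beta<\kappa$.
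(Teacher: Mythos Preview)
Your proposal is correct and follows essentially the same route as the paper: Gaussian hypercontractivity to pass from the $L^2$ bound to $L^p$, a Besov embedding (the paper cites \cite[Proposition~2.7]{mourrat2015construction} or Proposition~\ref{prop:emb}) to reach $\Cc^\beta$, and Kolmogorov/Garsia--Rodemich--Rumsey for time continuity. Your treatment is in fact more explicit than the paper's about the parameter bookkeeping and about the need to anchor the supremum (your remark that $x_0=0$ in all applications), which the paper leaves implicit.
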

\begin{proof}
	By Gaussian hypercontractivity 
	(e.g. \cite[Proposition 3.3]{mourrat2015construction}),
	for $p > 1$, $h \ge 0$ small enough
	\[
		\EE\left[ \left\|\Delta_j x_{s,t} \right\|^{2p}_{L^{2p}} \right]
		\lesssim \left\|\EE\left[ \left|\Delta_j x_{s,t}(x) \right|^2 \right] 
		\right\|_{L^p(dx)}^p \lesssim C |t - s|^{hp} 2^{-jp(2\kappa - 2h)}.
	\]
	For any $\beta < \kappa$, by taking $h$ small enough, $p$ large enough,  
	by \cite[Proposition 2.7]{mourrat2015construction}
	or Besov embedding, Proposition \ref{prop:emb}, we have
	\[
		\EE\left[ \left\|x_{s,t}\right\|^p_{\Cc^\beta} \right]
		\lesssim C^{p/2} |t - s|^{\frac{hp}{2}}. 
	\]
	By a variant of Kolmogorov continuity theorem or 
	the standard Garsia-Rodemich-Rumsey lemma (\cite{garsia1970real}),
	we obtain 
	\[
		\EE\left[ \left\|x\right\|^p_{C_T \Cc^\beta} \right] \lesssim
		 C^{p/2}.
	\]
\end{proof}

\subsection{Regularity and convergence of \texorpdfstring{$z$}{z}}
The following result as in \cite[Lemma 4.4]{gubinelli2015paracontrolled}
follows from a straightforward computation, which 
will be useful later.
\begin{lemma}
	The spatial Fourier transform $\widehat{z^{(\gamma)}}$ of $z^{(\gamma)}$ 
	is a complex-valued, centered Gaussian process with covariance
	\[
		\EE\left[\widehat{z^{(\gamma)}_t}(k)\, 
		\overline{\widehat{z^{(\gamma)}_{t'}}(k')}\right]
		\eqOrder \begin{cases}
			 \one_{k = k'} |k|^{-2+2\gamma}(e^{-|k|^2|t' - t|} - e^{-|k|^2(t' + t)}),
				\quad & k \neq 0 \\
						 0 & k = 0.
		\end{cases}
	\]
	where $k, k' \in \Z$, $t, t' \in [0, T]$. 
	In particular, we have
	\begin{equation} \label{eq:zest1}
		\EE\left[\left|\widehat{z^{(\gamma)}_{t}}(k)\right|^2 \right]
		\lesssim \frac{1}{|k|^{2 - 2\gamma}},
	\end{equation}
	\begin{equation} \label{eq:zest2}
		\EE\left[\widehat{z^{(\gamma)}_{t}}(k) \,  
			\overline{\widehat{z^{(\gamma)}_{t'}}(k)}\right]
		\lesssim \frac{1}{|t - t'|^\rho} \frac{1}{|k|^{2 - 2\gamma + 2\rho}},
	\end{equation}
	\begin{equation} \label{eq:zest3}
		\EE\left[\left|\widehat{z^{(\gamma)}_{s,t}}(k)\right|^2 \right]
		\lesssim \frac{|t - s|^h}{|k|^{2 - 2\gamma - 2h}}.
	\end{equation}
	for all $s,t,t' \in [0,T]$, $\rho, h \in [0, 1]$ and $k \in \Z_0$.
\end{lemma}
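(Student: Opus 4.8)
The plan is a direct second-moment computation in Fourier space. Since $\widehat{z^{(\gamma)}_t}(k) = \int_0^t e^{-|k|^2(t-s)} q_k \, dW_s(k)$ is a Wiener integral against the complex Brownian motions $W(k)$, the family $(\widehat{z^{(\gamma)}_t}(k))_{k,t}$ is jointly Gaussian and centered; moreover $q_0 = 0$ forces $\widehat{z^{(\gamma)}_t}(0) \equiv 0$, which gives the $k = 0$ line of the covariance. For $k \neq k'$ one writes $\overline{\widehat{z^{(\gamma)}_{t'}}(k')} = \widehat{z^{(\gamma)}_{t'}}(-k')$ (reality of $z^{(\gamma)}$) and uses that, under the convention $\overline{W(k)} = W(-k)$, the noises attached to Fourier modes with non-opposite indices are uncorrelated, so that $\EE[\widehat{z^{(\gamma)}_t}(k)\,\overline{\widehat{z^{(\gamma)}_{t'}}(k')}] = 0$. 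For $k = k'$, assuming without loss of generality $t \le t'$, the It\^o isometry gives
\[
  \EE[\widehat{z^{(\gamma)}_t}(k)\,\overline{\widehat{z^{(\gamma)}_{t'}}(k)}]
  = |q_k|^2 \int_0^{t} e^{-|k|^2(t-s)}\,e^{-|k|^2(t'-s)}\,ds
  = \frac{|q_k|^2}{2|k|^2}\Big(e^{-|k|^2|t'-t|} - e^{-|k|^2(t'+t)}\Big),
\]
and since $|q_k|^2 \eqOrder |k|^{2\gamma}$ the prefactor is $\eqOrder |k|^{-2+2\gamma}$, which is the asserted covariance formula.

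The three moment bounds then follow from elementary estimates on the exponentials. Taking $t = t'$ gives $\EE[|\widehat{z^{(\gamma)}_t}(k)|^2] \eqOrder |k|^{-2+2\gamma}(1 - e^{-2|k|^2 t}) \le |k|^{-2+2\gamma}$, which is \eqref{eq:zest1}. For \eqref{eq:zest2} I would discard the second exponential in the covariance formula and apply $e^{-x} \lesssim_\rho x^{-\rho}$, valid for $x>0$ and $\rho \in [0,1]$ because $x \mapsto x^\rho e^{-x}$ is bounded, with $x = |k|^2|t-t'|$. For \eqref{eq:zest3} I would use the semigroup identity, for $s < t$,
\[
  \widehat{z^{(\gamma)}_{s,t}}(k)
  = \big(e^{-|k|^2(t-s)} - 1\big)\,\widehat{z^{(\gamma)}_s}(k)
    + \int_s^t e^{-|k|^2(t-r)} q_k \, dW_r(k),
\]
whose two summands are independent, so that
\[
  \EE[|\widehat{z^{(\gamma)}_{s,t}}(k)|^2]
  = \big(1 - e^{-|k|^2(t-s)}\big)^2\,\EE[|\widehat{z^{(\gamma)}_s}(k)|^2]
    + |q_k|^2 \int_s^t e^{-2|k|^2(t-r)}\,dr ;
\]
bounding $1 - e^{-y} \lesssim_h y^h$ (for $y \ge 0$, $h \in [0,1]$) with $y = |k|^2(t-s)$, together with \eqref{eq:zest1}, makes each of the two pieces $\lesssim_h |t-s|^h\,|k|^{-(2-2\gamma-2h)}$, giving \eqref{eq:zest3}.

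The only point needing care is bookkeeping the complex/cylindrical-noise conventions: one must confirm that distinct (and non-opposite) modes $W(k)$ are uncorrelated, so the cross term in the covariance vanishes exactly as stated, and that the conjugation in $\overline{\widehat{z^{(\gamma)}_{t'}}(k)}$ produces the factor $|q_k|^2$ rather than $q_k^2$. Everything else is routine evaluation of geometric integrals; the resulting statement is that of \cite[Lemma 4.4]{gubinelli2015paracontrolled} after adjusting the Fourier multiplier of the driving noise to $q_k \eqOrder |k|^\gamma$, and I do not expect any genuine obstacle.
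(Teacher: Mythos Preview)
Your proposal is correct and is exactly the straightforward second-moment computation the paper has in mind; the paper does not spell out a proof but simply points to \cite[Lemma~4.4]{gubinelli2015paracontrolled}, which you also cite, and your It\^o-isometry evaluation of the covariance together with the elementary bounds $e^{-x}\lesssim x^{-\rho}$ and $1-e^{-y}\lesssim y^{h}$ is precisely how that computation goes.
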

Using Proposition \ref{prop:regW} with the previous lemma, 
we have the following result.
\begin{proposition} \label{prop:regz}
	For any $\beta < \frac{1}{2} - \gamma$, we have
	$z^{(\gamma)} \in C_T \Cc^\beta.$
\end{proposition}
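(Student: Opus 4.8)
The plan is to deduce the statement directly from the general regularity criterion \cref{prop:regW}, applied to $x_t = z^{(\gamma)}_t$ with $\kappa = \tfrac12 - \gamma$, so that the range $\beta < \kappa$ matches the claimed range $\beta < \tfrac12 - \gamma$. First I would observe that $z^{(\gamma)}_t$, being a Wiener integral against a cylindrical Brownian motion, belongs to the first (hence a finite) Wiener chaos, as required. The process is real-valued with spatial mean zero, so $\widehat{z^{(\gamma)}_t}(0) \equiv 0$ and the two hypotheses on the zeroth Fourier mode in \cref{prop:regW} hold trivially. For the off-diagonal vanishing, I would use that the driving family $(W(k))_{k\in\Z}$ is independent subject only to $\overline{W(k)} = W(-k)$: writing $\widehat{z^{(\gamma)}_t}(k') = \overline{\widehat{z^{(\gamma)}_t}(-k')}$ and invoking the covariance formula of the preceding lemma, one gets $\EE[\widehat{z^{(\gamma)}_s}(k)\,\widehat{z^{(\gamma)}_t}(k')] \ne 0$ only when $k + k' = 0$, which is exactly the orthogonality hypothesis of \cref{prop:regW}.

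Next I would match the two decay conditions. With $\kappa = \tfrac12 - \gamma$ the bound \eqref{eq:zest1} reads $\EE[|\widehat{z^{(\gamma)}_t}(k)|^2] \lesssim |k|^{-2+2\gamma} = |k|^{-(1+2\kappa)}$, which is precisely \eqref{eq:fest}. For the time-increment condition I would fix $h \in (0,1)$ small enough that $\kappa - h > \beta$ (possible since $\beta < \kappa$); then \eqref{eq:zest3} gives $\EE[|\widehat{z^{(\gamma)}_{s,t}}(k)|^2] \lesssim |t-s|^h |k|^{-(2-2\gamma-2h)} = |t-s|^h |k|^{-(1+2\kappa-2h)}$, which is exactly \eqref{eq:fest2}, and likewise $\EE[|\widehat{z^{(\gamma)}_{s,t}}(0)|^2] = 0 \lesssim |t-s|^h$. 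Having verified all hypotheses, \cref{prop:regW} yields $z^{(\gamma)} \in C_T\Cc^\beta$ (in fact with finite $p$-th moments of the norm and a H\"older-in-time modulus of order $h/2$) for every $\beta < \tfrac12 - \gamma$.

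I expect no genuine obstacle: the result is an immediate corollary of \cref{prop:regW} once the Fourier covariance structure of $z^{(\gamma)}$ --- already supplied by the preceding lemma --- is in hand. The only mild points of care are the bookkeeping of the reality constraint $\overline{W(k)} = W(-k)$ when checking the off-diagonal vanishing, and the (harmless) choice of a strictly positive auxiliary exponent $h$ small enough to preserve room above $\beta$ after the $h$-loss in the spatial decay. As an alternative to \cref{prop:regW}, one could instead estimate the Littlewood--Paley blocks directly: using the diagonal covariance together with \eqref{eq:zest3}, $\EE[|\Delta_j z^{(\gamma)}_{s,t}(y)|^2] = \sum_{k\sim 2^j}|\varphi_j(k)|^2\,\EE[|\widehat{z^{(\gamma)}_{s,t}}(k)|^2] \lesssim |t-s|^h \sum_{k \sim 2^j}|k|^{-(2-2\gamma-2h)} \lesssim |t-s|^h\,2^{-j(2\kappa-2h)}$, and then invoke \cref{prop:bdd} to conclude $z^{(\gamma)} \in C_T\Cc^\beta$ for $\beta < \kappa = \tfrac12-\gamma$.
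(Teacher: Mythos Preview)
Your proposal is correct and takes essentially the same approach as the paper: the paper's proof is the single sentence ``Using \cref{prop:regW} with the previous lemma, we have the following result,'' and you have simply spelled out the verification of the hypotheses of \cref{prop:regW} with $\kappa=\tfrac12-\gamma$ via \eqref{eq:zest1} and \eqref{eq:zest3}. Your alternative route through \cref{prop:bdd} is also valid but unnecessary here.
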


One may also adapt the argument by changing a few parameters
in \cite[Section 4.1]{catellier2018paracontrolled}
to prove the following approximation result.
\begin{proposition}
	For any $\beta < \frac{1}{2} - \gamma$ and any $p > 1$, we have
	\[
		\lim_{\epsilon \to 0} \EE\left[\left\|z^{(\gamma), \epsilon} 
			- z^{(\gamma)}\right\|^{p}_{C_T \Cc^{\beta}}\right] = 0.
	\]
\end{proposition}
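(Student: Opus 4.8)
The plan is to treat the error process $d^\epsilon_t := z^{(\gamma),\epsilon}_t - z^{(\gamma)}_t$ exactly as $z^{(\gamma)}$ was treated in \cref{prop:regz}, extracting from the mollification an extra power of $\epsilon$. In Fourier variables $\widehat{d^\epsilon_t}(k) = (\psi(\epsilon k) - 1)\,\widehat{z^{(\gamma)}_t}(k)$, so $d^\epsilon$ is again a process in the first Wiener chaos, and multiplication by the deterministic symbol $\psi(\epsilon k)-1$ preserves the Fourier-support orthogonality hypothesis of \cref{prop:regW} (if $k+k'\neq 0$ then $\EE[\widehat{d^\epsilon_s}(k)\widehat{d^\epsilon_t}(k')]=0$). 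Hence $d^\epsilon$ is of the type to which \cref{prop:regW} and \cref{prop:bdd} directly apply.

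First I would record an elementary bound on the mollifier. Since $\psi$ is smooth with $\psi(0)=1$, the fundamental theorem of calculus gives $|\psi(x)-1| \le \|\psi'\|_\infty |x|$, while boundedness of $\psi$ gives $|\psi(x)-1|\le 1+\|\psi\|_\infty$; interpolating the two bounds, $|\psi(x)-1| \lesssim |x|^\theta$ for every $\theta \in [0,1]$. Since $|k| \eqOrder 2^j$ on $\supp\varphi_j$, this yields $\sup_{k \in \supp\varphi_j}|\psi(\epsilon k)-1| \lesssim (\epsilon 2^j)^\theta$. Combining with the block form of \eqref{eq:zest3}, namely $\EE[|\Delta_j z^{(\gamma)}_{s,t}(x)|^2] \lesssim |t-s|^h\, 2^{-j(1-2\gamma-2h)}$, one obtains for any $\theta \in [0,1]$ and $h \ge 0$ small
\[
  \EE\big[|\Delta_j d^\epsilon_{s,t}(x)|^2\big] \lesssim (\epsilon 2^j)^{2\theta}\,\EE\big[|\Delta_j z^{(\gamma)}_{s,t}(x)|^2\big] \lesssim \epsilon^{2\theta}\,|t-s|^h\,2^{-j(1-2\gamma-2\theta-2h)}.
\]
Given $\beta < \tfrac12 - \gamma$, I would then pick $\theta \in (0,\,\tfrac12-\gamma-\beta)$, so that $\beta < \tfrac12-\gamma-\theta$, and apply \cref{prop:bdd} with $\kappa$ replaced by $\tfrac12-\gamma-\theta$ and with $C$ a fixed multiple of $\epsilon^{2\theta}$. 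This gives, for every $p>1$,
\[
  \EE\big[\|d^\epsilon\|^p_{C_T\Cc^\beta}\big] \lesssim \big(\epsilon^{2\theta}\big)^{p/2} = \epsilon^{\theta p} \longrightarrow 0 \quad (\epsilon \to 0),
\]
which is the assertion. Equivalently, one may phrase the $\epsilon$-gain as the Fourier-multiplier bound $\|\psi(\epsilon D) - 1\|_{\Cc^{\beta'} \to \Cc^\beta} \lesssim \epsilon^{\beta'-\beta}$ for $\beta < \beta' < \tfrac12-\gamma$ and combine it with the finiteness of $\EE\|z^{(\gamma)}\|^p_{C_T\Cc^{\beta'}}$ coming from \cref{prop:regz} and \cref{prop:regW}; this is essentially the route of \cite[Section~4.1]{catellier2018paracontrolled}.

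There is no serious obstacle once the $\epsilon^\theta$ gain is isolated: \cref{prop:bdd} already packages the passage from the localized second-moment estimate to the $p$-th moment of the $C_T\Cc^\beta$-norm (Gaussian hypercontractivity, Besov embedding, and a Garsia--Rodemich--Rumsey argument in time). The only point that requires care is the bookkeeping of exponents --- $\theta$ must be taken strictly below $\tfrac12-\gamma-\beta$ and then $h$ strictly below $\tfrac12-\gamma-\theta$ --- which is precisely where the strictness of $\beta < \tfrac12-\gamma$ is used.
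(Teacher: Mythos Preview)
Your proposal is correct and follows exactly the route the paper indicates: the paper does not spell out a proof here, only pointing to \cite[Section~4.1]{catellier2018paracontrolled}, and your argument --- extracting a factor $|\psi(\epsilon k)-1|\lesssim(\epsilon|k|)^\theta$ on each Littlewood--Paley block and feeding the resulting estimate into \cref{prop:bdd} --- is precisely that argument carried out in the present setting. The bookkeeping of exponents is right; the only cosmetic point is that the condition you quote on $h$ is really the combined constraint coming from the Besov embedding and Kolmogorov step inside the proof of \cref{prop:bdd}, but this does not affect the validity of the argument.
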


\subsection{Construction of \texorpdfstring{$B(z^{(\gamma)})$}{B(z(gamma))}
  and \texorpdfstring{$J(z^{(\gamma)})$}{J(z(gamma))}}
If $\gamma < \frac{1}{2}$, $z^{(\gamma)}$ is a function-valued process,
so $(z^{(\gamma)})^2$ and $J(z^{(\gamma)})$ are well-defined classically.
Then $B(z^{(\gamma)}) \in C_T \Cc^{\beta}$ for
	any $\beta < -\frac{1}{2} - \gamma$.

If $\frac{1}{2} \le \gamma$, $z^{(\gamma)}$ is distribution-valued, 
and $(z^{(\gamma)})^2$ is not classically well-defined, so 
we introduce a renormalization procedure.
For the regularized process $z^{(\gamma), \epsilon}$,
define the renormalized product 
$$(z^{(\gamma), \epsilon})^{\diamond 2}_t := 
(z^{(\gamma), \epsilon})^{2}_t - \EE[(z^{(\gamma), \epsilon})^{2}_t].$$

\begin{lemma} For any $s, t, t' \in [0, T]$, we have
\begin{align} \label{eq:z21}
	\EE\left[\left|\widehat{(z^{\epsilon})^{\diamond 2}_t}(k)\right|^2 \right] 
	& \lesssim \sum_{\substack {k_1 + k_2 = k, \\ |k_1|, |k_2| \lesssim \epsilon^{-1}} } 
			\frac{1}{|k_1|^{2 - 2\gamma}|k_2|^{2 - 2\gamma}}, \\
	\label{eq:z22}
	\EE\left[\widehat{(z^{\epsilon})^{\diamond 2}_t}(k)\,
			\overline{\widehat{(z^{\epsilon})^{\diamond 2}_{t'}}(k)}
	\right] 
	& \lesssim \frac{1}{|t - t'|^\rho} 
	\sum_{\substack {k_1 + k_2 = k, \\ |k_1|, |k_2| \lesssim \epsilon^{-1}} } 
		\frac{1}{|k_1|^{2 - 2\gamma + \rho}
		|k_2|^{2 - 2\gamma + \rho}}, \\
	\label{eq:z23}
	\EE\left[\left|\widehat{(z^{\epsilon})^{\diamond 2}_{s,t}}(k)\right|^2 \right] 
	& \lesssim {|t - s|^h} 
	\sum_{\substack {k_1 + k_2 = k, \\ |k_1|, |k_2| \lesssim \epsilon^{-1}} } 
		\frac{1}{|k_1|^{2 - 2\gamma - h}
		|k_2|^{2 - 2\gamma -h}},
\end{align}
for any $k \in \Z_0$, and $\rho, h \in [0,1]$.
\end{lemma}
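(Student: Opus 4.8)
The plan is to reduce all three estimates to Wick's theorem (the Isserlis formula) for the Gaussian family $\widehat{z^{(\gamma)}}$, following the computational template of \cite{mourrat2015construction}; the only analytic inputs are the elementary covariance estimates \eqref{eq:zest1}--\eqref{eq:zest3} and the covariance identity for $\widehat{z^{(\gamma)}}$ recorded just before \cref{prop:regz}. First I would pass to Fourier space. Writing $z^{\epsilon}=z^{(\gamma),\epsilon}$, we have $\widehat{z^{\epsilon}_t}(k)=\psi(\epsilon k)\,\widehat{z^{(\gamma)}_t}(k)$ with $\psi$ a smooth compactly supported cutoff, so $\widehat{(z^{\epsilon})^{2}_t}(k)=\sum_{k_1+k_2=k}\psi(\epsilon k_1)\psi(\epsilon k_2)\,\widehat{z^{(\gamma)}_t}(k_1)\,\widehat{z^{(\gamma)}_t}(k_2)$, and every contributing $k_i$ obeys $|k_i|\lesssim\epsilon^{-1}$; this is the source of the frequency restrictions in \eqref{eq:z21}--\eqref{eq:z23}. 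Since $\EE[\widehat{z^{(\gamma)}_t}(a)\,\widehat{z^{(\gamma)}_t}(b)]=0$ whenever $a+b\neq0$, the mean $\EE[(z^{\epsilon})^{2}_t]$ is a spatial constant, hence for $k\in\Z_0$ one has $\widehat{(z^{\epsilon})^{\diamond 2}_t}(k)=\widehat{(z^{\epsilon})^{2}_t}(k)$, a mean-zero element of the second Wiener chaos; the renormalization affects only the (ignored) zero mode.

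For \eqref{eq:z21} and \eqref{eq:z22} I would expand $\EE\bigl[\widehat{(z^{\epsilon})^{\diamond 2}_t}(k)\,\overline{\widehat{(z^{\epsilon})^{\diamond 2}_{t'}}(k)}\bigr]$ by Wick's theorem. Only the two ``cross'' pairings contribute, and because the elementary covariance is diagonal in frequency, both reduce (up to relabeling $k_1\leftrightarrow k_2$) to $\sum_{k_1+k_2=k}\EE[\widehat{z^{(\gamma)}_t}(k_1)\,\overline{\widehat{z^{(\gamma)}_{t'}}(k_1)}]\;\EE[\widehat{z^{(\gamma)}_t}(k_2)\,\overline{\widehat{z^{(\gamma)}_{t'}}(k_2)}]$. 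Taking $t'=t$ and inserting \eqref{eq:zest1} into each factor gives \eqref{eq:z21}; inserting \eqref{eq:zest2} with parameter $\rho/2$ into each of the two factors --- so that the time singularities multiply to $|t-t'|^{-\rho}$ while the frequency exponents become $2-2\gamma+\rho$ --- gives \eqref{eq:z22}. The frequency cutoffs from the previous step yield the restricted sums (which, though not needed for this lemma, are later evaluated via \cref{lem:fsum1} and \cref{lem:fsum2}).

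The increment estimate \eqref{eq:z23} is the delicate one, and is where I expect essentially all the work to lie. Starting from $\widehat{z^{(\gamma)}_t}(k_1)\widehat{z^{(\gamma)}_t}(k_2)-\widehat{z^{(\gamma)}_s}(k_1)\widehat{z^{(\gamma)}_s}(k_2)=\widehat{z^{(\gamma)}_{s,t}}(k_1)\,\widehat{z^{(\gamma)}_t}(k_2)+\widehat{z^{(\gamma)}_s}(k_1)\,\widehat{z^{(\gamma)}_{s,t}}(k_2)$ and expanding the second moment by Wick's theorem as above, each surviving term is a product of two covariances, at least one of which carries a time increment. The point is to bound \emph{each} such factor by the refined mixed estimate $|\EE[\widehat{z^{(\gamma)}_{s,t}}(k)\,\overline{\widehat{z^{(\gamma)}_u}(k)}]|\lesssim (t-s)^{h/2}\,|k|^{-(2-2\gamma-h)}$, which follows from $1-e^{-|k|^2(t-s)}\lesssim(|k|^2(t-s))^{h/2}$ and the covariance identity recalled above; multiplying two such factors produces precisely the symmetric weight $(t-s)^{h}\,|k_1|^{-(2-2\gamma-h)}|k_2|^{-(2-2\gamma-h)}$ of \eqref{eq:z23}. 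Equivalently, one may represent the Wick products as double Wiener--It\^o integrals, split their kernels over the time regions $(0,s)^2$, $(s,t)\times(0,s)$, $(0,s)\times(s,t)$ and $(s,t)^2$, and use the It\^o isometry to reduce everything to $L^2$ kernel norms; on each region the factor $|1-e^{-(|k_1|^2+|k_2|^2)(t-s)}|$ is controlled by $|k_1|^{h}|k_2|^{h}(t-s)^{h}$, again with the $h$-power split symmetrically. The obstacle to watch for is exactly this symmetric split: a naive decomposition of the difference of products places the entire time increment on a single frequency and overshoots the target, so the $h$-power must be distributed evenly between $k_1$ and $k_2$; once that is arranged, the finitely many remaining terms are handled by the same covariance insertions as in \eqref{eq:z21}--\eqref{eq:z22}.
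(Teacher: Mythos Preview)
Your treatment of \eqref{eq:z21} and \eqref{eq:z22} is correct and essentially identical to the paper's: the paper uses the It\^o representation of the second-chaos element and the isometry, you use Wick's formula on the product representation --- these are two faces of the same computation, and inserting \eqref{eq:zest1} or \eqref{eq:zest2} with parameter $\rho/2$ into each covariance factor is exactly right.

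For \eqref{eq:z23} there is a genuine gap in both of the routes you sketch. In the Wick expansion of $\EE\bigl[|\widehat{(z^{\epsilon})^{\diamond 2}_{s,t}}(k)|^2\bigr]$ via your decomposition $D=\widehat{z_{s,t}}(k_1)\widehat{z_t}(k_2)+\widehat{z_s}(k_1)\widehat{z_{s,t}}(k_2)$, one of the surviving terms is $\EE[|\widehat{z_{s,t}}(k_1)|^2]\,\EE[|\widehat{z_t}(k_2)|^2]$. Here the \emph{whole} increment sits on the $k_1$-factor and the $k_2$-factor carries none; your mixed estimate with exponent $h/2$ applies to neither factor (the first has two increments, the second has zero), and the only honest bound is \eqref{eq:zest3} times \eqref{eq:zest1}, giving $(t-s)^h|k_1|^{-(2-2\gamma-2h)}|k_2|^{-(2-2\gamma)}$ --- asymmetric, not the symmetric weight you claim. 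Your alternative, the bound $|1-e^{-(|k_1|^2+|k_2|^2)(t-s)}|\lesssim |k_1|^h|k_2|^h(t-s)^h$, is simply false: take $|k_1|=N$, $|k_2|=1$, $t-s=N^{-2}$, so the left side tends to $1-e^{-1}$ while the right side is $N^{-h}\to0$.

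The paper runs into the same asymmetry: its It\^o-isometry computation splits the kernel over $\{s_1\in(s,t)\}$ and $\{s_1\in(0,s)\}$ and bounds each piece by $(t-s)^h|k_1|^{-(2-2h)}|k_2|^{-2}$, i.e.\ all the $h$-loss on $k_1$. The passage to the symmetric form in \eqref{eq:z23} is the one-line remark ``by exchanging the role of $k_1$ and $k_2$'': since the summand $Q(k_1,k_2)$ is symmetric in $(k_1,k_2)$, the asymmetric termwise bound holds with $k_1,k_2$ swapped as well, and then $Q\le\min$ of the two $\le$ their geometric mean, which is exactly $(t-s)^h|k_1|^{-(2-2\gamma-h)}|k_2|^{-(2-2\gamma-h)}$. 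This symmetry-plus-geometric-mean step is what your argument is missing; once you insert it (or simply accept the asymmetric summand, which is equivalent for the downstream application via \cref{lem:fsum1}), your Wick route goes through.
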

\begin{proof}
	For convenience, we write $z = z^{(\gamma)}$. 
	Because of the renormalization, $(z^{\epsilon})^{\diamond 2}_t$ belongs
	to the second homogeneous Wiener chaos. 
	For $k \in \Z_0$, $t > 0$, by Itô formula,
	\[
		\widehat{(z^{\epsilon})^{\diamond 2}_t}(k) 
		= 2 \sum_{\substack {k_1 + k_2 = k, \\ |k_1|, |k_2| \lesssim \epsilon^{-1}} }
			\int_0^t \int_0^s e^{-|k_1|^2(t-s) - |k_2|^2 (t-r)} q_{k_1} q_{k_2}
			dW_r(k_2)\,dW_s(k_1),
	\]
	so we have
	\[
		\EE\left[\left|\widehat{(z^{\epsilon})^{\diamond 2}_t}(k)\right|^2
		\right] \eqOrder 
			\sum_{\substack {k_1 + k_2 = k, \\ |k_1|, |k_2| \lesssim \epsilon^{-1}} } 
			\frac{1}{|k_1|^{-2\gamma}|k_2|^{-2\gamma}} 
			\int_0^t \int_0^s e^{-2|k_1|^2(t-s)-2|k_2|^2(t-r)}\,dr \,ds, 
	\]
	and \eqref{eq:z21}, \eqref{eq:z22} hold. On the other hand,
	\begin{align*}
		\EE\left[\left|\widehat{(z^{\epsilon})^{\diamond 2}_{s,t}}(k)\right|^2
		\right] & \lesssim 
			\sum_{\substack {k_1 + k_2 = k, \\ |k_1|, |k_2| \lesssim \epsilon^{-1}}} 
			\frac{1}{|k_1|^{-2\gamma}|k_2|^{-2\gamma}} 
			\left(\int_s^t \int_0^{s_1} e^{-2|k_1|^2(t-s_1)-2|k_2|^2(t-s_2)}\,ds_2 \,ds_1
			\right. \\
			& \quad +
			\left.\int_0^s \int_0^{s_1} 
			\left(e^{-|k_1|^2(t-s_1)-|k_2|^2(t-s_2)} - 
			e^{-|k_1|^2(s-s_1)-|k_2|^2(s-s_2)}\right)^2\,ds_2 \,ds_1 
			\right) \\
			& = \sum_{\substack {k_1 + k_2 = k, \\ |k_1|, |k_2| \lesssim \epsilon^{-1}}} 
			\frac{(\text{I} + \text{II})}{|k_1|^{-2\gamma}|k_2|^{-2\gamma}},
	\end{align*}
	where using both the bound $e^{-r} \le 1$ and  
	$1 - e^{-rt} \lesssim rt$ for $r \ge 0$ and 
	computing integrals of the form $\int e^{-r} dr$, 
	for any $h \in [0,1]$,
	\begin{align*}
		\text{I} & \lesssim \frac{1}{|k_2|^2} \int_s^t e^{-2|k_1|^2(t - s_1)}\,ds_1
			\lesssim \frac{1}{|k_2|^2 }\left(\frac{1}{|k_1|^2} \wedge |t - s|\right)
			\le \frac{|t - s|^h}{|k_1|^{2-2h} |k_2|^2}, \\
		\text{II} & = \int_0^s \int_0^{s_1} 
			\left(e^{-(|k_1|^2 + |k_2|^2)(t-s)} - 1\right)^2
			e^{-2|k_1|^2(s-s_1)-2|k_2|^2(s-s_2)}\,ds_2 \,ds_1 \\
		& \lesssim \frac{1}{|k_2|^2} 
		\left(\frac{1}{|k_1|^2} \wedge |t - s|\right) 
		\le \frac{|t - s|^h}{|k_1|^{2-2h} |k_2|^2},
	\end{align*}
	so \eqref{eq:z23} holds by exchanging the role of $k_1$ and $k_2$.
\end{proof}

For $\gamma < \frac{3}{4}$, we can show that 
the renormalized product $(z^{(\gamma), \epsilon})^{\diamond 2}$ 
converges to a limiting process
$(z^{(\gamma)})^{\diamond 2}$ with the desired regularity.
\begin{proposition} \label{prop:z2}
	If $\frac{1}{2} \le \gamma < \frac{3}{4}$, 
	then there exists a process $(z^{(\gamma)})^{\diamond 2}$
	such that 
	\begin{equation} \label{eq:z2est1}
		\EE\left[\left| \widehat{(z^{(\gamma)})^{\diamond 2}_t}(k) \right|^2 \right]
		\lesssim \frac{1}{|k|^{3 - 4\gamma - \eta}},
	\end{equation}
        for any $k \in \Z_0$, $0 \le t \le T$ and small enough $\eta > 0$,
	and
	\[
		\lim_{\epsilon \to 0} \EE\left[\left\|(z^{(\gamma), \epsilon})^{\diamond 2} 
			- (z^{(\gamma)})^{\diamond 2}\right\|^{p}_{C_T \Cc^{\beta}}\right] = 0,
	\]
	for any $\beta < 1 - 2\gamma$ and $p > 1$.
\end{proposition}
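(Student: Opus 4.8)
The plan is to combine the three Fourier-mode estimates \eqref{eq:z21}--\eqref{eq:z23} for the regularized renormalized square with the frequency-summation \cref{lem:fsum1} to obtain bounds on the Littlewood--Paley blocks of $(z^{(\gamma), \epsilon})^{\diamond 2}$ that are uniform in $\epsilon$; then to convert these into bounds on $\EE\|(z^{(\gamma), \epsilon})^{\diamond 2}\|_{C_T \Cc^\beta}^p$ via \cref{prop:bdd}; and finally to run a Cauchy-in-$\epsilon$ argument in $L^p(\Omega; C_T \Cc^\beta)$ to produce and identify the limit $(z^{(\gamma)})^{\diamond 2}$.

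First I would record the key frequency sum. In \eqref{eq:z21} drop the harmless truncation $|k_1|, |k_2| \lesssim \epsilon^{-1}$ and, using $|k_i| \ge 1$, replace the exponent $2 - 2\gamma$ by $a := \min\{2 - 2\gamma,\, 1 - \tfrac{\eta}{2}\}$, so that $a < 1$ and $2a \ge (4 - 4\gamma) \wedge (2 - \eta) > 1$ precisely because $\gamma < \tfrac34$. Then \cref{lem:fsum1} gives, uniformly in $\epsilon$, $t$ and $k \in \Z_0$,
\[
  \EE\bigl[\,\bigl|\widehat{(z^{(\gamma), \epsilon})^{\diamond 2}_t}(k)\bigr|^2\,\bigr]
  \lesssim \sum_{k_1 + k_2 = k} \frac{1}{|k_1|^{a} |k_2|^{a}}
  \lesssim \frac{1}{|k|^{3 - 4\gamma - \eta}},
\]
and the same manipulation applied to \eqref{eq:z23}, with exponent $2 - 2\gamma - h$ and $h > 0$ chosen small enough that the reduced exponents still satisfy the hypotheses of \cref{lem:fsum1}, yields
\[
  \EE\bigl[\,\bigl|\widehat{(z^{(\gamma), \epsilon})^{\diamond 2}_{s,t}}(k)\bigr|^2\,\bigr]
  \lesssim \frac{|t - s|^h}{|k|^{3 - 4\gamma - 2h - \eta}}.
\]
Since $\Delta_j$ localizes to $|k| \eqOrder 2^j$, there are $\eqOrder 2^j$ such modes on $\T$, and for a process in a fixed Wiener chaos $\EE[|\Delta_j x_{s,t}(x)|^2]$ does not depend on the spatial point and equals $\sum_k |\varphi_j(k)|^2\, \EE|\widehat{x_{s,t}}(k)|^2$, the last display gives
\[
  \EE\bigl[\,\bigl|\Delta_j (z^{(\gamma), \epsilon})^{\diamond 2}_{s,t}(x)\bigr|^2\,\bigr]
  \lesssim |t - s|^h\, 2^{-j(2 - 4\gamma - 2h - \eta)},
\]
which is exactly the hypothesis of \cref{prop:bdd} with $2\kappa = 2 - 4\gamma - \eta$. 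As $(z^{(\gamma), \epsilon})^{\diamond 2}$ belongs to the second homogeneous Wiener chaos (as noted in the proof of the preceding lemma), \cref{prop:bdd} then gives $\EE\|(z^{(\gamma), \epsilon})^{\diamond 2}\|_{C_T \Cc^\beta}^p \lesssim 1$, uniformly in $\epsilon$, for every $\beta < 1 - 2\gamma - \tfrac{\eta}{2}$ and $p > 1$; since $\eta > 0$ is arbitrary this covers all $\beta < 1 - 2\gamma$.

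To produce the limit I would show $\{(z^{(\gamma), \epsilon})^{\diamond 2}\}_\epsilon$ is Cauchy as $\epsilon \downarrow 0$. For $\epsilon' < \epsilon$, the Fourier modes of the difference are the same double It\^o integrals appearing in the proof of the previous lemma, but weighted by $\psi(\epsilon k_1)\psi(\epsilon k_2) - \psi(\epsilon' k_1)\psi(\epsilon' k_2)$; since $\psi$ is smooth with $\psi(0) = 1$, this factor is $\lesssim (\epsilon|k_1|)^\theta + (\epsilon|k_2|)^\theta$ for any $\theta \in [0, 1]$, so a positive power $\epsilon^{2\theta}$ (together with $|k_i|^{2\theta}$) can be extracted when squaring. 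Re-running the computation behind \eqref{eq:z21}--\eqref{eq:z23} and applying \cref{lem:fsum1} with one exponent lowered by $2\theta$ (still admissible for $\theta$ small, again because $\gamma < \tfrac34$) gives
\[
  \EE\bigl[\,\bigl|\widehat{(z^{(\gamma), \epsilon})^{\diamond 2}_{s,t}}(k) - \widehat{(z^{(\gamma), \epsilon'})^{\diamond 2}_{s,t}}(k)\bigr|^2\,\bigr]
  \lesssim \epsilon^{2\theta}\, \frac{|t - s|^h}{|k|^{3 - 4\gamma - 2h - 2\theta - \eta}},
\]
and feeding this into \cref{prop:bdd} exactly as before, with the constant now carrying the extra factor $\epsilon^{2\theta}$, yields $\EE\|(z^{(\gamma), \epsilon})^{\diamond 2} - (z^{(\gamma), \epsilon'})^{\diamond 2}\|_{C_T \Cc^\beta}^p \lesssim \epsilon^{\theta p} \to 0$. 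Hence the sequence converges in the complete space $L^p(\Omega; C_T \Cc^\beta)$ to a limit, which I would \emph{define} to be $(z^{(\gamma)})^{\diamond 2}$; this is the asserted convergence for all $\beta < 1 - 2\gamma$ and $p > 1$, and since each Fourier mode converges in $L^2(\Omega)$ the bound above passes to the limit, giving \eqref{eq:z2est1}. The points that will need genuine care are the endpoint $\gamma = \tfrac12$, where the naive exponent $2 - 2\gamma$ equals the value $1$ that is excluded in \cref{lem:fsum1} and must be lowered by an arbitrarily small $\eta$, and the bookkeeping ensuring the loss $\theta$ incurred in the Cauchy estimate is compatible with $\gamma < \tfrac34$; both are handled simply by choosing the auxiliary exponents small. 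The role of the bound $\gamma < \tfrac34$ is structural here: it is exactly what makes $a + b > 1$ in \cref{lem:fsum1}, and it is what forces the output regularity to degrade to $\beta < 1 - 2\gamma \to -\tfrac12$ as $\gamma \uparrow \tfrac34$.
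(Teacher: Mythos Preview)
Your proof is correct and follows essentially the same approach as the paper: combine \eqref{eq:z21}--\eqref{eq:z23} with \cref{lem:fsum1}, feed the resulting block estimate into \cref{prop:bdd}, and conclude by a Cauchy argument. The only minor difference is in that last step: you extract a quantitative rate $\epsilon^{\theta p}$ by bounding the difference of the $\psi$-factors, whereas the paper simply observes that the same uniform estimates hold for the difference $(z^{(\gamma),\epsilon})^{\diamond 2} - (z^{(\gamma),\epsilon'})^{\diamond 2}$ and then appeals to dominated convergence using $\psi(\epsilon k_1)\psi(\epsilon k_2) - \psi(\epsilon' k_1)\psi(\epsilon' k_2) \to 0$.
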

\begin{proof} When $\gamma > \frac{1}{2}$, by \eqref{eq:z21} and Lemma \ref{lem:fsum1}, 
	it holds uniformly in $\epsilon$ that
	\begin{equation}\label{eq:z2sum}
		\EE\left[\left|\widehat{(z^{\epsilon})^{\diamond 2}_t}(k)\right|^2 \right] 
		\lesssim \sum_{ {k_1 + k_2 = k} } 
			\frac{1}{|k_1|^{2 - 2\gamma}}\frac{1}{|k_2|^{2 - 2\gamma}} 
		\lesssim \frac{1}{|k|^{3 - 4\gamma}}.
	\end{equation}
  When $\gamma = \frac{1}{2}$, in order to apply Lemma \ref{lem:fsum1},
  we give up arbitrary small amount of decay in $k_1$ and $k_2$ of \eqref{eq:z2sum}
  to obtain \eqref{eq:z2est1}.
	
  Similarly, using \eqref{eq:z23} and Lemma \ref{lem:fsum1}, 
	for $h \ge 0$ small enough,
	\[
		\EE\left[\left|\Delta_j {(z^{\epsilon})^{\diamond 2}_{s,t}}(x)\right|^2 \right] 
		\lesssim \sum_{ k \sim 2^j } 
			 \EE\left[\left|\widehat{(z^{\epsilon})^{\diamond 2}_{s,t}}(k)\right|^2 \right] 
		 \lesssim \sum_{ k \sim 2^j } \frac{|t - s|^h}{|k|^{3 - 4\gamma -2h}} 
		 \lesssim |t - s|^h  2^{-j(2 - 4\gamma -2h)},
	\]
	which implies, by Proposition \ref{prop:bdd},
	\[
		\sup_\epsilon \EE\left[\left\|(z^{(\gamma), \epsilon})^{\diamond 2} 
			\right\|^{p}_{C_T \Cc^{\beta}}\right] < \infty.
	\]
	for any $\beta < 1 - 2\gamma$ and $p > 1$.
	We can obtain the same estimate for $(z^{\epsilon})^{\diamond 2} 
		- (z^{\epsilon'})^{\diamond 2}$, 
	since the terms involving $\psi(\epsilon k)$, $\psi(\epsilon' k')$
	are uniformly bounded, and other terms are the same.
	As $\epsilon, \epsilon' \to 0$, in the $k$-th Fourier mode, we have
	a factor like $$\psi(\epsilon k_1) \psi(\epsilon k_2) 
	- \psi(\epsilon' k_1)\psi(\epsilon' k_2) \longrightarrow 0.$$
	By dominated convergence, $((z^{(\gamma), \epsilon})^{\diamond 2})_\epsilon$
	is a Cauchy sequence in $L^p(\Omega, C_T \Cc^{\beta})$,
  where $\Omega$ denotes the underlying probability space.
	We denote the limit by $(z^{(\gamma)})^{\diamond 2}$, 
	and the result follows.
\end{proof}

However, if $\gamma \ge \frac{3}{4}$, $(z^{(\gamma)})^{\diamond 2}$
is no longer a well-defined process but a space-time distribution
so that $(z^{(\gamma)})^{\diamond 2}_t$ has no meaning at a fixed time $t > 0$.
This can be recognized from the fact that the summation in
\eqref{eq:z2sum} diverges.
In this case, we instead consider the regularized process 
\[
	J(z^{(\gamma), \epsilon})_t^{\diamond }
	:= \int_0^t e^{-(t-s)A} \partial_x (z^{(\gamma), \epsilon})^{\diamond 2} \,ds,
\]
for which we can show that, 
with the help of temporal regularity provided by the heat kernel, 
$J(z^{(\gamma), \epsilon})^{\diamond }$ converges to a well-defined process
$J(z^{(\gamma)})^{\diamond}$ with the desired regularity.

\begin{proposition} \label{prop:jz}
	If $\frac{1}{2} \le \gamma < 1$, there exists a process $J(z)^{\diamond}$ 
	such that 
	\begin{equation} \label{eq:jzest1}
		\EE\left[\left| \widehat{J(z^{(\gamma)})^{\diamond}_t}(k) \right|^2 \right]
		\lesssim \frac{1}{|k|^{5 - 4\gamma}},
	\end{equation}
	\begin{equation} \label{eq:jzest2}
		\EE\left[\left| \widehat{J(z^{(\gamma)})^{\diamond}_{s,t}}(k) \right|^2 \right]
		\lesssim \frac{|t - s|^h}{|k|^{5 - 4\gamma - 2h}},
	\end{equation}
	for any $k \in \Z_0$, $h \in [0,1]$ and $s, t \in [0,T]$,
	and 
	\[
		\lim_{\epsilon \to 0} \EE\left[\left\|
			J(z^{(\gamma), \epsilon})^{\diamond} 
			- J(z^{(\gamma)})^{\diamond}\right\|^{p}_{C_T \Cc^{\beta}}\right] = 0,
	\] 
	for any $\beta < 2 - 2\gamma$ and $p > 1$.
\end{proposition}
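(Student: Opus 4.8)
The plan is to run the same three-step template used for \cref{prop:z2} — a uniform-in-$\epsilon$ Fourier-mode second-moment bound, promotion to $C_T\Cc^\beta$ via hypercontractivity and \cref{prop:bdd}, and then $L^p$-convergence by a dominated-convergence argument on the increments $J(z^{(\gamma),\epsilon})^{\diamond}-J(z^{(\gamma),\epsilon')}^{\diamond}$. The starting point is the Fourier representation: writing $z=z^{(\gamma)}$, for $k\in\Z_0$ one has
\[
\widehat{J(z^{\epsilon})^{\diamond}_t}(k)=\mathrm{i}k\int_0^t e^{-|k|^2(t-\sigma)}\,\widehat{(z^{\epsilon})^{\diamond 2}_\sigma}(k)\,d\sigma,
\]
so $J(z^{\epsilon})^{\diamond}$ is a deterministic linear functional of the second-chaos process $(z^{\epsilon})^{\diamond 2}$, hence itself lies in the second homogeneous Wiener chaos, and its covariance is computed exactly:
\[
\EE\bigl[|\widehat{J(z^{\epsilon})^{\diamond}_t}(k)|^2\bigr]=|k|^2\int_0^t\!\!\int_0^t e^{-|k|^2(t-\sigma)}e^{-|k|^2(t-\sigma')}\,\EE\bigl[\widehat{(z^{\epsilon})^{\diamond 2}_\sigma}(k)\,\overline{\widehat{(z^{\epsilon})^{\diamond 2}_{\sigma'}}(k)}\bigr]\,d\sigma\,d\sigma',
\]
with the analogous formula for increments $\widehat{J(z^{\epsilon})^{\diamond}_{s,t}}(k)$, in which the two semigroup factors on $[0,s]$ split off the factor $e^{-|k|^2(s-\sigma)}\bigl(e^{-|k|^2(t-s)}-1\bigr)$ and the rest is a short integral over $[s,t]$. (The $k=0$ mode vanishes thanks to the $\partial_x$.)

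\textbf{The key estimate.} The heart is to feed the covariance bounds \eqref{eq:z21}--\eqref{eq:z23} into the double time integral. For $\tfrac12<\gamma<\tfrac34$ the time-pointwise bound \eqref{eq:z21} together with \cref{lem:fsum1} gives $\EE[|\widehat{(z^{\epsilon})^{\diamond 2}_\sigma}(k)|^2]\lesssim|k|^{-(3-4\gamma)}$ uniformly in $\sigma,\epsilon$, whence
\[
\EE\bigl[|\widehat{J(z^{\epsilon})^{\diamond}_t}(k)|^2\bigr]\lesssim|k|^2\,|k|^{-(3-4\gamma)}\int_0^t\!\!\int_0^t e^{-|k|^2(t-\sigma)}e^{-|k|^2(t-\sigma')}\,d\sigma\,d\sigma'\lesssim|k|^{2}\,|k|^{-(3-4\gamma)}\,|k|^{-4}=|k|^{-(5-4\gamma)}.
\]
For $\tfrac34\le\gamma<1$ the corresponding spatial sum diverges, so one uses the decorrelation estimate \eqref{eq:z22} with a parameter $\rho$ chosen in the interval $\bigl(\max\{0,2\gamma-\tfrac32\},\,2\gamma-1\bigr)$, which is nonempty and contained in $(0,1)$ exactly because $\gamma<1$. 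Then \cref{lem:fsum1} gives $\sum_{k_1+k_2=k}|k_1|^{-(2-2\gamma+\rho)}|k_2|^{-(2-2\gamma+\rho)}\lesssim|k|^{-(3-4\gamma+2\rho)}$, while a scaling change of variables $u=|k|^2(t-\sigma)$, $v=|k|^2(t-\sigma')$ shows $\int_0^t\!\!\int_0^t e^{-|k|^2(t-\sigma)}e^{-|k|^2(t-\sigma')}|\sigma-\sigma'|^{-\rho}\,d\sigma\,d\sigma'\lesssim|k|^{-4+2\rho}$ (the residual $w,w'$-integral converging since $\rho<1$). Multiplying, the two powers of $\rho$ cancel and one again lands on \eqref{eq:jzest1}. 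The increment bound \eqref{eq:jzest2} follows the same computation, using $|e^{-|k|^2(t-s)}-1|\lesssim(|k|^2|t-s|)^h\wedge1$ on the $[0,s]$ piece and the short length of the $[s,t]$ piece, to gain $|t-s|^h$ at the cost of $|k|^{2h}$. The borderline $\gamma=\tfrac12$ is treated as in \cref{prop:z2}, conceding an arbitrarily small loss $|k|^{-(5-4\gamma-\eta)}$, which is harmless since the later applications only need $\beta<2-2\gamma$.

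\textbf{From Fourier bounds to $C_T\Cc^\beta$ and convergence.} Summing \eqref{eq:jzest1}--\eqref{eq:jzest2} over the block $k\sim2^j$ gives $\EE[|\Delta_j J(z^{\epsilon})^{\diamond}_{s,t}|^2]\lesssim|t-s|^h\,2^{-j(2(2-2\gamma)-2h)}$ uniformly in $\epsilon$, so \cref{prop:bdd} yields $\sup_\epsilon\EE[\|J(z^{(\gamma),\epsilon})^{\diamond}\|^p_{C_T\Cc^\beta}]<\infty$ for every $\beta<2-2\gamma$ and $p>1$. Finally, the difference $J(z^{(\gamma),\epsilon})^{\diamond}-J(z^{(\gamma),\epsilon')}^{\diamond}$ has the same chaos structure, each Fourier mode carrying the extra factor $\psi(\epsilon k_1)\psi(\epsilon k_2)-\psi(\epsilon' k_1)\psi(\epsilon' k_2)$, which is bounded by $2$ and tends to $0$ pointwise; the same estimates with these factors inside the sums, together with dominated convergence, show $(J(z^{(\gamma),\epsilon})^{\diamond})_\epsilon$ is Cauchy in $L^p(\Omega;C_T\Cc^\beta)$, and its limit is the desired $J(z^{(\gamma)})^{\diamond}$.

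\textbf{Main obstacle.} The delicate point is the key estimate for $\gamma\ge\tfrac34$: since $(z^{(\gamma)})^{\diamond 2}_t$ no longer exists at a fixed time, the integrand of the double time integral cannot be bounded pointwise in $\sigma$, and the whole argument rests on quantifying the temporal decorrelation of the renormalized square and verifying that the regularization produced by $e^{-(t-\sigma)A}$ compensates the $|\sigma-\sigma'|^{-\rho}$ singularity — exactly so the surviving spatial regularity $2-2\gamma$ stays positive, which happens precisely for $\gamma<1$ and is the reason the construction stops short of the KPZ value $\gamma=1$.
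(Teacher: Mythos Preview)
Your proposal is correct and follows essentially the same route as the paper's proof: express $\widehat{J(z^\epsilon)^\diamond_t}(k)$ as a double time integral against the heat kernel, feed in the decorrelation bound \eqref{eq:z22} with a suitable $\rho$ so that \cref{lem:fsum1} applies and the powers of $\rho$ cancel, handle the increment by splitting into the $[0,s]$ and $[s,t]$ pieces, and close with the same Cauchy-in-$L^p$ dominated-convergence argument as in \cref{prop:z2}. The only cosmetic differences are that the paper dismisses the range $\tfrac12\le\gamma<\tfrac34$ as already covered by \cref{prop:z2} rather than re-deriving it, and takes the slightly narrower (but still valid) window $2\gamma-\tfrac32<\rho<\gamma-\tfrac12$ for the decorrelation exponent, whereas your window $\rho<2\gamma-1$ is exactly what \cref{lem:fsum1} demands.
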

\begin{proof}
	We focus on the case $\frac{3}{4} \le \gamma < 1$, 
	since the other case is already done.
	Take $\rho \in (0,1)$ satisfy $2\gamma - \frac{3}{2} < \rho < \gamma - \frac{1}{2}$,
	and by \eqref{eq:z22} and Lemma \ref{lem:fsum1},
	\begin{align*}
		\EE\left[\widehat{(z^{\epsilon})^{\diamond 2}_s}(k)
				\overline{\widehat{(z^{\epsilon})^{\diamond 2}_{s'}}(k)}
		\right] 
		& \lesssim \frac{1}{|s - s'|^\rho} \sum_{k_1 + k_2 = k} 
			\frac{1}{|k_1|^{2 - 2\gamma + \rho}} \frac{1}{|k_2|^{2 - 2\gamma + \rho}} 
		\lesssim \frac{1}{|s - s'|^\rho}\frac{1}{|k|^{3  - 4\gamma + 2\rho}},
	\end{align*}
	which implies
	\begin{align*}
		\EE\left[\left| \widehat{J(z^{\epsilon})^{\diamond}_t}(k) \right|^2 \right] 
		& = \int_0^t \int_0^t |k|^2 e^{-|k|^2(t-s)-|k|^2(t - s')}
			\EE\left[\widehat{(z^{\epsilon})^{\diamond 2}_s}(k)
				\overline{\widehat{(z^{\epsilon})^{\diamond 2}_{s'}}(k)}
		\right] \,ds' \, ds \\
		& \lesssim \int_0^t \int_0^t |k|^2 e^{-|k|^2(t-s)-|k|^2(t - s')}
			 \frac{1}{|s - s'|^\rho} \frac{1}{|k|^{3  - 4\gamma + 2\rho}}
			\,ds' \, ds \\ 
		& \lesssim \frac{1}{|k|^{5 - 4\gamma}}.
	\end{align*}
	On the other hand, by taking $\rho \in (0,1)$ as above, using the bound
	\[
		\frac{e^{-|k|^2(t-s_1)-|k|^2(t - s_2)}}{|s_1 - s_2|^\rho |k|^{2\rho}} 
		\lesssim 1,
	\]
	with similar computations for \eqref{eq:z23}, we have
	\begin{align*}
		\EE\left[\left| \widehat{J(z^{(\gamma)})^{\diamond}_{s,t}}(k) \right|^2 \right]
		& \lesssim \frac{|k|^2}{|k|^{3  - 4\gamma}}
			\left(\int_s^t \int_s^{t} 
			\frac{e^{-|k|^2(t-s_1)-|k|^2(t - s_2)}}{|s_1 - s_2|^\rho |k|^{2\rho}}
		 \,ds_2 \, ds_1 \right. \\
		& \quad + \left.\int_0^s \int_0^s (e^{-|k|^2(t-s)} - 1)^2 \frac{
		 e^{-|k|^2(s-s_1)-|k|^2(s - s_2)}}{|s_1 - s_2|^\rho |k|^{2\rho}}
			\,ds_2 \, ds_1 \right) \\
		& \lesssim \frac{1}{|k|^{1 - 4\gamma}} 
		\left(\frac{1}{|k|^4} \wedge \frac{|t - s|}{|k|^2}\right) 
		\le \frac{|t - s|^{h}}{|k|^{5 - 4\gamma -2h}},
	\end{align*}
	for $h \in [0,1]$.
	With the above estimate, the last statement follows
	from the same argument as in Proposition \ref{prop:z2}.
\end{proof}

\begin{remark}[Renormalization is not needed for the nonlinearity of Burgers]
Since $\partial_x: \Cc^{\beta} \to \Cc^{\beta - 1}$ is continuous
and annihilates quantities that are constant in space,
we note that
\[
	\partial_x (z^{(\gamma)})^{\diamond 2} 
	= \lim_{\epsilon \to 0}  \partial_x (z^{(\gamma), \epsilon})^{\diamond 2}
	= \lim_{\epsilon \to 0}  \partial_x (z^{(\gamma), \epsilon})^{2}
	= B(z^{(\gamma)}).
\]
Also, by the Fourier expansion of $B(z^{(\gamma), \epsilon})$,
we see that the $0$-th mode is zero, 
so the renormalization procedure is not actually needed for $B(z^{(\gamma)})$.
By the same reasoning, 
we can interpret $J(z^{(\gamma)}) = J(z^{(\gamma)})^{\diamond}$.
\end{remark} 

\subsection{Construction of \texorpdfstring{$B(J(z^{(\gamma)}), z^{(\gamma)})$}{B(J(z(gamma)),z(gamma))}}
If $\gamma < \frac{1}{2}$, then $B(J(z^{(\gamma)}), z^{(\gamma)})$ is
classically well-defined. Then
$B(J(z^{(\gamma)}), z^{(\gamma)}) \in C_T \Cc^{\beta}$ for any
$\beta < ((\frac{3}{2} - \gamma) \wedge (\frac{1}{2} - \gamma) 
\wedge (2 - 2\gamma)) - 1 = -\frac{1}{2} - \gamma$, 
according to Remark \ref{rmk:prod}.

If $\frac{1}{2} \le \gamma < 1$,
we show the existence
of the resonant product $J(z^{(\gamma)}) \circ z^{(\gamma)}.$
\begin{proposition}
	Suppose $\frac{1}{2} \le \gamma < 1$. 
	Let $\vartheta = J(z^{(\gamma)})$, $z = z^{(\gamma)}$. 
	We have for any $k \in \Z_0$,
	\begin{equation*} 
  		\EE\left[\left| \widehat{\vartheta_t \circ z_t}(k) \right|^2 \right]
		\lesssim \frac{1}{|k|^{6 - 6\gamma}},
	\end{equation*}
	and $\vartheta \circ z \in C_T \Cc^{\beta}$
	for any $\beta < \frac{5}{2} - 3\gamma.$
\end{proposition}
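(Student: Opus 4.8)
The plan is to imitate the arguments of \cref{prop:z2} and \cref{prop:jz}. I would work with the mollified processes $\vartheta^\epsilon:=J(z^{(\gamma),\epsilon})$ and $z^\epsilon:=z^{(\gamma),\epsilon}$, prove uniform-in-$\epsilon$ bounds on the second moments of the Fourier coefficients and Littlewood--Paley blocks of $\vartheta^\epsilon_t\circ z^\epsilon_t$, and then pass to the limit by a Cauchy estimate in $L^p(\Omega,C_T\Cc^\beta)$. The one genuinely new feature is that $\vartheta$ and $z$ are built from the \emph{same} cylindrical Brownian motion, so the resonant product is not a product of independent factors; Bony's estimates (\cref{prop:para}) do give the pathwise regularity when $\frac52-3\gamma>0$, but not the precise Fourier bound, so one must work in Wiener chaos and use Wick's theorem.

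First I would record the chaos expansion. For $k_1\in\Z_0$ one has $\widehat{\vartheta^\epsilon_t}(k_1)=ik_1\int_0^t e^{-|k_1|^2(t-s)}\widehat{(z^\epsilon_s)^2}(k_1)\,ds$ with $\widehat{(z^\epsilon_s)^2}(k_1)=\sum_{l_1+l_2=k_1}\widehat{z^\epsilon_s}(l_1)\widehat{z^\epsilon_s}(l_2)$, so (for $k_1\neq0$) $\widehat{\vartheta^\epsilon_t}(k_1)$ lies in the second Wiener chaos and
\[
  \widehat{\vartheta^\epsilon_t\circ z^\epsilon_t}(k)=\sum_{\substack{k_1+k_2=k\\ k_1\sim k_2}}\widehat{\vartheta^\epsilon_t}(k_1)\,\widehat{z^\epsilon_t}(k_2)
\]
lies in the third inhomogeneous chaos; in particular it has zero mean for every $k$, so no renormalization is needed (the $0$-mode being annihilated by $\partial_x$, as in the remark following \cref{prop:jz}). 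Squaring and taking expectations produces a sixth-order Gaussian moment; Wick's theorem expands it into a finite sum over perfect matchings of the six factors $\widehat{z^\epsilon}(\cdot)$. In each term, every contraction $\EE[\widehat{z^\epsilon_a}(m)\,\overline{\widehat{z^\epsilon_b}(m)}]$ is estimated through \eqref{eq:zest1}, or through the $\rho$-regularized bound \eqref{eq:zest2} when one needs to gain negative powers of $|m|$ at the price of a temporal singularity $|a-b|^{-\rho}$; the prefactors $|k_1|,|k_1'|$ from $\partial_x$ are absorbed by the heat-kernel time integrals $\int_0^t e^{-|k_1|^2(t-s)}\,ds\lesssim|k_1|^{-2}$; and one is left with a purely spatial multi-index sum, collapsed by \cref{lem:fsum1} and \cref{lem:fsum2}.

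The structurally important pairing is the ``diagonal'' one, in which $\widehat{z^\epsilon_t}(k_2)$ is contracted against $\overline{\widehat{z^\epsilon_t}(k_2')}$: it contributes
\[
  \sum_{\substack{k_1+k_2=k\\ k_1\sim k_2}}\EE\bigl[|\widehat{\vartheta^\epsilon_t}(k_1)|^2\bigr]\,\EE\bigl[|\widehat{z^\epsilon_t}(k_2)|^2\bigr]\lesssim\sum_{\substack{k_1+k_2=k\\ k_1\sim k_2}}\frac{1}{|k_1|^{5-4\gamma}}\,\frac{1}{|k_2|^{2-2\gamma}}\lesssim\frac{1}{|k|^{6-6\gamma}},
\]
using the uniform version of \eqref{eq:jzest1}, \eqref{eq:zest1} and \cref{lem:fsum2}; here the exponent sum $(5-4\gamma)+(2-2\gamma)>1$ is exactly the constraint $\gamma<1$, and \cref{lem:fsum2} (rather than \cref{lem:fsum1}) must be used precisely because $5-4\gamma<1$, which is where the support restriction $k_1\sim k_2$ of the resonant product enters. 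The remaining, ``fully connected'' matchings -- in which $\widehat{z^\epsilon_t}(k_2)$ contracts into $\widehat{\vartheta^\epsilon_t}(k_1')$ -- are handled by keeping the double-integral structure of $\vartheta^\epsilon$, using the $\rho$-versions \eqref{eq:zest2} and \eqref{eq:z22} of the covariance estimates (with $\rho$ chosen exactly as in the proof of \cref{prop:jz}) together with \cref{lem:fsum1}/\cref{lem:fsum2}, and are checked to be no worse than $|k|^{-(6-6\gamma)}$. This gives $\EE[|\widehat{\vartheta^\epsilon_t\circ z^\epsilon_t}(k)|^2]\lesssim|k|^{-(6-6\gamma)}$ uniformly in $\epsilon$, with the usual arbitrarily small $\eta$-loss of decay at the endpoint $\gamma=\tfrac12$ as in \cref{prop:z2}.

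For the space--time regularity I would write $(\vartheta^\epsilon\circ z^\epsilon)_{s,t}=\vartheta^\epsilon_{s,t}\circ z^\epsilon_t+\vartheta^\epsilon_s\circ z^\epsilon_{s,t}$ and run the same Wick expansion on each piece, now invoking the increment bounds \eqref{eq:jzest2}, \eqref{eq:zest3} (and \eqref{eq:zest2} where an increment must be traded for powers of $|k|$) to get $\EE[|\Delta_j(\vartheta^\epsilon\circ z^\epsilon)_{s,t}(x)|^2]\lesssim|t-s|^h\,2^{-j(5-6\gamma-2h)}$ for small $h>0$. \cref{prop:bdd} then yields $\sup_\epsilon\EE[\|\vartheta^\epsilon\circ z^\epsilon\|^p_{C_T\Cc^\beta}]<\infty$ for every $\beta<\tfrac52-3\gamma$ and $p>1$; the difference $\vartheta^\epsilon\circ z^\epsilon-\vartheta^{\epsilon'}\circ z^{\epsilon'}$ satisfies the same bounds with an extra factor vanishing as $\epsilon,\epsilon'\to0$ (the cutoffs $\psi(\epsilon\,\cdot)$ tending to $1$), so by dominated convergence $(\vartheta^\epsilon\circ z^\epsilon)_\epsilon$ is Cauchy in $L^p(\Omega,C_T\Cc^\beta)$; its limit is the asserted $\vartheta\circ z=J(z^{(\gamma)})\circ z^{(\gamma)}$, and passing to the limit in the Fourier estimate gives the stated bound. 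The hard part will be the combinatorics of the Wick expansion: enumerating the matchings, tracking the linear frequency constraints they impose, and verifying that each resulting spatial sum falls in the range of \cref{lem:fsum1} or \cref{lem:fsum2} with total gain at least $|k|^{-(6-6\gamma)}$ -- which becomes tight as $\gamma\uparrow1$ and forces, at the endpoints, the same infinitesimal sacrifice of decay already used in \cref{prop:z2} and \cref{prop:jz}.
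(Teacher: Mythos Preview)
Your approach is correct and close to the paper's, but the organization differs in one useful way. Rather than squaring the full product and invoking Wick's theorem on a sixth-order moment, the paper first applies the Wiener-chaos product formula to split
\[
  \widehat{\vartheta_t\circ z_t}(k)=I^{(3)}_t(k)+2\,I^{(1)}_t(k),
\]
with $I^{(3)}$ in the third homogeneous chaos and $I^{(1)}$ in the first (one contraction between a leg of $\vartheta$ and $z$). By orthogonality, $\EE[|\widehat{\vartheta_t\circ z_t}(k)|^2]=\EE[|I^{(3)}_t(k)|^2]+4\,\EE[|I^{(1)}_t(k)|^2]$, and each piece is bounded separately via It\^o isometry on the explicit iterated stochastic integral. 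Your ``diagonal'' pairing is exactly the $I^{(3)}$ estimate, and your ``fully connected'' matchings are the $I^{(1)}$ contribution. The paper's treatment of $I^{(1)}$ does \emph{not} use the $\rho$-regularized covariances \eqref{eq:zest2} or \eqref{eq:z22}; instead it exploits the explicit first-chaos integral structure, peels off factors of the form $\int P_{s-r}(k)P_{s'-r}(-k)q_k^2\,dr\lesssim|k|^{-(2-2\gamma)}$ in three successive layers, and obtains directly $(|k|^{-(2-2\gamma)})^3=|k|^{-(6-6\gamma)}$. This is cleaner than tracking all Wick pairings and avoids the endpoint $\eta$-loss you anticipate; the $\rho$-trick is not needed here. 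For the time increment the paper proceeds exactly as you say, using the split $\vartheta_t\circ z_t-\vartheta_s\circ z_s=\vartheta_t\circ z_{s,t}+\vartheta_{s,t}\circ z_s$ and \cref{prop:regW} (rather than \cref{prop:bdd}), replacing \eqref{eq:zest1}, \eqref{eq:jzest1} by their increment analogues \eqref{eq:zest3}, \eqref{eq:jzest2}.
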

\begin{proof}
Our approach is the same as \cite[Page 29 - 31]{mourrat2015construction}, 
and to see the argument more clearly, 
the reader is encouraged to write down the corresponding diagrams
of our case.

For convenience, write $P_{t}(k) = e^{-|k|^2t} \one_{t \ge 0}$.
By the Wiener chaos decomposition 
(see \cite{mourrat2015construction} for a simple strategy using diagrams),
\[
	\widehat{\vartheta_t \circ z_t}(k) = I^{(3)}_t(k) + 2 \times I^{(1)}_t(k),
\]
where $I^{(3)}$ belongs to the third Wiener chaos, 
$I^{(1)}$ belongs to the first Wiener chaos, and
they are given by
\begin{align*}
	I^{(3)}_t(k) =  6 \sum_{\substack{k_1 + k_2 = k_4, \\ k_3 + k_4 = k, \\ k_3 \sim k_4}}
	& \int_0^t \int_0^{s_3} \int_0^{s_2} \int_0^t
	P_{s_4 - s_1}(k_1) P_{s_4 - s_2}(k_2) P_{t - s_3}(k_3) P_{t - s_4}(k_4) \\
	& \times ik_4\, q_{k_1} q_{k_2} q_{k_3} \, ds_4 
	\, dW_{s_1}(k_1) \, dW_{s_2}(k_2) \, dW_{s_3}(k_3), 
\end{align*}
\begin{align*}
	I^{(1)}_t(k) = \sum_{\substack{k_1 + k_2 = k_4, \\ k_3 + k_4 = k, \\ k_3 \sim k_4,
		\\ k_1 + k_3 = 0}}
	&  \int_0^t  \int_0^{t} \int_0^{s_4}
	P_{s_4 - s_1}(k_1) P_{s_4 - s_2}(k_2) P_{t - s_1}(k_3) P_{t - s_4}(k_4) \\
	& \times ik_4 \, q_{k_1} q_{k_2} q_{k_3} \, ds_1 \, ds_4  \, dW_{s_2}(k_2).
\end{align*}
Consider $\EE[|{{I^{(3)}_t}}(k)|^2]$ 
and expand out the integrals. We see that the inner integral
has almost the same expression as $\EE[|\widehat{\vartheta_t}(k')|^2]$
which can be bounded by $\frac{1}{|k'|^{5 - 4\gamma}}$.
With this bound, the remaining 
outer integral has almost the same expression as
$\EE[|\widehat{z_t}(k'')|^2]$ which can be bounded
by $\frac{1}{|k''|^{2-2\gamma}}$. Hence, we have by Lemma \ref{lem:fsum2},
\begin{align*}
	\EE\left[\left|{{I^{(3)}_t}}(k)\right|^2\right]
	& \lesssim \sum_{\substack{k' + k'' = k, \\ k' \sim k''}} 
	\frac{1}{|k'|^{5 - 4\gamma}}\frac{1}{|k''|^{2-2\gamma}}
	\lesssim \frac{1}{|k|^{6 - 6\gamma}}.
\end{align*}
Now consider $\EE[|{{I^{(1)}_t}}(k)|^2]$
and expand out the integrals.
The inner integral becomes the left-hand side the following expression, 
almost the same as $\EE[|\widehat{z_t}(k)|^2]$, 
\[
	\int_0^\infty P_{s - r}(k) P_{s' - r}(-k)q_k^2\,dr 
	\lesssim \frac{1}{|k|^{2 - 2\gamma}}.
\]
With this bound, the next outer integral has
the following expression
\begin{equation} \label{eq:outer}
	\sum_{\substack{k' + k'' = k, \\ k' \sim k''}} \int_0^\infty ik' P_{t - s}(k')
	\int_0^\infty P_{t - r}(k'') P_{s - r}(-k'')q_{k''}^2\,dr,
\end{equation}
whose size is bounded by, using Lemma \ref{lem:fsum2} again, 
\[
	\sum_{\substack{k' + k'' = k, \\ k' \sim k''}}
	\frac{1}{|k'|}\frac{1}{|k''|^{2-2\gamma}} 
	\lesssim \frac{1}{|k|^{2 - 2\gamma}}.
\]
The remaining outer integral also has the form \eqref{eq:outer}. 
We arrive at the result
\begin{align*}
	\EE\left[\left|{I^{(1)}_t}(k)\right|^2\right] 
	\lesssim \left(\frac{1}{|k|^{2 - 2\gamma}}\right)^3 = \frac{1}{|k|^{6-6\gamma}}.
\end{align*}

For the remaining statement, we note that
\[
	\vartheta_t \circ z_t - \vartheta_s \circ z_s
	= \vartheta_t \circ z_{s,t} - \vartheta_{s,t} \circ z_{s}.
\]
We can show \eqref{eq:fest2} of Proposition \ref{prop:regW}, 
by replacing similar bounds used above \eqref{eq:zest1} and \eqref{eq:jzest1} 
with \eqref{eq:zest3} and \eqref{eq:jzest2}.
\end{proof}
Hence, if $\frac{1}{2} \le \gamma < 1$,
$B(J(z^{(\gamma)}), z^{(\gamma)}) \in C_T \Cc^{\beta}$ for any
$\beta < ((2 - 2\gamma) \wedge (\frac{1}{2} - \gamma) 
\wedge (\frac{5}{3} - 3\gamma)) - 1 = -\frac{1}{2} - \gamma$, 
according to Remark \ref{rmk:prod}.

\subsection{Construction of \texorpdfstring{$B(J(z^{(\gamma)}))$}{B(J(z(gamma)))}}
Since $\gamma < 1$, $J(z^{(\gamma)}) \in C_T \Cc^\beta$ for some 
$\beta > 0$. Hence, $B(J(z^{(\gamma)}))$ is classically well-defined.
By Remark \ref{rmk:prod}, 
if $\gamma < \frac{1}{2}$, then $B(J(z^{(\gamma)})) \in C_T \Cc^\beta$
for any $\beta < \frac{1}{2} - \gamma,$
and if $\frac{1}{2} \le \gamma < 1$, then $B(J(z^{(\gamma)})) \in C_T \Cc^\beta$
for any $\beta < 1 - 2\gamma.$

\subsection{Construction of \texorpdfstring{$B(z^{(\gamma)}, z^{(\delta)})$}{B(z(gamma),z(delta))}
  and \texorpdfstring{$J(z^{(\gamma)}, z^{(\delta)})$}{J(z(gamma),z(delta))}}
Recall $z^{(\gamma)}$ and $z^{(\delta)}$ are independent.
For $\gamma + \delta < \frac{3}{2}$, then it suffices to show the existence
of the resonant product $z^{(\gamma)} \circ z^{(\delta)}.$
\begin{proposition} \label{prop:zz}
	Suppose $\gamma + \delta < \frac{3}{2}$. 
	Then we have 
	for any $k \in \Z_0$,
	\[
		\EE\left[\left| \widehat{z^{(\gamma)}_t \circ z^{(\delta)}_t}(k) \right|^2 \right]
		\lesssim \frac{1}{|k|^{3 - 2\gamma - 2\delta}},
	\]
	and 
	$z^{(\gamma)} \circ z^{(\delta)} \in C_T \Cc^{\beta}$
	for any $\beta < 1 - \gamma - \delta.$
\end{proposition}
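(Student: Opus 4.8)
\emph{Plan.} This is a pure second-chaos computation, and it is markedly simpler than the construction of $B(J(z^{(\gamma)}),z^{(\gamma)})$ because the independence of $z^{(\gamma)}$ and $z^{(\delta)}$ removes any need for renormalization: each factor has spatial mean zero and the two are independent, so $\EE[\widehat{z^{(\gamma)}_t}(k_1)\,\widehat{z^{(\delta)}_t}(k_2)]=0$ for all $k_1,k_2$, and hence $z^{(\gamma)}_t\circ z^{(\delta)}_t$ already lies in the second homogeneous Wiener chaos with no correction term. As in \cref{prop:z2}, I would run the argument first on the regularized processes $z^{(\gamma),\epsilon}$, $z^{(\delta),\epsilon}$, prove the moment bounds below uniformly in $\epsilon$, and then obtain convergence of $z^{(\gamma),\epsilon}\circ z^{(\delta),\epsilon}$ in $L^p(\Omega,C_T\Cc^\beta)$ by dominated convergence applied to the differences (the $\psi(\epsilon\,\cdot)$ cutoffs contribute only uniformly bounded factors, and each difference vanishes modewise as $\epsilon\to0$), so that the limit $z^{(\gamma)}\circ z^{(\delta)}$ is well defined.

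The key estimate comes from expanding in Fourier modes,
\[
	\widehat{z^{(\gamma)}_t\circ z^{(\delta)}_t}(k)=\sum_{\substack{k_1+k_2=k\\ k_1\sim k_2}}\widehat{z^{(\gamma)}_t}(k_1)\,\widehat{z^{(\delta)}_t}(k_2),
\]
taking the second moment, and factoring the expectation by independence; the $\one_{k_1=k_1'}$ structure of the covariance of $\widehat{z^{(\gamma)}}$ (and likewise for $\widehat{z^{(\delta)}}$) collapses the resulting double sum to the diagonal, giving, via \eqref{eq:zest1},
\[
	\EE\left[\left|\widehat{z^{(\gamma)}_t\circ z^{(\delta)}_t}(k)\right|^2\right]\lesssim\sum_{\substack{k_1+k_2=k\\ k_1\sim k_2}}\frac{1}{|k_1|^{2-2\gamma}}\,\frac{1}{|k_2|^{2-2\delta}}\lesssim\frac{1}{|k|^{3-2\gamma-2\delta}},
\]
where the last step is \cref{lem:fsum2} with $a=2-2\gamma$, $b=2-2\delta$, whose hypothesis $a+b>1$ is exactly $\gamma+\delta<\tfrac32$. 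For the temporal bound I would split $z^{(\gamma)}_t\circ z^{(\delta)}_t-z^{(\gamma)}_s\circ z^{(\delta)}_s=z^{(\gamma)}_t\circ z^{(\delta)}_{s,t}+z^{(\gamma)}_{s,t}\circ z^{(\delta)}_s$ and estimate each piece the same way, using \eqref{eq:zest3} on the increment factor; for $h$ small enough $\gamma+\delta<\tfrac32$ still yields $a+b>1$, so \cref{lem:fsum2} gives $\EE[|\widehat{(z^{(\gamma)}\circ z^{(\delta)})_{s,t}}(k)|^2]\lesssim|t-s|^h|k|^{-(3-2\gamma-2\delta-2h)}$.

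To conclude I would apply \cref{prop:regW} to $x=z^{(\gamma)}\circ z^{(\delta)}$ with $\kappa=1-\gamma-\delta$: the orthogonality hypothesis $k+k'\neq0\Rightarrow\EE[\widehat{x_s}(k)\,\widehat{x_t}(k')]=0$ follows from the reality constraint $\overline{\widehat{z}(k')}=\widehat{z}(-k')$ and the OU covariances (a nonvanishing term forces $k_1=-k_1'$ and $k_2=-k_2'$, hence $k=-k'$), the $0$-th mode is controlled as in the remark following \cref{prop:regW} since the corresponding series converges for $\gamma+\delta<\tfrac32$, and the two moment bounds above are precisely \eqref{eq:fest} and \eqref{eq:fest2} for this $\kappa$. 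This gives $z^{(\gamma)}\circ z^{(\delta)}\in C_T\Cc^\beta$ for every $\beta<1-\gamma-\delta$; the statements for $B(z^{(\gamma)},z^{(\delta)})=\partial_x(z^{(\gamma)}z^{(\delta)})$ and $J(z^{(\gamma)},z^{(\delta)})$ then follow with their canonical regularity from \cref{rmk:prod}. I do not expect any genuine obstacle: the only points to watch are that the summability threshold of \cref{lem:fsum2} coincides exactly with the hypothesis $\gamma+\delta<\tfrac32$, and that the increments are carried along carefully enough for the time-regularity half of \cref{prop:regW} to apply.
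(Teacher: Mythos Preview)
Your proposal is correct and matches the paper's proof essentially line for line: the paper also factors the second moment by independence, applies \eqref{eq:zest1} and \cref{lem:fsum2} (noting $a+b>1$ is exactly $\gamma+\delta<\tfrac32$), uses the same increment decomposition $z^{(\gamma)}_t\circ z^{(\delta)}_{s,t}+z^{(\gamma)}_{s,t}\circ z^{(\delta)}_s$ with \eqref{eq:zest3}, and then invokes \cref{prop:regW}. If anything, you are more explicit than the paper about the regularization/limit step and the verification of the orthogonality hypothesis in \cref{prop:regW}, both of which the paper takes for granted.
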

\begin{proof}
	By the definition of $z^{(\gamma)}_t \circ z^{(\delta)}_t$, 
	independence and \eqref{eq:zest1},
	we have for any $k \in \Z_0$
	\begin{equation} \label{eq:zzest}
		\begin{split}
		\EE\left[\left| \widehat{z^{(\gamma)}_t \circ z^{(\delta)}_t}(k) \right|^2 \right]
		& \lesssim \sum_{\substack{k_1 + k_2 = k, \\ k_1 \sim k_2}}
		\EE\left[\left| \widehat{z^{(\gamma)}_t}(k_1) \right|^2 \right]
		\EE\left[\left| \widehat{z^{(\delta)}_t}(k_2) \right|^2 \right] \\ 
		& \lesssim \sum_{\substack{k_1 + k_2 = k, \\ k_1 \sim k_2}}
			\frac{1}{|k_1|^{2 - 2\gamma}} \frac{1}{|k_2|^{2 - 2\delta}} 
		\, \lesssim \frac{1}{|k|^{3 - 2\gamma - 2\delta}},
		\end{split}
	\end{equation}
	where we used Lemma \ref{lem:fsum2}, 
	since $\gamma + \delta < \frac{3}{2}.$
	Note that
	\[
		z^{(\gamma)}_t \circ z^{(\delta)}_t - z^{(\gamma)}_s \circ z^{(\delta)}_s
		= z^{(\gamma)}_t \circ z^{(\delta)}_{s,t} + z^{(\gamma)}_{s,t} \circ z^{(\delta)}_s.
	\]
	We can show \eqref{eq:fest2} of Proposition \ref{prop:regW} in the same way
	as \eqref{eq:zzest} by using \eqref{eq:zest3} instead. 
	Then the result follows from Proposition \ref{prop:regW}.
\end{proof}

Thus, when $\gamma + \delta < \frac{3}{2}$, 
$B(z^{(\gamma)}, z^{(\delta)}) \in C_T \Cc^{\beta}$ for any
$\beta < ((\frac{1}{2} - \gamma) \wedge (\frac{1}{2} - \delta) 
\wedge (1 - \gamma - \delta)) - 1$, according to Remark \ref{rmk:prod}.

When $\frac{3}{2} \le \gamma + \delta < 2$, we encounter the same 
situation as $(z^{(\gamma)})^{\diamond 2}$ that $z^{(\gamma)} z^{(\delta)}$
is not a well-defined process but a space-time distribution,
so we work on defining the process $J(z^{(\gamma)}, z^{(\delta)})$
as we did for $J(z^{(\gamma)})$.
\begin{proposition}
	Suppose $\frac{3}{2} \le \gamma + \delta < 2$.
	Let $z = z^{(\gamma)}$, $\tilde{z} = z^{(\delta)}$. Then
	we have for any $k \in \Z_0$,
	\begin{equation*}
		\EE\left[\left| \widehat{J(z, \tilde{z})_t}(k) 
		\right|^2 \right] \lesssim \frac{1}{|k|^{5 - 2\gamma - 2\delta}},
	\end{equation*}
	and $J(z, \tilde{z}) \in C_T \Cc^{\beta}$ for any $\beta < 2 - \gamma - \delta.$
\end{proposition}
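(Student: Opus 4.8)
The plan is to mirror the proof of Proposition~\ref{prop:jz}, replacing the diagonal object $(z^{(\gamma),\epsilon})^{\diamond2}$ by the product $z^{(\gamma),\epsilon}z^{(\delta),\epsilon}$ of two \emph{independent} regularized Ornstein--Uhlenbeck processes. Since $z^{(\gamma)}\perp z^{(\delta)}$ one has $\EE[z^{(\gamma),\epsilon}_s z^{(\delta),\epsilon}_s]=0$, so no renormalization counterterm is needed; nevertheless, when $\gamma+\delta\ge\frac32$ the product $z^{(\gamma)}_tz^{(\delta)}_t$ is only a space-time distribution with no meaning at a fixed time, so one must work with the $\epsilon$-regularizations throughout and pass to the limit only in the time-integrated quantity, exactly as for $J(z^{(\gamma)})^{\diamond}$. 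Note also that since $\gamma,\delta\in(0,1)$ and $\gamma+\delta\ge\frac32$, both $\gamma>\frac12$ and $\delta>\frac12$.

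First I would write, in Fourier variables,
\[
  \widehat{J(z^{(\gamma),\epsilon},z^{(\delta),\epsilon})_t}(k)
  = \int_0^t ik\, e^{-|k|^2(t-s)}\!\!\!\sum_{\substack{k_1+k_2=k\\|k_1|,|k_2|\lesssim\epsilon^{-1}}}\!\!\!\widehat{z^{(\gamma),\epsilon}_s}(k_1)\,\widehat{z^{(\delta),\epsilon}_s}(k_2)\,ds,
\]
and then compute the time-correlation of the product. Using independence to factor the covariance, applying \eqref{eq:zest2} to each factor with time-exponents $\rho_1<\gamma-\frac12$ and $\rho_2<\delta-\frac12$, and summing over $k_1+k_2=k$ via Lemma~\ref{lem:fsum1} (whose hypotheses hold once $\rho:=\rho_1+\rho_2>\gamma+\delta-\frac32$, a condition compatible with $\rho_i<\gamma-\frac12,\delta-\frac12$, and automatically $\rho<1$ since $\gamma+\delta<2$), one obtains, uniformly in $\epsilon$,
\[
  \EE\Big[\widehat{z^{(\gamma),\epsilon}_sz^{(\delta),\epsilon}_s}(k)\,\overline{\widehat{z^{(\gamma),\epsilon}_{s'}z^{(\delta),\epsilon}_{s'}}(k)}\Big]
  \lesssim \frac{1}{|s-s'|^{\rho}}\,\frac{1}{|k|^{3-2\gamma-2\delta+2\rho}}.
\]
Plugging this into the double $ds\,ds'$ integral for $\EE[|\widehat{J_t}(k)|^2]$, bounding $e^{-|k|^2(t-s)-|k|^2(t-s')}\lesssim_\rho(|k|^2|s-s'|)^{-\rho}$ and using the elementary estimate $\int_0^t\!\!\int_0^t e^{-|k|^2(t-s)-|k|^2(t-s')}|s-s'|^{-\rho}\,ds\,ds'\lesssim|k|^{-2(2-\rho)}$, the powers of $|k|$ combine to $|k|^{2}\cdot|k|^{-(3-2\gamma-2\delta+2\rho)}\cdot|k|^{-(4-2\rho)}=|k|^{-(5-2\gamma-2\delta)}$, which is the claimed moment bound. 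Splitting the defining time integral of $J$ into its pieces over $[0,s]$ and $[s,t]$ and repeating this computation (together with $1-e^{-r}\lesssim r^{h}$), exactly as in Proposition~\ref{prop:jz}, yields the increment bound $\EE[|\widehat{J_{s,t}}(k)|^2]\lesssim|t-s|^{h}|k|^{-(5-2\gamma-2\delta-2h)}$ for small $h>0$.

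With these two estimates in hand, Proposition~\ref{prop:bdd} (through the Littlewood--Paley blocks $\Delta_j$, summing over $k\sim 2^j$ with Lemma~\ref{lem:fsum2}) gives $\sup_\epsilon\EE[\|J(z^{(\gamma),\epsilon},z^{(\delta),\epsilon})\|_{C_T\Cc^\beta}^p]<\infty$ for every $\beta<2-\gamma-\delta$ and $p>1$; applying the same computation to the difference $J(z^{(\gamma),\epsilon},z^{(\delta),\epsilon})-J(z^{(\gamma),\epsilon'},z^{(\delta),\epsilon'})$, whose Fourier kernels differ only by the factor $\psi(\epsilon k_1)\psi(\epsilon k_2)-\psi(\epsilon'k_1)\psi(\epsilon'k_2)\to0$, shows via dominated convergence that the family is Cauchy in $L^p(\Omega,C_T\Cc^\beta)$. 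Its limit, which one checks is independent of the choice of $\psi$, is the process $J(z^{(\gamma)},z^{(\delta)})$, and it inherits the stated moment estimate and the regularity $J(z^{(\gamma)},z^{(\delta)})\in C_T\Cc^\beta$ for $\beta<2-\gamma-\delta$. The main obstacle, as in the companion propositions, is the bookkeeping of the exponent $\rho$: it must be large enough for Lemma~\ref{lem:fsum1} to control the convolution sum (forcing $\rho>\gamma+\delta-\frac32$, which is precisely where the threshold $\gamma+\delta<2$ enters, via $\rho<1$) while keeping each factor's spatial exponent strictly below $1$; conceptually, the one genuinely new point compared with $B(z^{(\gamma)},z^{(\delta)})$ is that the object must be constructed at the level of the heat-convolved equation, since the untimed product does not exist in this range.
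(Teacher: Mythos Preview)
Your proposal is correct and follows essentially the same route as the paper: factor the product covariance using independence, apply \eqref{eq:zest2} to each factor with exponents $\rho_1<\gamma-\tfrac12$, $\rho_2<\delta-\tfrac12$ and $\rho_1+\rho_2>\gamma+\delta-\tfrac32$, sum via Lemma~\ref{lem:fsum1}, integrate against the heat kernel, and handle the increment exactly as in Proposition~\ref{prop:jz}. The paper concludes via Proposition~\ref{prop:regW} from the Fourier-mode estimates, whereas you pass through Littlewood--Paley blocks and Proposition~\ref{prop:bdd} together with an explicit $\epsilon$-regularization and Cauchy argument; these are equivalent here and your version merely makes explicit the limiting construction that the paper leaves implicit. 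One small correction: the sum $\sum_{k\sim 2^j}|k|^{-(5-2\gamma-2\delta-2h)}$ does not need Lemma~\ref{lem:fsum2} (which concerns convolution sums with $k_1\sim k_2$); it is just the elementary bound $\sum_{k\sim 2^j}|k|^{-a}\lesssim 2^{j(1-a)}$.
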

\begin{proof}
	Let $k \in \Z_0$. 
	By independence and \eqref{eq:zest2}, for any $\rho_1, \rho_2 \ge 0$,
	\begin{align*}
		\EE\left[\widehat{(z \tilde{z})_s}(k)
				\overline{\widehat{(z \tilde{z})_{s'}}(k)} \right]
		& = \sum_{k_1 + k_2 = k} 
			\EE\left[\widehat{z_s}(k_1)
				\overline{\widehat{z_{s'}}(k_1)} \right]
			\EE\left[\widehat{\tilde{z}_s}(k_2)
				\overline{\widehat{\tilde{z}_{s'}}(k_2)} \right]\\
		& \lesssim \frac{1}{|s - s'|^{\rho_1 + \rho_2}} \sum_{k_1 + k_2 = k}
			\frac{1}{|k_1|^{2 - 2\gamma + 2\rho_1}} 
			\frac{1}{|k_2|^{2 - 2\delta + 2\rho_2}}.	
	\end{align*}
	Recall that $\gamma, \delta < 1.$ 
	By taking $\rho_1, \rho_2 \ge 0$ such that 
	\[
		\rho_1 < \gamma - \frac{1}{2}, \quad \rho_2 < \delta - \frac{1}{2},
		\quad \rho_1 + \rho_2 > \gamma + \delta - \frac{3}{2},
	\]
	we can use Lemma \ref{lem:fsum1} to obtain
	\[
		\EE\left[\widehat{(z \tilde{z})_s}(k)
				\overline{\widehat{(z \tilde{z})_{s'}}(k)} \right]
		 \lesssim \frac{1}{|s - s'|^\rho} \frac{1}{|k|^{3 - 2\gamma - 2\delta + 2\rho}},
	\]
	where $\rho = \rho_1 + \rho_2 \in [0, 1].$ Then
	\begin{align*}
		\EE\left[\left| \widehat{J(z, \tilde{z})_t}(k) 
		\right|^2 \right] 
		& \lesssim \int_0^t \int_0^t |k|^2 e^{-|k|^2(t-s)-|k|^2(t - s')}
			\EE\left[\widehat{(z \tilde{z})_s}(k)
				\overline{\widehat{(z \tilde{z})_{s'}}(k)}
		\right] \,ds \, ds' \\
		& \lesssim \int_0^t \int_0^t |k|^2 e^{-|k|^2(t-s)-|k|^2(t - s')}
			 \frac{1}{|s - s'|^\rho} \frac{1}{|k|^{3 - 2\gamma - 2\delta + 2\rho}}
			 \,ds \, ds' \\ 
		& \lesssim \frac{1}{|k|^{5 - 2\gamma - 2\delta}}.
	\end{align*}
	By the same computation in Proposition \ref{prop:jz},
	for any $h \in [0,1]$,
	\begin{align*}
		\EE\left[\left| \widehat{J(z, \tilde{z})_{s,t}}(k) 
		\right|^2 \right] 
		\lesssim \frac{|t - s|^h}{|k|^{5 - 2\gamma - 2\delta - 2h}}.
	\end{align*}
	The result follows from Proposition \ref{prop:regW}.
\end{proof}

\subsection{Construction of \texorpdfstring{$B(J(z^{(\gamma)}), z^{(\delta)})$}{B(J(z(gamma)),z(delta))}}
If $\gamma < \frac{1}{2}$, then $B(J(z^{(\gamma)}), z^{(\delta)})$ is
classically well-defined. Then
$B(J(z^{(\gamma)}), z^{(\delta)}) \in C_T \Cc^{\beta}$ for any
$\beta < ((\frac{3}{2} - \gamma) \wedge (\frac{1}{2} - \delta) 
\wedge (2 - \gamma - \delta)) - 1$, 
according to Remark \ref{rmk:prod}.

If $\frac{1}{2} \le \gamma < 1$,
we need to show the existence
of the resonant product $J(z^{(\gamma)}) \circ z^{(\delta)}.$
\begin{proposition}
	Suppose $\frac{1}{2} \le \gamma < 1$. 
	Let $\vartheta = J(z^{(\gamma)})$, $z = z^{(\delta)}$. 
	We have for any $k \in \Z_0$,
	\[
		\EE\left[\left| \widehat{\vartheta_t \circ z_t}(k) \right|^2 \right]
		\lesssim \frac{1}{|k|^{6 - 4\gamma - 2\delta}},
	\]
	and $\vartheta \circ z \in C_T \Cc^{\beta}$
	for any $\beta < \frac{5}{2} - 2\gamma - \delta.$
\end{proposition}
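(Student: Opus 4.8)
The plan is to follow the proof of Proposition~\ref{prop:zz}, exploiting that $\vartheta = J(z^{(\gamma)})$ is a functional of $z^{(\gamma)}$ alone and is therefore independent of $z = z^{(\delta)}$. First I would pass to Fourier variables and write $\widehat{\vartheta_t \circ z_t}(k) = \sum_{k_1 + k_2 = k,\ k_1 \sim k_2} \widehat{\vartheta_t}(k_1)\,\widehat{z_t}(k_2)$, expand $\EE[|\widehat{\vartheta_t \circ z_t}(k)|^2]$ as a double sum over $(k_1,k_2)$ and $(k_1',k_2')$, and use independence to factor the expectation as $\EE[\widehat{\vartheta_t}(k_1)\overline{\widehat{\vartheta_t}(k_1')}]\,\EE[\widehat{z_t}(k_2)\overline{\widehat{z_t}(k_2')}]$. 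Since $\vartheta$ sits in the second and $z$ in the first homogeneous Wiener chaos, and both have mean zero in space, the It\^o isometry forces $k_1 = k_1'$ and $k_2 = k_2'$, so only the diagonal survives and
\[
  \EE\bigl[\,|\widehat{\vartheta_t \circ z_t}(k)|^2\,\bigr]
  \lesssim \sum_{\substack{k_1 + k_2 = k \\ k_1 \sim k_2}}
  \EE\bigl[\,|\widehat{\vartheta_t}(k_1)|^2\,\bigr]\,\EE\bigl[\,|\widehat{z_t}(k_2)|^2\,\bigr].
\]

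Next I would insert the bounds already at hand: $\EE[|\widehat{\vartheta_t}(k_1)|^2] = \EE[|\widehat{J(z^{(\gamma)})^{\diamond}_t}(k_1)|^2] \lesssim |k_1|^{-(5-4\gamma)}$ from \eqref{eq:jzest1}, and $\EE[|\widehat{z_t}(k_2)|^2] \lesssim |k_2|^{-(2-2\delta)}$ from \eqref{eq:zest1}. Applying Lemma~\ref{lem:fsum2} with $a = 5-4\gamma$ and $b = 2-2\delta$ (valid since $a + b = 7 - 4\gamma - 2\delta > 1$, as $\gamma,\delta < 1$) then yields $\EE[|\widehat{\vartheta_t \circ z_t}(k)|^2] \lesssim |k|^{-(6-4\gamma-2\delta)}$, which is the first claimed estimate.

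For the regularity statement I would argue exactly as in Propositions~\ref{prop:zz} and~\ref{prop:jz}. Decompose the time increment as $\vartheta_t \circ z_t - \vartheta_s \circ z_s = \vartheta_t \circ z_{s,t} + \vartheta_{s,t} \circ z_s$, and bound each piece by the same diagonalization, now using the increment estimates \eqref{eq:jzest2} for $\vartheta$ and \eqref{eq:zest3} for $z$ in place of \eqref{eq:jzest1} and \eqref{eq:zest1}; Lemma~\ref{lem:fsum2} then gives $\EE[|\widehat{(\vartheta\circ z)_{s,t}}(k)|^2] \lesssim |t-s|^h\,|k|^{-(6-4\gamma-2\delta-2h)}$ for small $h \ge 0$. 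The product structure together with the mean-zero-in-space property verifies the hypothesis $k + k' \neq 0 \Rightarrow \EE[\widehat{(\vartheta\circ z)_s}(k)\,\widehat{(\vartheta\circ z)_t}(k')] = 0$, so Proposition~\ref{prop:regW} applies and delivers $\vartheta \circ z \in C_T\Cc^{\beta}$ for every $\beta < \tfrac12(5-4\gamma-2\delta) = \tfrac52 - 2\gamma - \delta$; convergence of the regularizations $J(z^{(\gamma),\epsilon})^{\diamond} \circ z^{(\delta),\epsilon}$ to the limit then follows from the same Cauchy argument, since the cutoff factors $\psi(\epsilon k_i)$ are uniformly bounded and converge to $1$.

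The step needing the most care is the diagonalization: one must be sure that the third- and first-chaos cross contributions that appeared in the non-independent case $B(J(z^{(\gamma)}), z^{(\gamma)})$ are genuinely absent here, which is precisely where the independence of $z^{(\gamma)}$ and $z^{(\delta)}$ enters. Once that is in place the remaining computations are a routine repetition of the summation bookkeeping above, and the regularity exponent $\tfrac52 - 2\gamma - \delta$ is exactly the sum of the regularities of $\vartheta$ (just below $2-2\gamma$) and of $z$ (just below $\tfrac12-\delta$), consistent with the classical Bony estimate in the range $2\gamma+\delta < \tfrac52$ where the resonant product is defined deterministically.
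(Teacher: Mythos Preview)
Your proposal is correct and follows essentially the same route as the paper: exploit independence of $\vartheta=J(z^{(\gamma)})$ and $z=z^{(\delta)}$ to factor the second moment, insert the bounds \eqref{eq:jzest1} and \eqref{eq:zest1}, sum via Lemma~\ref{lem:fsum2}, and then handle the time increments with \eqref{eq:jzest2}, \eqref{eq:zest3} and Proposition~\ref{prop:regW}. The only cosmetic difference is that the paper states the diagonalization in one line (``by independence''), whereas you spell out the Fourier-mode orthogonality explicitly.
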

\begin{proof}
	By the definition of $\vartheta_t \circ z_t$, independence, 
	\eqref{eq:zest1} and \eqref{eq:jzest1},
	we have for any $k \in \Z_0$,
	\begin{align*}
		\EE\left[\left| \widehat{\vartheta_t \circ z_t}(k) \right|^2 \right]
		& \lesssim \sum_{\substack{k_1 + k_2 = k, \\ k_1 \sim k_2}}
		\EE\left[\left| \widehat{\vartheta_t}(k_1) \right|^2 \right]
		\EE\left[\left| \widehat{z_t}(k_2) \right|^2 \right] \\ 
		& \lesssim \sum_{\substack{k_1 + k_2 = k, \\ k_1 \sim k_2}}
			\frac{1}{|k_1|^{5 - 4\gamma}} \frac{1}{|k_2|^{2 - 2\delta}} 
		\,\lesssim \frac{1}{|k|^{6 - 4\gamma - 2\delta}},
	\end{align*}
	where we used Lemma \ref{lem:fsum2}, 
	since $\gamma, \delta < 1$.
	Similar to Proposition \ref{prop:zz}, 
	we can show \eqref{eq:fest2} of Proposition \ref{prop:regW} 
	by using \eqref{eq:zest3} and \eqref{eq:jzest2}. 
	Then the result follows from Proposition \ref{prop:regW}.
\end{proof}
Thus, when $\frac{1}{2} \le \gamma < 1$, 
$B(J(z^{(\gamma)}), z^{(\delta)}) \in C_T \Cc^{\beta}$ for any
$\beta < ((2 - 2\gamma) \wedge (\frac{1}{2} - \delta) 
\wedge (\frac{5}{2} - 2\gamma - \delta)) - 1$, 
according to Remark \ref{rmk:prod}.

\section{Existence and Regularity of Solutions}\label{sec:ExistenceRegularity}

We now prove a local existence theorem by a fixed point argument for the
type of equations needed in this note. This result is quite standard,
and we sketch the argument both for completeness and to highlight
the structure of these equations. We will consider the following
integral equation 
\begin{align}\label{eq:abstractIntEq}
  v_t = e^{-t A} v_0 + c_1J(v)_t + c_2 J(g,v)_t + G_t \eqdef \Phi(v)_t,
\end{align}
where $c_i \in \R$, and for some $T>0$, we have $G \in C_T\Cc^\sigma$, $g\in  C_T\Cc^\gamma$ and $v_0 \in
\Cc^\sigma$ for some $\gamma$ and $\sigma$. 
We will assume that 
\begin{equation}\label{eq:fixptparam}
  \gamma+1>\sigma>0 \quad \text{and} \quad \sigma+\gamma>0 .
\end{equation}
Now for $v^{(1)}, v^{(2)} \in  C_t\Cc^\sigma$ for some $t \in (0,T]$,
we have for $s \in (0,t]$ that
\begin{equation}
 \Phi(v^{(1)})_s -  \Phi(v^{(2)})_s =  c_2 J(g, v^{(1)}-v^{(2)})_s + c_1 J(v^{(1)} +v^{(2)} ,v^{(1)}-v^{(2)})_s.
\end{equation}
Because of our assumptions on $\sigma$ and $\gamma$, we have that
\begin{align*}
  \|J( v^{(2)} ,v^{(1)}-v^{(2)})\|_{C_s\Cc^{\sigma+1}} &\lesssim_s \big(
  \|v^{(1)}\|_{C_s\Cc^\sigma} +  \|v^{(2)}\|_{C_s\Cc^\sigma}
                                                         \big)\|v^{(1)} -v^{(2)}\|_{C_s\Cc^\sigma},\\
  \| J(g, v^{(1)}-v^{(2)})\|_{C_s\Cc^{\gamma+1}}  &\lesssim_s  \|g\|_{C_s\Cc^\gamma} \|v^{(1)} -v^{(2)}\|_{C_s\Cc^\sigma},
\end{align*}
where the dependent constant $s$ in each inequality goes to zero as $s
\rightarrow 0$. Hence, there exists a $K_s$ so that $K_s
\rightarrow0$ as $s\rightarrow0$ and
\begin{align*}
 \| \Phi(v^{(1)})-  \Phi(v^{(2)}) \|_{C_s\Cc^{\sigma}}  \leq K_s
  (  \|v^{(1)}\|_{C_s\Cc^\sigma} +  \|v^{(2)}\|_{C_s\Cc^\sigma}+
  \|g\|_{C_s\Cc^\gamma}   ) \|v^{(1)} -v^{(2)}\|_{C_s\Cc^\sigma} \,.
\end{align*}
Hence, fixing any $R>0$ so that $\|v_0\|_{\Cc^\sigma} +
\|G\|_{C_T\Cc^\sigma} <R$ and 
$\|g\|_{C_T\Cc^\gamma}   < R$, there exists $s>0$ such that $\Phi$
is a contraction on $\{  v \in C_s\Cc^\sigma : \|v\|_{ C_s\Cc^\sigma}
\le R\}$. This implies that there exists a fixed point with $v_t=\Phi(v)_t$
for all $t\in [0,s]$. Since $c_1J(v) + c_2 J(g,v)  \in {
  C_s\Cc^\sigma}$ is well-defined classically for $v \in C_s\Cc^\sigma$ 
  given our assumptions \eqref{eq:fixptparam}, 
  we have proven the following result.
\begin{proposition}[Local Existence and Regularity]\label{prop:local_existence}
  In the above setting with $\gamma+1 > \sigma > 0$ and $\gamma+\sigma>0$, the integral equation \eqref{eq:abstractIntEq}
  has a unique local solution $v$ with $v \in C_s\Cc^\sigma$ for
  some $s>0$.  In particular, if the regularity of the additive forcing
  $G_t$ is set by the stochastic convolution in the equation, then \eqref{eq:abstractIntEq}
 has the canonical regularity in the sense of \cref{def:CanonicalRegularity}.
\end{proposition}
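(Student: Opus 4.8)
The plan is to obtain \cref{prop:local_existence} from the Banach fixed point theorem applied to the map $\Phi$ of \eqref{eq:abstractIntEq} on a small closed ball of $C_s\Cc^\sigma$. Most of the work has already been assembled above: combining \cref{rem:JClassicalProdReg} with the semigroup estimates of \cref{prop:besicEstimates} gives, for $v^{(1)},v^{(2)}$ in the ball of radius $R$ in $C_s\Cc^\sigma$, the Lipschitz bound
\[
  \|\Phi(v^{(1)})-\Phi(v^{(2)})\|_{C_s\Cc^\sigma}\le K_s\big(\|v^{(1)}\|_{C_s\Cc^\sigma}+\|v^{(2)}\|_{C_s\Cc^\sigma}+\|g\|_{C_s\Cc^\gamma}\big)\|v^{(1)}-v^{(2)}\|_{C_s\Cc^\sigma}
\]
with $K_s\to 0$ as $s\to 0$. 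Here $\gamma+\sigma>0$ is exactly the condition ensuring $J(g,v)$ is classically well defined with canonical regularity, and $\sigma>0$, $\gamma+1>\sigma$ guarantee that $J(v)$ and $J(g,v)$ land in $C_s\Cc^{\beta}$ with $\beta>\sigma$, so that the iterates stay in $C_s\Cc^\sigma$.

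First I would record the companion self-mapping estimate: adding back the $e^{-tA}v_0+G_t$ contribution (which by \cref{prop:besicEstimates} and $\|v_0\|_{\Cc^\sigma}+\|G\|_{C_T\Cc^\sigma}<R$ is controlled, using that $\|G\|_{C_s\Cc^\sigma}$ is non-decreasing in $s$), the same computation gives $\|\Phi(v)\|_{C_s\Cc^\sigma}\le R+K_sR(R+\|g\|_{C_s\Cc^\gamma})$, so after shrinking $s$ (and, if desired, slightly enlarging the working radius) $\Phi$ maps the ball into itself and is a contraction there. The Banach fixed point theorem then produces a unique $v\in C_s\Cc^\sigma$ with $v_t=\Phi(v)_t$ on $[0,s]$. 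Uniqueness in all of $C_s\Cc^\sigma$, rather than only within the ball, follows by the usual continuation argument: two solutions agree on a maximal initial subinterval, and the local uniqueness just proven, applied from the first time of disagreement, forces that subinterval to be all of $[0,s]$.

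For the final assertion, I would read off the regularity directly from $v_t=e^{-tA}v_0+c_1J(v)_t+c_2J(g,v)_t+G_t$: by \cref{prop:besicEstimates} the term $e^{-tA}v_0$ lies in $C_s\Cc^b$ for every $b$, and by \cref{rem:JClassicalProdReg} one has $J(v)\in C_s\Cc^{(\sigma+1)\mm}$ and $J(g,v)\in C_s\Cc^{(r+1)\mm}$ with $r=\gamma\wedge\sigma\wedge(\gamma+\sigma)$; since $\sigma>0$ and $\gamma+\sigma>0$, both exponents exceed $\sigma$. Hence every term but $G_t$ is strictly smoother than $\Cc^\sigma$, so $v$ has exactly the regularity of $G_t$, which is the canonical regularity of \cref{def:CanonicalRegularity} whenever $G_t$'s regularity is set by the stochastic convolution term in the equation.

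I expect the main obstacle to be purely bookkeeping: verifying that the time-dependent constant $K_s$ coming from the two uses of the integral in \cref{rem:JClassicalProdReg} genuinely tends to $0$ as $s\to 0$ uniformly over the ball — that is, that the singularity $(t-\cdot)^{-\frac12(\beta-r+1)}$ in each $J$-estimate is integrable with room to spare under \eqref{eq:fixptparam} — and keeping the self-map and contraction radii consistent. The nonlinear equations actually needed, namely $\RY^{(n)}$, $\RX^{(n)}$, and \eqref{eq:rho}, then fall under \eqref{eq:abstractIntEq} once their (already renormalized) Gaussian source terms are absorbed into $G_t$ and their remaining bilinear terms into $c_1J(v)_t$ and $c_2J(g,v)_t$.
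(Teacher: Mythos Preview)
Your proposal is correct and follows essentially the same contraction-mapping argument the paper lays out just before the proposition: the same Lipschitz bound with constant $K_s\to 0$, the same choice of a ball of radius $R$ dominating $\|v_0\|_{\Cc^\sigma}+\|G\|_{C_T\Cc^\sigma}$ and $\|g\|_{C_T\Cc^\gamma}$, and the same fixed-point conclusion. You add a bit more detail than the paper (the explicit self-map estimate, the continuation argument for uniqueness outside the ball, and the term-by-term regularity count for the canonical-regularity claim), but the route is identical.
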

\begin{remark}
  By repeatedly applying the above result we can extend the existence
  to a maximal time $\tau$ such that $\|v_t\|_{\Cc^\sigma} \rightarrow
  \infty$ as $t \rightarrow \tau$ when $\tau <\infty$. By setting $v_t
  = \death$ for all $t \geq \tau$ when $\tau< \infty$, we see that $v
  \in C_T \overline \Cc^\sigma$ for all $T >0$. (See 
  Section~\ref{sec:functionSpaces} for the definition of $C_T
  \overline \Cc^\sigma$ and related discussions.)
\end{remark}
The previous Proposition~\ref{prop:local_existence} can be seamlessly
extended to less regular initial conditions. Assume
\begin{equation}\label{eq:fixptparam2}
  \gamma
    > - \tfrac12,
      \qquad
  \sigma
    > 0
      \qquad \text{and} \qquad
  \sigma+\gamma
    > 0,
\end{equation}
set $\rho=\sigma\wedge(\gamma+1)$, and consider $v_0\in\Cc^{\sigma_0}$,
with
\begin{equation}\label{eq:fixptparam3}
    \sigma_0
      >-1,
        \qquad
    \rho-\sigma_0\in(0,2).
\end{equation}
Set $\theta=\frac12(\rho-\sigma_0)$, and
for $T>0$ define the space
\[
  \mathcal{X}_T^{\sigma_0,\rho}
    = \{u\in C_T\Cc^{\sigma_0}: \sup_{t\in[0,T]}t^\theta\|u(t)\|_{\Cc^\rho}<\infty\},
\]
with norm
\[
  \|u\|_{\mathcal{X}_T^{\sigma_0,\rho}}
    \eqdef \|u\|_{C_T\Cc^{\sigma_0}} + \sup_{t\in[0,T]}t^\theta\|u(t)\|_{\Cc^\rho}.
\]
With the above choice of $\sigma_0$ and under conditions
\eqref{eq:fixptparam2} and \eqref{eq:fixptparam3},
we have that
\[
  \begin{aligned}
    \|t\mapsto e^{-tA}v_0\|_{\mathcal{X}_T^{\sigma_0,\rho}}
      &\lesssim \|v_0\|_{\Cc^{\sigma_0}},\\
    \|J(v^{(1)},v^{(2)})\|_{\mathcal{X}_T^{\sigma_0,\rho}}
      &\lesssim K_T\|v^{(1)}\|_{\mathcal{X}_T^{\sigma_0,\rho}}
        \|v^{(2)}\|_{\mathcal{X}_T^{\sigma_0,\rho}},\\
    \|J(g,v)\|_{\mathcal{X}_T^{\sigma_0,\rho}}
      &\lesssim K_T\|g\|_{C_T\Cc^\gamma}
        \|v\|_{\mathcal{X}_T^{\sigma_0,\rho}},\\
    \|G\|_{\mathcal{X}_T^{\sigma_0,\rho}}
      &\lesssim \|G\|_{C_T\Cc^\sigma},
  \end{aligned}
\]
with $K_T\downarrow0$ as $T\downarrow0$. With these
inequalities at hand, the same proof outlined above
for \cref{prop:local_existence}
can be adapted to this setting, yielding the following
result.
\begin{proposition}
  Consider $\gamma,\sigma$ as in \eqref{eq:fixptparam2},
  $\rho=\sigma\wedge(\gamma+1)$, and $\sigma_0$
  as in \eqref{eq:fixptparam3}, and let $v_0\in\Cc^{\sigma_0}$.
  Then the integral equation~\eqref{eq:abstractIntEq} has
  a unique local solution $v$ in $\mathcal{X}_s^{\sigma_0,\rho}$
  for some $s>0$.
\end{proposition}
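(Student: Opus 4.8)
The plan is to re-run the Banach fixed-point argument of \cref{prop:local_existence}, now carried out in the weighted space $\mathcal{X}_s^{\sigma_0,\rho}$ with $\theta=\tfrac12(\rho-\sigma_0)$; the singular weight $t^\theta$ is precisely what absorbs the loss between the regularity $\sigma_0$ of the initial datum and the working regularity $\rho=\sigma\wedge(\gamma+1)$. First I would establish the four displayed mapping estimates. The bound on $t\mapsto e^{-tA}v_0$ is immediate from \cref{prop:besicEstimates}: $\|e^{-tA}v_0\|_{\Cc^{\sigma_0}}\lesssim\|v_0\|_{\Cc^{\sigma_0}}$ and $\|e^{-tA}v_0\|_{\Cc^\rho}\lesssim t^{-\theta}\|v_0\|_{\Cc^{\sigma_0}}$, so $t^\theta\|e^{-tA}v_0\|_{\Cc^\rho}$ stays bounded; the bound on $G$ is trivial since $\sigma\ge\rho\ge\sigma_0$ (from \eqref{eq:fixptparam2}--\eqref{eq:fixptparam3}) and $t^\theta$ is bounded on $[0,T]$. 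The two bilinear bounds are the substance, and the key device there is the interpolation inherent in the norm of $\mathcal{X}_s^{\sigma_0,\rho}$: for $u\in\mathcal{X}_s^{\sigma_0,\rho}$ and any $\tau=(1-\lambda)\sigma_0+\lambda\rho$ with $\lambda\in[0,1]$ one has $\|u(t)\|_{\Cc^\tau}\lesssim t^{-\lambda\theta}\|u\|_{\mathcal{X}_s^{\sigma_0,\rho}}$. Choosing such a $\tau$ (positive, so that the products $v^{(1)}v^{(2)}$ and $gv$ are classically defined via \cref{prop:para}, but not too large), one applies \cref{prop:para} to the $B$-terms and then the semigroup smoothing of \cref{prop:besicEstimates} inside $J$, reducing each estimate to Beta-type integrals $\int_0^t(t-s)^{-a}s^{-b}\,ds$.

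The content of conditions \eqref{eq:fixptparam2}--\eqref{eq:fixptparam3} is exactly that an admissible $\tau$ exists: one needs $\tau\in(\max(0,\rho-1),\ \min(\rho,\sigma_0+1))$, and this interval is nonempty precisely because $\sigma_0>-1$ and $\rho-\sigma_0<2$ (together with $\rho>0$). For such a $\tau$, the resulting exponents satisfy $a<1$, $b<1$, and the accompanying power of $t$ (after multiplying by $t^\theta$) is nonnegative, so $J(v^{(1)},v^{(2)})$ and $J(g,v)$ land back in $\mathcal{X}_s^{\sigma_0,\rho}$ with a constant $K_s\downarrow0$ as $s\downarrow0$; the requirement $\gamma>-\tfrac12$ plays the analogous role for the factor $g$ in $J(g,v)$. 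Once the four estimates hold, the rest is routine: fixing $R>\|v_0\|_{\Cc^{\sigma_0}}+\|G\|_{C_T\Cc^\sigma}$ with also $\|g\|_{C_T\Cc^\gamma}<R$, one picks $s>0$ small enough that $\Phi$ maps a suitable closed ball of $\mathcal{X}_s^{\sigma_0,\rho}$ into itself, and, using the identity $\Phi(v^{(1)})-\Phi(v^{(2)})=c_2J(g,v^{(1)}-v^{(2)})+c_1J(v^{(1)}+v^{(2)},v^{(1)}-v^{(2)})$ together with the same bilinear bounds, is a contraction there. The Banach fixed-point theorem then gives the unique local solution $v\in\mathcal{X}_s^{\sigma_0,\rho}$; uniqueness in the whole space (not merely in the ball) follows by comparing two solutions on a sufficiently short subinterval and iterating.

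The step I expect to be the main obstacle is the weighted bilinear estimate near the two endpoints $s=0$ and $s=t$ of the time integral: a single choice of intermediate regularity $\tau$ will generally not work at both ends, so one splits the integral (say at $s=t/2$) and, at each end separately, balances the blow-up of the weight $s^{-\lambda\theta}$ against the heat-kernel singularity $(t-s)^{-a}$, keeping $\tau$ simultaneously large enough for the $B$-products to be defined and small enough for the $(t-s)$-singularity to remain integrable. Verifying that \eqref{eq:fixptparam2}--\eqref{eq:fixptparam3} leave enough room to do this at both ends at once — in particular that $\rho-\sigma_0<2$ is not merely necessary but sufficient — is the delicate bookkeeping; everything after that mirrors the proof of \cref{prop:local_existence}.
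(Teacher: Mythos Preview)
Your proposal is correct and follows exactly the approach the paper takes: the paper simply records the four mapping estimates you list and then states that ``with these inequalities at hand, the same proof outlined above for \cref{prop:local_existence} can be adapted to this setting,'' without spelling out the bilinear bounds further. Your discussion of the interpolated regularity $\tau$, the resulting Beta-type integrals, and the parameter window $\tau\in(\max(0,\rho-1),\min(\rho,\sigma_0+1))$ supplies precisely the detail the paper omits, and your identification of \eqref{eq:fixptparam3} as the condition ensuring this window is nonempty is on target.
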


The restriction on $\rho$ can be dropped by parabolic regularization,
yielding the following result.
\begin{corollary}
  Consider $\gamma,\sigma$ as in \eqref{eq:fixptparam2},
  $\rho=\sigma\wedge(\gamma+1)$ and $\sigma_0$ such
  that $-1<\sigma_0<\rho$. If $v_0\in\Cc^{\sigma_0}$,
  then the integral equation~\eqref{eq:abstractIntEq} has
  a unique local solution $v$ in $C_s\Cc^{\sigma_0}$
  for some $s>0$ such that
  $v\in C([\epsilon,s];\Cc^\rho)$ for all $\epsilon>0$.
  
  In particular, if $\sigma\leq\gamma+1$, and
   the regularity of the additive forcing
  $G$ is set by a stochastic convolution in the equation,
  then \eqref{eq:abstractIntEq} has the canonical regularity
  in the sense of \cref{def:CanonicalRegularity}
  on every closed interval included in $(0,s]$.
\end{corollary}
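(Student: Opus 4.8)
The strategy is to reduce the general range $\sigma_0\in(-1,\rho)$ to the case $\rho-\sigma_0<2$ already settled by the preceding proposition, by a finite parabolic bootstrap, and then to upgrade the bounds so obtained to continuity into $\Cc^\rho$ away from $t=0$. First I would fix a finite chain of exponents $\sigma_0=\rho_0<\rho_1<\cdots<\rho_N=\rho$ with consecutive gaps $\rho_j-\rho_{j-1}\in(0,2)$ and, in addition, $\rho_j>\max(0,-\gamma)$ for every $j\ge1$. Such a chain exists because $\sigma_0>-1$, $\rho>0$ and $-\gamma<\rho$ --- the last point since $\gamma>-\tfrac12$ gives $-\gamma<\gamma+1$ while $\sigma+\gamma>0$ gives $-\gamma<\sigma$ --- so one may always advance from $\rho_{j-1}$ to a value $\rho_j$ lying strictly above $\max(0,-\gamma)$, by strictly less than $2$, reaching $\rho$ after finitely many steps. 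I would also set $\sigma^{(j)}=\min(\sigma,\rho_j)$, which satisfies $\sigma^{(j)}\wedge(\gamma+1)=\rho_j$, so that the pair $(\gamma,\sigma^{(j)})$ obeys \eqref{eq:fixptparam2} and $\rho_{j-1}$ obeys \eqref{eq:fixptparam3} with $\rho_j$ in the role of $\rho$.

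For the base step I would invoke the preceding proposition with $\sigma$ replaced by $\sigma^{(1)}$ and target exponent $\rho_1$ (legitimate since $\rho_1-\sigma_0<2$ and $G\in C_T\Cc^\sigma\subseteq C_T\Cc^{\sigma^{(1)}}$): this yields, for some $s>0$, a unique local solution $v\in\mathcal{X}_s^{\sigma_0,\rho_1}$ of \eqref{eq:abstractIntEq}; in particular $v\in C_s\Cc^{\sigma_0}$ and $t^{(\rho_1-\sigma_0)/2}\|v(t)\|_{\Cc^{\rho_1}}$ is bounded on $(0,s]$, hence $v(t)\in\Cc^{\rho_1}$ for every $t\in(0,s]$. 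Then I would bootstrap by induction on $j$: granting that $v(t)\in\Cc^{\rho_{j-1}}$ for $t\in(0,s]$ with $\sup_{[\epsilon,s]}\|v(t)\|_{\Cc^{\rho_{j-1}}}<\infty$ for each $\epsilon>0$, I would restart the mild equation at an arbitrary $t_0\in[\epsilon,s)$. A direct computation shows that $\tau\mapsto v(t_0+\tau)$ again solves an equation of the form \eqref{eq:abstractIntEq}, now with initial datum $v(t_0)\in\Cc^{\rho_{j-1}}$, drift coefficient $g_{t_0+\cdot}\in C\Cc^\gamma$, and forcing $\widetilde G_\tau=G_{t_0+\tau}-e^{-\tau A}G_{t_0}\in C\Cc^\sigma$, whose regularity is still that of the stochastic convolution. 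The preceding proposition (with $\sigma\to\sigma^{(j)}$, target $\rho_j$) applies and its solution coincides with $v$ to the right of $t_0$ by uniqueness; since $\sup_{[\epsilon,s]}\|v\|_{\Cc^{\rho_{j-1}}}<\infty$, the lengths of these continuation intervals are bounded below uniformly in $t_0\in[\epsilon,s]$, so finitely many of them cover $[\epsilon,s]$ and $v(t)\in\Cc^{\rho_j}$ on $(\epsilon,s]$, uniformly bounded there. Letting $\epsilon\downarrow0$ and iterating up to $j=N$ gives $v(t)\in\Cc^\rho$ for all $t\in(0,s]$, with $\sup_{[\epsilon,s]}\|v(t)\|_{\Cc^\rho}<\infty$ for each $\epsilon>0$.

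To conclude I would read off continuity from \eqref{eq:abstractIntEq} on a compact interval $[\epsilon,s]\subset(0,s]$: the map $t\mapsto e^{-tA}v_0$ is continuous into $\Cc^\rho$ for $t>0$ by \cref{prop:besicEstimates}; $G\in C_T\Cc^\sigma\subseteq C_T\Cc^\rho$ because $\rho\le\sigma$; and, using the bounds on $v$ just obtained together with the smoothing estimates of \cref{prop:besicEstimates}, the terms $t\mapsto J(v)_t$ and $t\mapsto J(g,v)_t$ are continuous into $\Cc^{\rho'}$ for some $\rho'>\rho$, hence into $\Cc^\rho$; thus $v\in C([\epsilon,s];\Cc^\rho)$ for every $\epsilon>0$. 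Uniqueness in $C_s\Cc^{\sigma_0}$ is inherited from the preceding proposition, since the same restarting argument shows that any solution of that class lies in $\mathcal{X}_s^{\sigma_0,\rho_1}$, where uniqueness already holds. Finally, if $\sigma\le\gamma+1$ then $\rho=\sigma$, and when $G$ carries the regularity of a stochastic convolution this common value equals that regularity, so $v(t)$ has Hölder regularity $\sigma$ uniformly on each $[\epsilon,s]$; this is exactly canonical regularity in the sense of \cref{def:CanonicalRegularity} on every closed subinterval of $(0,s]$.

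I expect the main difficulty to be the bookkeeping of the bootstrap: arranging the chain $\{\rho_j\}$ so that at every restart the hypotheses \eqref{eq:fixptparam2}--\eqref{eq:fixptparam3} hold for the shifted initial datum, verifying that the restarted equation is genuinely of the form \eqref{eq:abstractIntEq} with a forcing whose regularity is still set by a stochastic convolution, and checking that the continuation intervals do not shrink to zero over a compact subinterval, so that only finitely many patches are needed. The remaining ingredients are the routine fixed-point and patching arguments already used for \cref{prop:local_existence} and the preceding proposition.
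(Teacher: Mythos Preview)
Your proposal is correct and is precisely the parabolic bootstrap the paper has in mind; the paper itself offers no proof beyond the sentence ``The restriction on $\rho$ can be dropped by parabolic regularization,'' and your chain-of-exponents argument with restarts at positive times is the standard way to make that sentence rigorous. Your verification that the pair $(\gamma,\sigma^{(j)})$ satisfies \eqref{eq:fixptparam2} and that the shifted forcing $\widetilde G_\tau=G_{t_0+\tau}-e^{-\tau A}G_{t_0}$ keeps the equation in the form \eqref{eq:abstractIntEq} is exactly what is needed.
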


\bibliographystyle{amsalpha}
\bibliography{levelsOfNoise.bib}
\end{document}